\documentclass{amsart}

\usepackage[utf8]{inputenc}
\usepackage[T1]{fontenc}
\usepackage{lmodern}
\usepackage{amsmath,amssymb,amsthm,mathtools}
\usepackage{enumitem}
\usepackage{xcolor}
\usepackage[colorlinks=true,linkcolor=blue!55!black,citecolor=blue!55!black,urlcolor=blue!55!black]{hyperref}
\hypersetup{
  pdftitle={On Split Forms of Fusion Categories},
  pdfauthor={Cesar Galindo}
}

\newtheorem{theorem}{Theorem}[section]
\newtheorem{proposition}[theorem]{Proposition}
\newtheorem{corollary}[theorem]{Corollary}
\newtheorem{lemma}[theorem]{Lemma}
\theoremstyle{definition}
\newtheorem{definition}[theorem]{Definition}

\theoremstyle{remark}
\newtheorem{remark}[theorem]{Remark}
\numberwithin{equation}{section}

\newcommand{\C}{\mathbb C}
\newcommand{\R}{\mathbb R}
\newcommand{\Q}{\mathbb Q}
\newcommand{\F}{\mathbb F}
\newcommand{\Rep}{\operatorname{Rep}}
\newcommand{\Vect}{\operatorname{Vec}}
\newcommand{\Aut}{\operatorname{Aut}}
\newcommand{\Gal}{\operatorname{Gal}}
\newcommand{\Br}{\operatorname{Br}}
\newcommand{\id}{\operatorname{id}}

\title{On Split Forms of Fusion Categories}
\author{César Galindo}
\address{Departamento de Matemáticas, Universidad de los Andes, Bogotá, Colombia}
\date{July 2026}
\subjclass[2020]{18M20, 17B37, 20C15, 12F10}
\keywords{fusion categories, split forms, Galois descent,
quantum groups at roots of unity}

\begin{document}

\begin{abstract}
We formulate split Galois descent for fusion categories through framed
equivalences, reducing the existence of a split form to a group-theoretic
splitting problem. The associated degree-two obstruction combines categorical
coherence with stable descent of the simple objects, while fusion spaces impose
index and parity restrictions. For quantum groups at roots of unity, we use the
quasi-$R$-matrix to make the bar involution tensor-compatible and prove that the
underlying fusion category $\mathcal C(\mathfrak g,k)$, associated with the simply
connected root datum, has a split form over the maximal totally real subfield of
the cyclotomic field generated by the quantum parameter. The same holds for all
its cyclotomic Galois conjugates; in particular, every
$\mathcal C(\mathfrak g,k)$ has a split real form. By contrast, no split real form
exists for the standard braiding except on $\mathcal C(E_8,1)$ and
$\mathcal C(D_{4m},1)$, $m\geq1$. We further show that associative
zesting can destroy real descent of $\mathcal C(\mathfrak g,k)$, while every  braided zesting of
$\mathcal C(\mathfrak g,k)$ still admits a split real form as an underlying
fusion category.
Applications to pointed and finite-group representation categories give criteria
for real descent in terms of cohomology and Frobenius--Schur indicators.
\end{abstract}

\maketitle

\section{Introduction}

Let $K\subset L$ and let $\mathcal C$ be a split fusion category over $L$. We study
whether $\mathcal C$ arises by scalar extension from a split fusion category over
$K$. An arbitrary $K$-form may combine several Galois-conjugate simples of
$\mathcal C$ into one simple object, and the endomorphism algebra of a descended
simple may be a nontrivial division algebra. Neither phenomenon occurs for a split
form. Every simple object descends individually with endomorphism algebra $K$,
compatibly with tensor product. Over $\R$, the nonsplit endomorphism algebras may be
$\C$ or the quaternion division algebra. Split real descent therefore gives an
intrinsic formulation of the real-form problem considered
in~\cite{BHP,BHPReality}.
Proposition~\ref{prop:dictionary} relates it to associativity coefficients, but our
arguments remain coordinate-free. The question is particularly natural for
fusion categories arising from quantum groups at roots of unity.

We use the categorical descent theory developed in
\cite[Sections~3.1--3.8]{EGDescent}, which separates descent of the category from
descent of its individual objects. For a finite Galois extension, one first seeks
twisted tensor equivalences whose isomorphism classes form a multiplicative
section. A degree-three class then obstructs coherent comparison isomorphisms.
Once a section preserving every simple class has been made coherent, each simple
object carries a relative Brauer class obstructing its stable descent
\cite[Sections~3.4 and~3.6]{EGDescent}. The coherent structures on a fixed section
form an $H^2$-torsor~\cite[Proposition~3.8(ii)]{EGDescent}. Consequently, changing
the coherent choice modifies all the Brauer classes simultaneously. Split descent
requires one coherent choice for which every simple object is stable with trivial
endomorphism division algebra.

For each twisted equivalence that fixes the simple isomorphism classes, we choose
isomorphisms from the images of the simple objects to the original objects. We
call the resulting data a \emph{framed equivalence}. Their equivalence classes
form a group that maps to the Galois group. A section of this map gives both a
coherent descent datum and compatible descent structures on the simple objects.
Proposition~\ref{prop:framed-split-descent} therefore reduces split descent to the
existence of such a section. Over $\C/\R$, this amounts to finding an element of
order two that maps to complex conjugation. The same construction applies to
equivalences preserving a braiding, a pivotal or spherical pivotal structure, or
a ribbon structure.

Fix a multiplicative section $\Phi$. Pulling back the framed extension defines a
class $\Omega_\Phi(\mathcal C)$ in degree two. Its image under the connecting map
is the inverse of the degree-three coherence obstruction of
\cite[Section~3.4]{EGDescent}. When the latter obstruction vanishes,
$\Omega_\Phi(\mathcal C)$ is represented by the relative Brauer classes of the
simple objects, modulo changes of the coherent monoidal structure. Thus its
vanishing is the remaining condition for split descent along $\Phi$
(Proposition~\ref{prop:stability-obstruction}). The same criterion applies when
$\Phi$ is required to preserve additional structure.

Fusion spaces give necessary conditions on $\Omega_\Phi(\mathcal C)$. An iterated
fusion space of dimension $n>0$ defines a central simple $K$-algebra of degree
$n$. Therefore the index of the corresponding Brauer class divides $n$
(Proposition~\ref{prop:fusion-space-index}). Taking determinants gives further
torsion relations among the Brauer classes. These relations define the
determinant obstruction group~\eqref{eq:determinant-obstruction-group}, which
depends only on the based Grothendieck ring and the extension $L/K$. It gives
necessary restrictions on the classes that can occur. Over $\C/\R$, odd fusion
multiplicities give parity conditions on the Brauer signs. In the unitary case, a
split real form gives orthonormal fusion bases with real orthogonal $F$-matrices.

Our main application of the framed criterion concerns the categories
$\mathcal C(\mathfrak g,k)$, for a complex simple Lie algebra $\mathfrak g$ and a
positive integral level $k$, defined using the simply connected root datum.
We use their realization as semisimplifications of tilting categories for
divided-power quantum groups at roots of unity
\cite{LusztigQuantumGroups,AndersenParadowski,SawinQuantum}. Complex conjugation
inverts the root-of-unity parameter, but the algebra bar is not a Hopf algebra
involution. The quasi-$R$-matrix supplies the tensorator that makes the bar
involution tensor-compatible
\cite[Sections~27.3.1--27.3.3 and~27.3.6]{LusztigQuantumGroups}. We show that this
tensor involution preserves the tilting category and its negligible ideal, while
the canonical bars on the simple highest-weight modules provide compatible
frames. The resulting framed element above complex conjugation is an involution.
In fact, the construction is defined over the cyclotomic field generated by the
quantum parameter and descends to its maximal totally real subfield. Consequently,
every $\mathcal C(\mathfrak g,k)$, as well as every cyclotomic Galois conjugate,
has a split form over that totally real field
(Theorem~\ref{thm:Cgk-split-cyclotomic} and
Corollary~\ref{cor:Cgk-galois-split}). In particular, every
$\mathcal C(\mathfrak g,k)$ has a split real form.
Together with unitarity, this gives orthonormal fusion bases with real orthogonal
$F$-matrices (Corollary~\ref{cor:Cgk-real-orthogonal-F}). The construction is
uniform in type and level and avoids computing associator matrices. It concerns
only the underlying fusion category and does not by itself provide descent of the
braiding.
A separate argument based on the unitary ribbon twist rules out a split real form
for the standard braiding except on $\mathcal C(E_8,1)$ and
$\mathcal C(D_{4m},1)$, $m\geq1$
(Theorem~\ref{thm:Cgk-split-braided-real}).

For every positive integral level, $\mathcal C(\mathfrak g,k)$ has a unique split
real form in types $A$, $B$, $C$, and $G_2$, up to equivalence compatible with
the chosen complexification. In even type~$D$, by contrast, we construct at least
two distinct simple-fixing split real forms, related by an explicit nontrivial
grading-cocycle twist. They are inequivalent under equivalences compatible
with the chosen complexifications.

Zesting is a procedure for modifying the tensor structure of a graded fusion
category using a $2$-cocycle with values in the invertible objects of its neutral
component, together with compatible coherence data
\cite[Sections~2--3]{BraidedZesting}. When this object-valued $2$-cocycle is
trivial, the tensor product remains unchanged and a scalar $3$-cocycle twists the
associator. This is the cohomological case. A braided zesting further modifies the
braiding compatibly. Applied along the universal grading, cohomological zesting
can destroy real descent. Indeed, for every $k\geq1$,
twisting $\mathcal C(\mathfrak{sl}_3,k)$ by a nonzero class in
$H^3(\mathbb Z/3\mathbb Z,\C^\times)$ produces a fusion category with no real
form. By contrast, for the braided zestings of
$\mathcal C(\mathfrak g,k)$ classified in~\cite{GalindoMoraRowellVerlinde}, the
hexagon equations force the nonreal coherence phases to cancel the half-braidings
of the relevant invertible objects. Their underlying fusion categories therefore
still have split real forms. As above, this conclusion concerns only the
underlying fusion categories.

Pointed categories provide an explicit cohomological application. For a finite
Galois extension $L/K$, the category $\Vect_{G,L}^\omega$ has a split $K$-form if
and only if $[\omega]$ lies in the image of $H^3(G,K^\times)$
(Proposition~\ref{prop:pointed-split-field}). Over $\C/\R$, this is equivalent to
$2[\omega]=0$ (Corollary~\ref{cor:pointed-split-real}). Thus the descent behavior
of a pointed category can be read from its associator class.

A second family of applications concerns $\Rep_{\C}(G)$ for a finite group $G$.
Suppose that an automorphism of $G$ sends every element to a conjugate of its
inverse and lifts coherently to a conjugate-linear tensor involution of
$\Rep_{\C}(G)$. The generalized twisted Frobenius--Schur indicators of
\cite{KawanakaMatsuyama} then determine the relative real or quaternionic type of
each simple representation. This gives a criterion for split symmetric real descent,
including the possible correction by central elements of order two. For groups of
odd order, split real forms exist precisely in the abelian case.

For irreducible Frobenius groups in odd characteristic, we use results on
bi-Galois algebras~\cite[Corollary~6.2]{DavydovGalois} and invariant
twists~\cite[Theorem~2.4]{DavydovTwistedGroup} to study arbitrary split real
forms. Under an orbit-rigidity hypothesis, a split real form exists if and only
if a split symmetric real form exists. This condition is determined by twisted
Frobenius--Schur indicators of the Frobenius complement
(Theorem~\ref{thm:orbit-rigid-frobenius-real}). A family with quaternionic
complement shows that a class-inverting automorphism is not sufficient. For
cyclic complements, Theorem~\ref{thm:frobenius} determines the smallest field of
definition and shows that every split form can be chosen symmetric over the same
field; this includes the examples studied in~\cite{BHP}.

This paper is organized as follows. Section~\ref{sec:descent} develops framed
descent and its cohomological obstruction, including restrictions from fusion
spaces and pointed categories. Section~\ref{sec:Cgk} studies split real forms of
$\mathcal C(\mathfrak g,k)$, braided descent, uniqueness, and zesting.
Section~\ref{sec:groups} treats representation categories of finite groups using
twisted Frobenius--Schur indicators. Sections~\ref{sec:frobenius-groups}
and~\ref{sec:cyclic-frobenius} study Frobenius groups and determine the minimal
split fields in the cyclic case.

\subsection*{Acknowledgements}

I am grateful to Pavel Etingof for several insightful comments on an earlier
version of this work. He suggested 
Remark~\ref{rem:affine-KZ-perspective}, raised the questions of uniqueness and
braided descent, and pointed out the cyclotomic refinement and its application
to Galois conjugates.
I am also grateful to Matthew Buican, Peter Huston, and Jiannis K. Pachos for
sharing a preliminary version of their work~\cite{BHPReality} and for helpful
discussions about the connections between the two papers.
I also thank Zhenghan Wang for helpful comments and for drawing my attention to
related literature on arithmetic aspects of modular categories.
This work was partially supported by Grant INV-2025-213-3452 from the
School of Science of Universidad de los Andes.
During the preparation of this paper, I collaborated with ChatGPT 5.5 and 5.6.

\enlargethispage{3\baselineskip}
\section{Split forms and framed Galois descent}\label{sec:descent}

We refer to~\cite{EGNO,EO} for the basic definitions and conventions concerning
fusion categories, tensor functors, and braidings. A fusion category $\mathcal D$ over
a field $k$ is a $k$-linear semisimple abelian category with finite-dimensional
morphism spaces and finitely many isomorphism classes of simple objects, equipped
with a rigid monoidal structure satisfying
$\operatorname{End}_{\mathcal D}(\mathbf 1)=k$. It is
\emph{split} if $\operatorname{End}_{\mathcal D}(X)=k$ for every simple object $X$.

\subsection{Split forms}

Throughout this section, let $K\subset L$ be a field extension and let $\mathcal C$
be a split fusion category over $L$. We use \emph{form}, \emph{split}, and scalar
extension, including idempotent completion, as in~\cite[Section~3.1]{EGDescent}.

\begin{definition}
A $K$-form of $\mathcal C$ is a fusion category $\mathcal C_K$ over $K$, together
with an $L$-linear tensor equivalence
\[
 \mathcal C_K\otimes_K L\simeq\mathcal C.
\]
Here $\mathcal C_K\otimes_K L$ denotes scalar extension from $K$ to $L$, including
idempotent completion. The form is \emph{split} if $\mathcal C_K$ is split.
\end{definition}

Write $\operatorname{Irr}(\mathcal C)$ for the set of simple isomorphism classes
and choose once and for all a representative of each class, denoted by the same
letter. For $X,Y,Z\in\operatorname{Irr}(\mathcal C)$, the
vector spaces $\operatorname{Hom}_{\mathcal C}(X\otimes Y,Z)$
are called \emph{fusion spaces}, and their dimensions
$N_{XY}^{Z}=\dim_L \operatorname{Hom}_{\mathcal C}(X\otimes Y,Z)$ are the fusion coefficients. The based
Grothendieck ring $K_0(\mathcal C)$ has distinguished basis
$\{[X]\mid X\in\operatorname{Irr}(\mathcal C)\}$ and multiplication
\[
 [X][Y]=\sum_{Z\in\operatorname{Irr}(\mathcal C)}N_{XY}^{Z}[Z].
\]
These products are the fusion rules. A \emph{fusion basis} is a choice of bases in
the fusion spaces, normalized at the tensor unit. In such bases the associator is
encoded by its
$6j$-symbols, assembled into $F$-matrices, and a braiding by its $R$-symbols,
assembled into $R$-matrices; see
\cite[Sections~2.1, 2.3, and~2.5.1]{Bonderson} and
\cite[Sections~4.1--4.2]{Wang}.

\begin{proposition}\label{prop:dictionary}
The following conditions are equivalent.
\begin{enumerate}[label=\textup{(\alph*)}]
\item There is a fusion basis in which all $6j$-symbols, equivalently all entries of the
$F$-matrices, lie in $K$.
\item The category $\mathcal C$ has a split $K$-form.
\end{enumerate}
\end{proposition}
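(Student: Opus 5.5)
We prove both implications, using the standard reconstruction of a semisimple tensor category from its skeletal data: fusion rules together with $F$-matrices in a chosen fusion basis.

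\emph{(b)$\Rightarrow$(a).} Let $\mathcal C_K$ be a split $K$-form with equivalence $\mathcal C_K\otimes_K L\simeq\mathcal C$.

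First we match simple objects. Since $\mathcal C_K$ is split, every simple $X_K$ has $\operatorname{End}(X_K)=K$. Hence $X_K\otimes_K L$ has endomorphism algebra $L$ and is simple; no idempotent completion is needed. Distinct simples stay non-isomorphic, because $\operatorname{Hom}_{\mathcal C_K}(X_K,Y_K)\otimes_K L=\operatorname{Hom}(X_K\otimes L,Y_K\otimes L)$ and scalar extension commutes with Hom spaces. Semisimplicity of $\mathcal C_K$ and the fact that objects of $\mathcal C_K\otimes_K L$ are summands of extended objects give a bijection $\operatorname{Irr}(\mathcal C_K)\to\operatorname{Irr}(\mathcal C)$. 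The same Hom formula shows that $\operatorname{Hom}_{\mathcal C_K}(X\otimes Y,Z)$ is a $K$-form of the corresponding fusion space.

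Next we choose bases. Pick $K$-bases of the $K$-fusion spaces in $\mathcal C_K$, normalized at the unit; this is possible since $\mathbf 1$ is simple with endomorphism ring $K$. The $6j$-symbols express the associator, composed with basis vectors of iterated fusion spaces, in terms of other basis vectors. Everything here is $K$-rational, since $\operatorname{Hom}_{\mathcal C_K}((X\otimes Y)\otimes Z,W)$ is spanned over $K$ by compositions of the chosen bases. The $F$-matrix entries therefore lie in $K$.

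Finally we transport along the equivalence, taking the fusion basis of $\mathcal C$ to be the image of this basis under $\otimes_K L$ followed by the given equivalence. A monoidal equivalence does not preserve $6j$-symbols on the nose. But once we use the transported bases, the tensor structure isomorphisms of the equivalence are absorbed, and the $F$-matrices agree exactly. To see this, use as basis of $\operatorname{Hom}(F(X)\otimes F(Y),F(Z))$ the elements $F(b)\circ J_{X,Y}$.

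\emph{(a)$\Rightarrow$(b).} Given a fusion basis with $F$-symbols in $K$, we build the skeletal category $\mathcal C_K$ over $K$:
\begin{itemize}
\item objects are finite formal sums of elements of $\operatorname{Irr}(\mathcal C)$;
\item morphisms are matrices over $K$ between multiplicity spaces;
\item the tensor product of simples is $\bigoplus_Z K^{N_{XY}^Z}\otimes Z$;
\item the associator is given by the $F$-matrices.
\end{itemize}
The pentagon and triangle identities are polynomial identities in the $F$-symbols. They hold in $L$, where they come from $\mathcal C$, so they hold in $K$. Rigidity holds because the duality maps are determined by the $F$-symbols. Concretely, the evaluation and coevaluation are the normalized basis vectors of $\operatorname{Hom}(X^*\otimes X,\mathbf 1)$ and $\operatorname{Hom}(\mathbf 1,X\otimes X^*)$, rescaled. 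The zigzag identity reduces to a single $F$-symbol, which is invertible in $K$ because it is nonzero. This makes $\mathcal C_K$ a split fusion category. Its scalar extension to $L$ is the skeletal model of $\mathcal C$ with the same data. The standard coherence theorem, that a semisimple tensor category is equivalent to its skeletalization built from any fusion basis, then gives $\mathcal C_K\otimes_K L\simeq\mathcal C$.

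The main obstacle is bookkeeping. We must make sure that "fusion basis" and "$6j$-symbols" are defined so that the skeletal reconstruction is a genuine tensor equivalence. In particular, the normalization at the unit has to make the unit constraints trivial, and rigidity has to descend. The zigzag scalar is handled by the $F$-symbol argument in the previous paragraph. The rest is routine.
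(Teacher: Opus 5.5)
Your proposal is correct and follows essentially the same route as the paper: (b)$\Rightarrow$(a) extends the simples and the $K$-bases of the fusion spaces of the split form, and (a)$\Rightarrow$(b) reconstructs a skeletal category over $K$ (the paper cites Yamagami's reconstruction), with the pentagon, unit and rigidity conditions checked after extension to $L$. One point to tidy: rigidity needs both zigzag identities, for left and right duals, not one, but each is again an $F$-symbol identity or nonvanishing condition that holds over $L$ and hence over $K$, as in the paper's appeal to Yamagami's local rigidity criterion.
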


\begin{proof}
If $\mathcal C_K$ is a split $K$-form, its simple objects and $K$-bases of their
fusion spaces extend to those of $\mathcal C$, and $K$-linearity of the associator
gives~\textup{(a)}.

Conversely, the $K$-spans of the chosen bases and the given $F$-matrices satisfy
over $K$ the pentagon and unit identities. The algebraic reconstruction in
\cite[Section~1]{Yamagami} gives a split semisimple $K$-linear monoidal category
$\mathcal C_K$ whose scalar extension is $\mathcal C$.
The local rigidity criterion of~\cite[Lemma~2.2]{Yamagami} also holds over $K$, since
its scalar identities and nonvanishing conditions may be checked after extension to
$L$. Thus $\mathcal C_K$ is a split $K$-form.
\end{proof}

If $\mathcal C$ is braided, the same argument, now including the hexagon identities,
shows that it has a split braided $K$-form precisely when its $F$- and $R$-matrices
are simultaneously $K$-valued in suitable fusion bases; structured forms are
treated intrinsically in Subsection~\ref{subsec:structured-forms}.

\subsection{Framed descent}

We now assume that $L/K$ is a finite Galois extension and write
$\Gamma=\Gal(L/K)$. For $\sigma\in\Gamma$, let ${}^{\sigma}\mathcal C$ denote the
category obtained by twisting scalar multiplication by $\sigma$. Recall that a
categorical group is a monoidal groupoid in which every object is tensor-invertible,
equivalently a rigid monoidal groupoid~\cite[Definition~2.11.4]{EGNO}; see
\cite[Sections~3--4]{GarzonInassaridze} for split extensions of categorical groups
and their obstruction theory. As in
\cite[Section~3.2]{EGDescent}, let $\underline{\Aut}_K(\mathcal C)$ be the
categorical group whose objects are pairs
\[
 (F,\sigma),
 \qquad F:{}^{\sigma}\mathcal C\longrightarrow\mathcal C,
 \qquad \sigma\in\Gamma,
\]
where $F$ is a twisted tensor equivalence. An arrow
$(F,\sigma)\to(F',\sigma)$ is a tensor natural isomorphism $F\Rightarrow F'$, and
there are no arrows between pairs with different Galois components. The monoidal
product is
\[
 (F,\sigma)\otimes(G,\tau)
 =\bigl(F\circ{}^\sigma G,\sigma\tau\bigr).
\]

Regard $\Gamma$ as the discrete categorical group $\underline{\Gamma}$.
Projection to the Galois component is the monoidal functor
\begin{equation}\label{eq:categorical-extension}
 \Psi:\underline{\Aut}_K(\mathcal C)\longrightarrow\underline{\Gamma},
 \qquad (F,\sigma)\longmapsto\sigma.
\end{equation}
Its kernel is the categorical group of $L$-linear tensor autoequivalences. We write
$\pi_0$ for the group of isomorphism classes of objects of a categorical group; in
particular, $\pi_0(\underline{\Gamma})=\Gamma$.
Thus~\eqref{eq:categorical-extension} induces a group homomorphism
\[
 \pi_0(\Psi):\pi_0\bigl(\underline{\Aut}_K(\mathcal C)\bigr)
 \longrightarrow\Gamma.
\]

Let $\Pi_{\mathrm{sf}}(\mathcal C/K)$ be the subgroup of
$\pi_0(\underline{\Aut}_K(\mathcal C))$ consisting of the classes
$[(F,\sigma)]$ such that
\[
 F:{}^\sigma\mathcal C\longrightarrow\mathcal C,
 \qquad F({}^\sigma X)\cong X
 \quad\text{for every simple object }X.
\]
Composition gives a homomorphism
\[
 p_{\mathrm{sf}}:\Pi_{\mathrm{sf}}(\mathcal C/K)\longrightarrow\Gamma.
\]
Here the subscript $\mathrm{sf}$ stands for simple-fixing.
A \emph{frame} on such a pair $(F,\sigma)$ is a family
\[
 \mathbf f=(f_X)_{X\in\operatorname{Irr}(\mathcal C)},
 \qquad
 f_X:F({}^\sigma X)\longrightarrow X.
\]
A \emph{framed twisted equivalence} is a triple $(F,\sigma,\mathbf f)$ consisting
of a twisted tensor equivalence together with a frame. Two framed twisted
equivalences $(F,\sigma,\mathbf f)$ and $(F',\sigma,\mathbf f')$ are equivalent if
there is a tensor natural isomorphism $\eta:F\Rightarrow F'$ such that
\[
 f'_X\circ\eta_{{}^\sigma X}=f_X
 \qquad(X\in\operatorname{Irr}(\mathcal C)).
\]
The composite of $(F,\sigma,\mathbf f)$ and $(G,\tau,\mathbf g)$ has underlying
functor $F\circ{}^\sigma G$ and frame
\[
 (\mathbf f*\mathbf g)_X
 =f_X\circ F({}^\sigma g_X).
\]
Composition preserves framed equivalence. The identity has underlying pair
$(\id_{\mathcal C},1)$ and the identity frame at every simple object.
We use the subscript $\mathrm{fr}$ for framed equivalences.

\begin{proposition}\label{prop:framed-group}
The equivalence classes of framed twisted equivalences form a group under this
composition. We denote it by $\Pi_{\mathrm{fr}}(\mathcal C/K)$.
\end{proposition}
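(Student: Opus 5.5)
We verify the group axioms directly on representatives, using that composition descends to equivalence classes (stated before the proposition). We take that well-definedness for granted, though it is quick to recheck: if $\eta:F\Rightarrow F'$ and $\theta:G\Rightarrow G'$ are tensor isomorphisms compatible with the frames, then $\eta *{}^\sigma\theta = F'({}^\sigma\theta)\circ\eta_{{}^\sigma G}$ intertwines the composite frames. The intertwining follows from the naturality of $\eta$ with respect to the morphism ${}^\sigma g_X$.

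\textbf{Associativity.} The underlying functors satisfy $(F\circ{}^\sigma G)\circ{}^{\sigma\tau}H = F\circ{}^\sigma(G\circ{}^\tau H)$ up to the canonical identification ${}^\sigma({}^\tau H)={}^{\sigma\tau}H$, where the identification is strict or a coherent tensor isomorphism. We compare the frames on $X$:
\[
 \bigl(f_X\circ F({}^\sigma g_X)\bigr)\circ (F\circ{}^\sigma G)({}^{\sigma\tau}h_X)
 \quad\text{and}\quad
 f_X\circ F\bigl({}^\sigma(g_X\circ G({}^\tau h_X))\bigr).
\]
These agree by functoriality of $F$ and of twisting. The unit axiom is immediate, because the identity frame composes trivially.

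\textbf{Inverses.} Given $(F,\sigma,\mathbf f)$, choose a quasi-inverse twisted tensor equivalence $G:{}^{\sigma^{-1}}\mathcal C\to\mathcal C$ with $F\circ{}^\sigma G\cong\id$ as tensor functors. Then $G$ is again simple-fixing. To build its frame, put $Y={}^{\sigma^{-1}}X$. Using the counit $\varepsilon:F\circ{}^\sigma G\Rightarrow\id$ together with $f_X$, define $g_X:G({}^{\sigma^{-1}}X)\to X$ as the unique morphism satisfying
\[
 f_X\circ F({}^\sigma g_X)=\varepsilon_X .
\]
Such a morphism exists and is unique because $F$ is fully faithful and $F({}^\sigma G({}^{\sigma^{-1}}X))$ and $F({}^\sigma X)$ are both isomorphic to $X$. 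With this frame, $\varepsilon$ exhibits $(F,\sigma,\mathbf f)*(G,\sigma^{-1},\mathbf g)$ as equivalent to the identity: the composite frame at $X$ equals $\varepsilon_X$, so $\mathrm{id}_X\circ\varepsilon_X=(\mathbf f*\mathbf g)_X$, which is exactly the required compatibility with $\eta=\varepsilon$.

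This gives a right inverse for every element. In a monoid in which every element has a right inverse, right inverses are two-sided, so the classes form a group. The only step needing care is the identification of the twist ${}^\sigma$ applied to composites and to morphisms. It is bookkeeping: twisting is a strict functorial operation on underlying categories and morphisms, changing only the scalar action, so ${}^\sigma$ commutes with composition on the nose. The main obstacle is therefore just confirming that $\varepsilon$ is a tensor natural isomorphism, which holds by choosing an adjoint equivalence in the 2-category of twisted tensor functors.
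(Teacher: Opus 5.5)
Your proof is correct, and it handles inverses differently from the paper. The paper chooses an adjoint equivalence $(H,u,\epsilon)$ and writes the inverse frame explicitly as $g_X={}^{\sigma^{-1}}\bigl(u^{-1}_{{}^\sigma X}\circ H(f_X^{-1})\bigr)$. It then checks both composites directly: the first has frame $\epsilon_X$ by a triangle identity and naturality, and the second has frame ${}^{\sigma^{-1}}(u^{-1}_{{}^\sigma X})$.

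You instead define $g_X$ implicitly through full faithfulness of $F$, so that the composite frame equals $\varepsilon_X$ by construction. You then get two-sidedness from the fact that a monoid in which every element has a right inverse is a group; your construction applies equally to $(G,\sigma^{-1},\mathbf g)$, so that hypothesis holds. Your route only needs some tensor isomorphism $F\circ{}^\sigma G\cong\id$ and no triangle identities, so your closing remark about choosing an adjoint equivalence is unnecessary for your own argument. The paper's route buys explicit formulas for the inverse frame and for both trivializing isomorphisms. When $\varepsilon$ is the counit of an adjoint equivalence, uniqueness shows that your $g_X$ coincides with the paper's.

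Two small corrections. Existence and uniqueness of $g_X$ follow from full faithfulness alone; the fact that both objects are isomorphic to $X$ plays no role. You should also state that $g_X$ is an isomorphism, which holds because $F$ reflects isomorphisms and $f_X^{-1}\circ\varepsilon_X$ is invertible.
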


\begin{proof}
Using the canonical identifications
${}^\sigma({}^\tau\mathcal C)={}^{\sigma\tau}\mathcal C$, associativity and the
identity follow from composition of twisted functors and their frames. It remains
to verify inverses, since a tensor equivalence is only invertible up to tensor
natural isomorphism.

Let $(F,\sigma,\mathbf f)$ be framed. Choose a tensor quasi-inverse
\[
 H:\mathcal C\longrightarrow{}^\sigma\mathcal C
\]
and tensor natural isomorphisms
\[
 u:\id_{{}^\sigma\mathcal C}\Longrightarrow HF,
 \qquad
 \epsilon:FH\Longrightarrow\id_{\mathcal C}
\]
satisfying the triangle identities. Put
\[
 G={} ^{\sigma^{-1}}H:
 {}^{\sigma^{-1}}\mathcal C\longrightarrow\mathcal C.
\]
For every simple $X$, define
\[
 h_X=u_{{}^\sigma X}^{-1}\circ H(f_X^{-1}):H(X)\longrightarrow{}^\sigma X,
 \qquad
 g_X={} ^{\sigma^{-1}}h_X:
 G({}^{\sigma^{-1}}X)\longrightarrow X.
\]
Thus $(G,\sigma^{-1},\mathbf g)$ is a framed twisted equivalence. The frame of
the product in the first order is
\[
 \begin{aligned}
 f_X\circ F({}^\sigma g_X)
 &=f_X\circ F(u_{{}^\sigma X}^{-1})\circ FH(f_X^{-1})\\
 &=f_X\circ\epsilon_{F({}^\sigma X)}\circ FH(f_X^{-1})
 =\epsilon_X,
 \end{aligned}
\]
where the second equality is a triangle identity and the last one is naturality
of $\epsilon$. Hence $\epsilon$ identifies this product with the identity framed
equivalence. The frame of the product in the other order is
\[
 \begin{aligned}
 g_X\circ G({}^{\sigma^{-1}}f_X)
 &={}^{\sigma^{-1}}\bigl(h_X\circ H(f_X)\bigr)\\
 &={}^{\sigma^{-1}}\bigl(u_{{}^\sigma X}^{-1}\bigr).
 \end{aligned}
\]
Therefore ${}^{\sigma^{-1}}u^{-1}$ identifies the other product with the
identity and also preserves the frames. The class of
$(G,\sigma^{-1},\mathbf g)$ is consequently a two-sided inverse. No strict
inverse of $F$ is required.
\end{proof}

Different choices of simple representatives yield isomorphic framed groups and
extensions. Evaluation on simple objects shows that
$\Aut(\id_{\mathcal C})$ is abelian.

\begin{proposition}\label{prop:framed-extension}
Forgetting the frames gives an exact sequence of groups
\begin{equation}\label{eq:framed-extension}
 1\longrightarrow
 \Aut_\otimes(\id_{\mathcal C})
 \longrightarrow\Aut(\id_{\mathcal C})
 \longrightarrow\Pi_{\mathrm{fr}}(\mathcal C/K)
 \longrightarrow\Pi_{\mathrm{sf}}(\mathcal C/K)
 \longrightarrow1.
\end{equation}
The map $\Aut(\id_{\mathcal C})\to\Pi_{\mathrm{fr}}(\mathcal C/K)$ sends $e$ to the class of
$(\id_{\mathcal C},1,(e_X)_X)$.
It is compatible with the projections to $\Gamma$.
\end{proposition}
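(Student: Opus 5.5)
Let me write $\iota:\Aut_\otimes(\id_{\mathcal C})\to\Aut(\id_{\mathcal C})$ for the inclusion, $j:\Aut(\id_{\mathcal C})\to\Pi_{\mathrm{fr}}(\mathcal C/K)$ for $e\mapsto[(\id_{\mathcal C},1,(e_X)_X)]$, and $q:\Pi_{\mathrm{fr}}(\mathcal C/K)\to\Pi_{\mathrm{sf}}(\mathcal C/K)$ for the map forgetting frames. The plan is to check well-definedness, the homomorphism property, and exactness at each of the four spots directly from the definitions. Compatibility with the projections to $\Gamma$ is immediate, since $j$ lands in Galois degree $1$ and $q$ preserves $\sigma$.

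\emph{Well-definedness and homomorphisms.}
\begin{itemize}
\item The map $q$ is well defined: equivalent framed triples have tensor-isomorphic functors. Its image lies in $\Pi_{\mathrm{sf}}$ because the frame gives $F({}^\sigma X)\cong X$.
\item The map $q$ is a homomorphism, since composition of framed triples has underlying pair $(F\circ{}^\sigma G,\sigma\tau)$.
\item The map $j$ is well defined. Here I use that natural automorphisms of $\id_{\mathcal C}$ on a semisimple category are determined by their values on the chosen simple representatives, which are scalars because $\mathcal C$ is split.
\item The map $j$ is a homomorphism, because $(\mathbf e*\mathbf e')_X=e_X\circ\id(e'_X)=e_Xe'_X$.
\end{itemize}

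\emph{Exactness at $\Aut_\otimes(\id_{\mathcal C})$ and $\Pi_{\mathrm{sf}}$.} Injectivity of $\iota$ is clear. For surjectivity of $q$, take $[(F,\sigma)]\in\Pi_{\mathrm{sf}}$. By definition, for each simple $X$ we may choose some isomorphism $f_X:F({}^\sigma X)\to X$, and this gives a frame lifting the class.

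\emph{Exactness at $\Aut(\id_{\mathcal C})$.} Suppose $j(e)$ is trivial. Then there is a tensor natural isomorphism $\eta:\id_{\mathcal C}\Rightarrow\id_{\mathcal C}$, that is, $\eta\in\Aut_\otimes(\id_{\mathcal C})$, with $\id_X\circ\eta_X=e_X$ for all simple $X$. Hence $e$ and $\eta$ agree on simples, and therefore everywhere by semisimplicity, so $e\in\Aut_\otimes(\id_{\mathcal C})$. Conversely, if $e$ is tensor, then $\eta=e$ itself exhibits $(\id,1,\mathbf e)\sim(\id,1,\mathrm{id})$.

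\emph{Exactness at $\Pi_{\mathrm{fr}}$.} Clearly $q\circ j$ is trivial. For the converse, suppose $q[(F,\sigma,\mathbf f)]$ is trivial. Then $\sigma=1$ and there is a tensor isomorphism $\eta:F\Rightarrow\id_{\mathcal C}$. Transporting along $\eta$ shows that $(F,1,\mathbf f)\sim(\id,1,\mathbf f')$ with $f'_X=f_X\circ\eta_X^{-1}$. This matches the equivalence relation, since $f'_X\circ\eta_X=f_X$. Each $f'_X\in\operatorname{Aut}(X)=L^\times$ because $\mathcal C$ is split. Extending these scalars additively over the semisimple category gives $e\in\Aut(\id_{\mathcal C})$ with $j(e)$ equal to our class.

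The only mildly delicate points are two identifications:
\begin{itemize}
\item $\Aut(\id_{\mathcal C})\cong\prod_X L^\times$ via values on simples, which uses semisimplicity and splitness;
\item the convention that equivalence of framed triples requires $\eta$ to be a \emph{tensor} isomorphism, which is exactly what makes the kernel of $j$ equal to $\Aut_\otimes(\id_{\mathcal C})$ rather than trivial.
\end{itemize}
I expect no genuine obstacle beyond keeping these conventions straight.
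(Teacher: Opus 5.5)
Your proof is correct and follows the same route as the paper's. Surjectivity comes from choosing frames. The kernel of $e\mapsto[(\id_{\mathcal C},1,\mathbf e)]$ is exactly the monoidal automorphisms, because equivalence of framed triples uses tensor isomorphisms. Exactness at $\Pi_{\mathrm{fr}}(\mathcal C/K)$ follows by transporting the frame along a tensor isomorphism $F\Rightarrow\id_{\mathcal C}$. You additionally spell out well-definedness and the role of splitness, which makes each $f_X\circ\eta_X^{-1}$ a scalar extending to an element of $\Aut(\id_{\mathcal C})$; the paper leaves both implicit.
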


\begin{proof}
Every class in $\Pi_{\mathrm{sf}}(\mathcal C/K)$ can be framed, which proves
surjectivity. An element $e\in\Aut(\id_{\mathcal C})$ maps to the identity framed
class precisely when $e$ is monoidal. Consider now a framed equivalence whose
unframed class is the identity.
A tensor natural isomorphism from its underlying functor to $\id_{\mathcal C}$
identifies the class with one represented by
$(\id_{\mathcal C},1,(e_X)_X)$ for some
$e\in\Aut(\id_{\mathcal C})$. This proves exactness.
\end{proof}

Let $p_{\mathrm{fr}}:\Pi_{\mathrm{fr}}(\mathcal C/K)\to\Gamma$ be the composite
of the forgetful map with $p_{\mathrm{sf}}$. The following proposition expresses
split descent directly in terms of this map.
Subsection~\ref{subsec:section-obstruction} relates this criterion to the usual
two-stage obstruction theory.

\begin{proposition}\label{prop:framed-split-descent}
A split $K$-form of $\mathcal C$ exists if and only if
$p_{\mathrm{fr}}$ admits a group-theoretic section.
\end{proposition}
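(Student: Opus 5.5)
We prove the two implications separately, using Galois descent for categories with a $\Gamma$-action as in \cite[Sections~3.1--3.8]{EGDescent}.

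\emph{From a split form to a section.} Suppose $\mathcal C_K\otimes_K L\simeq\mathcal C$ with $\mathcal C_K$ split. Transport the structure along this equivalence and work with $\mathcal C_K\otimes_K L$. For each $\sigma\in\Gamma$, the map $\id\otimes\sigma$ gives a strict twisted tensor equivalence
\[
F_\sigma:{}^\sigma(\mathcal C_K\otimes_K L)\to\mathcal C_K\otimes_K L,
\]
and these satisfy $F_\sigma\circ{}^\sigma F_\tau=F_{\sigma\tau}$ on the nose. Since $\mathcal C_K$ is split, every simple object of $\mathcal C$ is isomorphic to $X_0\otimes_K L$ for a simple $X_0$ of $\mathcal C_K$. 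Choose these as the representatives; this is harmless, since a change of representatives only conjugates the framed group. Then $F_\sigma({}^\sigma X)=X$ canonically, so we may take $f_X=\id$.

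The frames compose strictly: $(\mathbf f_\sigma*\mathbf f_\tau)_X=\id=(\mathbf f_{\sigma\tau})_X$. Hence $\sigma\mapsto[(F_\sigma,\sigma,\mathbf f_\sigma)]$ is a homomorphism $\Gamma\to\Pi_{\mathrm{fr}}(\mathcal C/K)$. It is a section of $p_{\mathrm{fr}}$, and it lands over $\Pi_{\mathrm{sf}}$ because $F_\sigma$ fixes every simple.

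\emph{From a section to a split form.} This is the harder direction. Let $s(\sigma)=[(F_\sigma,\sigma,\mathbf f_\sigma)]$ be a section. The equality $s(\sigma)s(\tau)=s(\sigma\tau)$ in $\Pi_{\mathrm{fr}}$ provides tensor isomorphisms
\[
\mu_{\sigma,\tau}:F_\sigma\circ{}^\sigma F_\tau\Rightarrow F_{\sigma\tau}
\quad\text{with}\quad
(f_{\sigma\tau})_X\circ(\mu_{\sigma,\tau})_{{}^{\sigma\tau}X}=(f_\sigma)_X\circ F_\sigma({}^\sigma(f_\tau)_X).
\]
The key point is uniqueness. A framed isomorphism between two framed equivalences is unique: its component at each simple is forced by the frames, and semisimplicity then determines it everywhere. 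So $\mu_{\sigma,\tau}$ is uniquely determined. The two composites $F_\sigma{}^\sigma F_\tau{}^{\sigma\tau}F_\rho\Rightarrow F_{\sigma\tau\rho}$ are both framed isomorphisms between the same framed equivalences, so they coincide. This gives the cocycle condition, and similarly the unit condition after normalizing $F_1=\id$, $\mathbf f_1=\id$.

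Thus $(F_\sigma,\mu)$ is a coherent descent datum, i.e.\ a $\Gamma$-action on $\mathcal C$ by twisted tensor autoequivalences. By \cite[Section~3.1]{EGDescent}, the category of equivariant objects $\mathcal C_K:=\mathcal C^\Gamma$ is a fusion category over $K$ with $\mathcal C_K\otimes_K L\simeq\mathcal C$.

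It remains to show that $\mathcal C_K$ is split. The frames give each simple $X$ the isomorphisms $u_\sigma=(f_\sigma)_X:F_\sigma({}^\sigma X)\to X$. The displayed frame identity is exactly the equivariance cocycle condition $u_{\sigma\tau}\circ\mu_{\sigma,\tau}=u_\sigma\circ F_\sigma({}^\sigma u_\tau)$. Hence $(X,u)$ is an object of $\mathcal C_K$ whose scalar extension is $X$. Its endomorphisms are the $\Gamma$-invariants of $\operatorname{End}(X)=L$, namely $L^\Gamma=K$. Every simple of $\mathcal C_K$ arises this way, since $\mathcal C_K\otimes_K L\simeq\mathcal C$ and the $(X,u)$ account for all simples after extension. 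Therefore $\mathcal C_K$ is split.

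The main care lies in two places. First, the descent equivalence must be invoked in the form that applies to twisted rather than strict actions. Second, the verification of coherence must rest entirely on the rigidity of framed isomorphisms. That rigidity is what removes the $H^3$ and Brauer-class obstructions simultaneously.
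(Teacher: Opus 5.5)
Your proposal is correct and follows essentially the same route as the paper. In the converse, uniqueness of frame-compatible tensor isomorphisms forces the $\mu_{\sigma,\tau}$ to satisfy the descent coherence, and the frame identity is exactly the compatibility condition on each simple object. The descended simples then have endomorphism algebra $L^\Gamma=K$. The only difference is that your forward direction is more explicit than the paper's citation of \cite[Proposition~3.5]{EGDescent}: you use the strict action $\id\otimes\sigma$ on $\mathcal C_K\otimes_K L$ with identity frames.
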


\begin{proof}
A split $K$-form supplies the coherent descent datum and compatible descent
isomorphisms on all simple objects described in
\cite[Proposition~3.5 and its proof]{EGDescent}. The corresponding framed classes
define a homomorphism $s$ satisfying $p_{\mathrm{fr}}\circ s=\id_\Gamma$.

Conversely, let $s:\Gamma\to\Pi_{\mathrm{fr}}(\mathcal C/K)$ be a section of
$p_{\mathrm{fr}}$.
Choose normalized representatives
$(\Phi_\sigma,\sigma,(f_{\sigma,X})_X)$ of $s(\sigma)$, with the identity
representative over $1\in\Gamma$. The equality
$s(\sigma)s(\tau)=s(\sigma\tau)$ gives a unique
tensor natural isomorphism
\[
 J_{\sigma,\tau}:\Phi_\sigma\circ{}^\sigma\Phi_\tau
 \Longrightarrow\Phi_{\sigma\tau}
\]
compatible with every frame; uniqueness follows because a framed automorphism is
the identity on simple objects and hence on all objects. The two composites of the
$J_{\sigma,\tau}$ associated with a triple $\sigma,\tau,\rho$ are isomorphisms
between the same framed equivalences. They coincide by uniqueness, and hence the
$J_{\sigma,\tau}$ satisfy
the descent coherence; the same uniqueness gives
$J_{1,\sigma}=J_{\sigma,1}=\id$ under the normalization above. Compatibility with
the frames gives
\[
 f_{\sigma,X}\circ\Phi_\sigma({}^\sigma f_{\tau,X})
 =f_{\sigma\tau,X}\circ
 (J_{\sigma,\tau})_{{}^{\sigma\tau}X},
\]
which is the compatibility condition in
\cite[proof of Proposition~3.5]{EGDescent}. The reconstruction in that proof gives a $K$-form
whose simple objects are the descended objects indexed by
$\operatorname{Irr}(\mathcal C)$. Their endomorphism algebras are
$\operatorname{End}_{\mathcal C}(X)^\Gamma=L^\Gamma=K$, and hence the form is split.
\end{proof}

For $L/K=\C/\R$, let $\gamma$ be complex conjugation. If
$p_{\mathrm{fr}}^{-1}(\gamma)$ is empty, there is no split real form. Otherwise,
choose $t\in p_{\mathrm{fr}}^{-1}(\gamma)$. The fiber over $\gamma$ is
$\ker(p_{\mathrm{fr}})t$, and
\[
 (nt)^2=n(tnt^{-1})t^2
 \qquad(n\in\ker(p_{\mathrm{fr}})).
\]
Thus Proposition~\ref{prop:framed-split-descent} says that a split real form exists
if and only if the fiber $p_{\mathrm{fr}}^{-1}(\gamma)$ contains an involution.
Fixing a section of $p_{\mathrm{sf}}$ gives the abelian criterion below.

\subsection{The obstruction attached to a section}\label{subsec:section-obstruction}

Assume that $p_{\mathrm{sf}}$ admits a section
\[
 \Phi:\Gamma\longrightarrow\Pi_{\mathrm{sf}}(\mathcal C/K),
 \qquad p_{\mathrm{sf}}\circ\Phi=\id_\Gamma.
\]
This is stronger than surjectivity of $p_{\mathrm{sf}}$. It selects
multiplicatively compatible isomorphism classes, but no representative functors
or comparison isomorphisms.

Pulling back~\eqref{eq:framed-extension} along $\Phi$ and taking conjugation in
the resulting extension makes
\[
 Q_\Phi:=
 \left(
 \frac{\Aut(\id_{\mathcal C})}
 {\Aut_\otimes(\id_{\mathcal C})}
 \right)_\Phi
\]
a $\Gamma$-module and gives
\begin{equation}\label{eq:section-framed-extension}
 1\longrightarrow Q_\Phi
 \longrightarrow
 \Gamma\mathop{\times}_{\Pi_{\mathrm{sf}}(\mathcal C/K)}
 \Pi_{\mathrm{fr}}(\mathcal C/K)
 \longrightarrow\Gamma
 \longrightarrow1.
\end{equation}
The extension therefore defines
\begin{equation}\label{eq:global-stability-obstruction}
 \Omega_\Phi(\mathcal C)
 \in H^2(\Gamma,Q_\Phi).
\end{equation}
By construction, $\Omega_\Phi(\mathcal C)$ vanishes precisely when $\Phi$ lifts to
a section of $p_{\mathrm{fr}}$.

To compare this lifting class with categorical descent, choose representatives
$\Phi_\sigma$ such that $\Phi(\sigma)=[(\Phi_\sigma,\sigma)]$. Applying $\sigma$
to scalar components and transporting through $\Phi_\sigma$ defines compatible
$\Gamma$-actions on
$\Aut(\id_{\mathcal C})$ and
$\Aut_\otimes(\id_{\mathcal C})$; the subscript $\Phi$ will indicate these
actions. Their quotient is $Q_\Phi$. Since $\mathcal C$ is split over $L$,
evaluation on simple objects gives
\[
 \Aut(\id_{\mathcal C})
 \xrightarrow{\sim}(L^\times)^{\operatorname{Irr}(\mathcal C)}.
\]
Under this identification, $\Aut_\otimes(\id_{\mathcal C})$ consists of the
families satisfying
$e_{\mathbf 1}=1$ and $e_Z=e_Xe_Y$ whenever $N_{XY}^{Z}>0$.
Because $\Phi$ fixes every simple class, the actions are componentwise,
$(\sigma\cdot e)_X=\sigma(e_X)$. We use ordinary group cohomology throughout.

Associated with the fixed section is the coherence obstruction
\[
 o_\Phi(\mathcal C)\in
 H^3\!\left(\Gamma,\Aut_\otimes(\id_{\mathcal C})_\Phi\right)
\]
defined in~\cite[Section~3.4]{EGDescent}.

\begin{proposition}\label{prop:stability-obstruction}
Let $\Phi$ be a section of $p_{\mathrm{sf}}$. Then
\[
 \Omega_\Phi(\mathcal C)=0
\]
if and only if $\Phi$ can be represented by a coherent descent datum equipped with
compatible descent isomorphisms on every simple object. Such a datum defines a
split $K$-form.

Moreover, the connecting homomorphism associated with
\[
 1\longrightarrow\Aut_\otimes(\id_{\mathcal C})_\Phi
 \longrightarrow
 \Aut(\id_{\mathcal C})_\Phi
 \longrightarrow Q_\Phi\longrightarrow1
\]
satisfies
\begin{equation}\label{eq:omega-boundary}
 \partial\Omega_\Phi(\mathcal C)=o_\Phi(\mathcal C)^{-1}
 \in H^3\!\left(\Gamma,\Aut_\otimes(\id_{\mathcal C})_\Phi\right).
\end{equation}
Here the inverse is taken in the cohomology group; in additive notation, the
right-hand side is $-o_\Phi(\mathcal C)$.
\end{proposition}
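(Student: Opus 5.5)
The first assertion follows from the construction of $\Omega_\Phi(\mathcal C)$ as the class of the pulled-back extension~\eqref{eq:section-framed-extension}. This class vanishes exactly when the extension splits, that is, when $\Phi$ lifts to a homomorphism $s:\Gamma\to\Pi_{\mathrm{fr}}(\mathcal C/K)$ with $s(\sigma)\mapsto\Phi(\sigma)$. Given such a lift, I repeat the converse argument of Proposition~\ref{prop:framed-split-descent}. Choosing normalized framed representatives $(\Phi_\sigma,\sigma,\mathbf f_\sigma)$ yields unique frame-compatible $J_{\sigma,\tau}$. Uniqueness gives descent coherence, and the frame identity gives compatible descent isomorphisms on simples; the reconstruction then gives a split $K$-form. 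Conversely, a coherent descent datum representing $\Phi$ with compatible descent isomorphisms $f_{\sigma,X}$ defines framed classes. The coherence and compatibility identities say precisely that $\sigma\mapsto[(\Phi_\sigma,\sigma,\mathbf f_\sigma)]$ is multiplicative, so it is a lift of $\Phi$.

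For~\eqref{eq:omega-boundary} I will compute cocycles. Fix representatives $\Phi_\sigma$ and arbitrary frames $f_{\sigma,X}$, normalized at $\sigma=1$. Since $\Phi(\sigma)\Phi(\tau)=\Phi(\sigma\tau)$, choose tensor isomorphisms $J_{\sigma,\tau}:\Phi_\sigma\circ{}^\sigma\Phi_\tau\Rightarrow\Phi_{\sigma\tau}$. Comparing frames defines $c(\sigma,\tau)\in\Aut(\id_{\mathcal C})\cong(L^\times)^{\operatorname{Irr}(\mathcal C)}$ by
\[
 f_{\sigma,X}\circ\Phi_\sigma({}^\sigma f_{\tau,X})
 =c(\sigma,\tau)_X\,f_{\sigma\tau,X}\circ(J_{\sigma,\tau})_{{}^{\sigma\tau}X}.
\]
In the framed group this says $s(\sigma)s(\tau)=\iota(c(\sigma,\tau))s(\sigma\tau)$, where $s(\sigma)$ is the framed class and $\iota$ is the map of Proposition~\ref{prop:framed-extension}. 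Hence the image $\bar c$ of $c$ in $Q_\Phi$ is a 2-cocycle representing $\Omega_\Phi(\mathcal C)$; the sign convention for extension classes must be fixed here. The connecting map $\partial$ is computed by taking the lift $c$ to $\Aut(\id_{\mathcal C})_\Phi$ and forming $dc$, which lies in $\Aut_\otimes(\id_{\mathcal C})_\Phi$. Changing $J$ by a tensor automorphism or changing the frames alters $c$ by a tensor-valued function or by a coboundary, so the resulting class is well defined.

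On the other side, the obstruction $o_\Phi(\mathcal C)$ of~\cite[Section~3.4]{EGDescent} is represented by the tensor automorphism $\omega(\sigma,\tau,\rho)$ measuring the failure of the associativity square for the $J$'s:
\[
 J_{\sigma\tau,\rho}\circ(J_{\sigma,\tau}\circ{}^{\sigma\tau}\Phi_\rho)
 =\omega(\sigma,\tau,\rho)\,J_{\sigma,\tau\rho}\circ(\Phi_\sigma\circ{}^\sigma J_{\tau,\rho}).
\]
I then evaluate both sides on ${}^{\sigma\tau\rho}X$ and compose with frames. Using the defining identity for $c$ three or four times, together with naturality of the $J$'s and the twisted action $(\sigma\cdot e)_X=\sigma(e_X)$ transported through $\Phi_\sigma$, both sides can be rewritten as frame composites. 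The left side produces $c(\sigma,\tau)c(\sigma\tau,\rho)$ and the right side $\sigma(c(\tau,\rho))c(\sigma,\tau\rho)$. This yields $\omega(\sigma,\tau,\rho)_X=(dc)(\sigma,\tau,\rho)_X^{-1}$, so $\partial\Omega_\Phi=o_\Phi^{-1}$.

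The main obstacle is this final bookkeeping. It requires tracking where each scalar sits, inside or outside $\Phi_\sigma$ and before or after applying $\sigma$. It also requires matching the orientation conventions for $o_\Phi$ in~\cite{EGDescent}, for extension classes, and for $\partial$, since these determine whether the answer is $o_\Phi$ or its inverse. I would first fix all conventions at the level of 2- and 3-cochains and then verify the identity componentwise on simple objects, where everything reduces to scalars in $L^\times$.
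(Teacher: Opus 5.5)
Your outline is the same as the paper's proof. The first assertion is the splitting criterion for the pulled-back extension combined with Proposition~\ref{prop:framed-split-descent}. The boundary is computed from the frame-defect cochain $c$ (the paper's $b$) and the associativity defect of the $J_{\sigma,\tau}$. The gap is in the last step, whose outcome you assert rather than compute: it contains two sign errors that happen to cancel.

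First, your displayed $\omega$ is exactly the paper's $a_{\sigma,\tau,\rho}$. With your definition of $c$, evaluate $f_{\sigma,X}\circ\Phi_\sigma({}^\sigma f_{\tau,X})\circ\Phi_\sigma{}^\sigma\Phi_\tau({}^{\sigma\tau}f_{\rho,X})$ in the two groupings. This gives
\[
 c(\sigma,\tau)_X\,c(\sigma\tau,\rho)_X\,\omega(\sigma,\tau,\rho)_X
 =\sigma\bigl(c(\tau,\rho)_X\bigr)\,c(\sigma,\tau\rho)_X ,
\]
so $\omega=dc$ for the usual coboundary, not $(dc)^{-1}$. You also claim that this $\omega$ represents $o_\Phi(\mathcal C)$. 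Taken together, a correct computation in your own conventions would give $\partial\Omega_\Phi(\mathcal C)=o_\Phi(\mathcal C)$, which contradicts the statement.

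Second, that attribution is itself inverted. In the convention of \cite[Section~3.4]{EGDescent} quoted by the paper, the obstruction multiplies the other composite:
\[
 \omega^{\mathrm{EG}}_{\sigma,\tau,\rho}\bigl(J_{\sigma\tau,\rho}\circ(J_{\sigma,\tau}\ast\id)\bigr)=J_{\sigma,\tau\rho}\circ(\id\ast{}^\sigma J_{\tau,\rho}).
\]
So your $\omega$ is $(\omega^{\mathrm{EG}})^{-1}$. With both corrections, $\partial\Omega_\Phi(\mathcal C)=[dc]=[\omega]=o_\Phi(\mathcal C)^{-1}$. This is precisely the paper's argument: $\delta b=a$ and $a=\omega^{-1}$.

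The orientation is the only content of \eqref{eq:omega-boundary} beyond $\partial\Omega_\Phi=o_\Phi^{\pm1}$. You therefore need to carry out this scalar comparison explicitly and fix the convention of \cite{EGDescent}, rather than defer both.
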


\begin{proof}
A class in $H^2(\Gamma,Q_\Phi)$ vanishes precisely when the corresponding extension
splits. Proposition~\ref{prop:framed-split-descent} identifies such a splitting
with the datum in the first assertion.

To compute the boundary, choose
normalized tensor natural isomorphisms
\[
 J_{\sigma,\tau}:\Phi_\sigma\circ{}^\sigma\Phi_\tau
 \Longrightarrow\Phi_{\sigma\tau}
\]
and frames
$f_{\sigma,X}:\Phi_\sigma({}^\sigma X)\to X$. Define
$b_{\sigma,\tau}=(b_X(\sigma,\tau))_X
\in\Aut(\id_{\mathcal C})_\Phi$ by
\begin{equation}\label{eq:simultaneous-defect}
 f_{\sigma,X}\circ\Phi_\sigma({}^\sigma f_{\tau,X})
 =b_X(\sigma,\tau)f_{\sigma\tau,X}\circ
 (J_{\sigma,\tau})_{{}^{\sigma\tau}X}.
\end{equation}
Define
\(a_{\sigma,\tau,\rho}\in\Aut_\otimes(\id_{\mathcal C})_\Phi\) by
\[
 J_{\sigma\tau,\rho}\circ
 (J_{\sigma,\tau}\mathbin{\ast}\id)
 =
 a_{\sigma,\tau,\rho}\,
 J_{\sigma,\tau\rho}\circ
 (\id\mathbin{\ast}{}^\sigma J_{\tau,\rho}),
\]
as natural transformations from
\(\Phi_\sigma\circ{}^\sigma\Phi_\tau\circ
{}^{\sigma\tau}\Phi_\rho\) to \(\Phi_{\sigma\tau\rho}\), where $\ast$ denotes
whiskering of natural transformations. We use the
multiplicative group-cohomology coboundary
\[
 (\delta b)_{\sigma,\tau,\rho}
 =(\sigma\cdot b_{\tau,\rho})\,b_{\sigma,\tau\rho}\,
 b_{\sigma\tau,\rho}^{-1}b_{\sigma,\tau}^{-1}.
\]
Comparison of the two evaluations of the triple composite on every simple object
then gives
\begin{equation}\label{eq:defect-coboundary}
 \delta b=a,
\end{equation}
where $a$ is viewed as a cochain with values in the natural-automorphism group.
Thus the image
$\overline b\in C^2(\Gamma,Q_\Phi)$ is a cocycle. It is the factor set of
\eqref{eq:section-framed-extension}, since~\eqref{eq:simultaneous-defect} says exactly
that the product of the chosen framed lifts differs from the chosen lift of
$\sigma\tau$ by $\overline b_{\sigma,\tau}$. Consequently,
$[\overline b]=\Omega_\Phi(\mathcal C)$. Equation~\eqref{eq:defect-coboundary}
gives
\[
 \partial[\overline b]=[a].
\]
With the cocycle convention used in~\cite[Section~3.4]{EGDescent},
\[
 \omega_{\sigma,\tau,\rho}\,
 \bigl(J_{\sigma\tau,\rho}\circ
 (J_{\sigma,\tau}\mathbin{\ast}\id)\bigr)
 =
 J_{\sigma,\tau\rho}\circ
 (\id\mathbin{\ast}{}^\sigma J_{\tau,\rho}).
\]
Thus \(a_{\sigma,\tau,\rho}=\omega_{\sigma,\tau,\rho}^{-1}\), which proves
\eqref{eq:omega-boundary}. The inversion has no effect on any vanishing
statement.
\end{proof}

The relevant fragment of the long exact cohomology sequence associated with the
coefficient sequence in Proposition~\ref{prop:stability-obstruction} is
\begin{multline*}
 H^2\!\left(\Gamma,\Aut_\otimes(\id_{\mathcal C})_\Phi\right)
 \longrightarrow
 H^2\!\left(\Gamma,\Aut(\id_{\mathcal C})_\Phi\right)
 \longrightarrow\\
 H^2(\Gamma,Q_\Phi)
 \xrightarrow{\partial}
 H^3\!\left(\Gamma,\Aut_\otimes(\id_{\mathcal C})_\Phi\right).
\end{multline*}
Exactness shows that the middle map induces an injection
\begin{equation}\label{eq:cokernel-injection}
 \begin{aligned}
 \operatorname{coker}\Bigl(
 &H^2\!\left(\Gamma,\Aut_\otimes(\id_{\mathcal C})_\Phi\right)\\[-2pt]
 &\longrightarrow
 H^2\!\left(\Gamma,\Aut(\id_{\mathcal C})_\Phi\right)
 \Bigr)
 \lhook\joinrel\longrightarrow H^2(\Gamma,Q_\Phi).
 \end{aligned}
\end{equation}
whose image is $\ker(\partial)$.

Suppose now that $o_\Phi(\mathcal C)=0$. By~\eqref{eq:omega-boundary},
$\Omega_\Phi(\mathcal C)$ belongs to $\ker(\partial)$ and therefore has a unique
preimage under~\eqref{eq:cokernel-injection}. A coherent structure $J$ exists by
\cite[Section~3.4 and Proposition~3.5]{EGDescent}. Any choice of frames gives,
through~\eqref{eq:simultaneous-defect}, a class in
$H^2\!\left(\Gamma,\Aut(\id_{\mathcal C})_\Phi\right)$. Changing the frames changes
its representing cocycle by a coboundary, while coherent choices form a torsor
under
$H^2\!\left(\Gamma,\Aut_\otimes(\id_{\mathcal C})_\Phi\right)$
\cite[Proposition~3.8(ii)]{EGDescent}. Thus the resulting cokernel class is
independent of the frames and of the coherent choice $J$, and it is precisely the
preimage of $\Omega_\Phi(\mathcal C)$ under~\eqref{eq:cokernel-injection}.
Consequently,
\[
 \Omega_\Phi(\mathcal C)=0
 \quad\Longleftrightarrow\quad
 \begin{array}{l}
 o_\Phi(\mathcal C)=0\text{ and the displayed cokernel class vanishes}.
 \end{array}
\]

Consequently, $\mathcal C$ has a split $K$-form precisely when some section
$\Phi$ of $p_{\mathrm{sf}}$ satisfies $\Omega_\Phi(\mathcal C)=0$.

\subsection{Forms preserving additional structure}\label{subsec:structured-forms}

Let $\mathfrak s$ be a fixed braiding, a pivotal or spherical pivotal structure,
or a ribbon structure. In the ribbon case, both the braiding and the twist are
fixed. We use the
general formalism of~\cite[Sections~3.1--3.2]{EGDescent}; for braided and symmetric
forms see also~\cite[Section~3.9]{EGDescent}. Over $\sigma\in\Gamma$, restrict to
twisted tensor equivalences
\[
 F:({}^{\sigma}\mathcal C,{}^{\sigma}\mathfrak s)
 \longrightarrow(\mathcal C,\mathfrak s),
\]
and define $\Pi_{\mathrm{sf}}^{\mathfrak s}(\mathcal C/K)$ and
$\Pi_{\mathrm{fr}}^{\mathfrak s}(\mathcal C/K)$ using only these equivalences. When
$\mathfrak s$ is a symmetry, we use the superscript $\mathrm{sym}$. Then
\[
 1\longrightarrow
 \Aut_\otimes(\id_{\mathcal C})
 \longrightarrow\Aut(\id_{\mathcal C})
 \longrightarrow\Pi_{\mathrm{fr}}^{\mathfrak s}(\mathcal C/K)
 \longrightarrow\Pi_{\mathrm{sf}}^{\mathfrak s}(\mathcal C/K)
 \longrightarrow1.
\]
The kernel is unchanged. Monoidal natural isomorphisms are automatically
compatible with duality, while compatibility with the braiding, pivotal
structure, and twist follows from naturality. Sphericality can be checked after
the faithful scalar extension $K\subset L$.

Write $p_{\mathrm{sf}}^{\mathfrak s}$ and $p_{\mathrm{fr}}^{\mathfrak s}$ for the
projections to $\Gamma$. For a section
$\Phi$ of $p_{\mathrm{sf}}^{\mathfrak s}$, forgetting $\mathfrak s$ identifies its
pullback extension with the underlying tensor extension. Its class is
$\Omega_\Phi(\mathcal C)$, and the preceding argument gives
\[
 (\mathcal C,\mathfrak s)\text{ has a split $K$-form preserving }\mathfrak s
 \quad\Longleftrightarrow\quad
 \begin{array}{c}
 p_{\mathrm{sf}}^{\mathfrak s}\text{ admits a section }\Phi\text{ with}\rule{0pt}{2.2ex}\\[-2pt]
 \Omega_\Phi(\mathcal C)=0.
 \end{array}
\]
The extra requirement is the existence of a section through
$\Pi_{\mathrm{sf}}^{\mathfrak s}(\mathcal C/K)$. This concerns descent of the chosen
structure, rather than existence of some structure of the same type on a form.

\subsection{Fusion spaces and the determinant obstruction group}

Fusion spaces impose restrictions on $\Omega_\Phi(\mathcal C)$ even before a
coherent structure has been chosen. Let $\Phi$ be a section of $p_{\mathrm{sf}}$ and
let
$X_1,\ldots,X_r,Z$ be simple objects. Put
\[
 N_{X_1,\ldots,X_r}^{Z}
 =\dim_L\operatorname{Hom}(X_1\otimes\cdots\otimes X_r,Z).
\]
If this multiplicity is nonzero, monoidality gives
$a_Z=a_{X_1}\cdots a_{X_r}$ for every
$a\in\Aut_\otimes(\id_{\mathcal C})$. Hence the formula
\begin{equation}\label{eq:fusion-channel-map}
 \lambda_{X_1,\ldots,X_r}^{Z}:Q_\Phi\longrightarrow L^\times,
 \qquad
 [(u_T)_T]\longmapsto
 \frac{u_Z}{u_{X_1}\cdots u_{X_r}}
\end{equation}
defines a $\Gamma$-equivariant homomorphism. Here $\Br(K)$ consists of Brauer classes
of central simple $K$-algebras,
$\Br(L/K)=\ker(\Br(K)\to\Br(L))$ of those split by $L$, and
$\operatorname{ind}(\alpha)$ is the degree of the division-algebra representative
of $\alpha$; we use additive notation.

If the section carries a coherent structure $J$, the stable-object construction of
\cite[proof of Proposition~3.5 and Example~3.9]{EGDescent} gives
\[
 \beta_X(J)=[b_X]\in H^2(\Gamma,L^\times)
 \cong\Br(L/K).
\]
This class vanishes precisely when $X$ admits a compatible descent structure.

\begin{proposition}\label{prop:fusion-space-index}
Let $L/K$ be a finite Galois extension and let $\Phi$ be a section of
$p_{\mathrm{sf}}$. If
$n=N_{X_1,\ldots,X_r}^{Z}>0$, then
\begin{equation}\label{eq:fusion-space-index}
 \operatorname{ind}\!\left(
 (\lambda_{X_1,\ldots,X_r}^{Z})_*
 \Omega_\Phi(\mathcal C)
 \right)\mid n.
\end{equation}
If $\Phi$ admits a coherent structure $J$, then
\begin{equation}\label{eq:fusion-channel-brauer-class}
 (\lambda_{X_1,\ldots,X_r}^{Z})_*
 \Omega_\Phi(\mathcal C)
 =\beta_Z(J)-\sum_{i=1}^r\beta_{X_i}(J).
\end{equation}
In particular,
\[
 n\left(\beta_Z(J)-\sum_{i=1}^r\beta_{X_i}(J)\right)=0.
\]
\end{proposition}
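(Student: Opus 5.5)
The plan is to realize $(\lambda_{X_1,\ldots,X_r}^{Z})_*\Omega_\Phi(\mathcal C)$ as the obstruction to Galois descent of the single fusion space
\[
 V=\operatorname{Hom}_{\mathcal C}(X_1\otimes\cdots\otimes X_r,Z),\qquad \dim_L V=n .
\]
Let $E=\Gamma\times_{\Pi_{\mathrm{sf}}(\mathcal C/K)}\Pi_{\mathrm{fr}}(\mathcal C/K)$ be the pullback extension~\eqref{eq:section-framed-extension}. The first goal is a semilinear action of $E$ on $V$.

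\textbf{The action of $E$ on $V$.} Take an element of $E$ over $\sigma$, represented by $(\Phi_\sigma,\sigma,\mathbf f)$. Let $\Phi_\sigma^{(r)}$ denote the iterated tensor structure of $\Phi_\sigma$. Define a $\sigma$-semilinear bijection $T:V\to V$ by
\[
 T(\varphi)=f_Z\circ\Phi_\sigma({}^\sigma\varphi)\circ\bigl(\Phi_\sigma^{(r)}\bigr)^{-1}\circ\bigl(f_{X_1}\otimes\cdots\otimes f_{X_r}\bigr)^{-1}.
\]
The steps are:
\begin{enumerate}[label=\textup{(\roman*)}]
\item Check that $T$ depends only on the framed class. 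A tensor natural isomorphism $\eta$ compatible with the frames cancels, by monoidality of $\eta$.
\item Check that the assignment is multiplicative for the composition law of framed equivalences. This is a routine diagram chase using the composite frame $f_X\circ F({}^\sigma g_X)$ and the tensor structure of $F\circ{}^\sigma G$.
\item Compute the action of the kernel. The element $(\id_{\mathcal C},1,(e_T)_T)$ acts on $V$ by the scalar $e_Z/(e_{X_1}\cdots e_{X_r})$. This is exactly $\lambda_{X_1,\ldots,X_r}^{Z}$ of~\eqref{eq:fusion-channel-map}, and it is trivial on $\Aut_\otimes(\id_{\mathcal C})$, consistent with $E$ being an extension by $Q_\Phi$.
\end{enumerate}
Now choose set-theoretic lifts $s(\sigma)\in E$ with factor set $\overline b$ representing $\Omega_\Phi(\mathcal C)$. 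The operators $T_\sigma$ then satisfy
\[
 T_\sigma T_\tau=\lambda(\overline b_{\sigma,\tau})\,T_{\sigma\tau}.
\]
So $V$ is a module over the crossed product algebra $A=(L,\Gamma,\lambda_*\overline b)$, up to the sign convention for the class.

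\textbf{Index bound.} The algebra $A$ is central simple over $K$ of degree $m=|\Gamma|$ and represents $\pm\lambda_*\Omega_\Phi(\mathcal C)$ in $\Br(L/K)$. The sign does not affect the index. Write $d$ for the index and $A\cong M_{m/d}(D)$. A simple $A$-module has $K$-dimension $(m/d)d^2=md$. The module $V$ has $K$-dimension $nm$, so $md\mid nm$, hence $d\mid n$, which is~\eqref{eq:fusion-space-index}. Equivalently, $\operatorname{End}_A(V)$ is a central simple $K$-algebra of degree $n$ that is Brauer equivalent to $A^{\mathrm{op}}$.

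\textbf{The formula with a coherent structure.} Given a coherent $J$, choose frames $f_{\sigma,X}$ and let $b$ be the defect~\eqref{eq:simultaneous-defect}. By the proof of Proposition~\ref{prop:stability-obstruction}, $\overline b$ represents $\Omega_\Phi(\mathcal C)$. Applying $\lambda$ componentwise gives
\[
 \lambda(\overline b)_{\sigma,\tau}=\frac{b_Z(\sigma,\tau)}{b_{X_1}(\sigma,\tau)\cdots b_{X_r}(\sigma,\tau)}.
\]
Each $b_X$ is a $2$-cocycle, because $a=\delta b$ is trivial when $J$ is coherent, and $[b_X]=\beta_X(J)$. Additivity in $H^2(\Gamma,L^\times)\cong\Br(L/K)$ then gives~\eqref{eq:fusion-channel-brauer-class}. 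The final assertion follows because the period of a Brauer class divides its index, which divides $n$.

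\textbf{Main obstacle.} The hardest step is (ii), the multiplicativity of $T$ under framed composition. One must also track the sign convention relating the factor set of $E$ to the cocycle of $A$, although the sign is harmless for the index statement.
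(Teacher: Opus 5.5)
Your proposal is correct and is essentially the paper's argument. In both, $V=\operatorname{Hom}(X_1\otimes\cdots\otimes X_r,Z)$ carries $\sigma$-semilinear operators whose factor set is $\lambda_*$ of the extension cocycle, and the coherent-case formula is read off from the componentwise cocycles $b_T$. The differences are cosmetic. You get the cocycle property from an action of the pullback extension on $V$ rather than from $\delta b=a$. You also bound the index by counting $K$-dimensions of modules over the degree-$[L:K]$ crossed product, whereas the paper uses the degree-$n$ fixed algebra $\operatorname{End}_L(V)^\Gamma$, which is your $\operatorname{End}_A(V)$.
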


\begin{proof}
Choose normalized representatives $\Phi_\sigma$, tensor isomorphisms
$J_{\sigma,\tau}$, and frames $f_{\sigma,T}$ as in the proof of
Proposition~\ref{prop:stability-obstruction}. Iterating the tensorators gives frames
on $X_1\otimes\cdots\otimes X_r$ and $\sigma$-semilinear maps on
\[
 V=\operatorname{Hom}(X_1\otimes\cdots\otimes X_r,Z).
\]
Let $b_T(\sigma,\tau)$ be the corresponding defects from
\eqref{eq:simultaneous-defect}. The factor set on $V$ is
\[
 d(\sigma,\tau)
 =\frac{b_Z(\sigma,\tau)}{
 b_{X_1}(\sigma,\tau)\cdots b_{X_r}(\sigma,\tau)}.
\]
Equation~\eqref{eq:defect-coboundary} and the fact that the scalar ratio defining
\eqref{eq:fusion-channel-map} is trivial on
$\Aut_\otimes(\id_{\mathcal C})$ show that $d$ is a cocycle. Since the image
of the family $(b_T)_T$ in $Q_\Phi$ represents $\Omega_\Phi(\mathcal C)$, the class of $d$ is
$(\lambda_{X_1,\ldots,X_r}^{Z})_*\Omega_\Phi(\mathcal C)$.

Write $T_\sigma$ for the resulting $\sigma$-semilinear operators on $V$. Our
factor-set convention is
\[
 T_\sigma T_\tau=d(\sigma,\tau)T_{\sigma\tau}.
\]
We identify $H^2(\Gamma,L^\times)$ with $\Br(L/K)$ by the convention that
$[d]$ is the Brauer class of the fixed algebra for the action
\[
 A\longmapsto T_\sigma A T_\sigma^{-1}
 \qquad(A\in\operatorname{End}_L(V)).
\]
This is an honest semilinear action on $\operatorname{End}_L(V)$, and its fixed
algebra is a central simple $K$-algebra of degree $\dim_L(V)=n$, split by $L$.
The index of a Brauer class divides the degree of every central simple algebra
representing it. This proves~\eqref{eq:fusion-space-index}.

If $J$ is coherent, each $b_T$ is a cocycle representing $\beta_T(J)$, which gives
\eqref{eq:fusion-channel-brauer-class}. The last assertion follows because the
exponent of a Brauer class divides its index.
\end{proof}

Taking determinants in the projective semilinear action on a fusion space
explains the following construction. If
$\operatorname{Hom}(X\otimes Y,Z)$ has dimension $n>0$ and factor set $d$, then
the determinant relation shows that $d^n$ is a coboundary. Thus taking
determinants forgets the sharper index bound of
Proposition~\ref{prop:fusion-space-index}, but retains the relation
\[
 n\bigl(\beta_Z(J)-\beta_X(J)-\beta_Y(J)\bigr)=0.
\]
The following quotient of the additive group of the based Grothendieck ring is
universal for precisely these determinant relations.
\begin{equation}\label{eq:determinant-group}
 D_{\det}(K_0(\mathcal C))=K_0(\mathcal C)\Big/
 \left\langle
 [\mathbf 1],\;
 N_{XY}^{Z}([Z]-[X]-[Y])
 \ \middle|\ N_{XY}^{Z}>0
 \right\rangle.
\end{equation}
For every abelian group $B$, written additively,
\begin{equation}\label{eq:determinant-group-universal}
 \begin{aligned}
 &\operatorname{Hom}(D_{\det}(K_0(\mathcal C)),B)\\
 &\qquad=\left\{b:\operatorname{Irr}(\mathcal C)\to B\ \middle|\
 \begin{array}{l}
 b_{\mathbf 1}=0,\\[2pt]
 N_{XY}^{Z}(b_Z-b_X-b_Y)=0
 \text{ if }N_{XY}^{Z}>0
 \end{array}
 \right\}.
 \end{aligned}
\end{equation}
In particular, Proposition~\ref{prop:fusion-space-index} shows that, whenever a
section $\Phi$ of $p_{\mathrm{sf}}$ is endowed with a coherent structure $J$,
$[X]\mapsto\beta_X(J)$ factors through $D_{\det}(K_0(\mathcal C))$.

The coefficient modules are themselves determined by the based ring and the field
extension. The universal grading group $U(\mathcal C)$ and the degrees of the simple
objects are determined by
$K_0(\mathcal C)$
\cite[Sections~3.6 and~4.14]{EGNO}. Schur's lemma gives the first identification
below, while the proof of~\cite[Proposition~4.14.3]{EGNO}, which applies verbatim
to a split category over $L$, gives the second. With componentwise $\Gamma$-action,
\[
 \begin{aligned}
 \Aut(\id_{\mathcal C})_\Phi
 &\cong\operatorname{Map}(\operatorname{Irr}(\mathcal C),L^\times),\\
 \Aut_\otimes(\id_{\mathcal C})_\Phi
 &\cong\operatorname{Hom}(U(\mathcal C),L^\times)
 \subseteq\operatorname{Map}(\operatorname{Irr}(\mathcal C),L^\times).
 \end{aligned}
\]
The inclusion evaluates a character on the degree of each simple object, and the
quotient is $Q_\Phi$. Since
$H^2(\Gamma,L^\times)\cong\Br(L/K)$, componentwise cohomology gives
\[
 H^2\!\left(\Gamma,
 \operatorname{Map}(\operatorname{Irr}(\mathcal C),L^\times)\right)
 \cong\Br(L/K)^{\operatorname{Irr}(\mathcal C)},
\]
and
\[
 \operatorname{im}\bigl(
 H^2(\Gamma,\operatorname{Hom}(U(\mathcal C),L^\times))
 \to\Br(L/K)^{\operatorname{Irr}(\mathcal C)}\bigr)
 \subseteq
 \operatorname{Hom}(D_{\det}(K_0(\mathcal C)),\Br(L/K)).
\]
We call the quotient
\begin{equation}\label{eq:determinant-obstruction-group}
 \mathcal O_{K_0(\mathcal C)}^{\det}(L/K)
 =\frac{\operatorname{Hom}(D_{\det}(K_0(\mathcal C)),\Br(L/K))}
 {\operatorname{im}\bigl(
 H^2(\Gamma,\operatorname{Hom}(U(\mathcal C),L^\times))
 \to\Br(L/K)^{\operatorname{Irr}(\mathcal C)}\bigr)}
\end{equation}
the \emph{determinant obstruction group} of $K_0(\mathcal C)$ over $L/K$. Its numerator
consists of the Brauer vectors satisfying the determinant relations from the
fusion spaces. Its denominator identifies vectors that differ by a change of
coherent monoidal structure. Therefore it depends only on the based Grothendieck
ring and the extension $L/K$.
It injects canonically into
\[
 \mathcal O_{K_0(\mathcal C)}^{\det}(L/K)
 \lhook\joinrel\longrightarrow
 \operatorname{coker}\bigl(
 H^2(\Gamma,\operatorname{Hom}(U(\mathcal C),L^\times))
 \to H^2(\Gamma,\operatorname{Map}(\operatorname{Irr}(\mathcal C),L^\times))\bigr).
\]
The long exact sequence gives a further injection
\begin{multline*}
 \operatorname{coker}\Bigl(
 H^2(\Gamma,\operatorname{Hom}(U(\mathcal C),L^\times))
 \longrightarrow{}\\[-2pt]
 H^2(\Gamma,\operatorname{Map}(\operatorname{Irr}(\mathcal C),L^\times))\Bigr)
 \lhook\joinrel\longrightarrow
 H^2\!\left(\Gamma,
 \frac{\operatorname{Map}(\operatorname{Irr}(\mathcal C),L^\times)}
 {\operatorname{Hom}(U(\mathcal C),L^\times)}\right).
\end{multline*}
For a section $\Phi$ with coherent structure $J$, the class of its Brauer vector in
\eqref{eq:determinant-obstruction-group} maps to $\Omega_\Phi(\mathcal C)$. However,
not every class in the determinant obstruction group must arise from a coherent
descent datum.

\subsection{Real forms and odd support}\label{subsec:real-case}

Assume that $L/K=\C/\R$ and fix a section $\Phi$ of $p_{\mathrm{sf}}$ with coherent
structure $J$. Write $C_2=\Gal(\C/\R)=\{1,\gamma\}$. Group
cohomology and~\cite[Proposition~4.14.3]{EGNO} give
\[
 H^2\!\left(C_2,\Aut(\id_{\mathcal C})_\Phi\right)
 \cong\{\pm1\}^{\operatorname{Irr}(\mathcal C)},
 \qquad
 H^2\!\left(C_2,\Aut_\otimes(\id_{\mathcal C})_\Phi\right)
 \cong\operatorname{Hom}(U(\mathcal C),\{\pm1\}).
\]
The first group records the relative Brauer signs of the simple objects. In the second,
conjugation acts by inversion on the characters of the universal grading group, and
all norms are trivial. Write
$\varepsilon_X(J)$ for the sign of $\beta_X(J)$ and let
$\varepsilon(J)=(\varepsilon_X(J))_X
\in\{\pm1\}^{\operatorname{Irr}(\mathcal C)}$.

\begin{corollary}\label{cor:real-signs}
Let $\Phi$ be a section of $p_{\mathrm{sf}}$ with coherent structure $J$. Then
\[
 \frac{\{\pm1\}^{\operatorname{Irr}(\mathcal C)}}
 {\operatorname{Hom}(U(\mathcal C),\{\pm1\})}
 \lhook\joinrel\longrightarrow H^2(C_2,Q_\Phi),
 \qquad
 [\varepsilon(J)]\longmapsto\Omega_\Phi(\mathcal C).
\]
In particular,
$\Omega_\Phi(\mathcal C)=0$ if and only if
$\varepsilon(J)\in\operatorname{Hom}(U(\mathcal C),\{\pm1\})$,
equivalently if and only if the coherent datum can be modified to define a split
real form.
\end{corollary}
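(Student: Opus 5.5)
This corollary is the real-case specialization of the discussion after Proposition~\ref{prop:stability-obstruction}, with the cohomology groups computed explicitly. Since $J$ is coherent, $o_\Phi(\mathcal C)=0$. The cokernel injection \eqref{eq:cokernel-injection} then embeds
\[
\operatorname{coker}\bigl(H^2(C_2,\Aut_\otimes(\id_{\mathcal C})_\Phi)\to H^2(C_2,\Aut(\id_{\mathcal C})_\Phi)\bigr)
\]
into $H^2(C_2,Q_\Phi)$. By the preceding paragraph, the class of the frame-defect cocycle $b$ from \eqref{eq:simultaneous-defect} maps to $\Omega_\Phi(\mathcal C)$. So the plan is: identify the source and target of this coker map with $\operatorname{Hom}(U(\mathcal C),\{\pm1\})\to\{\pm1\}^{\operatorname{Irr}(\mathcal C)}$, and identify the class of $b$ with $\varepsilon(J)$.

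\textbf{Step 1: the larger group.} For $C_2$ acting on $\C^\times$ by conjugation, Tate periodicity gives
\[
H^2(C_2,\C^\times)=\R^\times/N(\C^\times)=\R^\times/\R_{>0}\cong\{\pm1\}.
\]
This is $\Br(\C/\R)$, with the quaternions corresponding to $-1$. Taking this componentwise, $H^2(C_2,\Aut(\id_{\mathcal C})_\Phi)\cong\{\pm1\}^{\operatorname{Irr}(\mathcal C)}$. By the construction recalled before Proposition~\ref{prop:fusion-space-index}, the component of $[b]$ at $X$ is $\beta_X(J)$, whose sign is $\varepsilon_X(J)$.

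\textbf{Step 2: the smaller group.} By \cite[Proposition~4.14.3]{EGNO}, $\Aut_\otimes(\id_{\mathcal C})_\Phi\cong\operatorname{Hom}(U(\mathcal C),\C^\times)$. Conjugation acts on values, which is inversion followed by the unitary part of the character. Write $M=\operatorname{Hom}(U,\C^\times)$. Then
\[
H^2(C_2,M)=M^{C_2}/N(M),\qquad M^{C_2}=\operatorname{Hom}(U,\R^\times).
\]
The norm $\chi\mapsto\chi\bar\chi=|\chi|^2$ is trivial because $U$ is finite, so every character takes root-of-unity values and $|\chi|=1$. The real-valued characters of the finite group $U$ are exactly the $\pm1$-valued ones. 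Hence $H^2(C_2,M)\cong\operatorname{Hom}(U(\mathcal C),\{\pm1\})$.

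The map into $\{\pm1\}^{\operatorname{Irr}(\mathcal C)}$ is induced by evaluating a character on degrees. It is therefore the inclusion $\chi\mapsto(\chi(\deg X))_X$, so the cokernel is exactly the displayed quotient and the injection with $[\varepsilon(J)]\mapsto\Omega_\Phi(\mathcal C)$ follows.

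\textbf{Step 3: the ``in particular'' clause.} By injectivity, $\Omega_\Phi(\mathcal C)=0$ if and only if $\varepsilon(J)$ lies in the image of $\operatorname{Hom}(U(\mathcal C),\{\pm1\})$. The equivalence with modifying the coherent datum to a split real form goes as follows.
\begin{itemize}[label=--]
\item By Proposition~\ref{prop:stability-obstruction}, vanishing of $\Omega_\Phi(\mathcal C)$ is equivalent to the existence of a coherent datum on $\Phi$ with compatible descent on all simples, hence a split real form.
\item Concretely, coherent structures on $\Phi$ form an $H^2(C_2,M)$-torsor \cite[Proposition~3.8(ii)]{EGDescent}. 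Twisting $J$ by $\chi$ multiplies $\varepsilon(J)$ by $\chi(\deg(-))$.
\item So if $\varepsilon(J)=\chi\circ\deg$, twisting by $\chi^{-1}$ makes all signs trivial. Every simple then descends, and Proposition~\ref{prop:framed-split-descent} gives the split form.
\end{itemize}

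\textbf{Main obstacle.} The step I expect to need the most care is the sign and convention bookkeeping in Step 2 and Step 3. The action of $\gamma$ on $M$ is transported through $\Phi_\gamma$, but it is still componentwise conjugation on values because $\Phi$ fixes simple classes. I would also need to check that the torsor action of \cite{EGDescent} changes $b$ by exactly the cocycle $\chi\circ\deg$, not by its inverse. Since everything here is $2$-torsion, a possible inversion is harmless, and I would simply remark this.
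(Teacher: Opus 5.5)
Your proposal is correct and follows the paper's own argument. Like the paper, you use the cokernel injection \eqref{eq:cokernel-injection}, which applies because $J$ is coherent, together with the identifications $H^2(C_2,\Aut(\id_{\mathcal C})_\Phi)\cong\{\pm1\}^{\operatorname{Irr}(\mathcal C)}$ and $H^2(C_2,\Aut_\otimes(\id_{\mathcal C})_\Phi)\cong\operatorname{Hom}(U(\mathcal C),\{\pm1\})$, and the observation that the induced map is the inclusion of the monoidal sign vectors; your Tate-cohomology computation, torsor argument for the final equivalence, and remark that a possible sign inversion is harmless in $2$-torsion just make explicit what the paper states in one line.
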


\begin{proof}
Since $J$ is coherent, Proposition~\ref{prop:stability-obstruction} identifies
$\Omega_\Phi(\mathcal C)$ with the image of the simultaneous Brauer vector
$(\beta_X(J))_{X\in\operatorname{Irr}(\mathcal C)}$.
Under the identifications above, the map
$H^2(C_2,\Aut_\otimes(\id_{\mathcal C})_\Phi)\to
H^2\!\left(C_2,\Aut(\id_{\mathcal C})_\Phi\right)$
is the inclusion of the monoidal sign vectors into all sign vectors. Its cokernel
is therefore the displayed quotient, and the long
exact sequence gives the displayed injection and identification.
\end{proof}

The order-two monoidal automorphisms can be read from the fusion rules. A natural
automorphism with components $\varepsilon_X\id_X$, $\varepsilon_X\in\{\pm1\}$,
is monoidal precisely when its signs satisfy
\[
 \operatorname{Hom}(U(\mathcal C),\{\pm1\})
 =\left\{\varepsilon\in\{\pm1\}^{\operatorname{Irr}(\mathcal C)}\ \middle|\
 \begin{array}{l}
  \varepsilon_{\mathbf 1}=1,\\[2pt]
  \varepsilon_Z=\varepsilon_X\varepsilon_Y
  \text{ whenever }N_{XY}^{Z}>0
 \end{array}\right\}.
\]
Define
\[
 \mathcal S_{\mathrm{odd}}(\mathcal C)
 =\left\{\varepsilon\in\{\pm1\}^{\operatorname{Irr}(\mathcal C)}\ \middle|\
 \begin{array}{l}
  \varepsilon_{\mathbf 1}=1,\\[2pt]
  \varepsilon_Z=\varepsilon_X\varepsilon_Y
  \text{ whenever }N_{XY}^{Z}\text{ is odd}
 \end{array}\right\}.
\]
The universal property~\eqref{eq:determinant-group-universal} and
\eqref{eq:determinant-obstruction-group} now give
\begin{equation}\label{eq:real-determinant-obstruction-group}
 \operatorname{Hom}(D_{\det}(K_0(\mathcal C)),\{\pm1\})
 \cong\mathcal S_{\mathrm{odd}}(\mathcal C),
 \qquad
 \mathcal O_{K_0(\mathcal C)}^{\det}(\C/\R)
 \cong
 \frac{\mathcal S_{\mathrm{odd}}(\mathcal C)}
 {\operatorname{Hom}(U(\mathcal C),\{\pm1\})}.
\end{equation}
\begin{proposition}\label{prop:odd-support}
For every section $\Phi$ of $p_{\mathrm{sf}}$ with coherent structure $J$, one has
\[
 \varepsilon(J)\in\mathcal S_{\mathrm{odd}}(\mathcal C).
\]
Consequently, if
$\mathcal S_{\mathrm{odd}}(\mathcal C)
=\operatorname{Hom}(U(\mathcal C),\{\pm1\})$, then
$\Omega_\Phi(\mathcal C)=0$ and the datum can be modified to give a split real
form.
\end{proposition}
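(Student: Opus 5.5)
We apply Proposition~\ref{prop:fusion-space-index} with $r=2$ and $L/K=\C/\R$, where $\Br(\C/\R)=\Br(\R)\cong\{\pm1\}$. This group has exponent two, and every index is $1$ or $2$. Write the relative Brauer classes $\beta_X(J)$ multiplicatively as the signs $\varepsilon_X(J)$.

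First take simple objects $X,Y,Z$ with $n=N_{XY}^{Z}$ odd. The last assertion of Proposition~\ref{prop:fusion-space-index} gives
\[
 \bigl(\varepsilon_Z(J)\varepsilon_X(J)^{-1}\varepsilon_Y(J)^{-1}\bigr)^n=1.
\]
Since each factor is a sign and $n$ is odd, this forces $\varepsilon_Z(J)=\varepsilon_X(J)\varepsilon_Y(J)$. Equivalently, the algebra $\operatorname{End}_\R$-fixed points of $\operatorname{End}_\C(\operatorname{Hom}(X\otimes Y,Z))$ is a central simple $\R$-algebra of odd degree $n$. It therefore cannot be Brauer equivalent to the quaternions, whose index is $2$.

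Next we need $\varepsilon_{\mathbf 1}(J)=1$. Since $N_{\mathbf 1\mathbf 1}^{\mathbf 1}=1$ is odd, the previous step gives $\varepsilon_{\mathbf 1}=\varepsilon_{\mathbf 1}^2=1$. This holds automatically because the cocycle $b_{\mathbf 1}$ can be taken trivial, using normalized representatives whose unit constraints provide canonical frames on $\mathbf 1$. Together the two steps show $\varepsilon(J)\in\mathcal S_{\mathrm{odd}}(\mathcal C)$.

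For the consequence, suppose $\mathcal S_{\mathrm{odd}}(\mathcal C)=\operatorname{Hom}(U(\mathcal C),\{\pm1\})$. Then $\varepsilon(J)$ lies in $\operatorname{Hom}(U(\mathcal C),\{\pm1\})$. Corollary~\ref{cor:real-signs} then gives $\Omega_\Phi(\mathcal C)=0$. By Proposition~\ref{prop:stability-obstruction}, or concretely by twisting $J$ by the corresponding class in $H^2(C_2,\Aut_\otimes(\id_{\mathcal C})_\Phi)$, the coherent datum can be modified so that every simple object acquires a compatible descent structure. This yields a split real form.

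The only delicate point is the sign convention identifying the factor set $d$ on a fusion space with $\beta_Z-\beta_X-\beta_Y$. This is exactly equation~\eqref{eq:fusion-channel-brauer-class}, so no new obstacle is expected.
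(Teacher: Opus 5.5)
Your proposal is correct and follows essentially the paper's route. The paper packages the relations $n\bigl(\beta_Z(J)-\beta_X(J)-\beta_Y(J)\bigr)=0$ from Proposition~\ref{prop:fusion-space-index} into the statement that the Brauer vector factors through $D_{\det}(K_0(\mathcal C))$, reads off $\mathcal S_{\mathrm{odd}}(\mathcal C)$ via \eqref{eq:real-determinant-obstruction-group}, and concludes with Corollary~\ref{cor:real-signs}. You instead apply those relations directly to each odd multiplicity, including $N_{\mathbf 1\mathbf 1}^{\mathbf 1}=1$ to get $\varepsilon_{\mathbf 1}(J)=1$, and conclude with the same corollary.
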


\begin{proof}
The Brauer vector factors through $D_{\det}(K_0(\mathcal C))$, and under
\eqref{eq:real-determinant-obstruction-group} its homomorphism is the sign vector
$\varepsilon(J)$. Hence
$\varepsilon(J)\in\mathcal S_{\mathrm{odd}}(\mathcal C)$. If the latter group is
$\operatorname{Hom}(U(\mathcal C),\{\pm1\})$,
Corollary~\ref{cor:real-signs} gives the conclusion.
\end{proof}

If
$\varepsilon_X=(-1)^{x_X}$ with $x_X\in\mathbb F_2$, then
$\mathcal S_{\mathrm{odd}}(\mathcal C)$ is the solution space of the linear equations
\[
 x_{\mathbf 1}=0,
 \qquad
 x_X+x_Y+x_Z=0
 \quad\text{whenever }N_{XY}^{Z}\text{ is odd}.
\]

\begin{corollary}\label{cor:odd-multiplicities}
Let $\Phi$ be a section of $p_{\mathrm{sf}}$ with coherent structure $J$. If every
nonzero fusion coefficient $N_{XY}^{Z}$ is odd, then
$\Omega_\Phi(\mathcal C)=0$. Thus the given datum can be modified to define a split
real form.
\end{corollary}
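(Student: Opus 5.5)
The plan is to reduce the corollary to Proposition~\ref{prop:odd-support}. It suffices to show that, under the hypothesis that every nonzero $N_{XY}^{Z}$ is odd, the two sign groups agree:
\[
 \mathcal S_{\mathrm{odd}}(\mathcal C)=\operatorname{Hom}(U(\mathcal C),\{\pm1\}).
\]
Both groups are described by explicit sign conditions in the text preceding Proposition~\ref{prop:odd-support}. Each consists of the sign vectors $\varepsilon\in\{\pm1\}^{\operatorname{Irr}(\mathcal C)}$ with $\varepsilon_{\mathbf 1}=1$. They differ only in which triples are constrained. For $\operatorname{Hom}(U(\mathcal C),\{\pm1\})$ we require $\varepsilon_Z=\varepsilon_X\varepsilon_Y$ whenever $N_{XY}^{Z}>0$. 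For $\mathcal S_{\mathrm{odd}}(\mathcal C)$ we require it whenever $N_{XY}^{Z}$ is odd.

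The inclusion $\operatorname{Hom}(U(\mathcal C),\{\pm1\})\subseteq\mathcal S_{\mathrm{odd}}(\mathcal C)$ always holds, because an odd coefficient is positive. Under the hypothesis the converse also holds: the conditions "$N_{XY}^{Z}>0$" and "$N_{XY}^{Z}$ odd" then pick out the same triples. So the two defining systems of equations coincide and the groups are equal.

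Proposition~\ref{prop:odd-support} then gives $\varepsilon(J)\in\operatorname{Hom}(U(\mathcal C),\{\pm1\})$. Corollary~\ref{cor:real-signs} then gives $\Omega_\Phi(\mathcal C)=0$. For the final assertion, I would invoke Proposition~\ref{prop:stability-obstruction}. Vanishing of $\Omega_\Phi(\mathcal C)$ means $\Phi$ can be represented by a coherent descent datum with compatible descent isomorphisms on all simples, and such a datum defines a split real form. Concretely, the modification is this: since $\varepsilon(J)$ lies in the image of $H^2(C_2,\Aut_\otimes(\id_{\mathcal C})_\Phi)$, the torsor structure of \cite[Proposition~3.8(ii)]{EGDescent} lets us change $J$ by that monoidal class, which makes every Brauer sign trivial.

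There is essentially no obstacle. The only point needing care is that the identification $\operatorname{Hom}(U(\mathcal C),\{\pm1\})$ with monoidal sign vectors, via \cite[Proposition~4.14.3]{EGNO}, is exactly the fusion-rule description used above. This was already recorded in the text preceding Proposition~\ref{prop:odd-support}.
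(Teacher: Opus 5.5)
Your proposal is correct and matches the paper's proof: under the hypothesis, the triples with $N_{XY}^{Z}>0$ are exactly those with $N_{XY}^{Z}$ odd, so $\mathcal S_{\mathrm{odd}}(\mathcal C)=\operatorname{Hom}(U(\mathcal C),\{\pm1\})$, and Proposition~\ref{prop:odd-support} then gives the conclusion. Your extra unpacking through Corollary~\ref{cor:real-signs} and the torsor modification of $J$ is what that proposition already packages, so it adds detail but no new route.
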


\begin{proof}
Under the hypothesis, the odd-support relations are exactly the monoidal sign
relations; hence the two groups coincide. Apply
Proposition~\ref{prop:odd-support}.
\end{proof}

Say that a family $V_1,\ldots,V_s\in\operatorname{Irr}(\mathcal C)$
tensor-generates $\mathcal C$ if every simple object is a summand of a tensor word
in the $V_i$. For a simple object $V$, consider the
directed graph on
$\operatorname{Irr}(\mathcal C)$ with an arrow $X\to Y$, for $X\ne Y$, whenever
$N_{VX}^{Y}$ is odd.

\begin{proposition}\label{prop:odd-support-tests}
Let $\mathcal C$ be a fusion category.
\begin{enumerate}[label=\textup{(\alph*)}]
\item Suppose that $V_1,\ldots,V_s$ tensor-generate $\mathcal C$ and every nonzero
$N_{V_iX}^{Y}$ is odd. Then
$\mathcal S_{\mathrm{odd}}(\mathcal C)
=\operatorname{Hom}(U(\mathcal C),\{\pm1\})$.

\item Let $V$ be simple. If the underlying undirected graph on
$\operatorname{Irr}(\mathcal C)$ associated above with $V$ is connected, evaluation at
$V$ is injective on $\mathcal S_{\mathrm{odd}}(\mathcal C)$, and hence
$|\mathcal S_{\mathrm{odd}}(\mathcal C)|\leq2$. If, in addition,
$N_{VX}^{X}$ is odd for some simple $X$, then
$\mathcal S_{\mathrm{odd}}(\mathcal C)
=\operatorname{Hom}(U(\mathcal C),\{\pm1\})=1$.
\end{enumerate}
\end{proposition}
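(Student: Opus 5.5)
The plan is to work directly with the explicit descriptions of $\operatorname{Hom}(U(\mathcal C),\{\pm1\})$ and $\mathcal S_{\mathrm{odd}}(\mathcal C)$ as sign vectors, displayed just before Proposition~\ref{prop:odd-support}. Both are subgroups of $\{\pm1\}^{\operatorname{Irr}(\mathcal C)}$ defined by the same kind of relations. The monoidal group uses all nonzero $N_{XY}^{Z}$; the odd-support group uses only the odd ones. Hence the inclusion $\operatorname{Hom}(U(\mathcal C),\{\pm1\})\subseteq\mathcal S_{\mathrm{odd}}(\mathcal C)$ is automatic, and in each part it suffices to prove the reverse inclusion.

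For (a), fix $\varepsilon\in\mathcal S_{\mathrm{odd}}(\mathcal C)$. For a word $w=V_{i_1}\otimes\cdots\otimes V_{i_m}$ put $\varepsilon(w)=\varepsilon_{V_{i_1}}\cdots\varepsilon_{V_{i_m}}$, with $\varepsilon(\emptyset)=1$ for the empty word $\mathbf 1$.

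The first step is to show, by induction on $m$, that $\varepsilon_Z=\varepsilon(w)$ for every simple summand $Z$ of $w$. The base case is $\varepsilon_{\mathbf 1}=1$. For the inductive step, write $w=V_i\otimes w'$. If $Z$ is a summand of $w$, semisimplicity gives a simple summand $Y$ of $w'$ with $N_{V_iY}^{Z}>0$. That coefficient is odd by hypothesis, so $\varepsilon_Z=\varepsilon_{V_i}\varepsilon_Y=\varepsilon_{V_i}\varepsilon(w')=\varepsilon(w)$. Here I deliberately build words by multiplying on the left, to match the hypothesis, which is stated for $N_{V_iX}^{Y}$ with $V_i$ on the left.

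The second step uses tensor generation. Given $N_{XY}^{Z}>0$, choose words $w_1,w_2$ containing $X$ and $Y$ as summands. Then $Z$ is a summand of $w_1\otimes w_2$, which is again a word. The first step gives $\varepsilon_Z=\varepsilon(w_1)\varepsilon(w_2)=\varepsilon_X\varepsilon_Y$. Hence $\varepsilon$ satisfies all monoidal sign relations, so $\varepsilon\in\operatorname{Hom}(U(\mathcal C),\{\pm1\})$.

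For (b), an edge between $X$ and $Y$ comes from an odd $N_{VX}^{Y}$ (in one direction or the other). In either orientation the odd-support relation gives $\varepsilon_Y=\varepsilon_V\varepsilon_X$, hence also $\varepsilon_X=\varepsilon_V\varepsilon_Y$. The graph contains $\mathbf 1$ and is connected, so along a path from $\mathbf 1$ to $X$ of length $\ell$ we get $\varepsilon_X=\varepsilon_V^{\ell}\varepsilon_{\mathbf 1}=\varepsilon_V^{\ell}$. Thus $\varepsilon$ is determined by $\varepsilon_V$, so evaluation at $V$ is injective and $|\mathcal S_{\mathrm{odd}}(\mathcal C)|\le 2$. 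If moreover $N_{VX}^{X}$ is odd for some $X$, the relation $\varepsilon_X=\varepsilon_V\varepsilon_X$ forces $\varepsilon_V=1$. Then $\mathcal S_{\mathrm{odd}}(\mathcal C)$ is trivial, and so is its subgroup $\operatorname{Hom}(U(\mathcal C),\{\pm1\})$.

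The only delicate point is the induction in (a). The words must be built from the side on which the hypothesis on $N_{V_iX}^{Y}$ is stated. With that orientation the argument is purely combinatorial, so I expect no real obstacle.
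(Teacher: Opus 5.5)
Your proposal is correct and follows the paper's proof essentially step for step. In (a) the paper also peels off the left factors of a right-associated word to get $\varepsilon_X=\prod_j\varepsilon_{V_{i_j}}$ and then concatenates words to deduce $\varepsilon_Z=\varepsilon_X\varepsilon_Y$; your induction on word length is a more formal version of its chain $Y_0=X,\ldots,Y_t=\mathbf 1$. In (b) it likewise propagates $\varepsilon_Y=\varepsilon_V\varepsilon_X$ from $\varepsilon_{\mathbf 1}=1$ and uses an odd diagonal coefficient to force $\varepsilon_V=1$.
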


\begin{proof}
For~\textup{(a)}, let $\varepsilon\in\mathcal S_{\mathrm{odd}}(\mathcal C)$.
If a simple object $X$ is a summand of the right-associated tensor word
$V_{i_1}\otimes\cdots\otimes V_{i_t}$, semisimplicity gives simple objects
$Y_0=X,\ldots,Y_t=\mathbf 1$ such that
\[
 N_{V_{i_j}Y_j}^{Y_{j-1}}>0
 \qquad(1\leq j\leq t).
\]
These coefficients are odd by hypothesis, and hence
\[
 \varepsilon_X=\prod_{j=1}^t\varepsilon_{V_{i_j}}.
\]
If $N_{XY}^{Z}>0$, choose tensor words containing $X$ and $Y$. Their concatenation
contains $X\otimes Y$, and hence $Z$. The preceding formula therefore gives
$\varepsilon_Z=\varepsilon_X\varepsilon_Y$. Thus
$\varepsilon\in\operatorname{Hom}(U(\mathcal C),\{\pm1\})$; the reverse inclusion
is immediate.

For~\textup{(b)}, the relation along an edge is
$\varepsilon_Y=\varepsilon_V\varepsilon_X$. Since $\varepsilon_V^2=1$, this
relation propagates in either direction along an edge. Since
$\varepsilon_{\mathbf 1}=1$, connectedness determines every sign from
$\varepsilon_V$. An odd diagonal coefficient gives
$\varepsilon_X=\varepsilon_V\varepsilon_X$, and hence $\varepsilon_V=1$.
Connectedness then makes every sign positive.
\end{proof}

The odd-support criterion applies more generally than
Corollary~\ref{cor:odd-multiplicities}. Label the nontrivial irreducible
representations of the alternating group $A_5$ by their dimensions as
$V_3,V_{3'},V_4,V_5$. Then
\[
 \begin{aligned}
 V_3\otimes V_3&\cong\mathbf 1\oplus V_3\oplus V_5,&
 V_{3'}\otimes V_{3'}&\cong\mathbf 1\oplus V_{3'}\oplus V_5,\\
 V_3\otimes V_{3'}&\cong V_4\oplus V_5.
 \end{aligned}
\]
These rules force every element of
$\mathcal S_{\mathrm{odd}}(\Rep_{\C}(A_5))$ to be trivial, whereas
$N_{V_5V_5}^{V_4}=2$. Hence
Proposition~\ref{prop:odd-support} applies to the coefficientwise-conjugation datum
although the hypothesis of
Corollary~\ref{cor:odd-multiplicities} does not.

\subsection{Unitary consequences}\label{subsec:unitary-consequences}

By a \emph{unitary fusion category} we mean a fusion category with a positive dagger
compatible with tensor products and with unitary monoidal constraints; positivity
means that every endomorphism algebra is a finite-dimensional $C^*$-algebra
\cite[Sections~1.1--1.2]{Reutter}.

Let $\gamma$ denote complex conjugation. If a unitary fusion category has a split
real form, view its conjugate-linear descent
functor as a linear monoidal equivalence
$\Phi:{}^{\gamma}\mathcal C\to\mathcal C$. This equivalence and its coherence
$\mu:\Phi^2\Rightarrow\id_{\mathcal C}$ may be chosen unitary by
\cite[Theorems~1--2]{Reutter}. For a frame
$f_X:\Phi(X)\to X$ satisfying the descent identity, one has
$f_X\Phi(f_X)=\mu_X$. Writing
$f_X^\dagger f_X=\lambda_X\id_{\Phi(X)}$ with $\lambda_X>0$ gives
\[
 \id_{\Phi^2(X)}=\mu_X^\dagger\mu_X
 =\Phi(f_X)^\dagger f_X^\dagger f_X\Phi(f_X)
 =\lambda_X\Phi(f_X^\dagger f_X)
 =\lambda_X^2\id_{\Phi^2(X)}.
\]
The last equality uses $\Phi(\lambda_X)=\lambda_X$, since
$\lambda_X\in\R_{>0}$.
Thus every $f_X$ is unitary. The induced real structures on the fusion spaces are
antiunitary and preserve the associator; orthonormal bases of their fixed subspaces
give real orthogonal $F$-matrices. For the duality datum below, the sign criterion
recovers the Frobenius--Schur mechanism of~\cite{FRS} and is related to the
flat charge-conjugation criterion of~\cite[Sections~3--5]{BHPReality}.

\begin{proposition}
\label{prop:unitary-braided-real-twist}
Let $(\mathcal C,c)$ be a braided fusion category admitting a unitary structure,
and let $\theta$ be its unique unitary ribbon structure. Suppose that
$(\mathcal C,c)$ has a braided real form, and write
\[
 \Phi:({}^{\gamma}\mathcal C,{}^{\gamma}c)
 \longrightarrow(\mathcal C,c)
\]
for the braided equivalence over complex conjugation. If the induced permutation
of the simple objects is denoted by $\pi$, so that
$\Phi({}^{\gamma}X)\cong\pi(X)$, then
\begin{equation}\label{eq:braided-real-twist-conjugation}
 \theta_{\pi(X)}=\overline{\theta_X}
 \qquad\bigl(X\in\operatorname{Irr}(\mathcal C)\bigr).
\end{equation}
Moreover, the descent datum may be chosen to preserve $\theta$. Consequently,
a split braided real form is automatically a split ribbon real form, and its
existence forces
\[
 \theta_X\in\{\pm1\}
 \qquad\bigl(X\in\operatorname{Irr}(\mathcal C)\bigr).
\]
\end{proposition}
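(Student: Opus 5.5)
The plan is to show that the unique unitary ribbon twist is an invariant of the braided tensor structure that transforms under twisted equivalences in a controlled way. First, from $(\mathcal C,c)$ form the conjugate category ${}^{\gamma}\mathcal C$ with braiding ${}^{\gamma}c$. Any unitary structure on $\mathcal C$ induces one on ${}^{\gamma}\mathcal C$: use the same dagger, viewed on the conjugate vector spaces. The twist $\theta$, regarded as a family of morphisms in ${}^{\gamma}\mathcal C$, is then a unitary ribbon structure there, and as scalars its eigenvalue on ${}^{\gamma}X$ is $\overline{\theta_X}$. The reason is that ${}^{\gamma}\mathcal C$ has the same morphisms, but scalar multiplication is twisted, so the scalar $\theta_X\in\operatorname{End}(X)=\C$ is read as $\overline{\theta_X}$ in ${}^{\gamma}\mathcal C$. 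Thus ${}^{\gamma}\theta$ is the unique unitary ribbon structure of $({}^{\gamma}\mathcal C,{}^{\gamma}c)$. Uniqueness transports because the conjugate of a unitarizable braided category is unitarizable.

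Second, I will transport ${}^{\gamma}\theta$ along the braided equivalence $\Phi$. This produces a ribbon structure $\theta'$ on $(\mathcal C,c)$ with $\theta'_{\pi(X)}=\overline{\theta_X}$. The main obstacle is showing $\theta'=\theta$. Ribbon structures on a braided fusion category form a torsor under the order-two monoidal automorphisms of the identity, so they are not unique in general. What fixes the choice is unitarity, and for that I will use \cite[Theorems~1--2]{Reutter}. These theorems allow $\Phi$ to be replaced, up to monoidal (hence braided) natural isomorphism, by a unitary braided equivalence between unitary braided categories. A unitary equivalence carries the unitary ribbon structure to the unitary ribbon structure. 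This works because the unitary twist is characterized intrinsically: it is the twist for which the canonical spherical structure, with positive dimensions, arises, equivalently the one with $\dim_\theta X>0$. Dimensions are preserved by braided equivalences together with their induced pivotal data. Replacing $\Phi$ by an isomorphic functor does not change $\pi$ or the transported twist, since natural isomorphisms commute with twists by naturality. This yields~\eqref{eq:braided-real-twist-conjugation}.

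Third, the equality $\Phi({}^{\gamma}\theta)=\theta$ is exactly the statement that $\Phi$ preserves the ribbon structure. Hence the braided descent datum, consisting of $\Phi$ and its coherence $\mu$, is already a ribbon descent datum. As noted in Subsection~\ref{subsec:structured-forms}, compatibility of the natural isomorphisms $\mu$ with the twist is automatic by naturality. Therefore any braided real form, and in particular any split braided real form built from sections of $p^{c}_{\mathrm{fr}}$, is a ribbon real form.

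Finally, in the split case the section lies in $\Pi_{\mathrm{sf}}$, so $\pi=\id$. Then~\eqref{eq:braided-real-twist-conjugation} gives $\theta_X=\overline{\theta_X}$. Unitarity forces $\theta_X$ to be a root of unity (Vafa), or at least to satisfy $|\theta_X|=1$, since $\theta$ is unitary. Combining these, $\theta_X\in\{\pm1\}$.
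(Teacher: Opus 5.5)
Your proposal is correct and follows the paper's argument. You equip ${}^{\gamma}\mathcal C$ with the conjugate unitary structure, use Reutter's theorem to replace $\Phi$ by a unitary braided equivalence, and identify the transported conjugate twist with $\theta$ by uniqueness of the unitary ribbon structure; then you read off $\theta_{\pi(X)}=\overline{\theta_X}$ and, in the split case, $\theta_X\in\{\pm1\}$ from $|\theta_X|=1$.

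Your extra observations are valid refinements. Braidedness and twist preservation are invariant under monoidal natural isomorphism, so the monoidal Theorems~1--2 suffice in place of the braided Theorem~5, and the original $\Phi$ already preserves $\theta$. The positive-dimension characterization of the unitary twist would, on its own, make the appeal to Reutter unnecessary.
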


\begin{proof}
Equip ${}^{\gamma}\mathcal C$ with the conjugate unitary structure. By
\cite[Theorem~5]{Reutter}, the braided equivalence $\Phi$ is braided-monoidally
isomorphic to a unitary braided equivalence. Transporting the conjugate of
$\theta$ through this unitary equivalence gives a unitary ribbon structure on
$(\mathcal C,c)$. By the uniqueness theorem
\cite[Theorem~3.5]{GalindoUnitary}, this transported structure is $\theta$.
Replacing the descent datum within its braided monoidal isomorphism class and
transporting its coherence therefore makes it ribbon preserving.

On the simple object ${}^{\gamma}X$, the conjugate twist acts by
$\overline{\theta_X}$. Ribbon preservation now gives
\eqref{eq:braided-real-twist-conjugation}. In the split case $\pi$ is the
identity, so every $\theta_X$ is real. Since the unitary twist has absolute value
one, it follows that $\theta_X\in\{\pm1\}$.
\end{proof}

\begin{corollary}\label{cor:selfdual-odd}
Let $\mathcal C$ be a braided fusion category admitting a unitary structure. Suppose
every simple object is self-dual and
$\mathcal S_{\mathrm{odd}}(\mathcal C)
=\operatorname{Hom}(U(\mathcal C),\{\pm1\})$. Then
$\mathcal C$ has a split real form. Moreover, after choosing a unitary structure, it
admits orthonormal fusion bases in which all $F$-matrices are real.

In particular, the conclusion holds if every nonzero fusion multiplicity is odd.
\end{corollary}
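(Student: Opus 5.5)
The plan is to produce a section $\Phi$ of $p_{\mathrm{sf}}$ over $\C/\R$ carrying a coherent structure $J$. Once this is done, Proposition~\ref{prop:odd-support} and the hypothesis $\mathcal S_{\mathrm{odd}}(\mathcal C)=\operatorname{Hom}(U(\mathcal C),\{\pm1\})$ give $\Omega_\Phi(\mathcal C)=0$. Proposition~\ref{prop:framed-split-descent} then yields a split real form. The unitary statement follows from the discussion in Subsection~\ref{subsec:unitary-consequences}: frames satisfying the descent identity are unitary, so orthonormal bases of the fixed real subspaces of the fusion spaces give real orthogonal $F$-matrices. The final sentence follows because, when every nonzero multiplicity is odd, the odd-support relations coincide with the monoidal sign relations, as in Corollary~\ref{cor:odd-multiplicities}.

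To construct $\Phi$, I use the unitary structure. Fix a unitary structure on $\mathcal C$ and consider the conjugate-linear functor $X\mapsto \overline{X}$ to the complex-conjugate category. Composed with the dagger, it becomes covariant, and composed with duality it returns to $\mathcal C$. Concretely, the complex conjugate category $\overline{\mathcal C}$ is identified with ${}^{\gamma}\mathcal C$, and the assignment $\overline X\mapsto X^*$, with morphisms $\overline f\mapsto (f^\dagger)^*$, gives a linear monoidal equivalence
\[
\Phi_\gamma:{}^{\gamma}\mathcal C\longrightarrow\mathcal C^{\mathrm{rev}\,\mathrm{op}\,\mathrm{op}}\simeq\mathcal C.
\]
The dagger reverses composition and the dual reverses it back; the dual reverses the tensor order and the conjugate-category convention compensates for it. I will check this carefully, since the precise op/rev bookkeeping is routine but fiddly. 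On simples it sends ${}^{\gamma}X$ to $X^*\cong X$ by self-duality, so its class lies in $\Pi_{\mathrm{sf}}(\mathcal C/\R)$. This is the standard "duality datum" alluded to before Proposition~\ref{prop:unitary-braided-real-twist}.

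Next I need $\Phi_\gamma\circ{}^{\gamma}\Phi_\gamma\cong\id$ as tensor functors, together with a coherent $J$. The composite sends $X$ to $X^{**}$ with morphisms $f\mapsto f^{**}$, since the two conjugations cancel. The canonical unitary pivotal structure of a unitary fusion category gives a monoidal isomorphism $X^{**}\cong X$, so the class of $\Phi_\gamma$ squares to $1$ and defines a section $\Phi$ of $p_{\mathrm{sf}}$. For coherence over $C_2$, the obstruction $o_\Phi$ lives in $H^3(C_2,\operatorname{Hom}(U(\mathcal C),\C^\times))$. I would check directly that the pivotal isomorphism $j$ satisfies the single cocycle condition $J\ast\id=\id\ast{}^{\gamma}J$ on $\Phi^3$; this reduces to the identity $(j_X)^{*}=j_{X^*}^{-1}$ for the canonical spherical unitary pivotal structure, up to dagger.

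I expect this coherence check to be the main obstacle. If the direct verification is awkward, the fallback is to use that the braiding is irrelevant and argue via Reutter's unitarization results, choosing $J$ unitary and noting that the defect $a$ is then a unitary monoidal automorphism fixed by conjugation-twisted inversion. That would make $o_\Phi$ a $\{\pm1\}$-valued class, which can be killed by rescaling $J$ by a grading character.
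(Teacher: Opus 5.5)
Your overall skeleton matches the paper: build a coherent simple-fixing datum over $\C/\R$, then apply Proposition~\ref{prop:odd-support}, then the unitary refinement of Subsection~\ref{subsec:unitary-consequences}. The construction of the datum, however, has a genuine gap: the functor $\overline X\mapsto X^*$, $\overline f\mapsto (f^\dagger)^*$ is not monoidal as a functor to $\mathcal C$.

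Twisting scalars does not change the tensor product of ${}^{\gamma}\mathcal C$. The dagger satisfies $(f\otimes g)^\dagger=f^\dagger\otimes g^\dagger$, so it reverses composition but not tensor order, while duality reverses both. In your notation the two ``op''s cancel but the ``rev'' survives. Your functor is therefore a tensor equivalence ${}^{\gamma}\mathcal C\to\mathcal C^{\mathrm{rev}}$ and does not define an element of $\Pi_{\mathrm{sf}}(\mathcal C/\R)$. This is exactly where the braided hypothesis is needed, so your remark that ``the braiding is irrelevant'' is the symptom of the gap.

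The paper corrects the order with the braiding. It takes $\Phi_2(X,Y)=\nu_{X,Y}\circ c_{X^\vee,Y^\vee}$, where $\nu$ is the anti-monoidal tensorator of a unitary dual functor. The tensorator of $\Phi^2$ then contains two braidings, and $\mu=j^{-1}$ is monoidal only because they cancel. This uses that the braiding is unitary \cite[Theorem~3.2]{GalindoUnitary}, so that $\Phi(c_{X,Y})=(c_{X,Y}^{-1})^\vee$, together with $\nu_{Y,X}^{-1}\circ\Phi(c_{X,Y})\circ\nu_{X,Y}=c_{X^\vee,Y^\vee}^{-1}$. Once this is in place, coherence $\Phi(\mu_X)=\mu_{\Phi(X)}$ does follow from $j_{X^\vee}=(j_X^{-1})^\vee$ and unitarity of $j$, as you anticipated.

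Your fallback for coherence would not work either, because $o_\Phi$ does not depend on the choice of $\mu$. Rescaling $\mu$ by $e\in\operatorname{Hom}(U(\mathcal C),\C^\times)$ multiplies the coherence defect by $e\,\overline{e}^{\,-1}=e^2$. Hence
\[
H^3\bigl(C_2,\operatorname{Hom}(U(\mathcal C),\C^\times)\bigr)\cong\operatorname{Hom}(U(\mathcal C),\C^\times)/\operatorname{Hom}(U(\mathcal C),\C^\times)^2,
\]
which is nonzero whenever $|U(\mathcal C)|$ is even. A $\{\pm1\}$-valued defect is precisely what cannot be rescaled away in general. You therefore need the direct verification with the braided tensorator.
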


\begin{proof}
Choose a unitary structure and a unitary dual functor, with anti-monoidal tensorator
$\nu_{X,Y}:Y^\vee\otimes X^\vee\to(X\otimes Y)^\vee$ and unitary pivotal
structure $j:\id_{\mathcal C}\Rightarrow(-)^{\vee\vee}$, as
in~\cite[Proposition~3.9, Corollaries~3.10 and~3.32, and Section~3.5]{Penneys}.
Put $\Phi(X)=X^\vee$ and $\Phi(f)=(f^\dagger)^\vee$.
By~\cite[Theorem~3.2]{GalindoUnitary}, the braiding $c$ is unitary. Define
\[
 \Phi_2(X,Y)=\nu_{X,Y}\circ c_{X^\vee,Y^\vee}
\]
and use the unit constraint of the dual functor. The anti-monoidal coherence of $\nu$
and the hexagon show that this makes $\Phi$ a conjugate-linear tensor
autoequivalence. Moreover,~\cite[Exercise~8.9.2]{EGNO} and unitarity give
\[
 \nu_{Y,X}^{-1}\circ\Phi(c_{X,Y})\circ\nu_{X,Y}
 =c_{X^\vee,Y^\vee}^{-1}.
\]
The displayed identity cancels the two braidings in the tensorator of $\Phi^2$,
leaving the canonical double-dual tensorator. Hence
$\mu=j^{-1}:\Phi^2\Rightarrow\id_{\mathcal C}$ is monoidal, and the pivotal identity
$j_{X^\vee}=(j_X^{-1})^\vee$, together with unitarity, gives
$\Phi(\mu_X)=\mu_{\Phi(X)}$. Thus $(\Phi,\mu)$ is a coherent descent datum over
$\C/\R$. Self-duality makes it a section of $p_{\mathrm{sf}}$, and
Proposition~\ref{prop:odd-support} gives a split real form. The unitary refinement
above gives the final assertion.
\end{proof}

An independent criterion in~\cite[Corollary~1]{BHPReality} produces real
$F$-symbols, together with symmetric $R$-matrices, when the
Frobenius--Schur indicators of a self-dual unitary ribbon category define a
grading. The criterion above is instead formulated in terms of odd fusion
multiplicities and produces a split real form of the underlying fusion category.

Examples include unitary $N$-metaplectic categories when $N$ is odd or divisible by
$4$~\cite[Sections~3.1 and~3.3]{GRR}. The self-dual cases among the modular
equivariantizations of~\cite{GLM} also apply
\cite[Section~2 and Propositions~5.8, 5.15, and~5.16]{GLM}. Their construction by
extension and equivariantization makes them weakly group-theoretical
\cite[Proposition~4.1]{ENOWGT}, hence unitarizable
\cite[Theorem~5.20]{GHR}. Deligne products give further examples.

\subsection{Split forms of pointed categories}

A fusion category is \emph{pointed} if every simple object is
tensor-invertible. Its simple isomorphism classes form a finite group $G$ under
tensor product, and its based Grothendieck ring is $\mathbb ZG$. In this case the
determinant data has a simple description. The definition of
$\mathcal S_{\mathrm{odd}}$ applies to any based ring, and for $\mathbb ZG$ the
relations in~\eqref{eq:determinant-group} give
\begin{equation}\label{eq:pointed-determinant-group}
 D_{\det}(\mathbb ZG)\cong G_{\mathrm{ab}},
 \qquad
 \mathcal S_{\mathrm{odd}}(\mathbb ZG)
 =\operatorname{Hom}(G,\{\pm1\}).
\end{equation}
Here $G_{\mathrm{ab}}$ denotes the abelianization of $G$.
Consequently,
$\mathcal O_{\mathbb ZG}^{\det}(\C/\R)=0$. Split descent is then determined by
the associator class, as the following criterion shows.

For a finite group $G$, a field $L$, and a normalized cocycle
$\omega\in Z^3(G,L^\times)$, let $\Vect_{G,L}^{\omega}$ be the category of
finite-dimensional $G$-graded $L$-vector spaces with associator determined by
$\omega$; over $\C$ write $\Vect_G^\omega$. Its tensor-equivalence class depends
only on $[\omega]\in H^3(G,L^\times)$. We use the classification of pointed
categories and
their tensor equivalences in~\cite[Section~2.6]{EGNO}; see also
\cite[Proposition~4.10.3 and Remark~4.10.4]{EGNO}.

\begin{proposition}\label{prop:pointed-split-field}
Let $L/K$ be a finite Galois extension, let $G$ be a finite group, and let
$\omega\in Z^3(G,L^\times)$. Then $\Vect_{G,L}^{\omega}$ has a split $K$-form
if and only if
\begin{equation}\label{eq:pointed-split-field}
 [\omega]\in
 \operatorname{im}\left(
 H^3(G,K^\times)\longrightarrow H^3(G,L^\times)
 \right).
\end{equation}
\end{proposition}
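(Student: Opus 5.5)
For the ``if'' direction I would construct the split form directly. If $[\omega]$ is the image of a class in $H^3(G,K^\times)$, choose a normalized representative $\omega'\in Z^3(G,K^\times)$ whose image in $Z^3(G,L^\times)$ is cohomologous to $\omega$. The category $\Vect_{G,K}^{\omega'}$ is a split pointed fusion category over $K$. Its scalar extension is $\Vect_{G,L}^{\omega'}$, and no idempotent completion is needed since each graded line stays simple. This category is tensor equivalent to $\Vect_{G,L}^{\omega}$ by the classification in \cite[Section~2.6]{EGNO}. Equivalently, in the fusion basis given by one-dimensional lines, the $F$-symbols are the values of $\omega'$, which lie in $K$, and Proposition~\ref{prop:dictionary} applies.

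For the ``only if'' direction, let $\mathcal C_K$ be a split $K$-form. Its simple objects have endomorphism algebra $K$. After scalar extension they remain simple and give exactly the simples of $\Vect_{G,L}^{\omega}$. Hence every simple of $\mathcal C_K$ is invertible, and $\mathcal C_K$ is a split pointed fusion category over $K$ with group of simples isomorphic to $G$. The isomorphism is through the group isomorphism $\phi$ induced by the equivalence, which may permute simples.

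Next I would run the standard classification argument over $K$, as in \cite[Section~2.6]{EGNO}; it uses only splitness. Choose simple representatives $\delta_g$ and isomorphisms $\delta_g\otimes\delta_h\cong\delta_{gh}$. The associator then defines $\omega_K\in Z^3(G,K^\times)$, so $\mathcal C_K\simeq\Vect_{G,K}^{\omega_K}$. Extending scalars gives $\Vect_{G,L}^{\omega_K}\simeq\Vect_{G,L}^{\omega}$. By the classification of tensor equivalences of pointed categories, this means $[\omega]=\phi^*[\omega_K]$ in $H^3(G,L^\times)$ for some automorphism $\phi$ of $G$.

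The main point to handle carefully is this automorphism $\phi$. It is harmless because $\phi^*$ commutes with the coefficient map $H^3(G,K^\times)\to H^3(G,L^\times)$. Therefore $[\omega]$ is the image of $\phi^*[\omega_K]\in H^3(G,K^\times)$. Alternatively, one can relabel the simples of $\mathcal C_K$ via $\phi$ from the start, so that the equivalence preserves labels.

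Two smaller checks remain:
\begin{itemize}
\item The equivalence classes of $\Vect_{G,L}^{\omega}$ depend only on the cohomology class of $\omega$.
\item The extension of $\mathcal C_K$ is computed by the same cocycle, viewed in $L^\times$. This holds because scalar extension preserves the chosen bases of the one-dimensional Hom spaces.
\end{itemize}
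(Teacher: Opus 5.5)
Your proposal is correct and follows essentially the same route as the paper's proof. For the converse you observe that the split form is pointed and run the field-independent skeletal classification over $K$. You then absorb the automorphism of $G$ using naturality of $H^3(G,K^\times)\to H^3(G,L^\times)$ under $\Aut(G)$, and you prove sufficiency by the same direct construction from a $K$-valued representative of $[\omega]$.
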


\begin{proof}
A split form of a pointed category is pointed because invertibility of its simple
objects may be checked after scalar extension. The skeletal proof of the cited
classification is field-independent for split categories and identifies the form with
$\Vect_{G',K}^{\eta}$ for a finite group $G'\cong G$ and
$[\eta]\in H^3(G',K^\times)$. After identifying $G'$ with $G$, scalar extension
places $[\omega]$ in the $\Aut(G)$-orbit of the image of $[\eta]$. This image is
$\Aut(G)$-stable by naturality, and hence contains $[\omega]$. Conversely, a
$K$-valued representative of a class in~\eqref{eq:pointed-split-field} defines a
split pointed category over $K$ whose scalar extension is tensor equivalent to
$\Vect_{G,L}^{\omega}$.
\end{proof}

\begin{corollary}\label{cor:pointed-split-real}
For a finite group $G$ and a normalized cocycle
$\omega\in Z^3(G,\C^\times)$,
\[
 \Vect_G^\omega\text{ has a split real form}
 \quad\Longleftrightarrow\quad
 2[\omega]=0.
\]
Equivalently, $[\omega]$ has a representative with values in $\{\pm1\}$.
\end{corollary}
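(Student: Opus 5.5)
Apply Proposition~\ref{prop:pointed-split-field} with $L/K=\C/\R$, so the task is to identify the image of $H^3(G,\R^\times)\to H^3(G,\C^\times)$ with the $2$-torsion subgroup of $H^3(G,\C^\times)$. This image equals the image of $H^3(G,\{\pm1\})$, because $\R^\times=\{\pm1\}\times\R_{>0}$ and $H^n(G,\R_{>0})=0$ for $n\ge1$: the group $\R_{>0}\cong\R$ is uniquely divisible and $G$ is finite. This already gives the final ``equivalently'' sentence once the main equivalence is proved. Every class coming from $\{\pm1\}$ is killed by $2$, which proves the implication ``split real form $\Rightarrow 2[\omega]=0$''.

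For the converse, suppose $2[\omega]=0$. I would use the coefficient sequence
\[
 1\longrightarrow\{\pm1\}\longrightarrow\C^\times\xrightarrow{\,z\mapsto z^2\,}\C^\times\longrightarrow1,
\]
which is exact because $\C$ is algebraically closed. The associated long exact sequence
\[
 H^3(G,\{\pm1\})\longrightarrow H^3(G,\C^\times)\xrightarrow{\;2\;}H^3(G,\C^\times)
\]
shows that the kernel of multiplication by $2$ is exactly the image of $H^3(G,\{\pm1\})$. Concretely, if $\omega^2=\delta\alpha$, choose a square root $\beta$ of $\alpha$ pointwise. Then $\omega\,\delta\beta^{-1}$ squares to $1$, so it takes values in $\{\pm1\}$. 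Normalization can be kept by choosing $\beta=1$ wherever $\alpha=1$.

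Hence $[\omega]$ has a $\{\pm1\}$-valued, and so real-valued, representative. Proposition~\ref{prop:pointed-split-field} then gives the split real form. There is no real obstacle here. The only points needing care are the vanishing of $H^n(G,\R_{>0})$ and the surjectivity of squaring on $\C^\times$.
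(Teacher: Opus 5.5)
Your proposal is correct and follows essentially the same route as the paper: reduce via Proposition~\ref{prop:pointed-split-field} to the image of $H^3(G,\R^\times)\to H^3(G,\C^\times)$, discard the uniquely divisible factor $\R_{>0}$, and identify the image of $H^3(G,\{\pm1\})$ with the $2$-torsion using the squaring sequence. Your explicit square-root construction of a $\{\pm1\}$-valued representative is just the cocycle-level form of the paper's exactness argument.
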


\begin{proof}
Proposition~\ref{prop:pointed-split-field} reduces the question to the image of
\[
 H^3(G,\R^\times)\longrightarrow H^3(G,\C^\times).
\]
Since $\R^\times\cong\{\pm1\}\times\R_{>0}$ and the positive reals are uniquely
divisible, their positive-degree cohomology under the finite group $G$ vanishes.
The displayed image is therefore the image of $H^3(G,\{\pm1\})$. Exactness of the
cohomology sequence associated with
\[
 1\longrightarrow\{\pm1\}\longrightarrow\C^\times
 \xrightarrow{z\mapsto z^2}\C^\times\longrightarrow1
\]
identifies this image with the kernel of multiplication by two on
$H^3(G,\C^\times)$.
\end{proof}

The equivalent condition $[\omega^2]=1$ for the existence of real
$F$-symbols is obtained independently in~\cite[Section~7]{BHPReality}.
Proposition~\ref{prop:pointed-split-field} places this criterion in the more
general setting of split forms over finite Galois extensions.

Thus every complex categorification of $\mathbb ZG$ has a split real form precisely
when $H^3(G,\C^\times)$ has exponent at most two. For example,
\cite[Example~2.6.4]{EGNO} gives
\[
 H^3(C_n,\C^\times)\cong\mathbb Z/n\mathbb Z.
\]
The category corresponding to $a\in\mathbb Z/n\mathbb Z$ has a split real form
precisely when $2a=0$.
In particular, for $C_3$ only the trivial associator class admits a split real
form, while for $C_4$ exactly the classes of order at most two do.

\section{Split forms for
\texorpdfstring{$\mathcal C(\mathfrak g,k)$}{C(g,k)}}\label{sec:Cgk}

Let $\mathfrak g$ be a complex simple Lie algebra and let $k$ be a positive
integer. Following
\cite[Section~8.18.2]{EGNO} and
\cite[Section~5.1]{GalindoMoraRowellVerlinde}, we write
$\mathcal C(\mathfrak g,k)$ for the modular category associated with the simply
connected root datum of $\mathfrak g$ at level $k$. We prove that its underlying
fusion category has a split form over a totally real cyclotomic field, and that
the same is true for all its cyclotomic Galois conjugates. The split real form is
then obtained by extension of scalars. By
Proposition~\ref{prop:framed-split-descent}, the essential step is to construct a
tensor equivalence over cyclotomic conjugation, a coherent monoidal isomorphism
$\mu:\Phi^2\Rightarrow\id$, and, for every simple object $X$, an isomorphism
$f_X:\Phi(X)\to X$ satisfying
\[
 f_X\circ\Phi(f_X)=\mu_X.
\]
We first construct the tensor involution on the ambient quantum-group module
category. We then show that it preserves the tilting category and its negligible
ideal, pass to the semisimplification, and finally construct compatible frames on
the simple objects.

\subsection{The quantum-group realization}

Let $h^\vee$ be the dual Coxeter number of $\mathfrak g$ and let
$r^\vee\in\{1,2,3\}$ be its lacing number, the ratio of the squared length of a
long root to that of a short root. Put
\[
 \ell=r^\vee(k+h^\vee),\qquad
 \zeta=\exp\!\left(\frac{\pi i}{\ell}\right).
\]
Thus $\zeta$ is a primitive root of unity of order $N=2\ell$. Set
\begin{equation}\label{eq:Cgk-cyclotomic-fields}
 L=\Q(\zeta),
 \qquad
 K=L^+=\Q(\zeta+\zeta^{-1}).
\end{equation}
Since $N>2$, the field $K$ is the maximal totally real subfield of $L$, and
\[
 \Gal(L/K)=\langle\gamma\rangle,
 \qquad
 \gamma(\zeta)=\zeta^{-1}.
\]
In the notation of
\cite[Section~3.3]{BakalovKirillov}, the shifted level is $k+h^\vee$ and the
parameter denoted there by $q$ is our $\zeta$. For comparison, the parameters
$D,l,l'$ of~\cite[Introduction and Section~2]{SawinQuantum} have the values
\[
 D=r^\vee,\qquad l=2\ell,\qquad l'=\ell,
\]
and hence $2D\mid l$ and $D\mid l'$. In particular, this is the integer-level case
of~\cite[Remark~8]{SawinQuantum}.

For the simply connected root datum, let $P$ be the weight lattice and let
$Y=Q^\vee$ be the cocharacter lattice, which is the coroot lattice. Write
$\langle\ ,\ \rangle:P\times Y\to\mathbb Z$ for the natural pairing. Let $P_+$
be the set of dominant integral weights and let $\theta^\vee$ be the coroot of
the highest long root. Put
\[
 \Lambda_k=\{\lambda\in P_+\mid
 \langle\lambda,\theta^\vee\rangle\leq k\}.
\]
In the conventions of~\cite[Section~3.3]{BakalovKirillov}, the open principal
alcove is
\[
 \{\lambda\in P_+\mid
   \langle\lambda+\rho,\theta^\vee\rangle<k+h^\vee\}.
\]
Since $\langle\rho,\theta^\vee\rangle=h^\vee-1$ and the pairings are integral,
this alcove is precisely $\Lambda_k$. This also verifies the numerical hypothesis
in~\cite[Theorem~2]{SawinQuantum}, because its parameters satisfy
$l'=\ell=r^\vee(k+h^\vee)\geq r^\vee h^\vee=D h^\vee$.

We recall the quantum-group conventions used below; compare
\cite[Section~3.1]{RowellUnitaryQuantumGroups} for an overview and
\cite[Sections~3.1.1--3.1.4]{LusztigQuantumGroups} for the algebra and its Hopf
structure. Let $I$ index the simple roots $\alpha_i$ and coroots
$\alpha_i^\vee$. Normalize the invariant form by requiring every short root to
have squared length $2$, and put
\[
 d_i=\frac{(\alpha_i,\alpha_i)}2,\qquad
 v_i=v^{d_i},\qquad
 \widetilde K_i=K_{d_i\alpha_i^\vee}.
\]
Thus $d_i=1$ for a short root and $d_i=r^\vee$ for a long root. Over
$\Q(v)$, the simply connected Drinfeld--Jimbo algebra $U_v(\mathfrak g)$ is
generated by $E_i,F_i$ $(i\in I)$ and $K_\mu$ $(\mu\in Y)$. To fix the
conventions used below, we record the following relations:
\begin{align*}
 K_0&=1,& K_\mu K_\nu&=K_{\mu+\nu},\\
 K_\mu E_iK_{-\mu}&=v^{\langle\alpha_i,\mu\rangle}E_i,
 &K_\mu F_iK_{-\mu}&=v^{-\langle\alpha_i,\mu\rangle}F_i,\\
 [E_i,F_j]&=\delta_{ij}
 \frac{\widetilde K_i-\widetilde K_i^{-1}}{v_i-v_i^{-1}}.
\end{align*}
We use the coproduct convention
\begin{equation}\label{eq:quantum-coproduct}
 \begin{aligned}
  \Delta(K_\mu)&=K_\mu\otimes K_\mu,\\
  \Delta(E_i)&=E_i\otimes1+\widetilde K_i\otimes E_i,\\
  \Delta(F_i)&=1\otimes F_i+F_i\otimes\widetilde K_i^{-1}.
 \end{aligned}
\end{equation}

Put $\mathcal A=\mathbb Z[v,v^{-1}]$ and
\[
 [n]_{v_i}=\frac{v_i^n-v_i^{-n}}{v_i-v_i^{-1}},\qquad
 E_i^{(n)}=\frac{E_i^n}{[n]_{v_i}!},\qquad
 F_i^{(n)}=\frac{F_i^n}{[n]_{v_i}!}\qquad(n\geq0).
\]
Lusztig's simply connected divided-power form
$U_{\mathcal A}=U_{\mathcal A}(\mathfrak g)$ is the $\mathcal A$-subalgebra
generated by these divided powers and the elements $K_\mu$ $(\mu\in Y)$
\cite[Section~3.1.13]{LusztigQuantumGroups}. The coproduct formulas for divided
powers show that it is stable under~\eqref{eq:quantum-coproduct}
\cite[Section~3.1.5]{LusztigQuantumGroups}. Write
\[
 U_{\zeta,L}(\mathfrak g)=L\otimes_{\mathcal A}U_{\mathcal A},
 \qquad
 U_\zeta(\mathfrak g)=\C\otimes_LU_{\zeta,L}(\mathfrak g),
\]
where $\mathcal A$ acts through $v\mapsto\zeta$.

A finite-dimensional $U_\zeta(\mathfrak g)$-module is of \emph{type~$1$} if it
has a weight decomposition
\[
 M=\bigoplus_{\lambda\in P}M_\lambda,
 \qquad
 K_\mu m=\zeta^{\langle\lambda,\mu\rangle}m
 \quad(m\in M_\lambda,\ \mu\in Y),
\]
and it is \emph{integrable} if every $E_i$ and $F_i$ acts locally nilpotently. A
type~$1$ module is \emph{tilting} if it admits both a Weyl filtration and a dual
Weyl filtration, equivalently if it and its dual admit Weyl filtrations; see
\cite[Section~3.1]{RowellUnitaryQuantumGroups} and
\cite[Section~3.3]{BakalovKirillov}. Let $\mathcal T_\zeta$ be the category of
finite-dimensional type~$1$ tilting modules. Its ribbon structure is given in
\cite[Corollary~3.3.14]{BakalovKirillov}.

For $\lambda\in P_+$, denote by $\Delta_\zeta(\lambda)$,
$L_\zeta(\lambda)$, and $T_\zeta(\lambda)$ the Weyl, simple, and indecomposable
tilting modules of highest weight $\lambda$.

\begin{lemma}\label{lem:Cgk-tilting-semisimplification}
At the root of unity $\zeta$ fixed above, the following statements hold.
\begin{enumerate}[label=\textup{(\alph*)}]
\item For every $\lambda\in\Lambda_k$,
\begin{equation}\label{eq:alcove-simple-module}
 T_\zeta(\lambda)\cong\Delta_\zeta(\lambda)\cong L_\zeta(\lambda)
 \qquad(\lambda\in\Lambda_k).
\end{equation}
\item Every tilting module is a direct sum of the modules in
\textup{(a)} and indecomposable tiltings $T_\zeta(\lambda)$ with
$\lambda\notin\Lambda_k$. The latter are negligible.
\item Let $\mathcal I_{\mathrm{neg}}$ be the trace ideal: a morphism
$f:X\to Y$ belongs to $\mathcal I_{\mathrm{neg}}$ when the quantum trace of
$gf$ vanishes for every $g:Y\to X$. Then $\mathcal I_{\mathrm{neg}}$ is precisely
the ideal of morphisms factoring through direct sums of the negligible
indecomposable tiltings. The quotient
\begin{equation}\label{eq:Cgk-quantum-realization}
 \mathcal C(\mathfrak g,k)
 \simeq\mathcal T_\zeta/\mathcal I_{\mathrm{neg}}.
\end{equation}
The quotient is semisimple, has simple objects indexed by $\Lambda_k$, and is
modular.
\end{enumerate}
\end{lemma}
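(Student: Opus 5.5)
This lemma is essentially a compilation of standard results, so I will prove it by assembling the literature with the parameter normalizations already fixed above, rather than by new arguments. The order of work is: verify that our $\zeta$, $\ell$ and alcove match the hypotheses of the cited theorems; deduce (a) from the linkage principle; deduce (b) from the tilting classification together with the vanishing of quantum dimensions; and then deduce (c) from the general theory of semisimplification of spherical categories.

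For (a), I would use the quantum linkage principle of Andersen \cite{AndersenParadowski}, in the normalization of \cite[Theorem~2]{SawinQuantum}, whose numerical hypothesis $l'\geq Dh^\vee$ was checked above. The affine Weyl group acts on $P$ through the dot action, with reflections in the walls $\langle\lambda+\rho,\beta^\vee\rangle\in(\ell/d_\beta)\mathbb Z$. With our choices, the open principal alcove is exactly $\Lambda_k$, as computed above. For $\lambda\in\Lambda_k$, every weight $\mu<\lambda$ that is linked to $\lambda$ lies outside the dominant part below $\lambda$. Hence $\Delta_\zeta(\lambda)$ has no composition factor other than $L_\zeta(\lambda)$, which gives $\Delta_\zeta(\lambda)=L_\zeta(\lambda)$. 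The same argument applied to the dual Weyl module shows it is simple and self-dual up to the weight involution. A module that is simultaneously Weyl and dual Weyl is tilting, and indecomposable tiltings are classified by highest weight, so $T_\zeta(\lambda)\cong\Delta_\zeta(\lambda)$.

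For (b), every tilting module is a direct sum of $T_\zeta(\mu)$ with $\mu\in P_+$, by the Ringel--Donkin classification. The negligibility of $T_\zeta(\mu)$ for $\mu\notin\Lambda_k$ is the statement that its quantum dimension vanishes, together with the fact that the trace of any endomorphism vanishes as well. Here I would invoke \cite{AndersenParadowski}, \cite[Section~3.3]{BakalovKirillov} and \cite[Theorem~2 and Remark~8]{SawinQuantum}. The mechanism is that such a $T_\zeta(\mu)$ has a Weyl filtration whose factors are grouped in linkage orbits, and the Weyl character formula makes $\dim_\zeta\Delta_\zeta(\nu)$ vanish on walls and alternate in sign across them. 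The main obstacle in the whole lemma is exactly this step: the non-simply-laced case, where $\zeta$ is a $2r^\vee(k+h^\vee)$-th root and short and long roots see different orders. The point to stress is that Sawin's treatment with $D\mid l'$ and $2D\mid l$ is precisely designed to cover this case, and the parameter comparison above is what makes it applicable.

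For (c), I would argue as follows. In a pivotal Krull--Schmidt category whose indecomposables are either of nonzero dimension with endomorphism ring $\C$ (or $L$), or negligible, the trace ideal equals the ideal of morphisms factoring through negligible summands. The quotient is then semisimple with simples the non-negligible indecomposables, namely $\Lambda_k$ by (a) and (b). This is the standard semisimplification argument, as in \cite[Section~3.3]{BakalovKirillov}. The ribbon structure \cite[Corollary~3.3.14]{BakalovKirillov} descends because $\mathcal I_{\mathrm{neg}}$ is a tensor ideal. The identification with $\mathcal C(\mathfrak g,k)$ of \cite[Section~8.18.2]{EGNO} and \cite[Section~5.1]{GalindoMoraRowellVerlinde} is by definition, or by Finkelberg's theorem if one uses the affine definition. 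Modularity follows from the invertibility of the $S$-matrix, computed via the Kac--Peterson formula in \cite[Theorem~3.3.20]{BakalovKirillov}.
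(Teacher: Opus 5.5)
Your route is the same as the paper's. Part (a) and the decomposition in (b) come from linkage and the classification of indecomposable tiltings. Negligibility outside $\Lambda_k$ is quoted from Andersen--Paradowski, Bakalov--Kirillov and Sawin in the present normalization; your walls $(\lambda+\rho,\beta)\in\ell\mathbb Z$ correctly cut out exactly $\Lambda_k$. Modularity comes from \cite[Theorem~3.3.20]{BakalovKirillov}.

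The one step that fails as written is the general principle you invoke in (c). Take a Krull--Schmidt pivotal category whose indecomposables either have nonzero dimension and endomorphism ring $\C$, or are negligible. It is \emph{not} true in general that its trace ideal consists of the morphisms factoring through negligible objects. Suppose $X\not\cong Y$ are non-negligible indecomposables with $\operatorname{End}=\C$. Then every $f:X\to Y$ is trace-negligible, since each composite $gf$ is a non-invertible, hence zero, endomorphism of $X$. Yet $f$ need not factor through a negligible object.

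For instance, in $\Rep_{\overline{\F}_3}(S_3)$ let $U$ be the uniserial module with head $\mathrm{sgn}$ and socle the trivial module. Then $\dim U=2\neq0$, $\operatorname{End}(U)=\overline{\F}_3$, and the socle inclusion of the trivial module into $U$ is such a morphism. Every composite of it through the only negligible indecomposables, the two projective covers, is zero. Since this identification of $\mathcal I_{\mathrm{neg}}$ is one of the assertions of (c), the general principle cannot be used as stated.

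The fix uses what you already proved in (a). The non-negligible indecomposable tiltings are pairwise non-isomorphic \emph{simple} modules. By Schur's lemma there are no nonzero morphisms between distinct ones. Together with $\dim_\zeta L_\zeta(\lambda)\neq0$ for $\lambda\in\Lambda_k$, which follows from the quantum Weyl dimension formula, the trace pairing on the alcove part is nondegenerate. This is exactly the paper's argument: a trace-negligible morphism has zero alcove-to-alcove block, hence factors through the negligible summands. Conversely, every endomorphism of a negligible tilting has zero quantum trace. With that hypothesis made explicit, your proof of (c) goes through.
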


\begin{proof}
The linkage principle and the tilting-module theorems give~\textup{(a)} and the
decomposition in~\textup{(b)}; see
\cite[Theorems~3.3.9 and~3.3.13]{BakalovKirillov}. The negligibility of the
indecomposable summands outside the alcove follows from
\cite[Lemma~3.3.16]{BakalovKirillov} and, in the present normalization, from
\cite[Theorem~2]{SawinQuantum}. These results originate in the tilting construction
and reduced tensor product of~\cite[Sections~3.18--3.21]{AndersenParadowski}; see
also~\cite[Theorem~3.5]{KirillovInnerProduct}.

For~\textup{(c)}, decompose each tilting module as the direct sum of its alcove
part and its negligible part. The trace pairing on the full subcategory generated
by the alcove simples is nondegenerate, since these simples have nonzero quantum
dimension. Hence a trace-negligible morphism has zero alcove-to-alcove block and
therefore factors through the sum of the negligible parts of its source and target.
The converse follows because every endomorphism of a negligible tilting has zero
quantum trace. This proves the asserted description of the ideal; compare
\cite[Lemmas~3.3.16--3.3.18]{BakalovKirillov},
\cite[Definition~3.6]{KirillovInnerProduct}, and
\cite[Section~3.3(c)--(d)]{WenzlCStar}. The quotient description and
semisimplicity are also given in~\cite[Theorem~5]{SawinQuantum}. Modularity in the
integer-level, simply connected case follows from
\cite[Theorem~3.3.20]{BakalovKirillov}; see also
\cite[Theorem~6 and Remark~8]{SawinQuantum}.
\end{proof}

\subsection{The tensor-product bar}

The bar involution of $U_{\mathcal A}$ is the $\mathbb Z$-linear algebra
involution determined by
\begin{equation}\label{eq:quantum-bar}
 \overline v=v^{-1},\qquad
 \overline{E_i^{(n)}}=E_i^{(n)},\qquad
 \overline{F_i^{(n)}}=F_i^{(n)},\qquad
 \overline{K_\mu}=K_{-\mu}.
\end{equation}
Here $\mu\in Y$. The bar of
\cite[Section~3.1.12]{LusztigQuantumGroups} preserves the integral form defined
in~\cite[Section~3.1.13]{LusztigQuantumGroups}, since it fixes the divided powers
and sends $K_\mu$ to $K_{-\mu}$. The algebra bar is not a Hopf involution for the coproduct
in~\eqref{eq:quantum-coproduct}; for example, with the bar acting componentwise,
\[
 \overline{\Delta(E_i)}
 =E_i\otimes1+\widetilde K_i^{-1}\otimes E_i
 \neq\Delta(\overline{E_i}).
\]
In the following formula, the bar on scalars is complex conjugation and the bar on
$U_{\mathcal A}$ is the involution in~\eqref{eq:quantum-bar}. Since
$\overline\zeta=\zeta^{-1}$, the formula
\[
 \sigma_\zeta(a\otimes x)=\overline a\otimes\overline x
 \qquad(a\in\C,\ x\in U_{\mathcal A})
\]
is well defined on $U_\zeta(\mathfrak g)$ and defines a conjugate-linear algebra
involution. Thus $\sigma_\zeta$ denotes the specialized bar.
For a complex vector space $M$, write $\overline M$ for its conjugate vector space,
whose elements are denoted $\overline m$ and whose scalar multiplication is
$a\,\overline m=\overline{\overline a m}$.

Define a conjugate-linear functor on finite-dimensional type~$1$ integrable
$U_\zeta(\mathfrak g)$-modules by
\begin{equation}\label{eq:bar-twisted-module}
 \Phi(M)=\overline M,\qquad
 u\cdot\overline m=\overline{\sigma_\zeta(u)m},\qquad
 \Phi(f)(\overline m)=\overline{f(m)}.
\end{equation}

The evident vector-space identification
\[
 \Phi(M)\otimes\Phi(N)\longrightarrow\Phi(M\otimes N),
 \qquad
 \overline m\otimes\overline n\longmapsto\overline{m\otimes n},
\]
is not generally a module map, because
$\Delta\circ\sigma_\zeta\neq
(\sigma_\zeta\otimes\sigma_\zeta)\circ\Delta$.
Lusztig's tensor-product bar
\cite[Sections~27.3.1--27.3.3 and~27.3.6]{LusztigQuantumGroups} uses the
quasi-$R$-matrix to supply an intertwiner between these two actions and hence the
required tensorator.
Write $\Theta=\sum_\nu\Theta_\nu$ for the quasi-$R$-matrix of
\cite[Section~4.1]{LusztigQuantumGroups}.

\begin{lemma}\label{lem:specialized-quasi-R}
For finite-dimensional type~$1$ integrable modules $M,N$, every homogeneous
component $\Theta_\nu$ specializes at $v=\zeta$, and the sum of the specialized
operators
\[
 \Theta_\zeta=\sum_\nu(\Theta_\nu)_\zeta
\]
is locally finite on $M\otimes N$ and defines a natural invertible operator
satisfying
\begin{equation}\label{eq:quasi-R-intertwining}
 \Delta(\sigma_\zeta(u))\Theta_\zeta
 =\Theta_\zeta(\sigma_\zeta\otimes\sigma_\zeta)\Delta(u)
 \qquad(u\in U_\zeta(\mathfrak g)).
\end{equation}
On triple tensor products one has
\begin{equation}\label{eq:quasi-R-associativity}
 (\Delta\otimes\id)(\Theta_\zeta)\Theta_\zeta^{12}
 =(\id\otimes\Delta)(\Theta_\zeta)\Theta_\zeta^{23},
\end{equation}
and on every tensor product,
\begin{equation}\label{eq:bar-quasi-R-inverse}
 (\sigma_\zeta\otimes\sigma_\zeta)(\Theta_\zeta)=\Theta_\zeta^{-1}.
\end{equation}
These are identities of locally finite operators.
\end{lemma}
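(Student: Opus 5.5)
We will deduce Lemma~\ref{lem:specialized-quasi-R} from the generic identities of Lusztig over $\Q(v)$, followed by specialization through the integral form. The properties of $\Theta$ recorded in \cite[Sections~4.1.2--4.1.4 and~4.2]{LusztigQuantumGroups} are the following. Each component $\Theta_\nu$, for $\nu$ in the positive root cone, is of the form $\Theta_\nu=\sum_b c_b\,b^-\otimes b'^+$. It lies in a completion of $U^-\otimes U^+$ (or $U^+\otimes U^-$, depending on the ordering convention). It satisfies $\Delta(u)\Theta=\Theta\,\overline{\Delta}(\overline u)$, where $\overline\Delta=(\bar{}\otimes\bar{})\circ\Delta\circ\bar{}$. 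It satisfies $\Theta\overline\Theta=1$. On triple products it satisfies the cocycle-type identity, via \cite[Theorem~4.1.2 and Section~4.2.4]{LusztigQuantumGroups}. The identities are stated for the opposite coproduct convention, so we first translate them to \eqref{eq:quantum-coproduct}, possibly by applying the antipode or $\omega$. Unwinding the definitions then shows that the intertwining identity is exactly \eqref{eq:quasi-R-intertwining} with $\sigma_\zeta$ in place of the generic bar, that $\Theta\overline\Theta=1$ is \eqref{eq:bar-quasi-R-inverse}, and that the triple identity is \eqref{eq:quasi-R-associativity}.

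The key integrality step is that $\Theta_\nu\in U^-_{\mathcal A}\otimes U^+_{\mathcal A}$ with respect to the divided-power forms. This is \cite[Corollary~24.1.6]{LusztigQuantumGroups}: in dual canonical bases, $\Theta_\nu=\sum_b b^-\otimes b^{*+}$ up to signs and powers of $v$, with coefficients in $\mathcal A$. Hence $(\Theta_\nu)_\zeta$ is a well-defined element of $U_\zeta^-\otimes U_\zeta^+$. Each generic identity becomes, after expansion into weight components, a finite identity in each fixed bidegree among elements of $U_{\mathcal A}^{\otimes 2}$ or $U_{\mathcal A}^{\otimes 3}$. The intertwining identity is checked on the generators $E_i^{(n)},F_i^{(n)},K_\mu$ of $U_{\mathcal A}$. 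These identities therefore specialize along $v\mapsto\zeta$, and the bar specializes to $\sigma_\zeta$ because $\overline\zeta=\zeta^{-1}$. We still have to check that an identity holding in $U_{\mathcal A}^{\otimes2}$ really lies there and is not merely in $U_{\Q(v)}$. This follows because both sides are integral and $U_{\mathcal A}$ is a free $\mathcal A$-module with PBW basis, so a relation among integral elements persists after base change.

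Local finiteness follows from the type~$1$ weight decompositions. On $M\otimes N$, the component $\Theta_\nu$ shifts weights by $(-\nu,\nu)$, and $M,N$ have finitely many weights, so only finitely many $\Theta_\nu$ act nonzero. Naturality holds because $\Theta_\zeta$ acts through elements of $U_\zeta^{\otimes2}$. Invertibility holds because $\Theta_0=1\otimes1$, so $\Theta_\zeta$ is unipotent with respect to the weight filtration; alternatively, \eqref{eq:bar-quasi-R-inverse} exhibits the inverse directly.

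The main obstacle is bookkeeping of conventions rather than conceptual. Lusztig's coproduct and his $\widetilde K_i$ placement differ from \eqref{eq:quantum-coproduct}, and the factors $K$ may appear in the identities. We will fix an explicit Hopf anti- or automorphism identifying the two conventions, check that it preserves the integral form and commutes with the bar, and transport Lusztig's three identities through it. We will then verify the intertwining relation directly on $E_i$, $F_i$, $K_\mu$ at the generic level as a sanity check, and only then specialize.
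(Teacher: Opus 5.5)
Your proposal is correct and follows the paper's proof: integrality of each $\Theta_\nu$ from Lusztig's Corollary~24.1.6, specialization at $v=\zeta$ of the intertwining identity (Theorem~4.1.2), of $\overline\Theta=\Theta^{-1}$ (Corollary~4.1.3) and of the cocycle identity (Proposition~4.2.4), and local finiteness read off from weights. Two minor corrections: \eqref{eq:quantum-coproduct} is already Lusztig's coproduct from Section~3.1.4, so no translation by the antipode or $\omega$ is needed; and Lusztig's identity is $\Delta(u)\Theta=\Theta\,\overline{\Delta(\overline u)}$, whereas your formula has an extra bar that already fails for $u=K_\mu$, so you must replace $u$ by $\overline u$ before specializing to obtain \eqref{eq:quasi-R-intertwining}.
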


\begin{proof}
In finite type, Corollary~24.1.6 of~\cite{LusztigQuantumGroups} expresses each
$\Theta_\nu$ in the canonical bases of the negative and positive parts with
coefficients in $\mathcal A$. Thus every homogeneous component specializes at
$v=\zeta$. On a vector in a finite-dimensional integrable tensor product, only
finitely many components act nontrivially; hence their sum defines the asserted
locally finite natural operator.

For $x\in U_{\mathcal A}$, Theorem~4.1.2 of~\cite{LusztigQuantumGroups} gives
\[
 \Delta(x)\Theta=\Theta\,\overline{\Delta(\overline x)},
\]
where the outer bar acts componentwise on
$U_{\mathcal A}\otimes U_{\mathcal A}$. Replacing $x$ by $\overline x$ and
specializing gives~\eqref{eq:quasi-R-intertwining}.
Proposition~4.2.4 of~\cite{LusztigQuantumGroups}, summed over the homogeneous
components and then specialized, gives~\eqref{eq:quasi-R-associativity}.
Finally, Corollary~4.1.3 of~\cite{LusztigQuantumGroups} gives
$\overline\Theta=\Theta^{-1}$ in the completed tensor product; specialization on
integrable modules gives~\eqref{eq:bar-quasi-R-inverse}.
\end{proof}

For modules $M,N$, use the operator of
Lemma~\ref{lem:specialized-quasi-R} to define
\begin{align}
 \Phi_2(M,N):\Phi(M)\otimes\Phi(N)&\longrightarrow\Phi(M\otimes N),
 &\Phi_2(M,N)(\overline m\otimes\overline n)
  &=\overline{\Theta_\zeta(m\otimes n)},
  \label{eq:bar-tensorator}\\
 \mu_M:\Phi^2(M)&\longrightarrow M,
 &\mu_M(\overline{\overline m})&=m.
 \label{eq:double-bar-mu}
\end{align}
Let $\Phi_0:\mathbf 1\to\Phi(\mathbf 1)$ be the map
$\C\to\overline\C$ sending $1$ to $\overline1$.

\begin{proposition}\label{prop:quantum-bar-tensor-involution}
The data $(\Phi,\Phi_2,\Phi_0,\mu)$ define a coherent conjugate-linear tensor
involution on the category of finite-dimensional type~$1$ integrable
$U_\zeta(\mathfrak g)$-modules.
\end{proposition}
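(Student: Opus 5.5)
I will verify the axioms one at a time, using Lemma~\ref{lem:specialized-quasi-R} for each of them. First I check that $\Phi$ is a well-defined conjugate-linear endofunctor of the category of finite-dimensional type~$1$ integrable modules. Since $\sigma_\zeta$ is a conjugate-linear algebra involution, the formula~\eqref{eq:bar-twisted-module} defines a $U_\zeta(\mathfrak g)$-module structure on $\overline M$. Because $\sigma_\zeta(K_\mu)=K_{-\mu}$ and $\overline{\zeta^{\langle\lambda,-\mu\rangle}}=\zeta^{\langle\lambda,\mu\rangle}$, the weight space $M_\lambda$ becomes the weight-$\lambda$ space of $\overline M$. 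So type~$1$ is preserved, weights are unchanged, and local nilpotence of $E_i,F_i$ is preserved because the bar fixes divided powers. Morphisms go to morphisms, and $\Phi$ is conjugate-linear on Hom spaces. Next, $\mu_M$ is a module map because $\sigma_\zeta^2=\id$, and it is natural. The map $\Phi_0$ is a module map because the counit is bar-invariant: $\varepsilon(\sigma_\zeta(u))=\overline{\varepsilon(u)}$ on generators.

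The second step is to show that $\Phi_2(M,N)$ is a natural module isomorphism. Naturality follows because $\Theta_\zeta$ is a natural operator built from the action of $U_\zeta\otimes U_\zeta$. Invertibility comes from the invertibility of $\Theta_\zeta$. For the module-map property, I act by $u$ on $\overline m\otimes\overline n$ in $\Phi(M)\otimes\Phi(N)$, which gives $\overline{(\sigma_\zeta\otimes\sigma_\zeta)\Delta(u)(m\otimes n)}$. Applying $\Phi_2$ yields $\overline{\Theta_\zeta(\sigma_\zeta\otimes\sigma_\zeta)\Delta(u)(m\otimes n)}$. On the other side, $u$ acting on $\Phi_2(\overline m\otimes\overline n)$ gives $\overline{\Delta(\sigma_\zeta(u))\Theta_\zeta(m\otimes n)}$. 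These two agree by~\eqref{eq:quasi-R-intertwining}. I also need to note that $\Theta_\zeta$ has constant term $\Theta_0=1$, and that $\Theta_\nu$ lies in the completion of $U^-_\nu\otimes U^+_\nu$. Hence $\Theta_\zeta$ acts trivially whenever one factor is $\mathbf 1$, which gives the unit axioms for $(\Phi_2,\Phi_0)$.

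The third step is the hexagon, or associativity, axiom for $\Phi_2$. Both composites $\Phi(M)\otimes\Phi(N)\otimes\Phi(P)\to\Phi(M\otimes N\otimes P)$ are the bar of an operator on $M\otimes N\otimes P$. One composite is $\Phi_2(M\otimes N,P)\circ(\Phi_2(M,N)\otimes\id)$, which sends $\overline x$ to $\overline{(\Delta\otimes\id)(\Theta_\zeta)\,\Theta^{12}_\zeta x}$. The other composite gives $\overline{(\id\otimes\Delta)(\Theta_\zeta)\,\Theta^{23}_\zeta x}$. Here I must check carefully that the operator applied second acts on the already-transformed vector in the stated order, since the bar is applied only once at the end and the conjugation just passes through. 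These two agree by~\eqref{eq:quasi-R-associativity}, and the associators are trivial vector-space identifications.

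Finally, I check that $\mu:\Phi^2\Rightarrow\id$ is monoidal and satisfies the involution coherence $\Phi(\mu_M)=\mu_{\Phi(M)}$. The latter is immediate from the definitions, since both sides send $\overline{\overline{\overline m}}\mapsto\overline m$. For monoidality, I first compute the tensorator of $\Phi^2$, namely $\Phi(\Phi_2(M,N))\circ\Phi_2(\Phi M,\Phi N)$. Its second factor applies $\Theta_\zeta$ acting on $\Phi M\otimes\Phi N$, that is, through $\sigma_\zeta\otimes\sigma_\zeta$. After unwinding, the composite is the double bar of $(\sigma_\zeta\otimes\sigma_\zeta)(\Theta_\zeta)$ followed by $\Theta_\zeta$ (in the appropriate order) on $M\otimes N$. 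By~\eqref{eq:bar-quasi-R-inverse} this is $\Theta_\zeta^{-1}\Theta_\zeta=\id$, so the tensorator of $\Phi^2$ corresponds under $\mu$ to the identity, and similarly for the unit. I expect this last bookkeeping step to be the main obstacle. Keeping track of which copy of $\Theta_\zeta$ acts through the twisted action, and in which order the conjugations land, is where the sign and order conventions of Lusztig's $\Theta$ versus $\overline\Theta$ must be matched precisely. The local finiteness in Lemma~\ref{lem:specialized-quasi-R} ensures that all these manipulations of infinite sums are legitimate.
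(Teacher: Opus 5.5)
Your proposal is correct and follows essentially the same route as the paper's proof. Both arguments get well-definedness from $\sigma_\zeta$ being a conjugate-linear algebra involution that preserves weights and fixes divided powers. The module-map property of $\Phi_2$ comes from the intertwining identity for $\Theta_\zeta$, associativity from the cocycle identity $(\Delta\otimes\id)(\Theta_\zeta)\Theta_\zeta^{12}=(\id\otimes\Delta)(\Theta_\zeta)\Theta_\zeta^{23}$, the unit axioms from $\Theta_0=1\otimes1$, and monoidality of $\mu$ from $(\sigma_\zeta\otimes\sigma_\zeta)(\Theta_\zeta)=\Theta_\zeta^{-1}$. Along the way you make explicit some points the paper leaves implicit: $\mu_M$ is a module map because $\sigma_\zeta^2=\id$, $\Phi_0$ is a module map because the counit is bar-compatible, and the components $\Theta_\nu$ with $\nu\neq0$ act by zero when one factor is $\mathbf 1$.
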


\begin{proof}
The functor in~\eqref{eq:bar-twisted-module} is well defined because $\sigma_\zeta$
is a conjugate-linear algebra involution. If $M_\lambda$ is a weight space, then
$\Phi(M)_\lambda=\overline{M_\lambda}$ because
$\sigma_\zeta(K_\mu)=K_{-\mu}$ and $\overline\zeta=\zeta^{-1}$.
Moreover, $\sigma_\zeta$ fixes the divided powers of the $E_i$ and $F_i$.
Consequently, $\Phi$ preserves type~$1$ modules and integrability.

The identities in Lemma~\ref{lem:specialized-quasi-R} control the tensor and
coherence axioms.
For a pure tensor $x=m\otimes n\in M\otimes N$, applying the action
before~\eqref{eq:bar-tensorator} gives
\[
 \overline{\Theta_\zeta(\sigma_\zeta\otimes\sigma_\zeta)\Delta(u)x},
\]
whereas applying the tensorator first gives
\[
 \overline{\Delta(\sigma_\zeta(u))\Theta_\zeta x}.
\]
These expressions agree by~\eqref{eq:quasi-R-intertwining}. Hence $\Phi_2(M,N)$ is
a natural module isomorphism; its inverse is obtained from $\Theta_\zeta^{-1}$.

Equation~\eqref{eq:quasi-R-associativity} gives the pentagon for $\Phi_2$, and the
degree-zero term $\Theta_0=1\otimes1$ gives the unit axioms for $\Phi_2$ and
$\Phi_0$. Under $\mu$, the tensorator of $\Phi^2$ acts by
$\Theta_\zeta(\sigma_\zeta\otimes\sigma_\zeta)(\Theta_\zeta)=1$.
Equation~\eqref{eq:bar-quasi-R-inverse} therefore shows that $\mu$ is monoidal, and
$\Phi(\mu_M)=\mu_{\Phi(M)}$ follows directly from
$\overline{\overline{\overline m}}\mapsto\overline m$. Compatibility with the unit
follows from the degree-zero component of $\Theta$, which is $1$, and the definition
of $\Phi_0$.
\end{proof}

\subsection{Descent to
\texorpdfstring{$\mathcal C(\mathfrak g,k)$}{C(g,k)}}

\begin{lemma}\label{lem:bar-preserves-tilting-negligible}
The tensor involution of Proposition~\ref{prop:quantum-bar-tensor-involution}
preserves the tilting category $\mathcal T_\zeta$ and the ideal
$\mathcal I_{\mathrm{neg}}$. It therefore descends, together with its tensorator
and coherence, to a conjugate-linear tensor involution of
$\mathcal C(\mathfrak g,k)$.
\end{lemma}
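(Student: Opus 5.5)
The plan is to show that $\Phi$ sends tilting modules to tilting modules and negligible morphisms to negligible morphisms. Once that is done, the tensorator $\Phi_2$ and the coherence $\mu$ of Proposition~\ref{prop:quantum-bar-tensor-involution} restrict to $\mathcal T_\zeta$ and pass to the quotient~\eqref{eq:Cgk-quantum-realization}.

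\emph{Step 1: $\Phi$ preserves Weyl modules.} I would first identify $\Phi(\Delta_\zeta(\lambda))$ for $\lambda\in P_+$. By the proof of Proposition~\ref{prop:quantum-bar-tensor-involution}, $\Phi$ preserves weight spaces, so $\Phi(\Delta_\zeta(\lambda))$ has the same character as $\Delta_\zeta(\lambda)$. It is generated by $\overline{v_\lambda}$. This vector is a highest-weight vector of weight $\lambda$, because $\sigma_\zeta$ fixes every $E_i^{(n)}$. Moreover, $\sigma_\zeta$ preserves the defining relations of the Weyl module: it fixes the divided powers, and it sends $K_\mu-\zeta^{\langle\lambda,\mu\rangle}$ to $K_{-\mu}-\zeta^{-\langle\lambda,\mu\rangle}$, which still annihilates a vector of weight $\lambda$. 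The universal property of $\Delta_\zeta(\lambda)$, which is the specialization of the $\mathcal A$-lattice generated by the highest-weight vector, then gives a surjection $\Delta_\zeta(\lambda)\to\Phi(\Delta_\zeta(\lambda))$. Since the two modules have equal dimension, this surjection is an isomorphism.

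Since $\Phi$ is exact and commutes with the identity on underlying filtrations, it carries Weyl filtrations to Weyl filtrations. For dual Weyl filtrations, I would show that $\Phi$ commutes with duals up to isomorphism. The cleanest route is to note that $\Phi$ is a tensor functor, so it preserves rigidity and sends $M^*$ to a dual of $\Phi(M)$. Hence $\Phi(M)^*\cong\Phi(M^*)$ has a Weyl filtration whenever $M^*$ does. With the characterization of tilting modules recalled before Lemma~\ref{lem:Cgk-tilting-semisimplification}, this shows that $\Phi(\mathcal T_\zeta)\subseteq\mathcal T_\zeta$.

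\emph{Step 2: $\Phi$ preserves the ideal.} By Lemma~\ref{lem:Cgk-tilting-semisimplification}(c), it suffices to show that $\Phi$ sends negligible indecomposables $T_\zeta(\lambda)$, with $\lambda\notin\Lambda_k$, to negligible ones. Indecomposability is preserved because $\Phi$ is an autoequivalence ($\mu$ gives $\Phi^2\cong\id$). The highest weight is preserved, since characters are preserved, so $\Phi(T_\zeta(\lambda))\cong T_\zeta(\lambda)$. Morphisms factoring through sums of such modules therefore map to morphisms of the same kind. A more intrinsic alternative would be to check that the quantum trace transforms by conjugation under $\Phi$, since the pivotal element $K_{2\rho}$-type is sent to its inverse, compatibly with the conjugated pivotal structure. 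The factorization description avoids that computation, so I would use it.

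\emph{Step 3: descend.} The ideal is tensor, and $\Phi_2$ and $\mu$ are natural isomorphisms. So $\Phi$, $\Phi_2$, $\Phi_0$ and $\mu$ induce data on $\mathcal T_\zeta/\mathcal I_{\mathrm{neg}}$, and the coherence identities descend from Proposition~\ref{prop:quantum-bar-tensor-involution}. The main obstacle is Step~1: making the identification $\Phi(\Delta_\zeta(\lambda))\cong\Delta_\zeta(\lambda)$ rigorous at the root of unity, where the Weyl module is not simple. I would handle this by working over $\mathcal A$. The bar involution preserves the lattice $U_{\mathcal A}^-v_\lambda$ of the generic simple module and is compatible with $v\mapsto v^{-1}$, so after specialization the semilinear bar-twist gives the needed isomorphism.
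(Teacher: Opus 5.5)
Your proposal is correct and follows essentially the same route as the paper. Both arguments fix $\Delta_\zeta(\lambda)$ through the highest-weight vector $\overline{v_\lambda}$, the universal property and a dimension count; use exactness and compatibility with duals to preserve Weyl and dual Weyl filtrations; deduce $\Phi(T_\zeta(\lambda))\cong T_\zeta(\lambda)$; and then use the factorization description of $\mathcal I_{\mathrm{neg}}$ rather than any trace computation. Your step from ``indecomposable tilting with highest weight $\lambda$'' to this isomorphism is exactly the uniqueness of indecomposable tiltings, \cite[Theorem~3.3.13(ii)]{BakalovKirillov}, which the paper cites explicitly. Your fallback via Lusztig's bar on the $\mathcal A$-lattice (as in Lemma~\ref{lem:canonical-simple-bars}) is also valid but not needed here.
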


\begin{proof}
Let $\lambda\in P_+$ and choose a highest-weight vector
$v_\lambda\in\Delta_\zeta(\lambda)$. In the bar-twisted module,
\begin{align*}
 K_\mu\cdot\overline{v_\lambda}
 &=\overline{K_{-\mu}v_\lambda}
  =\overline{\zeta^{-\langle\lambda,\mu\rangle}v_\lambda}
  =\zeta^{\langle\lambda,\mu\rangle}\overline{v_\lambda},\\
 E_i\cdot\overline{v_\lambda}&=0.
\end{align*}
Consequently,
\begin{equation}\label{eq:bar-fixes-standards}
 \Phi(\Delta_\zeta(\lambda))\cong\Delta_\zeta(\lambda).
\end{equation}
Indeed, the bar-twisted module is generated by $\overline{v_\lambda}$ and has highest
weight $\lambda$. The universal property of the Weyl module gives a surjection onto
it, which is an isomorphism because the two modules have the same dimension.
The conjugate-linear strong tensor equivalence $\Phi$ preserves duals. Since
the dual Weyl module of highest weight $\lambda$ is
$\Delta_\zeta(-w_0\lambda)^\vee$, it also fixes every dual Weyl module. It therefore
preserves modules admitting both Weyl and dual Weyl filtrations; here we use that
$\Phi$ is exact. By uniqueness of the indecomposable tilting module of highest
weight $\lambda$
\cite[Theorem~3.3.13(ii)]{BakalovKirillov},
\begin{equation}\label{eq:bar-fixes-tiltings}
 \Phi(T_\zeta(\lambda))\cong T_\zeta(\lambda).
\end{equation}
Lemma~\ref{lem:Cgk-tilting-semisimplification} and
\eqref{eq:bar-fixes-tiltings} show that $\Phi$ preserves both
$\mathcal T_\zeta$ and the morphisms factoring through negligible indecomposable
tiltings;
hence it preserves $\mathcal I_{\mathrm{neg}}$. Notice that this argument does not
require the bar to preserve the quantum trace or the ribbon structure. The functor
therefore descends, together with $\Phi_2$ and $\mu$, to the quotient
$\mathcal T_\zeta/\mathcal I_{\mathrm{neg}}\simeq\mathcal C(\mathfrak g,k)$.
We regard the descended conjugate-linear endofunctor as the corresponding twisted
$\C$-linear equivalence
${}^{\gamma}\mathcal C(\mathfrak g,k)\to\mathcal C(\mathfrak g,k)$.
\end{proof}

\begin{lemma}\label{lem:canonical-simple-bars}
For every $\lambda\in\Lambda_k$, the integral highest-weight bar specializes to a
linear module isomorphism
\[
 f_\lambda:\Phi(L_\zeta(\lambda))\longrightarrow L_\zeta(\lambda)
\]
satisfying
$f_\lambda\circ\Phi(f_\lambda)=\mu_{L_\zeta(\lambda)}$.
\end{lemma}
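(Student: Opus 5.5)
The strategy is to build $f_\lambda$ from Lusztig's bar involution on the integral highest-weight module and then use that the bar is an involution. Fix $\lambda\in\Lambda_k$. Let $\Delta_{\mathcal A}(\lambda)=U_{\mathcal A}^-v_\lambda$ be the integral Weyl module spanned over $\mathcal A$ by the divided-power monomials in the $F_i$ applied to $v_\lambda$, or equivalently by the canonical basis $\{b^-v_\lambda\}$. Lusztig's bar on this lattice is the $\mathbb Z$-linear map
\[
 \psi\bigl(x\,v_\lambda\bigr)=\overline x\,v_\lambda\qquad(x\in U_{\mathcal A}^-).
\]
It is well defined because the annihilator of $v_\lambda$ in $U_{\mathcal A}^-$ is generated by the $F_i^{(n)}$ with $n>\langle\lambda,\alpha_i^\vee\rangle$, and these elements are bar-invariant. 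The map is $\overline{\ \cdot\ }$-semilinear over $\mathcal A$ and satisfies
\[
 \psi(uw)=\overline u\,\psi(w)\qquad(u\in U_{\mathcal A}).
\]
To check the last identity, use the triangular decomposition $U_{\mathcal A}=U^-_{\mathcal A}U^0_{\mathcal A}U^+_{\mathcal A}$. The torus parts and the divided powers $E_i^{(n)}$ are handled by $\overline{K_\mu}=K_{-\mu}$ and by bar-invariance of the relations. Since $\psi^2=\id$ at the integral level, this relation is built in.

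Next specialize at $v=\zeta$. Because $\Delta_{\mathcal A}(\lambda)$ is a free $\mathcal A$-lattice whose base change along $v\mapsto\zeta$ is $\Delta_\zeta(\lambda)$, and $\zeta\mapsto\zeta^{-1}$ is complex conjugation, the map $\psi$ induces a conjugate-linear bijection
\[
 \psi_\zeta(a\otimes w)=\overline a\otimes\psi(w)
\]
on $\Delta_\zeta(\lambda)$. It satisfies $\psi_\zeta(um)=\sigma_\zeta(u)\psi_\zeta(m)$ and $\psi_\zeta^2=\id$. By Lemma~\ref{lem:Cgk-tilting-semisimplification}(a), $\Delta_\zeta(\lambda)=L_\zeta(\lambda)$ for $\lambda\in\Lambda_k$. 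Now define
\[
 f_\lambda(\overline m)=\psi_\zeta(m).
\]
This map is complex-linear from $\overline{L_\zeta(\lambda)}$ to $L_\zeta(\lambda)$, since conjugate-linearity of $\psi_\zeta$ cancels the conjugate scalar structure. It is a module map: the action on $\Phi(L)$ gives
\[
 f_\lambda(u\cdot\overline m)=f_\lambda\bigl(\overline{\sigma_\zeta(u)m}\bigr)=\psi_\zeta(\sigma_\zeta(u)m)=\sigma_\zeta^2(u)\psi_\zeta(m)=u\,f_\lambda(\overline m).
\]
It is invertible because $\psi_\zeta$ is bijective.

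For the normalization, compute on $\overline{\overline m}$. By \eqref{eq:bar-twisted-module}, $\Phi(f_\lambda)(\overline{\overline m})=\overline{f_\lambda(\overline m)}=\overline{\psi_\zeta(m)}$. Applying $f_\lambda$ gives $\psi_\zeta^2(m)=m$, and this equals $\mu_{L_\zeta(\lambda)}(\overline{\overline m})$ by \eqref{eq:double-bar-mu}.

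The main obstacle is the first step, namely that $\psi$ is well defined and intertwines the full divided-power action, including the $E_i^{(n)}$ and not only $U^-$. Since $v_\lambda$ is killed by positive divided powers, I would reduce to commuting $E_i^{(n)}$ past $F$-monomials. The commutation formulas (Lusztig 3.1.9 type) involve Gaussian binomials in $\widetilde K_i$ that are bar-compatible, so the identity follows by induction on the length of monomials. Alternatively, I would cite Lusztig's construction of the involution $\psi$ on ${}_{\mathcal A}V(\lambda)$ (Section~19.3 and Chapter~27 of~\cite{LusztigQuantumGroups}) together with the fact that its specialization commutes with base change.
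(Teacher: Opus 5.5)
Your proposal is correct and follows the paper's proof: Lusztig's semilinear involution $\psi$ on ${}_{\mathcal A}\Lambda_\lambda$ is specialized at $v=\zeta$ using $\overline{r(\zeta)}=\overline r(\zeta)$, then transported to $L_\zeta(\lambda)$ via $\Delta_\zeta(\lambda)\cong L_\zeta(\lambda)$. You then set $f_\lambda(\overline m)=\psi_\zeta(m)$, which satisfies $f_\lambda\circ\Phi(f_\lambda)=\mu_{L_\zeta(\lambda)}$ because $\psi_\zeta^2=\id$; the paper simply cites Lusztig's Section~19.3.4 for the existence and $U_{\mathcal A}$-compatibility of $\psi$, which you either verify directly or cite in the same way.
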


\begin{proof}
For $\lambda\in\Lambda_k$, let
$\Delta_{\mathcal A}(\lambda)={}_{\mathcal A}\Lambda_\lambda$ be the integral
highest-weight module with highest-weight vector $\eta_\lambda$, in the notation of
\cite[Sections~19.3.1--19.3.2]{LusztigQuantumGroups}. It is stable under the
semilinear involution $\psi_{\lambda,\mathcal A}$ characterized by
\[
 \psi_{\lambda,\mathcal A}(u\eta_\lambda)
 =\overline u\,\eta_\lambda,
 \qquad
 \psi_{\lambda,\mathcal A}^2=\id
 \qquad(u\in U_{\mathcal A});
\]
see~\cite[Section~19.3.4]{LusztigQuantumGroups}. The specialization construction
in~\cite[equation~(3.13)]{KirillovInnerProduct} gives
\[
 \Delta_\zeta(\lambda)
 \cong\C\otimes_{\mathcal A}\Delta_{\mathcal A}(\lambda).
\]
Under this identification, take $v_\lambda=1\otimes\eta_\lambda$. Since
$\overline{r(\zeta)}=\overline r(\zeta)$ for $r\in\mathcal A$, where the first bar
is complex conjugation and the second is the involution $v\mapsto v^{-1}$, the
formula
\[
 \psi_{\lambda,\zeta}(c\otimes m)
 =\overline c\otimes\psi_{\lambda,\mathcal A}(m)
\]
is well defined. It is a conjugate-linear involution satisfying
\[
 \psi_{\lambda,\zeta}(um)
 =\sigma_\zeta(u)\psi_{\lambda,\zeta}(m)
 \qquad(u\in U_\zeta(\mathfrak g)).
\]
By~\eqref{eq:alcove-simple-module}, the canonical quotient
$\Delta_\zeta(\lambda)\to L_\zeta(\lambda)$ is an isomorphism. Denote the induced
involution of $L_\zeta(\lambda)$ by $\psi_\lambda$; it fixes $v_\lambda$ and
satisfies $\psi_\lambda^2=\id$.
Thus
\begin{equation}\label{eq:simple-stable-bar}
 f_\lambda:\Phi(L_\zeta(\lambda))\longrightarrow L_\zeta(\lambda),
 \qquad f_\lambda(\overline m)=\psi_\lambda(m),
\end{equation}
is a linear module isomorphism and satisfies
\begin{equation}\label{eq:positive-bar-sign}
 f_\lambda\circ\Phi(f_\lambda)=\mu_{L_\zeta(\lambda)},
\end{equation}
because both sides send $\overline{\overline m}$ to $m$. Hence every simple object
is fixed by $\Phi$ up to isomorphism.
\end{proof}

For $\lambda\in\Lambda_k$, write
\begin{equation}\label{eq:cyclotomic-simple-module}
 \Delta_{\zeta,L}(\lambda)
 =L\otimes_{\mathcal A}\Delta_{\mathcal A}(\lambda).
\end{equation}

The pivotal structure needed for semisimplification is also defined over $L$.
Put
\begin{equation}\label{eq:quantum-pivotal-element}
 \kappa_\rho
 =\sum_{\alpha>0}\frac{(\alpha,\alpha)}2\,\alpha^\vee\in Y,
 \qquad
 g_{\mathrm{piv}}=K_{-\kappa_\rho}\in U_{\mathcal A}^0.
\end{equation}
For the coproduct convention~\eqref{eq:quantum-coproduct}, the antipode satisfies
$S^2(u)=g_{\mathrm{piv}}u g_{\mathrm{piv}}^{-1}$. On a vector of weight
$\lambda$, the pivotal element acts by
\[
 v^{-\langle\lambda,\kappa_\rho\rangle}
 =v^{-(\lambda,2\rho)}.
\]
The exponent is integral because $2\rho$ belongs to the root lattice, so
$(\lambda,2\rho)$ is an integral linear combination of the integers
$(\lambda,\alpha_i)=d_i\langle\lambda,\alpha_i^\vee\rangle$.
Thus $g_{\mathrm{piv}}$ specializes over $L$, and the associated pivotal traces,
given by ordinary traces with insertion of $g_{\mathrm{piv}}^{\pm1}$ according
to the left or right convention, take values in $L$.

\begin{proposition}\label{prop:bar-descent-Cgk}
Let $\mathcal T_{\zeta,L}$ be the full category of finite-dimensional type~$1$
$U_{\zeta,L}(\mathfrak g)$-modules whose scalar extensions to $\C$ are tilting,
let
$\mathcal I_{\mathrm{neg},L}$ be the negligible ideal defined by the standard
pivotal trace on $\mathcal T_{\zeta,L}$, and put
\[
 \mathcal C_L(\mathfrak g,k)
 =\operatorname{Kar}
 \bigl(\mathcal T_{\zeta,L}/\mathcal I_{\mathrm{neg},L}\bigr).
\]
Then $\mathcal C_L(\mathfrak g,k)$ is a split fusion category over $L$, and
\begin{equation}\label{eq:Cgk-cyclotomic-complexification}
 \mathcal C_L(\mathfrak g,k)\otimes_L\C
 \simeq\mathcal C(\mathfrak g,k).
\end{equation}
Moreover, the bar involution, its quasi-$R$-matrix tensorator, and the canonical
frames determine an element
\[
 \tau_L\in
 \Pi_{\mathrm{fr}}\bigl(\mathcal C_L(\mathfrak g,k)/K\bigr)
\]
such that $p_{\mathrm{fr}}(\tau_L)=\gamma$ and $\tau_L^2=1$.
\end{proposition}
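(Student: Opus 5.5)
I will prove the statement in three stages: the $L$-rational category, its complexification, and the framed involution over $\gamma$.

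\emph{The category over $L$.} Every ingredient used over $\C$ is already defined over $L$. The algebra $U_{\zeta,L}(\mathfrak g)$ and the modules $\Delta_{\zeta,L}(\lambda)$ of~\eqref{eq:cyclotomic-simple-module} are rational. The pivotal element~\eqref{eq:quantum-pivotal-element} and its traces are $L$-valued. The components of $\Theta$ have coefficients in $\mathcal A$ (Lemma~\ref{lem:specialized-quasi-R}). Scalar extension $-\otimes_L\C$ is fully faithful on Hom spaces: $\operatorname{Hom}_{U_{\zeta,L}}(M,N)\otimes_L\C=\operatorname{Hom}_{U_\zeta}(M_\C,N_\C)$, by flatness and finite-dimensionality. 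It also commutes with pivotal traces, so $\mathcal I_{\mathrm{neg},L}\otimes_L\C=\mathcal I_{\mathrm{neg}}$.

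The modules $\Delta_{\zeta,L}(\lambda)$ with $\lambda\in\Lambda_k$ are tilting after extension, by Lemma~\ref{lem:Cgk-tilting-semisimplification}(a). Their images in the quotient have endomorphism algebra $L$, since the complexified endomorphism algebra is $\C$. Every object of $\mathcal T_{\zeta,L}$ decomposes in the Karoubi envelope. This holds because idempotents in the quotient of $\operatorname{End}$ lift, the algebras being finite-dimensional over $L$. Semisimplicity can be checked after the faithful extension to $\C$, using Lemma~\ref{lem:Cgk-tilting-semisimplification}(c). Hence $\mathcal C_L(\mathfrak g,k)$ is split semisimple with simples indexed by $\Lambda_k$. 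Rigidity and $\operatorname{End}(\mathbf 1)=L$ descend from $\C$, since the duals $M^*$ are already $L$-modules. The equivalence~\eqref{eq:Cgk-cyclotomic-complexification} then follows from the Hom identification together with essential surjectivity on simples.

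\emph{The framed element.} Next I restrict the construction of Proposition~\ref{prop:quantum-bar-tensor-involution} to $L$-forms. Define $\Phi(M)=\overline M$, where now $\overline M$ denotes $M$ with $L$-scalars twisted by $\gamma$. Since $\gamma(\zeta)=\zeta^{-1}$, the specialized bar $\sigma_\zeta$ restricts to a $\gamma$-semilinear involution of $U_{\zeta,L}$. The tensorator $\Phi_2$ uses $\Theta_\zeta$, whose components are $\mathcal A$-integral and specialize to $L$. The identities~\eqref{eq:quasi-R-intertwining}--\eqref{eq:bar-quasi-R-inverse} hold over $L$, because they hold after extension to $\C$. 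The same applies to the coherence $\mu$.

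$\Phi$ preserves $\mathcal T_{\zeta,L}$, because the tilting condition is tested after complexification, where Lemma~\ref{lem:bar-preserves-tilting-negligible} applies. $\Phi$ also preserves $\mathcal I_{\mathrm{neg},L}$, since that ideal is the intersection of $\mathcal I_{\mathrm{neg}}$ with the $L$-rational morphisms. So $\Phi$ descends to a twisted equivalence ${}^\gamma\mathcal C_L\to\mathcal C_L$, which extends to the Karoubi envelope. For frames, I will use the $L$-specialization $\psi_{\lambda,L}$ of Lusztig's $\psi_{\lambda,\mathcal A}$ on $\Delta_{\zeta,L}(\lambda)$, exactly as in Lemma~\ref{lem:canonical-simple-bars}. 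This gives $f_\lambda$ with $f_\lambda\circ\Phi(f_\lambda)=\mu_\lambda$. Each simple class is fixed, so $\tau_L=[(\Phi,\gamma,(f_\lambda))]$ lies in $\Pi_{\mathrm{fr}}$ and maps to $\gamma$.

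\emph{The involution $\tau_L^2=1$.} The square $\tau_L^2$ is represented by $(\Phi\circ{}^\gamma\Phi,1,(f_\lambda\circ\Phi(f_\lambda)))$. The monoidal isomorphism $\mu:\Phi^2\Rightarrow\id$ of Proposition~\ref{prop:quantum-bar-tensor-involution} identifies the underlying functor with the identity. By~\eqref{eq:positive-bar-sign}, $\mu$ carries the frame $f_\lambda\circ\Phi(f_\lambda)$ to $\id$. Hence $\tau_L^2$ is the identity framed class.

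\emph{Main obstacle.} I expect the hardest step to be the rationality bookkeeping in the first stage. One must make sure that the Karoubi quotient over $L$ is split and complexifies exactly to $\mathcal C(\mathfrak g,k)$. In particular, one must check that idempotent completion and the negligible quotient commute with $-\otimes_L\C$. I would handle this by working with the explicit alcove objects $\Delta_{\zeta,L}(\lambda)$, reducing every claim to finite-dimensional Hom spaces and faithful flat base change.
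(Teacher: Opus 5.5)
Your proposal is correct and follows essentially the same route as the paper. Both arguments rest on the same ingredients: rationality of all ingredients over $L$, base change of Hom spaces and of the trace radicals, $\operatorname{End}=L$ for the alcove objects $\Delta_{\zeta,L}(\lambda)$, the cyclotomic bar with its $\mathcal A$-integral quasi-$R$-matrix tensorator and Lusztig's $\psi_{\lambda,\mathcal A}$ frames, and $\tau_L^2=1$ via the monoidal $\mu$ together with $f_\lambda\circ\Phi(f_\lambda)=\mu$. The paper is more explicit at two points you leave terse. For semisimplicity, it shows $J(\operatorname{End}(X))\otimes_L\C=0$ and then splits the resulting idempotents in the Karoubi envelope. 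For exhaustion of simples, it notes that any simple $X$ of $\mathcal C_L(\mathfrak g,k)$ receives a nonzero map from some $\Delta_{\zeta,L}(\lambda)$, because this holds after extension to $\C$. Both follow from the base-change facts you already invoke.
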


\begin{proof}
The quantum-group, tilting, duality, and pivotal constructions needed below are
defined over $L$. They are obtained from $U_{\mathcal A}$, its highest-weight
modules, and the pivotal element~\eqref{eq:quantum-pivotal-element} by the
specialization $v\mapsto\zeta$. In particular, the pivotal trace of an
$L$-linear endomorphism takes values in $L$. After extension of scalars to $\C$,
this pivotal trace is the standard quantum trace on $\mathcal T_\zeta$.

Since the Hom spaces are finite-dimensional, the radicals of the pivotal-trace
pairings commute with the scalar extension $L\subset\C$. Thus, for
$X,Y\in\mathcal T_{\zeta,L}$,
\begin{equation}\label{eq:negligible-ideal-base-change}
 \mathcal I_{\mathrm{neg},L}(X,Y)\otimes_L\C
 =\mathcal I_{\mathrm{neg}}(X_{\C},Y_{\C}).
\end{equation}
Equation~\eqref{eq:negligible-ideal-base-change} also shows that the cyclotomic
bar preserves $\mathcal I_{\mathrm{neg},L}$. Indeed, if
$f\in\mathcal I_{\mathrm{neg},L}$, then
$f_{\C}\in\mathcal I_{\mathrm{neg}}$; the complex bar preserves the latter
ideal by Lemma~\ref{lem:bar-preserves-tilting-negligible}, and its image is the
scalar extension of the cyclotomic bar of $f$. Faithfulness of scalar extension
then implies that the cyclotomic bar of $f$ belongs to
$\mathcal I_{\mathrm{neg},L}$.
Hence the quotient commutes with scalar extension. The Karoubi envelope is
included to make the quotient idempotent complete. We make no assertion here
that the standard braiding or ribbon twist is defined over $L$; in general,
additional roots of unity are required for those structures.

For $\lambda\in\Lambda_k$, the scalar extension of
\eqref{eq:cyclotomic-simple-module} to $\C$ is the simple module
$\Delta_\zeta(\lambda)\cong L_\zeta(\lambda)$. Thus
$\Delta_{\zeta,L}(\lambda)$ is an absolutely simple tilting module, and
\[
 \operatorname{End}_{U_{\zeta,L}(\mathfrak g)}
 \bigl(\Delta_{\zeta,L}(\lambda)\bigr)=L.
\]
After extending scalars, these objects give all the pairwise nonisomorphic
simples of $\mathcal C(\mathfrak g,k)$. By
\eqref{eq:negligible-ideal-base-change}, their endomorphism algebras in the
quotient, and hence in its Karoubi envelope, satisfy
\[
 \operatorname{End}_{\mathcal C_L(\mathfrak g,k)}
 \bigl(\Delta_{\zeta,L}(\lambda)\bigr)=L.
\]

We next justify semisimplicity over $L$. For an object
$X\in\mathcal C_L(\mathfrak g,k)$, put
$A_X=\operatorname{End}_{\mathcal C_L(\mathfrak g,k)}(X)$. Scalar extension
gives
\[
 A_X\otimes_L\C
 \cong
 \operatorname{End}_{\mathcal C(\mathfrak g,k)}(X_{\C}).
\]
The algebra on the right is semisimple. The Jacobson radical $J(A_X)$ is
nilpotent, so $J(A_X)\otimes_L\C$ is a nilpotent two-sided ideal of the
semisimple algebra $A_X\otimes_L\C$ and must vanish. Faithfulness of scalar
extension implies $J(A_X)=0$. More explicitly, applying this to $X\oplus Y$
shows that for every morphism $f:X\to Y$ there is a morphism $g:Y\to X$ such
that $fgf=f$. The idempotents $fg$ and $gf$ split in the Karoubi envelope, so
every morphism is a split map between direct summands. Hence
$\mathcal C_L(\mathfrak g,k)$ is semisimple.

Finally, let $X$ be a simple object of $\mathcal C_L(\mathfrak g,k)$. The
nonzero semisimple object $X_{\C}$ contains some alcove simple
$L_\zeta(\lambda)$. By scalar extension,
\[
 \operatorname{Hom}_{\mathcal C_L(\mathfrak g,k)}
 \bigl(\Delta_{\zeta,L}(\lambda),X\bigr)\otimes_L\C
 \cong
 \operatorname{Hom}_{\mathcal C(\mathfrak g,k)}
 \bigl(L_\zeta(\lambda),X_{\C}\bigr),
\]
and the right-hand side is nonzero. Hence there is a nonzero morphism
$\Delta_{\zeta,L}(\lambda)\to X$. Both objects are simple in the semisimple
category $\mathcal C_L(\mathfrak g,k)$, so this morphism is an isomorphism.
Thus the $\Delta_{\zeta,L}(\lambda)$ exhaust the simple objects and have
endomorphism algebra $L$. Therefore $\mathcal C_L(\mathfrak g,k)$ is split, and
its scalar extension to $\C$ is equivalent to $\mathcal C(\mathfrak g,k)$,
proving~\eqref{eq:Cgk-cyclotomic-complexification}.

The remaining assertions are the cyclotomic versions of
Lemmas~\ref{lem:bar-preserves-tilting-negligible} and
\ref{lem:canonical-simple-bars}. The involution $\gamma$ on $L$ is characterized
by $\gamma(\zeta)=\zeta^{-1}$. The bar preserves $U_{\mathcal A}$, every
homogeneous component of the quasi-$R$-matrix has coefficients in
$\mathcal A$, and the highest-weight bars preserve
$\Delta_{\mathcal A}(\lambda)$. Consequently, the tensor involution, tensorator,
coherence, and frames used above are all defined for the extension $L/K$.

Let $\tau_L$ be the resulting framed class. Its squared frame at
$\Delta_{\zeta,L}(\lambda)$ is
$f_\lambda\circ\Phi(f_\lambda)$, which equals
$\mu_{\Delta_{\zeta,L}(\lambda)}$ by the integral version of
\eqref{eq:positive-bar-sign}. Since $\mu:\Phi^2\Rightarrow\id$ is monoidal,
$\tau_L^2=1$. Hence $1\mapsto1$ and $\gamma\mapsto\tau_L$ define a section of
\[
 p_{\mathrm{fr}}:
 \Pi_{\mathrm{fr}}\bigl(\mathcal C_L(\mathfrak g,k)/K\bigr)
 \longrightarrow\Gal(L/K).
\]
\end{proof}

\begin{theorem}\label{thm:Cgk-split-cyclotomic}
\label{thm:Cgk-split-real}
For every complex simple Lie algebra $\mathfrak g$ and every integer $k\geq1$, the
underlying fusion category $\mathcal C(\mathfrak g,k)$ has a split form over the
maximal totally real subfield
\[
 K=\Q(\zeta+\zeta^{-1}),
 \qquad
 \zeta=\exp\!\left(
 \frac{\pi i}{r^\vee(k+h^\vee)}
 \right).
\]
More precisely, there is a split fusion category
$\mathcal C_K(\mathfrak g,k)$ over $K$ whose scalar extension to $\C$ is
$\mathcal C(\mathfrak g,k)$. In particular, it has a split real form.
\end{theorem}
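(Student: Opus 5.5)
The plan is to assemble Proposition~\ref{prop:bar-descent-Cgk} with the framed descent criterion, Proposition~\ref{prop:framed-split-descent}, applied to the Galois extension $L/K$ of~\eqref{eq:Cgk-cyclotomic-fields}. First, Proposition~\ref{prop:bar-descent-Cgk} provides a split fusion category $\mathcal C_L(\mathfrak g,k)$ over $L=\Q(\zeta)$ with $\mathcal C_L(\mathfrak g,k)\otimes_L\C\simeq\mathcal C(\mathfrak g,k)$. Since $\Gal(L/K)=\{1,\gamma\}$ has order two, a group-theoretic section of $p_{\mathrm{fr}}:\Pi_{\mathrm{fr}}(\mathcal C_L(\mathfrak g,k)/K)\to\Gal(L/K)$ is the same as an element of the fiber over $\gamma$ whose square is the identity class. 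The element $\tau_L$ of Proposition~\ref{prop:bar-descent-Cgk} is such an element. Thus $1\mapsto 1$, $\gamma\mapsto\tau_L$ is a homomorphic section, and Proposition~\ref{prop:framed-split-descent} yields a split $K$-form $\mathcal C_K(\mathfrak g,k)$ with $\mathcal C_K(\mathfrak g,k)\otimes_K L\simeq\mathcal C_L(\mathfrak g,k)$.

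Second, I will compose the scalar extensions. There is the transitivity $(\mathcal C_K\otimes_K L)\otimes_L\C\simeq\mathcal C_K\otimes_K\C$, including idempotent completion, which is harmless since all categories involved are semisimple and hence idempotent complete. Combined with~\eqref{eq:Cgk-cyclotomic-complexification}, this gives $\mathcal C_K(\mathfrak g,k)\otimes_K\C\simeq\mathcal C(\mathfrak g,k)$ as tensor categories. Splitness must persist under $K\subset L\subset\C$. Simple objects of $\mathcal C_K$ have endomorphism algebra $K$, so after extension they remain simple with endomorphism algebra equal to the ground field. Hence the simples of $\mathcal C(\mathfrak g,k)$ are exactly the extensions of those of $\mathcal C_K$.

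Third, for the real form, I will choose the embedding $L\subset\C$ with $\zeta=\exp(\pi i/\ell)$. Then $K=\Q(2\cos(\pi/\ell))\subset\R$, and I put $\mathcal C_\R=\mathcal C_K(\mathfrak g,k)\otimes_K\R$. This category is split fusion over $\R$, by the same argument as above, and $\mathcal C_\R\otimes_\R\C\simeq\mathcal C_K\otimes_K\C\simeq\mathcal C(\mathfrak g,k)$.

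The only genuinely delicate point is already isolated in Proposition~\ref{prop:bar-descent-Cgk}: that $\tau_L^2=1$ holds as a framed class, and not merely up to an element of $\Aut(\id)$. This relies on $\mu$ being monoidal, via~\eqref{eq:bar-quasi-R-inverse}, and on the canonical frames squaring exactly to $\mu$, via~\eqref{eq:positive-bar-sign}. For the theorem itself, I expect the main care to be bookkeeping. One check is that the framed element is genuinely in $\Pi_{\mathrm{fr}}$ over $K$: it is a $\gamma$-twisted $L$-linear tensor equivalence that fixes each simple class by Lemma~\ref{lem:canonical-simple-bars}. The other is that the composite of scalar extensions is compatible with the chosen complexification.
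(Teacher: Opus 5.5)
Your proposal is correct and follows essentially the same route as the paper. The paper takes the section $\gamma\mapsto\tau_L$ supplied by Proposition~\ref{prop:bar-descent-Cgk} and applies Proposition~\ref{prop:framed-split-descent} to obtain $\mathcal C_K(\mathfrak g,k)\otimes_K L\simeq\mathcal C_L(\mathfrak g,k)$. It then extends scalars to $\C$ using \eqref{eq:Cgk-cyclotomic-complexification} and to $\R$ using $K\subset\R$. Your additional checks on preservation of splitness and on the compatibility of the embeddings $K\subset L\subset\C$ and $K\subset\R\subset\C$ only make explicit what the paper leaves implicit.
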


\begin{proof}
Proposition~\ref{prop:bar-descent-Cgk} provides a section of
$p_{\mathrm{fr}}$ for the extension $L/K$. By
Proposition~\ref{prop:framed-split-descent}, there is a split fusion category
$\mathcal C_K(\mathfrak g,k)$ over $K$ such that
\[
 \mathcal C_K(\mathfrak g,k)\otimes_KL
 \simeq\mathcal C_L(\mathfrak g,k).
\]
Extending further to $\C$ and using
\eqref{eq:Cgk-cyclotomic-complexification} gives the asserted $K$-form of
$\mathcal C(\mathfrak g,k)$. The standard embedding $K\subset\R$ then gives a
split real form by extension of scalars.
\end{proof}

\begin{corollary}\label{cor:Cgk-galois-split}
Every cyclotomic Galois conjugate of the underlying fusion category
$\mathcal C(\mathfrak g,k)$ has a split form over the same totally real field
$K$ in~\eqref{eq:Cgk-cyclotomic-fields}.
\end{corollary}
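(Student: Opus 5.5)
The idea is to realize each cyclotomic Galois conjugate as a scalar extension of the same cyclotomic model $\mathcal C_L(\mathfrak g,k)$, but along a different embedding $L\hookrightarrow\C$. Then the split $K$-form from Theorem~\ref{thm:Cgk-split-cyclotomic} serves for every conjugate at once.

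First I make precise what a cyclotomic Galois conjugate is. For $g\in\Gal(\Q(\zeta)/\Q)=(\mathbb Z/N\mathbb Z)^\times$, say $g(\zeta)=\zeta^a$ with $\gcd(a,N)=1$. I extend $g$ to an automorphism $\tilde g$ of $\C$, and take the conjugate category ${}^{\tilde g}\mathcal C(\mathfrak g,k)$, obtained by twisting scalars by $\tilde g$. Its equivalence class does not depend on the extension $\tilde g$, because $\mathcal C(\mathfrak g,k)\simeq\mathcal C_L(\mathfrak g,k)\otimes_L\C$ by~\eqref{eq:Cgk-cyclotomic-complexification}. Indeed, twisting gives
\[
 {}^{\tilde g}\bigl(\mathcal C_L\otimes_{L,\iota}\C\bigr)\simeq\mathcal C_L\otimes_{L,\iota\circ g}\C,
\]
where $\iota:L\subset\C$ is the standard embedding. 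So each conjugate is the complexification of $\mathcal C_L(\mathfrak g,k)$ along the embedding $\iota\circ g$, and this depends only on $g$. The same description holds if one prefers to define conjugates by specializing the quantum-group construction at $\zeta^a$ instead of $\zeta$: the $\mathcal A$-forms $U_{\mathcal A}$, $\Delta_{\mathcal A}(\lambda)$, $\Theta_\nu$ and $g_{\mathrm{piv}}$ are all defined over $\mathcal A$, so specializing $v\mapsto\zeta^a$ is the same as base-changing the $v\mapsto\zeta$ specialization along $g$.

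Next, Theorem~\ref{thm:Cgk-split-cyclotomic} gives a split $K$-form $\mathcal C_K(\mathfrak g,k)$ with $\mathcal C_K\otimes_K L\simeq\mathcal C_L$. Since $\Gal(L/\Q)$ is abelian, $g$ restricts to an automorphism of $K$; concretely, $g(\zeta+\zeta^{-1})=\zeta^a+\zeta^{-a}\in K$. Composing the extensions of scalars gives
\[
 \mathcal C_L\otimes_{L,\iota\circ g}\C\simeq \mathcal C_K\otimes_{K,\iota\circ g|_K}\C .
\]
I then form the twisted $K$-category ${}^{g|_K}\mathcal C_K$, where $K$ acts through $g|_K^{-1}$ so that the base change becomes the standard one. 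This is again a split fusion category over $K$, and its complexification along $\iota|_K$ is the conjugate. It is therefore a split $K$-form of ${}^{\tilde g}\mathcal C(\mathfrak g,k)$.

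The main point needing care is the bookkeeping of embeddings: one must check that twisting a split $K$-linear fusion category by a field automorphism of $K$ preserves splitness and the fusion axioms, and that base change is compatible with composition of embeddings. Both are formal, because field automorphisms transport all structure and $\operatorname{End}(X)=K$ is preserved. An alternative route that avoids twisting is to rerun Proposition~\ref{prop:bar-descent-Cgk} directly at the parameter $\zeta^a$. The bar still sends $\zeta^a$ to its inverse, and $\Q(\zeta^a+\zeta^{-a})=K$. The one check needed there is that the tilting and semisimplification statements of Lemma~\ref{lem:Cgk-tilting-semisimplification} hold over $L$, which follows from the base-change argument already given.
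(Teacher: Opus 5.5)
Your proposal is correct and follows essentially the same route as the paper. Both arguments use that $\Gal(L/\Q)$ is abelian, so the conjugating automorphism restricts to $K$, and then twist the split $K$-form $\mathcal C_K(\mathfrak g,k)$ of Theorem~\ref{thm:Cgk-split-cyclotomic} by $g|_K$; the twist stays split and extends to the conjugate. Your bookkeeping with the embeddings $\iota\circ g$, including the observation that the conjugate depends only on $\tilde g|_L=g$, is a more explicit version of the paper's identification $({}^{\sigma|_K}\mathcal C_K)\otimes_K L\simeq{}^{\sigma}\mathcal C_L$ and its remark that every absolute conjugate restricts to an element of $\Gal(L/\Q)$.
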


\begin{proof}
Let $\sigma\in\Gal(L/\Q)$. Since the cyclotomic extension $L/\Q$ is abelian,
$\sigma$ commutes with $\gamma$ and therefore preserves its fixed field $K$.
Twist the split $K$-form of Theorem~\ref{thm:Cgk-split-cyclotomic} by
$\sigma|_K$. It remains split, and scalar extension gives
\[
 \bigl({}^{\sigma|_K}\mathcal C_K(\mathfrak g,k)\bigr)\otimes_KL
 \simeq{}^\sigma\mathcal C_L(\mathfrak g,k).
\]
Thus it is a split $K$-form of the $\sigma$-conjugate. Every absolute Galois
conjugate restricts to an element of $\Gal(L/\Q)$, which proves the assertion.
\end{proof}

\begin{remark}
Concretely, for $\lambda\in\Lambda_k$, the pairs
$(\Delta_{\zeta,L}(\lambda),f_\lambda)$ descend to simple objects with
endomorphism algebra $K$. After extension to $\R$, their endomorphism algebras
become $\R$. The bar construction splits $p_{\mathrm{fr}}$ rather than its
structured analogue, so it does not itself descend the braiding, ribbon
structure, or modular data. Theorem~\ref{thm:Cgk-split-braided-real} below shows
that this distinction is essential except in a small level-one family.
\end{remark}

\begin{corollary}\label{cor:Cgk-real-orthogonal-F}
For every complex simple Lie algebra $\mathfrak g$ and every integer $k\geq1$,
$\mathcal C(\mathfrak g,k)$ admits orthonormal fusion bases in which all
$F$-matrices are real and orthogonal.
\end{corollary}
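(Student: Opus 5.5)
The plan is to combine the split real form of Theorem~\ref{thm:Cgk-split-real} with the unitary rigidity argument of Subsection~\ref{subsec:unitary-consequences}. First, recall that $\mathcal C(\mathfrak g,k)$ admits a unitary structure, via Wenzl's and Xu's construction of positive-definite Hermitian forms on the alcove modules, which is compatible with the tensor product and makes the associator unitary \cite{WenzlCStar}. Theorem~\ref{thm:Cgk-split-real} then provides a split real form, that is, a conjugate-linear descent datum $(\Phi,\mu)$ together with frames $f_X$ satisfying $f_X\circ\Phi(f_X)=\mu_X$. By Proposition~\ref{prop:framed-split-descent}, this is equivalent to an involutive element of $\Pi_{\mathrm{fr}}$ over $\gamma$.

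Next, we replace the datum by a unitary one. Viewed as a linear monoidal equivalence ${}^{\gamma}\mathcal C\to\mathcal C$, where ${}^\gamma\mathcal C$ carries the conjugate unitary structure, the functor $\Phi$ is monoidally isomorphic to a unitary monoidal equivalence by \cite[Theorems~1--2]{Reutter}. Transporting the coherence $\mu$ and the frames along this isomorphism preserves the framed class, hence the identities $\mu$ monoidal and $f_X\Phi(f_X)=\mu_X$. The step that needs care is that $\mu$ itself can be taken unitary. For this, we use that unitary monoidal natural isomorphisms between unitary functors are obtained by polar decomposition: replacing $\mu$ by its unitary part keeps monoidality, and it also keeps the relation $\Phi(\mu_X)=\mu_{\Phi(X)}$, because polar decomposition is canonical and commutes with the antiunitary $\Phi$. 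The frames must be adjusted compatibly. The positive part of $\mu$ is a monoidal positive automorphism commuting with $\Phi$, so it is determined by positive real scalars; these rescale the $f_X$ by positive real numbers, and the identity $f_X\Phi(f_X)=\mu_X$ survives.

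With a unitary datum in hand, the computation displayed in Subsection~\ref{subsec:unitary-consequences} shows that every frame $f_X$ is unitary. Write $f_X^\dagger f_X=\lambda_X\id$ with $\lambda_X>0$; then the descent identity gives $\lambda_X^2=1$, so $\lambda_X=1$. Consequently, on each fusion space $\operatorname{Hom}(X\otimes Y,Z)$ the induced conjugate-linear map $\phi\mapsto f_Z\circ\Phi(\phi)\circ(f_X\otimes f_Y)^{-1}\circ(\text{tensorator})$ is an antiunitary involution for the inner product $\langle\phi,\psi\rangle\id_Z=\phi\psi^\dagger$. Here unitarity of the tensorator of $\Phi$ is needed, and it holds because $\Phi$ is a unitary monoidal equivalence. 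The fixed subspace of an antiunitary involution is a real form on which the inner product is real, so it has a real orthonormal basis; this basis is also a complex orthonormal basis of the fusion space. These bases are exactly the fusion bases coming from the split real form.

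Finally, in such bases Proposition~\ref{prop:dictionary} gives real $6j$-symbols. The $F$-matrices are unitary because they represent the unitary associator in orthonormal bases, with the standard normalization by quantum dimensions, which are positive in the unitary case. A real unitary matrix is orthogonal, which completes the argument. The main obstacle is the unitarization step: ensuring that the descent datum, coherence and frames can be made simultaneously unitary while keeping the involutive framed class. If the polar-decomposition argument turns out to be delicate, the fallback is to invoke \cite[Theorems~1--2]{Reutter} directly at the level of the framed involution, as the paper does in Subsection~\ref{subsec:unitary-consequences}.
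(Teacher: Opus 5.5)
Your proposal is correct and is essentially the paper's argument. It uses unitarity of $\mathcal C(\mathfrak g,k)$ from Wenzl (or Rowell), the split real form of Theorem~\ref{thm:Cgk-split-real}, unitarization of the descent datum via \cite[Theorems~1--2]{Reutter}, the computation $\lambda_X^2=1$ forcing unitary frames, and orthonormal bases of the fixed subspaces of the induced antiunitary real structures, which give real orthogonal $F$-matrices. Your explicit polar-decomposition step, which replaces $\mu$ by its unitary part and rescales each $f_X$ by the inverse square root of the positive scalar $|\mu_X|$, spells out what the paper compresses into its citation of Reutter's theorems.
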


\begin{proof}
The category $\mathcal C(\mathfrak g,k)$ is unitary by
\cite[Theorem~3.7]{WenzlCStar} or
\cite[Theorem~4.2]{RowellUnitaryQuantumGroups}. The divisibility hypothesis
$r^\vee\mid\ell$ in the second reference holds because
$\ell=r^\vee(k+h^\vee)$.
Combining Theorem~\ref{thm:Cgk-split-real} with
Subsection~\ref{subsec:unitary-consequences} gives the conclusion.
\end{proof}

For the self-dual Chern--Simons families, the Frobenius--Schur grading
criterion of~\cite[Section~6.7]{BHPReality} independently gives real
$F$-symbols and symmetric $R$-matrices. The corollary above applies to every
complex simple $\mathfrak g$ and every integer $k\geq1$ and arises from split
descent over an explicit totally real field.

\subsection{Split braided real forms}
\label{subsec:Cgk-split-braided-real}

\begin{theorem}\label{thm:Cgk-split-braided-real}
Equip $\mathcal C(\mathfrak g,k)$ with its standard unitary braiding and canonical
unitary ribbon structure. The following conditions are equivalent:
\begin{enumerate}[label=\textup{(\alph*)}]
\item the braided fusion category $\mathcal C(\mathfrak g,k)$ has a split real
form;
\item the ribbon fusion category $\mathcal C(\mathfrak g,k)$ has a split real
form;
\item either $(\mathfrak g,k)=(E_8,1)$ or
$(\mathfrak g,k)=(D_{4m},1)$ for some $m\geq1$.
\end{enumerate}
\end{theorem}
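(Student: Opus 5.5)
The plan is to prove (a)$\Rightarrow$(b)$\Rightarrow$(c)$\Rightarrow$(a).

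\emph{Step (a)$\Rightarrow$(b).} This follows directly from Proposition~\ref{prop:unitary-braided-real-twist}. The category $\mathcal C(\mathfrak g,k)$ with its standard braiding admits a unitary structure, and the canonical ribbon structure is the unique unitary one. Hence a split braided real form can be modified so that its descent datum preserves $\theta$, and it is then a split ribbon real form. The same proposition also records the key consequence of (b): in the split case the permutation $\pi$ is trivial, so $\theta_\lambda\in\{\pm1\}$ for every $\lambda\in\Lambda_k$.

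\emph{Step (b)$\Rightarrow$(c).} The task is to show that the twist condition $\theta_\lambda\in\{\pm1\}$ for all $\lambda\in\Lambda_k$ fails outside the listed cases. I will use the standard formula
\[
\theta_\lambda=\exp\!\left(2\pi i\,\frac{(\lambda,\lambda+2\rho)}{2(k+h^\vee)}\right),
\]
with long roots normalized to have squared length $2$. The condition is that $(\lambda,\lambda+2\rho)/(k+h^\vee)$ is an integer for every $\lambda\in\Lambda_k$. I split into two cases.

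\emph{Case $k\ge 2$.} The adjoint weight $\theta$ lies in $\Lambda_k$, since $\langle\theta,\theta^\vee\rangle=2\le k$, and $(\theta,\theta+2\rho)=2h^\vee$. The condition then says that $k+h^\vee$ divides $2h^\vee$. Since $k\ge 1$, we have $h^\vee<k+h^\vee$, so the only possibility is $k+h^\vee=2h^\vee$, that is, $k=h^\vee$.
\begin{itemize}[label=--]
\item If $h^\vee\ge4$, then $2\theta\in\Lambda_k$, and $(2\theta,2\theta+2\rho)/(2h^\vee)=2+2/h^\vee$ is not an integer, a contradiction.
\item If $h^\vee\in\{2,3\}$, the possibilities are $\mathfrak{sl}_2$ at level $2$ and $\mathfrak{sl}_3$ at level $3$. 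These I will exclude directly: $\omega_1$ has twist $e^{2\pi i\cdot 3/16}$ for $\mathfrak{sl}_2$ at level $2$, and $e^{2\pi i\cdot 4/18}$ for $\mathfrak{sl}_3$ at level $3$.
\end{itemize}

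\emph{Case $k=1$.} Here $\Lambda_1$ consists of $0$ and the level-one (cominuscule-type) fundamental weights. I will run through the known conformal weights type by type.
\begin{itemize}[label=--]
\item Type $A_n$: $h_{\omega_1}=n/(2(n+1))\notin\tfrac12\mathbb Z$.
\item Type $B_n$: the spinor has $h=(2n+1)/16$.
\item Type $C_n$: $h_{\omega_1}=(2n+1)/(4(n+2))$.
\item Type $D_n$: the spinor has $h=n/8$, which lies in $\tfrac12\mathbb Z$ exactly when $4\mid n$.
\item Type $E_6$: $h=2/3$.
\item Type $E_7$: $h=3/4$.
\item Type $G_2$: the Fibonacci weight has $h=2/5$.
\item Type $F_4$: $h=3/5$.
\item Type $E_8$: the category is trivial.
\end{itemize}
Only $E_8$ and $D_{4m}$ survive, which gives (c). The main obstacle in this direction is keeping the normalizations consistent in the non-simply-laced cases. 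I will cross-check each value against the central charge via the Gauss sum $\sum_\lambda d_\lambda^2\theta_\lambda\in\R_{>0}\,e^{2\pi i c/8}$.

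\emph{Step (c)$\Rightarrow$(a).} The category $\mathcal C(E_8,1)$ is $\Vect$, which is trivially split real. The category $\mathcal C(D_{4m},1)$ is pointed, with group $\mathbb Z/2\times\mathbb Z/2$. Its quadratic form takes the value $-1$ on the vector weight and $(-1)^m$ on both spinor weights; in particular it is $\{\pm1\}$-valued. Braided pointed categories are classified by pre-metric groups via the Eilenberg--MacLane isomorphism $H^3_{ab}(G,\C^\times)\cong\operatorname{Quad}(G,\C^\times)$.

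The same construction works over $\R$. Write the form as a product of $\{\pm1\}$-valued forms on cyclic factors, together with the bicharacter cross term. Each such piece is realized by an explicit $\{\pm1\}$-valued abelian $3$-cocycle $(\omega,c)$; for $\mathbb Z/2$ one uses the standard cocycles realizing $q=\pm1$, taking $\omega=1$ when $q$ restricted to the factor has order dividing $2$ as a quadratic form.

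One can also write down the abelian cocycle for the $\mathbb Z/2\times\mathbb Z/2$ forms directly. These are the toric-code, three-fermion, and mixed types: the case $m$ even has $q=(1,-1,-1,-1)$ on $(0,v,s,c)$, and the case $m$ odd has $q=(1,-1,1,\dots)$ in the appropriate arrangement. In each case the representing cocycle can be taken $\{\pm1\}$-valued. Taking $K=\R$ in the skeletal construction, as in the proof of Proposition~\ref{prop:pointed-split-field}, this gives a split braided real form.
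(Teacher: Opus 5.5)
\textbf{Gap in the $k\geq2$ case.} Your step (a)$\Rightarrow$(b) is the paper's own. For $k\geq2$ you deviate from the paper, which uses a type-by-type table at $k=h^\vee$. Instead you use $2\vartheta$ when $h^\vee\geq4$, which is correct and more uniform: $x_{2\vartheta}(h^\vee)=2+2/h^\vee$.

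The problem is the list of leftover cases. The value $h^\vee=3$ occurs not only for $\mathfrak{sl}_3$ but also for $\mathfrak{sp}_4\cong\mathfrak{so}_5$, type $C_2=B_2$. For $\mathcal C(C_2,3)$ the adjoint test passes, since $x_\vartheta(3)=6/6=1$. Your second test does not apply, because $\langle2\vartheta,\vartheta^\vee\rangle=4>3$ puts $2\vartheta$ outside $\Lambda_3$. So as written, (b)$\Rightarrow$(c) does not exclude $(C_2,3)$.

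The fix is one computation. The weight $\omega_1$ of $C_2$ (the $4$-dimensional representation) lies in $\Lambda_1\subseteq\Lambda_3$. It satisfies $(\omega_1,\omega_1+2\rho)_0=5/2$, so $x_{\omega_1}(3)=5/12\notin\mathbb Z$. Alternatively, the $5$-dimensional vector representation, which is $\omega_1$ of $B_2$, gives $2/3$. These are the $r=2$ entries of the paper's table at $k=h^\vee$. Your level-one values all agree with the paper's after the conversion $h=x/2$.

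\textbf{Parity slip in (c)$\Rightarrow$(a).} You correctly say the vector has twist $-1$ and both spinors have twist $(-1)^m$. You then assign $q=(1,-1,-1,-1)$ on $(0,v,s,c)$ to $m$ even, which is backwards:
\begin{itemize}
\item $m$ even gives the toric-code form $(1,-1,1,1)$;
\item $m$ odd gives the three-fermion form $(1,-1,-1,-1)$.
\end{itemize}
This does not break the argument. Both forms are $\{\pm1\}$-valued. Your recipe of $\pm1$ forms on the two $\mathbb Z/2$ factors plus a cross bicharacter yields exactly the paper's real bicharacters $c_0(a,b)=(-1)^{a_1b_2}$ and $c_1(a,b)=(-1)^{a_1b_1+a_2b_2+a_1b_2}$, each with trivial associator. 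The pre-metric-group classification then identifies their complexifications with $\mathcal C(D_{4m},1)$ as braided categories, as you intend.
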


\begin{proof}
The implication \textup{(b)}$\Rightarrow$\textup{(a)} is immediate, while
Proposition~\ref{prop:unitary-braided-real-twist} gives
\textup{(a)}$\Rightarrow$\textup{(b)} and shows that either condition forces all
simple twists to belong to $\{\pm1\}$.

Let $(\ ,\ )_0=(r^\vee)^{-1}(\ ,\ )$, where $(\ ,\ )$ is the form fixed at the
beginning of Section~\ref{sec:Cgk}; thus the long roots have squared length $2$.
Put
\[
 x_\lambda(k)=
 \frac{(\lambda,\lambda+2\rho)_0}{k+h^\vee}.
\]
For $\lambda\in\Lambda_k$, the standard twist is
\begin{equation}\label{eq:Cgk-standard-twist}
 \theta_\lambda=\exp\!\bigl(\pi i x_\lambda(k)\bigr)
\end{equation}
by~\cite[Equation~(5.3)]{GalindoMoraRowellVerlinde}. Hence
$\theta_\lambda\in\{\pm1\}$ exactly when $x_\lambda(k)\in\mathbb Z$.

Suppose first that $k\geq2$. The highest long root $\vartheta$ labels the adjoint
simple object, since
$\langle\vartheta,\vartheta^\vee\rangle=2\leq k$. Moreover,
\[
 (\vartheta,\vartheta+2\rho)_0=2h^\vee,
 \qquad
 x_\vartheta(k)=\frac{2h^\vee}{k+h^\vee}.
\]
This number lies strictly between $0$ and $2$. If it is integral, it must equal
$1$, and therefore $k=h^\vee$. At this remaining level, the root data give the
following simple labels and twist exponents:
\[
\begin{array}{c|c|c@{\qquad}c|c|c}
\mathfrak g&\lambda&x_\lambda(h^\vee)
 &\mathfrak g&\lambda&x_\lambda(h^\vee)\\ \hline
A_r&\omega_1&\dfrac{r(r+2)}{2(r+1)^2}
 &E_6&\omega_1&\dfrac{13}{18}\\[5pt]
B_r&\omega_1&\dfrac{r}{2r-1}
 &E_7&\omega_7&\dfrac{19}{24}\\[5pt]
C_r&\omega_1&\dfrac{2r+1}{4(r+1)}
 &E_8&\omega_1&\dfrac85\\[5pt]
D_r&\omega_1&\dfrac{2r-1}{4(r-1)}
 &F_4&\omega_4&\dfrac23\\[5pt]
 &&&G_2&\omega_1&\dfrac12
\end{array}
\]
Here $r\geq1$ in type $A$, $r\geq2$ in types $B$ and $C$, and $r\geq4$ in
type $D$. These values follow directly from the fundamental weights and $2\rho$
listed in~\cite[Appendix]{GalindoMoraRowellVerlinde}; none is an integer. Thus
$k\geq2$ is impossible.

At level $1$, the same calculation gives
\[
\begin{array}{c|c|c@{\qquad}c|c|c}
\mathfrak g&\lambda&x_\lambda(1)
 &\mathfrak g&\lambda&x_\lambda(1)\\ \hline
A_r&\omega_1&\dfrac{r}{r+1}
 &E_6&\omega_1&\dfrac43\\[5pt]
B_r&\omega_r&\dfrac{2r+1}{8}
 &E_7&\omega_7&\dfrac32\\[5pt]
C_r&\omega_1&\dfrac{2r+1}{2(r+2)}
 &F_4&\omega_4&\dfrac65\\[5pt]
D_r&\omega_r&\dfrac r4
 &G_2&\omega_1&\dfrac45
\end{array}
\]
Every displayed exponent outside type $D$ is nonintegral. In type $D_r$, the
level-one simples are
$0,\omega_1,\omega_{r-1},\omega_r$; the vector object has exponent $1$, while
the two spinor objects have exponent $r/4$. Thus all twists are real exactly
when $4\mid r$. Finally, $\Lambda_1=\{0\}$ in type $E_8$. This proves that
\textup{(a)} or \textup{(b)} implies \textup{(c)}.

It remains to prove existence in the listed cases. The category
$\mathcal C(E_8,1)$ is $\Vect_{\C}$, so it has its evident split braided and
ribbon real form. Let now $\mathfrak g=D_{4m}$ and $k=1$. All four simples are
invertible, and their metric group is $A=\F_2^2$
\cite[Proposition~5.2 and Tables~2--3]{GalindoMoraRowellVerlinde}. Taking the two
spinor objects as a basis, the vector object is their sum, its twist is $-1$, and
the spinor twists are $(-1)^m$. Consider the split pointed category of
finite-dimensional $A$-graded real vector spaces with trivial associator and one
of the following braidings on homogeneous objects:
\[
 \begin{aligned}
 c_0(a,b)&=(-1)^{a_1b_2},\\
 c_1(a,b)&=(-1)^{a_1b_1+a_2b_2+a_1b_2}.
 \end{aligned}
\]
Their quadratic forms $q_j(a)=c_j(a,a)$ satisfy
\[
 \begin{array}{c|ccc}
 &q_j(1,0)&q_j(0,1)&q_j(1,1)\\ \hline
 j=0&1&1&-1\\
 j=1&-1&-1&-1.
 \end{array}
\]
Both $c_0$ and $c_1$ are bicharacters, so they satisfy the hexagon equations
with the trivial associator, and their symmetrizations are nondegenerate. With
the standard spherical structure, their ribbon twists are the displayed
quadratic forms $q_0$ and $q_1$.
Thus the complexification of the first category has the metric group of
$\mathcal C(D_{4m},1)$ when $m$ is even, and the second has its metric group when
$m$ is odd. The classification of pointed braided fusion categories by metric
groups~\cite[Section~8.4]{EGNO} identifies the corresponding complexifications
with $\mathcal C(D_{4m},1)$. Since all associativity, braiding, and twist scalars
in these models lie in $\R$, they give split ribbon real forms. Hence
\textup{(c)} implies \textup{(b)}.
\end{proof}

\begin{remark}
\label{rem:affine-KZ-perspective}
Pavel Etingof suggested considering the relation between
$\mathcal C(\mathfrak g,k)$ and positive-level affine Lie algebra representations
from the perspective of real forms. The construction
in~\cite{McRaeAffineKZ} transports the tensor structure to a category of
finite-dimensional $\mathfrak g$-modules and describes its tensor product and
associator using the KZ equation. After choosing a Chevalley real form, the
quotient formulas for the tensor products are defined over $\R$. The residues of
the KZ equation are also defined over $\R$, which suggests that the regularized
connection matrix, and hence the associator, preserves the corresponding real
spaces.

This suggests an affine Lie algebra approach to the split real form problem for
$\mathcal C(\mathfrak g,k)$. Making this approach rigorous would require
constructing and verifying the corresponding tensor structure over $\R$.
\end{remark}

\subsection{Uniqueness and soft tensor autoequivalences}
\label{subsec:split-real-uniqueness}

The preceding constructions raise two distinct uniqueness questions.
Monoidal natural automorphisms of the identity control the coherent structures
attached to a fixed semilinear equivalence, whereas soft tensor autoequivalences
control the possible simple-fixing semilinear equivalences themselves. We begin
with a general uniqueness statement for a fixed simple-fixing section.

\begin{proposition}\label{prop:fixed-section-split-real-uniqueness}
Let $\mathcal C$ be a fusion category over $\C$, let
$C_2=\{1,\gamma\}$, and let
\[
 \Phi:C_2\longrightarrow\Pi_{\mathrm{sf}}(\mathcal C/\R)
\]
be a section of $p_{\mathrm{sf}}$. If $\Phi$ admits a lift to a section of
$p_{\mathrm{fr}}$, then all such lifts are conjugate by $Q_\Phi$.
Consequently, a split real form inducing the fixed simple-fixing section
$\Phi$ is unique up to equivalence compatible with the chosen identification
of its complexification with $\mathcal C$.
\end{proposition}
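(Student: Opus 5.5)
The argument is a standard computation with group extensions, carried out in the pulled-back extension \eqref{eq:section-framed-extension}:
\[
 1\to Q_\Phi\to E_\Phi\to C_2\to 1,\qquad E_\Phi=C_2\times_{\Pi_{\mathrm{sf}}(\mathcal C/\R)}\Pi_{\mathrm{fr}}(\mathcal C/\R).
\]
A lift of $\Phi$ to a section of $p_{\mathrm{fr}}$ is the same thing as a splitting of this extension. Since $\Phi$ is fixed, the lift is determined by the image $t\in E_\Phi$ of $\gamma$. This $t$ is an involution lying over $\gamma$.

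Fix one such involution $t$. Any other lift over $\gamma$ has the form $qt$ with $q\in Q_\Phi$. By the identity $(qt)^2=q\,(tqt^{-1})\,t^2$ recorded after Proposition~\ref{prop:framed-split-descent}, $qt$ is an involution exactly when $q\cdot\gamma(q)=1$. Here $\gamma(q)=tqt^{-1}$ is the $C_2$-action defining the module $Q_\Phi$. Thus the lifts correspond to $1$-cocycles $Z^1(C_2,Q_\Phi)$. Conjugating $t$ by $r\in Q_\Phi$ gives $rtr^{-1}=r\gamma(r)^{-1}t$, and these correspond to the coboundaries. So ``all lifts are conjugate by $Q_\Phi$'' is equivalent to $H^1(C_2,Q_\Phi)=0$, and the proof must establish this vanishing.

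To compute $H^1$, use the long exact sequence of
\[
 1\to\Aut_\otimes(\id_{\mathcal C})_\Phi\to\Aut(\id_{\mathcal C})_\Phi\to Q_\Phi\to1,
\]
with the identifications made before Corollary~\ref{cor:real-signs}. The middle term is $\operatorname{Map}(\operatorname{Irr}(\mathcal C),\C^\times)$ with componentwise conjugation. Hilbert~90 gives $H^1(C_2,\C^\times)=0$, so $H^1$ of the middle term vanishes. Hence $H^1(C_2,Q_\Phi)$ injects into the kernel of
\[
 H^2\!\left(C_2,\Aut_\otimes(\id_{\mathcal C})_\Phi\right)\to H^2\!\left(C_2,\Aut(\id_{\mathcal C})_\Phi\right).
\]
By the identifications above this map is the inclusion $\operatorname{Hom}(U(\mathcal C),\{\pm1\})\hookrightarrow\{\pm1\}^{\operatorname{Irr}(\mathcal C)}$. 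It is injective, so $H^1(C_2,Q_\Phi)=0$.

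The main obstacle is justifying that this map is injective on classes, not just on the chosen cocycle representatives. I would check it directly. The group $\operatorname{Hom}(U,\C^\times)$ carries the action $\chi\mapsto\overline{\chi}^{-1}$ composed with the transport through $\Phi$; since $\Phi$ fixes simples, this is $\chi\mapsto\overline\chi$ on values. Its $H^2$ is therefore the fixed characters modulo norms $\chi\overline\chi$. A character with values in $\{\pm1\}$ that is a norm componentwise in $\C^\times$ has positive values, hence is trivial. This agrees with the Tate description used in Corollary~\ref{cor:real-signs}.

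For the consequence, argue as follows. A split real form inducing $\Phi$, together with its identification of the complexification, yields a lift of $\Phi$ to a section of $p_{\mathrm{fr}}$, as in the proof of Proposition~\ref{prop:framed-split-descent}. Two forms give conjugate lifts, say conjugate by $r=[(\id,1,(e_X))]$. Then the natural automorphism $e$ provides compatible isomorphisms between the two descent data together with their frames. The reconstruction in \cite[Proposition~3.5]{EGDescent} turns these into an equivalence of real forms compatible with the identifications.
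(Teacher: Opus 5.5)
Your proposal is correct and follows the paper's proof essentially step for step: you classify the lifts, modulo $Q_\Phi$-conjugacy, by $H^1(C_2,Q_\Phi)$, and you show this group vanishes using Hilbert~90 for $\operatorname{Map}(\operatorname{Irr}(\mathcal C),\C^\times)$ together with injectivity of $\operatorname{Hom}(U(\mathcal C),\{\pm1\})\to\{\pm1\}^{\operatorname{Irr}(\mathcal C)}$ in the long exact sequence. Two small points: your step ``positive values, hence trivial'' uses faithfulness of the universal grading (every element of $U(\mathcal C)$ is the degree of a simple object), which the paper invokes explicitly at this point; and your description of the action on characters is garbled, although you end with the correct action $\chi\mapsto\overline\chi$.
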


\begin{proof}
Put
\[
 E_\Phi=\Aut(\id_{\mathcal C})_\Phi,
 \qquad
 A_\Phi=\Aut_\otimes(\id_{\mathcal C})_\Phi,
 \qquad
 Q_\Phi=E_\Phi/A_\Phi.
\]
Pulling back the framed extension along $\Phi$ gives
\[
 1\longrightarrow Q_\Phi
 \longrightarrow
 C_2\mathop{\times}_{\Pi_{\mathrm{sf}}(\mathcal C/\R)}
 \Pi_{\mathrm{fr}}(\mathcal C/\R)
 \longrightarrow C_2
 \longrightarrow1.
\]
Once one splitting has been chosen, any other splitting differs from it by a
$1$-cocycle with values in $Q_\Phi$, and conjugation by an element of $Q_\Phi$
changes this cocycle by a coboundary. Thus the conjugacy classes of splittings
are parametrized by $H^1(C_2,Q_\Phi)$. We show that this group is trivial.

Since $\Phi$ fixes every simple isomorphism class, evaluation on simple objects
gives $C_2$-module identifications
\[
 E_\Phi\cong
 \operatorname{Map}\bigl(\operatorname{Irr}(\mathcal C),\C^\times\bigr),
 \qquad
 A_\Phi\cong
 \operatorname{Hom}\bigl(U(\mathcal C),\C^\times\bigr),
\]
where $C_2$ acts componentwise by complex conjugation. The cohomology sequence
associated with
\[
 1\longrightarrow A_\Phi
 \longrightarrow E_\Phi
 \longrightarrow Q_\Phi
 \longrightarrow1
\]
contains
\[
 H^1(C_2,E_\Phi)
 \longrightarrow H^1(C_2,Q_\Phi)
 \longrightarrow H^2(C_2,A_\Phi)
 \longrightarrow H^2(C_2,E_\Phi).
\]
Componentwise Hilbert Theorem~90 gives
\[
 H^1(C_2,E_\Phi)=1.
\]
Moreover,
\[
 H^2(C_2,A_\Phi)
 \cong\operatorname{Hom}\bigl(U(\mathcal C),\{\pm1\}\bigr),
 \qquad
 H^2(C_2,E_\Phi)
 \cong\{\pm1\}^{\operatorname{Irr}(\mathcal C)}.
\]
Under these identifications, the last map in the displayed cohomology sequence
is evaluation:
\[
 \chi\longmapsto
 \bigl(\chi(|X|)\bigr)_{X\in\operatorname{Irr}(\mathcal C)}.
\]
The universal grading is faithful, so every element of $U(\mathcal C)$ occurs as
the degree of a simple object. The map is therefore injective, and exactness gives
\[
 H^1(C_2,Q_\Phi)=1,
\]
which proves the assertion.
\end{proof}

This proposition also explains the role of
$\Aut_\otimes(\id_{\mathcal C})$. Although a fixed semilinear equivalence may
admit several coherent $C_2$-structures, changing the coherent structure by a
nontrivial element of
\[
 H^2(C_2,A_\Phi)
 \cong\operatorname{Hom}\bigl(U(\mathcal C),\{\pm1\}\bigr)
\]
changes the relative Brauer sign of at least one simple object. Consequently,
at most one coherent structure on a fixed simple-fixing section can make every
simple object real rather than quaternionic.

Following Davydov~\cite[Section~2.1]{DavydovSoft}, a tensor autoequivalence is
\emph{soft} if its underlying linear functor is naturally isomorphic to the
identity. For a fusion category this is equivalent to acting trivially on the
Grothendieck ring. Thus define the soft tensor autoequivalence group
\[
 \operatorname{Aut}^1_\otimes(\mathcal C)
 =
 \ker\left(
 \operatorname{Eq}(\mathcal C)
 \longrightarrow
 \operatorname{Aut}_{\mathrm{bas}}\bigl(K_0(\mathcal C)\bigr)
 \right),
\]
where $\operatorname{Eq}(\mathcal C)$ is the group of tensor autoequivalence
classes and
$\operatorname{Aut}_{\mathrm{bas}}(K_0(\mathcal C))$ is the group of based-ring
automorphisms. Thus $\operatorname{Aut}^1_\otimes(\mathcal C)$ consists
precisely of the tensor autoequivalence classes that fix every simple
isomorphism class. This group is denoted $\operatorname{Gauge}(\mathcal C)$,
and its elements are called gauge autoequivalences, in
\cite{EdieMichellAutoequivalences,EdieMichellGraded}.

\begin{corollary}\label{cor:gauge-trivial-real-uniqueness}
If
\[
 \operatorname{Aut}^1_\otimes(\mathcal C)=1,
\]
then $\mathcal C$ admits at most one split real form, up to equivalence compatible
with chosen identifications of the complexifications with $\mathcal C$.
\end{corollary}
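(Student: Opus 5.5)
The argument reduces to Proposition~\ref{prop:fixed-section-split-real-uniqueness}: under the hypothesis there is at most one simple-fixing section $\Phi$ of $p_{\mathrm{sf}}$. Start with two split real forms $\mathcal C_1,\mathcal C_2$, each with a chosen identification of its complexification with $\mathcal C$. By Proposition~\ref{prop:framed-split-descent}, each gives a section $s_j$ of $p_{\mathrm{fr}}$ over $C_2=\{1,\gamma\}$. Forgetting frames produces sections $\Phi_j$ of $p_{\mathrm{sf}}$. They are simple-fixing because every simple object of a split form stays simple after complexification. So we can write $\Phi_j(\gamma)=[(F_j,\gamma)]$ with $F_j({}^\gamma X)\cong X$ for every simple $X$.

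Next compare the two classes in $\Pi_{\mathrm{sf}}(\mathcal C/\R)$. The element $\Phi_1(\gamma)\Phi_2(\gamma)^{-1}$ lies over $1\in C_2$, so it is the class of an $\C$-linear tensor autoequivalence. This autoequivalence fixes every simple isomorphism class, since both factors do and the set of simple-fixing classes is a subgroup. It therefore lies in $\operatorname{Aut}^1_\otimes(\mathcal C)$, which is trivial by hypothesis. Hence $\Phi_1(\gamma)=\Phi_2(\gamma)$, so $\Phi_1=\Phi_2=:\Phi$.

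With a common section $\Phi$, both $s_1$ and $s_2$ lift $\Phi$ to sections of $p_{\mathrm{fr}}$. Proposition~\ref{prop:fixed-section-split-real-uniqueness} then says they are conjugate by $Q_\Phi$, and that the split real form inducing $\Phi$ is unique up to equivalence compatible with the identifications. This gives the corollary.

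The step that needs the most care is that the final clause of Proposition~\ref{prop:fixed-section-split-real-uniqueness} really applies: conjugate framed sections must give equivalent reconstructed forms, compatibly with the complexification. Conjugating by $e\in Q_\Phi$, lifted to $\Aut(\id_{\mathcal C})$, amounts to rescaling the frames $f_X$ by $e_X$ up to the monoidal part. It induces an isomorphism of descent data for the same coherent $J$, and hence an equivalence of the forms from \cite[Proposition~3.5]{EGDescent}. A second point to check is that each given split real form recovers exactly its descent datum from Proposition~\ref{prop:framed-split-descent}, so that the identification of complexifications is preserved.
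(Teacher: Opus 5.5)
Your proposal is correct and follows the paper's proof. The quotient of the two simple-fixing classes over $\gamma$ is a soft tensor autoequivalence, hence trivial by hypothesis, so the two sections of $p_{\mathrm{sf}}$ coincide and Proposition~\ref{prop:fixed-section-split-real-uniqueness} finishes the argument. Your closing remarks only unpack the final clause of that proposition, which the paper uses directly; one small correction there is that conjugating by a lift $e\in\Aut(\id_{\mathcal C})$ rescales each frame by $e_X/\overline{e_X}$ rather than by $e_X$, which changes nothing.
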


\begin{proof}
Let $\Phi$ and $\Psi$ be the simple-fixing semilinear sections associated with
two split real forms. Their quotient over the nontrivial element of $C_2$,
\[
 [\Psi_\gamma][\Phi_\gamma]^{-1},
\]
is a complex-linear tensor autoequivalence that fixes every simple isomorphism
class. It therefore belongs to $\operatorname{Aut}^1_\otimes(\mathcal C)$. By the
hypothesis, the two sections coincide in
$\Pi_{\mathrm{sf}}(\mathcal C/\R)$. Proposition
\ref{prop:fixed-section-split-real-uniqueness} then shows that their framed lifts,
and hence the corresponding split real forms with the chosen complexification
identifications, are equivalent.
\end{proof}

\begin{corollary}\label{cor:ABCG-split-real-uniqueness}
Let $\mathfrak g$ be of type
\[
 A_r,\qquad B_r,\qquad C_r,\qquad\text{or}\qquad G_2.
\]
For every positive integral level $k$, the category
$\mathcal C(\mathfrak g,k)$ has a unique split real form, up to equivalence
compatible with a chosen identification of its complexification with
$\mathcal C(\mathfrak g,k)$.
\end{corollary}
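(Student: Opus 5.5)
Existence is already settled by Theorem~\ref{thm:Cgk-split-real}, so only uniqueness remains. For that I will use Corollary~\ref{cor:gauge-trivial-real-uniqueness}: it suffices to show that
\[
 \operatorname{Aut}^1_\otimes\bigl(\mathcal C(\mathfrak g,k)\bigr)=1
\]
for $\mathfrak g$ of type $A_r$, $B_r$, $C_r$, or $G_2$ and every $k\geq1$. Once this holds, the two simple-fixing sections attached to any two split real forms coincide in $\Pi_{\mathrm{sf}}$. Proposition~\ref{prop:fixed-section-split-real-uniqueness} then makes their framed lifts conjugate, and hence the two forms are equivalent compatibly with the chosen complexifications.

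The key step is therefore the vanishing of the gauge (soft) autoequivalence group, and I will import it from the literature rather than reprove it. Edie-Michell computed the groups $\operatorname{Gauge}(\mathcal C(\mathfrak g,k))$ in \cite{EdieMichellAutoequivalences,EdieMichellGraded}. For type $A$, for $C_r$, and for $G_2$ the gauge group is trivial at every positive level, and the same holds for $B_r$. The result relies on the fact that every tensor autoequivalence is determined by its action on a tensor generator (the vector representation, or $\omega_1$ for $G_2$). Given such a generator, Kazhdan--Wenzl type reconstruction, or planar-algebra and skein-theoretic uniqueness for the $BMW$, $HOMFLYPT$, and $G_2$ spiders, shows that any autoequivalence fixing the generator is naturally isomorphic to the identity.

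I would add a sentence explaining why type $D$ and the exceptional types are excluded. In those cases the cited results contain nontrivial gauge autoequivalences; for even $D$ these come from the grading-cocycle twists mentioned in the introduction. This is consistent with the nonuniqueness claimed for even type $D$.

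The main obstacle is bookkeeping with the cited classification. I must make sure that its conventions agree with ours, namely the simply connected root datum and the tensor-equivalence classes of the underlying fusion category, not braided ones. I must also check that it covers all levels, including small levels and low-rank coincidences such as $B_2=C_2$, $A_1$, and $A_3=D_3$. If some low-rank or low-level exceptions appear in the literature, I would verify them directly. The approach there is to note that a soft autoequivalence fixes the generating object, and then to use that a gauge autoequivalence of a category generated by an object $X$ with $\dim\operatorname{End}(X^{\otimes2})$ small is controlled by scalars on the trivalent vertices, which can be normalized away.
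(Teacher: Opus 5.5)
Your proposal is correct and is essentially the paper's proof: existence comes from Theorem~\ref{thm:Cgk-split-real}, and uniqueness comes from Corollary~\ref{cor:gauge-trivial-real-uniqueness} combined with Edie-Michell's theorem that the gauge (soft) autoequivalence groups are trivial in types $A$, $B$, $C$, and $G_2$ at every level. One correction to your aside: odd type $D$ and the exceptional types are excluded only because triviality of the soft group has not been established for them, not because nontrivial gauge autoequivalences are known there. The paper notes that the grading-cocycle ambiguity is absent in those types, and only even type $D$ carries the explicit nontrivial twist $G_\alpha$.
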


\begin{proof}
The soft tensor autoequivalence groups of the Verlinde categories of types $A$,
$B$, $C$, and $G_2$ are trivial by
\cite[Theorem~3.3, Corollary~4.2, Lemma~5.1, and
Corollary~6.2]{EdieMichellAutoequivalences}, where they are called gauge
autoequivalence groups. Theorem~\ref{thm:Cgk-split-real} gives existence, and
Corollary~\ref{cor:gauge-trivial-real-uniqueness} gives uniqueness.
\end{proof}

After extension of the cyclotomic descent datum along $K\subset\R$, let
$\Phi_{\mathrm{bar}}$ denote the conjugate-linear tensor equivalence underlying
the resulting real bar descent. The even $D$-series has an additional ambiguity
visible already from the
universal grading. We first record the general construction that produces it.

\begin{proposition}\label{prop:grading-cocycle-twist}
Let $\mathcal C$ be a fusion category faithfully graded by a finite group $U$,
and let
\[
 \bigl(\Phi,\mu,(f_X)_{X\in\operatorname{Irr}(\mathcal C)}\bigr)
\]
be a simple-fixing split real descent datum. Let
\[
 \alpha\in Z^2(U,\{\pm1\})
\]
be a normalized cocycle. On homogeneous objects define a tensor
autoequivalence $G_\alpha$ whose underlying functor is the identity and whose
tensorator is
\[
 (G_\alpha)_2(X,Y)
 =
 \alpha(|X|,|Y|)\id_{X\otimes Y}.
\]
Then
\[
 \Phi_\alpha:=G_\alpha\circ\Phi
\]
admits the same coherence $\mu$ and the same frames $(f_X)_X$, and therefore
defines another simple-fixing split real descent datum.
We call this operation a \emph{grading-cocycle twist} of the descent datum.

If $\mathcal C$ is braided, then $U$ is abelian. If the commutator bicharacter
\[
 b_\alpha(g,h)
 =
 \frac{\alpha(g,h)}{\alpha(h,g)}
\]
is nontrivial, then $G_\alpha$ is a nontrivial soft tensor autoequivalence.
Consequently, $\Phi_\alpha$ and $\Phi$ determine distinct sections of
$p_{\mathrm{sf}}$.
\end{proposition}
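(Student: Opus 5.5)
The plan is to verify directly the three ingredients of a simple-fixing split real descent datum for $\Phi_\alpha$: it is a conjugate-linear tensor equivalence, $\mu$ is a coherent monoidal isomorphism $\Phi_\alpha^2\Rightarrow\id_{\mathcal C}$, and the $f_X$ satisfy $f_X\circ\Phi_\alpha(f_X)=\mu_X$. Then Proposition~\ref{prop:framed-split-descent} produces the split real form.

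\textbf{Step 1: $G_\alpha$ is a tensor autoequivalence.} The monoidal axiom for the tensorator $(G_\alpha)_2$ on homogeneous $X,Y,Z$ reduces to
\[
\alpha(|X|,|Y|)\,\alpha(|X||Y|,|Z|)=\alpha(|Y|,|Z|)\,\alpha(|X|,|Y||Z|),
\]
which is the cocycle identity. The unit axioms follow from normalization. Both extend additively from homogeneous objects, since morphisms respect the grading.

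\textbf{Step 2: $\Phi$ preserves degrees.} Since $\Phi$ fixes every simple class, $|\Phi(X)|=|X|$ for every homogeneous $X$. Because $\alpha$ is $\{\pm1\}$-valued, it is fixed by complex conjugation, so composing $G_\alpha$ with the conjugate-linear $\Phi$ causes no twisting of scalars. The tensorator of $\Phi_\alpha$ is therefore $\alpha(|X|,|Y|)\,\Phi_2(X,Y)$.

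\textbf{Step 3: the tensorator of $\Phi_\alpha^2$.} The underlying functor of $\Phi_\alpha^2$ equals that of $\Phi^2$. Its tensorator differs from that of $\Phi^2$ by the scalar $\alpha(|X|,|Y|)\cdot\overline{\alpha(|X|,|Y|)}=\alpha(|X|,|Y|)^2=1$; the conjugation arises from passing one copy of $\alpha$ through $\Phi$. Hence $\mu$ remains monoidal. The coherence $\Phi_\alpha(\mu_X)=\mu_{\Phi_\alpha(X)}$ is a statement about underlying morphisms, and those are unchanged. The frames $f_X:\Phi_\alpha(X)=\Phi(X)\to X$ are the same maps, so the identity $f_X\circ\Phi_\alpha(f_X)=\mu_X$ persists. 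Bookkeeping the sign in Step 3 is the only delicate point, and it is harmless because $\alpha^2=1$.

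\textbf{Step 4: the braided case.} When $\mathcal C$ is braided, the universal grading group is abelian, and so is its quotient $U$; this is standard. The functor $G_\alpha$ is soft because its underlying functor is the identity. To show it is nontrivial, suppose $\eta:\id\Rightarrow G_\alpha$ is a monoidal isomorphism. Then $\eta$ is given by scalars $\eta_X$ on simples, and
\[
\eta_{X\otimes Y}=\alpha(|X|,|Y|)^{\pm1}(\eta_X\otimes\eta_Y).
\]
Naturality of $\eta$ with respect to the braiding $c_{X,Y}:X\otimes Y\to Y\otimes X$ gives $c\circ\eta_{X\otimes Y}=\eta_{Y\otimes X}\circ c$. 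Moreover, $c$ intertwines $\eta_X\otimes\eta_Y$ with $\eta_Y\otimes\eta_X$. Comparing the two sides yields $\alpha(|X|,|Y|)=\alpha(|Y|,|X|)$.

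\textbf{Step 5: conclusion.} By faithfulness of the grading, every pair $(g,h)\in U^2$ occurs as $(|X|,|Y|)$ for some simples $X,Y$. Step 4 then forces $b_\alpha=1$, contradicting the hypothesis, so $G_\alpha$ is a nontrivial soft tensor autoequivalence. Finally,
\[
[\Phi_\alpha][\Phi]^{-1}=[G_\alpha]\neq1
\]
in $\Pi_{\mathrm{sf}}(\mathcal C/\R)$, so the two sections of $p_{\mathrm{sf}}$ are distinct.
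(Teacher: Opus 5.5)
Your proposal is correct and follows essentially the same route as the paper. The cocycle identity makes $G_\alpha$ monoidal. Degree preservation by the simple-fixing $\Phi$ makes the extra tensorator factors of $\Phi_\alpha^2$ cancel as $\alpha\overline{\alpha}=1$, so $\mu$ and the frames carry over unchanged. Naturality of a putative monoidal isomorphism $G_\alpha\cong\id_{\mathcal C}$ with respect to the braiding, together with faithfulness of the grading, forces $b_\alpha=1$.

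Your extra remarks make explicit two points the paper leaves implicit: the $C_2$-coherence $\Phi_\alpha(\mu_X)=\mu_{\Phi_\alpha(X)}$, and the fact that $U$ is abelian as a quotient of the universal grading group.
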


\begin{proof}
The cocycle identity for $\alpha$ is exactly the tensor-functor coherence for
$G_\alpha$. Since $\Phi$ fixes every simple isomorphism class, it acts trivially
on the universal grading group. The tensorator of $\Phi_\alpha$ differs from
that of $\Phi$ on objects of degrees $g,h$ by the scalar $\alpha(g,h)$. On
squaring, the two additional factors are $\alpha(g,h)$ and its complex
conjugate. They multiply to $1$ because $\alpha$ takes values in $\{\pm1\}$.
Thus the tensorator of $\Phi_\alpha^2$ agrees with that of $\Phi^2$, so the
natural isomorphism
$\mu:\Phi^2\Rightarrow\id_{\mathcal C}$ is also monoidal for
$\Phi_\alpha^2$. The underlying functor of $G_\alpha$ is the identity on
objects and morphisms; hence
\[
 f_X\circ\Phi_\alpha(f_X)
 =
 f_X\circ\Phi(f_X)
 =
 \mu_X.
\]
Thus the original frames remain compatible and the new datum is split.

Suppose now that $\mathcal C$ is braided and that there were a monoidal natural
isomorphism
\[
 \eta:G_\alpha\Longrightarrow\id_{\mathcal C}.
\]
For homogeneous simple objects $X,Y$, monoidality gives
\[
 \eta_{X\otimes Y}\,
 \alpha(|X|,|Y|)
 =
 \eta_X\otimes\eta_Y.
\]
Applying the same equation to $Y,X$ and using naturality of $\eta$ with respect
to the braiding $c_{X,Y}:X\otimes Y\to Y\otimes X$ gives
\[
 \alpha(|X|,|Y|)
 =
 \alpha(|Y|,|X|).
\]
The grading is faithful, so this holds for every pair of elements of $U$,
contradicting the nontriviality of $b_\alpha$. Therefore $G_\alpha$ is
nontrivial.
\end{proof}

\begin{corollary}\label{cor:even-D-two-real-sections}
Let
\[
 \mathcal C=\mathcal C(D_{2r},k),
 \qquad r\geq2.
\]
Then the bar split real structure admits a nontrivial grading-cocycle
twist that gives a second simple-fixing split real section.

More explicitly, identify
\[
 U(\mathcal C)\cong
 C_2\times C_2\cong\F_2^2
\]
and write $a=(a_1,a_2)$. The formula
\begin{equation}\label{eq:even-D-real-gauge-cocycle}
 \alpha(a,b)=(-1)^{a_2b_1}
\end{equation}
defines a normalized $2$-cocycle. Its commutator satisfies
\[
 b_\alpha\bigl((1,0),(0,1)\bigr)=-1.
\]
Hence $G_\alpha$ is nontrivial and
\[
 \Phi_{\mathrm{bar}}
 \qquad\text{and}\qquad
 G_\alpha\circ\Phi_{\mathrm{bar}}
\]
define distinct simple-fixing sections of $p_{\mathrm{sf}}$, both of which lift
to split framed sections.
\end{corollary}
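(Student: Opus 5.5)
The plan is to deduce the corollary from Proposition~\ref{prop:grading-cocycle-twist}, applied to the bar descent datum. There are four checks to make: the universal grading group, the cocycle and its commutator, the braiding hypothesis, and the lift of the new section.

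\emph{The grading group and the cocycle.} For the simply connected root datum, $U(\mathcal C(\mathfrak g,k))$ is the Pontryagin dual of the center of the simply connected group, namely $P/Q$. For $D_{2r}$ this quotient is $\F_2^2$. The grading is faithful: for $k\geq1$ the level-one simples $0,\omega_1,\omega_{2r-1},\omega_{2r}$ lie in $\Lambda_k$ and represent all four cosets, and the universal grading of $\mathcal C(\mathfrak g,k)$ is the $P/Q$-grading. I would cite \cite[Sections~3.6 and~4.14]{EGNO} together with the standard description of the universal grading of $\mathcal C(\mathfrak g,k)$ (for example \cite{GalindoMoraRowellVerlinde}). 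Next, $\alpha(a,b)=(-1)^{a_2b_1}$ is a bicharacter with values in $\{\pm1\}$. Bilinearity gives the $2$-cocycle identity directly, and normalization holds because $\alpha(0,b)=\alpha(a,0)=1$. The commutator is
\[
 b_\alpha\bigl((1,0),(0,1)\bigr)=\alpha((1,0),(0,1))/\alpha((0,1),(1,0))=1/(-1)=-1,
\]
so it is nontrivial.

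\emph{Applying the proposition.} Take the simple-fixing split real descent datum $(\Phi_{\mathrm{bar}},\mu,(f_\lambda)_\lambda)$. It comes from Theorem~\ref{thm:Cgk-split-real}, or from Proposition~\ref{prop:bar-descent-Cgk} after extending scalars along $K\subset\R$. It is simple-fixing by Lemma~\ref{lem:canonical-simple-bars}. The first part of Proposition~\ref{prop:grading-cocycle-twist} says that $G_\alpha\circ\Phi_{\mathrm{bar}}$, with the same $\mu$ and the same frames, is another simple-fixing split real descent datum. Its framed class therefore squares to $1$ and lies over $\gamma$, so it defines a split framed section. Since $\mathcal C(D_{2r},k)$ is braided by its standard braiding, the second part of the proposition applies. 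It shows that $G_\alpha$ is a nontrivial soft autoequivalence, and hence that the two sections of $p_{\mathrm{sf}}$ differ, because their quotient over $\gamma$ is the class of $G_\alpha$.

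\emph{Main obstacle.} The only step that needs real input is identifying $U(\mathcal C)$ with $\F_2^2$ and checking that the grading is faithful; the rest is formal. Faithfulness matters because the proposition's braiding argument is only valid on degrees that are actually realized. I would verify it with the explicit level-one weights listed above. The cocycle checks and the commutator computation are routine.
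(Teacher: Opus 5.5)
Your proposal is correct and follows essentially the same route as the paper's proof. You identify $U(\mathcal C)\cong\F_2^2$ via \cite{GalindoMoraRowellVerlinde}, observe that $\alpha$ is a normalized $\{\pm1\}$-valued bicharacter with $b_\alpha((1,0),(0,1))=-1$, and apply Proposition~\ref{prop:grading-cocycle-twist} to the bar datum. One small caution: the general identification $U(\mathcal C(\mathfrak g,k))\cong P/Q$ that you state fails at $(E_8,2)$, where an exceptional invertible object makes $U$ of order two; this does not affect type $D_{2r}$.
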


\begin{proof}
The universal grading computation follows from
\cite[Theorem~5.3 and Table~7]{GalindoMoraRowellVerlinde}.
The cocycle in~\eqref{eq:even-D-real-gauge-cocycle} is bilinear and hence
satisfies the cocycle identity. Its commutator is nontrivial, so
Proposition~\ref{prop:grading-cocycle-twist} applies.
\end{proof}

This phenomenon is special to the even $D$-series among the Verlinde categories.
Their universal grading group is the Klein four group, and
\[
 H^2(C_2\times C_2,\C^\times)\cong C_2.
\]
For the other simple Lie types the universal grading group is cyclic or trivial,
and hence
\[
 H^2(U(\mathcal C),\C^\times)=0.
\]
Thus no nontrivial soft tensor autoequivalence can be obtained in those types
merely by twisting the tensorator through the universal grading.

\begin{remark}
\label{rem:type-D-gauge-ambiguity}
The paragraph following
\cite[Theorem~9.1]{EdieMichellGraded} states that
\[
 \operatorname{Gauge}(\mathcal C(D_{2r+1},k))=1,
 \qquad
 \operatorname{Gauge}(\mathcal C(D_{2r},k))\cong C_2,
\]
but explicitly leaves the proof to a future publication. We therefore do not
use these statements as established results.

The preceding construction proves independently that the even $D$-series
contains the nontrivial soft class $[G_\alpha]$. If the announced calculation of
the full soft tensor autoequivalence group is established, this class generates
the entire group in even type $D$. It remains open here whether the real
categories obtained from $\Phi_{\mathrm{bar}}$ and
$G_\alpha\Phi_{\mathrm{bar}}$ are equivalent after forgetting the chosen
identifications of their complexified simple objects.

For odd $D$-type and for the exceptional types, the grading-cocycle
ambiguity is absent, but a complete calculation of all soft tensor
autoequivalences, or a complete uniqueness statement for split real forms, is
not supplied here.
\end{remark}

\subsection{Real forms under zesting}\label{subsec:Cgk-zesting}

We now study how changing the associator by zesting affects real descent. We
recall only the part of the construction needed here; see
\cite[Sections~2--3]{BraidedZesting} for the general obstruction theory and
\cite{GalindoMoraRowellVerlinde} for the categories
$\mathcal C(\mathfrak g,k)$.

Let $A$ be a finite abelian group and let
$\mathcal C=\bigoplus_{a\in A}\mathcal C_a$ be a faithfully $A$-graded braided
fusion category. The part of an associative zesting datum relevant here consists
of a normalized function
\[
 \lambda:A\times A\longrightarrow\operatorname{Inv}(\mathcal C_0),
 \qquad \lambda(0,a)=\lambda(a,0)=\mathbf 1,
\]
together with coherent identifications between
$\lambda(a,b)\otimes\lambda(a+b,c)$ and
$\lambda(b,c)\otimes\lambda(a,b+c)$ satisfying the pentagon. For
homogeneous objects the zested tensor product is
\begin{equation}\label{eq:zested-tensor-product}
 X_a\otimes_\lambda Y_b
 =X_a\otimes Y_b\otimes\lambda(a,b).
\end{equation}
Its associator combines the original associator, the half-braiding used to move
$\lambda(a,b)$ past the third factor, and the coherence isomorphisms of the
zesting. A braided zesting is an associative zesting equipped with additional
data satisfying two hexagon identities. When the object-valued part is trivial,
that is, $\lambda(a,b)=\mathbf 1$ for all $a,b$, an ordinary normalized cocycle
$\omega\in Z^3(A,\C^\times)$ gives the cohomological zesting
$\mathcal C^\omega$, with unchanged tensor product and associator
\begin{equation}\label{eq:cohomological-zesting-associator}
 \alpha^\omega_{X_a,Y_b,Z_c}
 =\omega(a,b,c)\alpha_{X_a,Y_b,Z_c}.
\end{equation}

The following observation shows that arbitrary associative zesting need not
preserve real descent.

\begin{proposition}\label{prop:cohomological-zesting-real-obstruction}
Let $\mathcal C=\bigoplus_{a\in A}\mathcal C_a$ be a faithfully graded complex
fusion category with a split real form whose associated semilinear involution
fixes every simple isomorphism class and preserves the $A$-grading. Let
$R:{}^\gamma\mathcal C\to\mathcal C$ be the corresponding twisted tensor
equivalence. Suppose that a real form of $\mathcal C^\omega$, after using $R$ to
identify ${}^\gamma(\mathcal C^\omega)$ with
$\mathcal C^{\overline\omega}$, gives a tensor equivalence
\[
 F:\mathcal C^{\overline\omega}\longrightarrow\mathcal C^\omega
\]
obtained from a tensor autoequivalence $T$ of $\mathcal C$ by using the same
underlying functor and tensorator. If $T$ induces $u\in\Aut(A)$, then
\begin{equation}\label{eq:zesting-arbitrary-real-necessary}
 u^*[\omega]=-[\omega].
\end{equation}
In particular, if $u=\id_A$, then $2[\omega]=0$.
\end{proposition}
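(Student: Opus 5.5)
The plan is to compare the tensor structure of $F$ with that of $T$ and then read off the discrepancy as a coboundary identity for cocycles on $A$.

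First, make the identification ${}^\gamma(\mathcal C^\omega)\simeq\mathcal C^{\overline\omega}$ explicit. Twisting scalars by $\gamma$ turns the associator $\omega(a,b,c)\alpha$ into $\overline{\omega(a,b,c)}\,{}^\gamma\alpha$. Since $R$ is a tensor equivalence ${}^\gamma\mathcal C\to\mathcal C$ that fixes simple classes and preserves the grading, transporting through $R$ with its tensorator $R_2$ gives a tensor equivalence ${}^\gamma(\mathcal C^\omega)\to\mathcal C^{\overline\omega}$. Indeed, $R_2$ intertwines ${}^\gamma\alpha$ with $\alpha$, and the scalar factors $\overline{\omega(a,b,c)}$ on homogeneous objects pass through unchanged because $R$ preserves degrees.

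Second, suppose $F$ has the underlying functor and tensorator $T_2$ of $T$. Write out the associativity (hexagon) axiom for $(F,T_2)$ as a functor $\mathcal C^{\overline\omega}\to\mathcal C^\omega$, evaluated on homogeneous simple objects $X_a,Y_b,Z_c$. The source associator is $\overline\omega(a,b,c)\alpha$, and the target associator on $T(X_a),T(Y_b),T(Z_c)$ is $\omega(u(a),u(b),u(c))\alpha$, since $T$ sends degree $a$ to $u(a)$. Because $(T,T_2)$ already satisfies the axiom for $\alpha$ itself, the $\alpha$-parts cancel and we get
\[
 \overline{\omega(a,b,c)}=\omega(u(a),u(b),u(c))
\]
for all degrees that occur, hence for all $a,b,c\in A$ by faithfulness of the grading.

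Third, use that values of a cocycle on a finite group may be taken in roots of unity, so that $\overline\omega=\omega^{-1}$. The displayed identity then reads $u^*\omega=\omega^{-1}$ at cocycle level, which gives \eqref{eq:zesting-arbitrary-real-necessary}. For $u=\id_A$ it becomes $2[\omega]=0$.

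The main obstacle is the literal reading of the hypothesis ``same underlying functor and tensorator''. In general, $F$ agrees with $T$ only up to a rescaling of the tensorator by a $2$-cochain $\beta$ on $A$, which produces an extra factor $\delta\beta$. The step I expect to need care is to allow this rescaling, argue that it lands in cohomology, and check that the conclusion is then an identity of classes rather than of cocycles. A second point to check is that replacing $\omega$ by a cohomologous cocycle with root-of-unity values is compatible with the identification by $R$; this is harmless because cohomologous cocycles give equivalent zestings via the identity functor.
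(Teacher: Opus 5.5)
Your proposal is correct and follows the paper's proof. You identify ${}^\gamma(\mathcal C^\omega)$ with $\mathcal C^{\overline\omega}$ through $R$, then compare the monoidal coherence of $F$ with that of $T$ on homogeneous objects so that the $\alpha$- and tensorator-contributions cancel, and read off $u^*[\omega]=[\overline\omega]=-[\omega]$. You also do not need the root-of-unity replacement you flag at the end: the exact identity $u^*\omega=\overline\omega$ holds for the given cocycle, and $[\overline\omega]=-[\omega]$ holds for any cocycle because the $\R_{>0}$-part of $\omega$ in its polar decomposition is a coboundary.
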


\begin{proof}
The equivalence $R$ identifies ${}^\gamma(\mathcal C^\omega)$ with
$\mathcal C^{\overline\omega}$, where
$[\overline\omega]=-[\omega]$. Compare the monoidal coherence equation for
$F:\mathcal C^{\overline\omega}\to\mathcal C^\omega$ with the one for
$T:\mathcal C\to\mathcal C$. On homogeneous objects of degrees $a,b,c$, the
contributions from the original associator and from the common tensorator cancel.
The remaining scalar equation identifies $\overline\omega(a,b,c)$ with
$\omega(u(a),u(b),u(c))$, up to a coboundary. Hence
$[\overline\omega]=u^*[\omega]$, which is
\eqref{eq:zesting-arbitrary-real-necessary}.
\end{proof}

\begin{remark}\label{rem:zesting-obstruction-hypothesis}
The hypothesis relating $F$ to a tensor autoequivalence of $\mathcal C$ is
essential. The $H^3(A,\C^\times)$-torsor describes associators only after the
graded components and their system of tensor products have been fixed. A general
equivalence of graded extensions may also change these data through an
automorphism of $A$ and an autoequivalence of the neutral component; see
\cite{EdieMichellGraded}. Thus the conclusion of
Proposition~\ref{prop:cohomological-zesting-real-obstruction} does not follow from
the induced automorphism of $A$ alone.
\end{remark}

We apply this to type $A$. The universal grading group of
$\mathcal C(\mathfrak{sl}_N,k)$ is
$\mathbb Z/N\mathbb Z$~\cite[Section~5.1]{GalindoMoraRowellVerlinde}, and
\[
 H^3(\mathbb Z/N\mathbb Z,\C^\times)\cong\mathbb Z/N\mathbb Z.
\]
For $t\in\mathbb Z/N\mathbb Z$, choose a cocycle $\omega_t$ representing $t$;
for instance, on representatives $0\leq a,b,c<N$, one may take
\begin{equation}\label{eq:cyclic-zesting-cocycle}
 \omega_t(a,b,c)
 =\exp\left(
   \frac{2\pi i\,ta}{N}
   \left\lfloor\frac{b+c}{N}\right\rfloor
 \right).
\end{equation}

\begin{lemma}\label{lem:type-A-fixed-label-cocycle-rigidity}
Let $N\geq3$ and $k\geq2$. If a tensor equivalence
\[
 F:\mathcal C(\mathfrak{sl}_N,k)^{\omega_s}
 \longrightarrow
 \mathcal C(\mathfrak{sl}_N,k)^{\omega_t}
\]
induces the identity on the based Grothendieck ring, then
$s=t$ in $H^3(\mathbb Z/N\mathbb Z,\C^\times)$.
\end{lemma}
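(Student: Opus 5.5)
The plan is to reduce the statement to the classification of graded extensions, in the style of Etingof--Nikshych--Ostrik, with the $\mathbb Z/N\mathbb Z$-grading fixed and the neutral component pinned. Write $\mathcal C=\mathcal C(\mathfrak{sl}_N,k)$ and $A=\mathbb Z/N\mathbb Z=U(\mathcal C)$.

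First I would show that the underlying linear functor of $F$ is isomorphic to the identity. Since $F$ fixes every simple class and both categories are split semisimple with the same underlying linear category, this holds. Transporting the tensor structure, I may therefore assume that $F=\id$ on objects and morphisms, with a tensorator $J_{X,Y}\in\operatorname{Aut}(X\otimes Y)$. On homogeneous objects of degrees $a,b,c$, the monoidal coherence of $F$ reads
\[
 (\id\otimes J)\circ J\circ\omega_s(a,b,c)\,\alpha
 =\omega_t(a,b,c)\,\alpha\circ(J\otimes\id)\circ J .
\]
In particular, $F$ preserves the grading and induces $\id_A$.

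Next, restrict $F$ to the neutral component $\mathcal C_0$. There both cocycles are trivial, so $F|_{\mathcal C_0}$ is a tensor autoequivalence of $\mathcal C_0$ fixing all simples, that is, a soft autoequivalence. I would invoke the gauge-triviality results of \cite{EdieMichellAutoequivalences} for type $A$ to conclude that $F|_{\mathcal C_0}$ is monoidally isomorphic to the identity. I would check carefully that they cover the adjoint subcategory, which is exactly where $N\geq3$ and $k\geq2$ should enter, since for $k=1$ the component $\mathcal C_0$ is trivial and the argument degenerates. After modifying $J$ by this isomorphism, which extends to all components using the faithful grading, $F$ becomes an equivalence of $A$-graded extensions of $\mathcal C_0$ that restricts to the identity on $\mathcal C_0$. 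Because it fixes all simples, it also induces the identity on the invertible $\mathcal C_0$-bimodule categories $\mathcal C_a$, up to bimodule autoequivalences that again fix simples.

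In the extension theory, extensions with fixed homomorphism $c:A\to\operatorname{BrPic}(\mathcal C_0)$ and fixed $M\in H^2(A,\operatorname{Inv}Z(\mathcal C_0))$ form a torsor under a quotient of $H^3(A,\C^\times)$. So $s-t$ must lie in the indeterminacy, which is the image of $H^1(A,\operatorname{Inv}Z(\mathcal C_0))$ under the map obtained by pairing with $M$, together with contributions from soft bimodule autoequivalences. The main obstacle is to show that this image does not meet $s-t$ unless $s=t$.

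My first attempt at this step is to compute the indeterminacy directly. The objects of $\operatorname{Inv}Z(\mathcal C_0)$ acting trivially on simples come from the simple currents $k\omega_j$ via the braiding, and the pairing with $M$ is a cup product whose values are determined by the self-braidings of simple currents. I would use the explicit twists from \cite[Section~5]{GalindoMoraRowellVerlinde} to check that every resulting class in $H^3(A,\C^\times)\cong\mathbb Z/N\mathbb Z$ is also realized by a soft autoequivalence of $\mathcal C$ that is trivial on $\mathcal C_0$, and hence cannot relate $\omega_s$ and $\omega_t$ with $F$ acting trivially on $K_0$.

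If this computation becomes unwieldy, I would fall back on a more concrete approach. Using the multiplicity-free Pieri rule for $V=\omega_1\otimes X$, I would normalize $J$ on the channels $V\otimes X\to Y$ to scalars. Evaluating the coherence equation on words in $V$ then shows that $\omega_s/\omega_t$ restricted along the generator $1\in A$ is a coboundary, which suffices because $H^3$ of a cyclic group is detected on $(1,b,c)$-type values.
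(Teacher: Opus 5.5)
Your setup is a reasonable reduction: make the underlying functor the identity, trivialize $F|_{\mathcal C_0}$, and view $F$ as an equivalence of $\mathbb Z/N\mathbb Z$-graded extensions. But there is a genuine gap. The proof stops exactly where the content of the lemma lies, a step you yourself call ``the main obstacle.'' What must be shown is the following. Every $\mathcal C_0$-bimodule autoequivalence of a component $\mathcal C_a$ that fixes all simple classes must be isomorphic to the identity bimodule functor. Equivalently, no invertible object $z\neq\mathbf 1$ of $Z(\mathcal C_0)$ may satisfy $z\otimes X\cong X$ for every simple $X\in\mathcal C_a$. This requires controlling both the half-braidings on $\mathbf 1$, that is, characters of $U(\mathcal C_0)$, and the simple currents in $\mathcal C_0$ with global fixed points on $\mathcal C_a$. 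Only after that does the normalized tensorator on $\mathcal C_a\times\mathcal C_b$ become a balanced bimodule automorphism, hence a scalar $\beta(a,b)$, so that coherence gives $\omega_s/\omega_t=d\beta$.

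Neither of your attempts supplies this step. The first runs the logic backwards. Suppose a nonzero indeterminacy class were realized by an equivalence $\mathcal C^{\omega}\to\mathcal C^{\omega+\delta}$ fixing all simples. That would be a counterexample to the lemma, not evidence for it. Also, a soft autoequivalence of $\mathcal C$ is not an equivalence between different twists at all. What is needed is that the simple-fixing part of the indeterminacy vanishes, and that is never established. The fallback only asserts the conclusion. $J$ is scalar on the multiplicity-free channels $V\otimes X\to Y$, but the coherence equations on words in $V$ involve $J$ on $V\otimes V$, $V^{\otimes 2}\otimes X$, and so on, which have multiplicity blocks. You do not identify any gauge-invariant quantity that detects $\prod_j(\omega_s/\omega_t)(1,j,1)$. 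Finally, triviality of soft autoequivalences of $\mathcal C_0$ is an extra input you leave unverified. The type-$A$ results you invoke are stated for $\mathcal C(\mathfrak{sl}_N,k)$, and whether they cover $\mathcal C_0$ is exactly what you defer.

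The missing idea is an invariant of based-ring-preserving tensor equivalences that separates the cohomological twists. The paper takes it from the Kazhdan--Wenzl-type reconstruction theorem of \cite[Theorem~25.4]{CiamproneGiannonePinzari}. A category with $\mathfrak{sl}_N$ fusion rules and a fixed based-ring identification is determined by a pair $(q_{\mathcal C},\tau_{\mathcal C})$, up to simultaneous inversion. Twisting by $\omega_r$ multiplies $\tau$ by $w_r^{-1}$, so $r\mapsto\tau_{\mathcal C^{\omega_r}}$ is injective. The inversion ambiguity cannot occur because $q_{\mathcal C}=q^2$ is a primitive $\ell$-th root of unity, so $q_{\mathcal C}\neq q_{\mathcal C}^{-1}$. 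Your fallback computation on tensor words in $V$ amounts to reproving that $\tau$ is gauge invariant and computing how $\omega_r$ changes it. Without that argument, or the completed extension-theoretic computation above, the proposal does not prove the lemma.
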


\begin{proof}
Put $\ell=N+k$ and $q=\exp(\pi i/\ell)$. The type-$A$ reconstruction theorem
in~\cite[Theorem~25.4]{CiamproneGiannonePinzari} associates to a semisimple
rigid tensor category with the $\mathfrak{sl}_N$ fusion rules and a fixed
based-ring identification a pair of invariants
\[
 (q_{\mathcal C},\tau_{\mathcal C}),
\]
which determines the tensor-equivalence class of the pair, up to simultaneous
inversion. For $\mathcal C(\mathfrak{sl}_N,k)$ one may take
$q_{\mathcal C}=q^2$. The cocycle modification in
\cite[equation~(25.1)]{CiamproneGiannonePinzari}, as its parameter ranges over
the $N$th roots of unity, gives the $N$ cohomological twists of the standard
category. Choose an identification compatible with these twists,
\[
 H^3(\mathbb Z/N\mathbb Z,\C^\times)
 \xrightarrow{\sim}\mu_N,
 \qquad
 [\omega_r]\longmapsto w_r.
\]
Under this identification, the same theorem gives
\[
 \tau_{\mathcal C^{\omega_r}}=w_r^{-1}\tau_{\mathcal C}.
\]
Thus the assignment
\[
 r\longmapsto\tau_{\mathcal C^{\omega_r}}
\]
is injective.

The only ambiguity in the classification is the simultaneous replacement
\[
 (q_{\mathcal C},\tau_{\mathcal C})
 \longmapsto
 (q_{\mathcal C}^{-1},\tau_{\mathcal C}^{-1}).
\]
Here $q_{\mathcal C}=q^2$ is a primitive $\ell$th root of unity. Since
$\ell=N+k>N+1$, one has
$q_{\mathcal C}\ne q_{\mathcal C}^{-1}$, so the simultaneous-inversion
alternative cannot identify two categories having this same value of
$q_{\mathcal C}$. An equivalence inducing the identity on the based ring
therefore forces
\[
 \tau_{\mathcal C^{\omega_s}}
 =\tau_{\mathcal C^{\omega_t}},
\]
and injectivity gives $s=t$.
\end{proof}

\begin{corollary}\label{cor:slN-associative-zesting-no-real}
Let $N\geq3$ and $k\geq1$.
If $2t\ne0$ in $\mathbb Z/N\mathbb Z$, then
$\mathcal C(\mathfrak{sl}_N,k)^{\omega_t}$ has no split real form. If
\begin{equation}\label{eq:cyclic-zesting-no-real-condition}
 (1+u^2)t\ne0\qquad
 \text{for every }u\in(\mathbb Z/N\mathbb Z)^\times,
\end{equation}
then it has no real form at all. In particular, for every $k\geq1$ and every
nonzero $t\in\mathbb Z/3\mathbb Z$, the category
$\mathcal C(\mathfrak{sl}_3,k)^{\omega_t}$ has no real form.
\end{corollary}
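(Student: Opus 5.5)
Write $\mathcal C=\mathcal C(\mathfrak{sl}_N,k)$ and $A=U(\mathcal C)=\mathbb Z/N\mathbb Z$. Both assertions will be reduced to Proposition~\ref{prop:cohomological-zesting-real-obstruction}, applied with the bar descent of Theorem~\ref{thm:Cgk-split-real} (extended to $\R$). Call this descent $R$. It fixes every simple class, so it acts trivially on $K_0(\mathcal C)$ and preserves the grading. Hence $R$ identifies ${}^\gamma(\mathcal C^{\omega_t})$ with $\mathcal C^{\overline{\omega_t}}=\mathcal C^{\omega_{-t}}$.

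A real form of $\mathcal C^{\omega_t}$ gives a twisted equivalence ${}^\gamma(\mathcal C^{\omega_t})\to\mathcal C^{\omega_t}$. Composing with $R$, we obtain a linear tensor equivalence
\[
 F:\mathcal C^{\omega_{-t}}\longrightarrow\mathcal C^{\omega_t}.
\]
Zesting does not change the underlying fusion rules, so $F$ induces a based-ring automorphism of $K_0(\mathcal C)$.

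\emph{Split case.} Suppose the real form is split. Then the semilinear equivalence fixes every simple class, so $F$ induces the identity on the based ring. For $k\geq2$, Lemma~\ref{lem:type-A-fixed-label-cocycle-rigidity} gives $-t=t$, that is, $2t=0$.

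For $k=1$ the category is pointed, namely $\Vect_{\mathbb Z/N}^{\eta}$ for some class $\eta$. Its cohomological zesting is $\Vect^{\eta+\omega_t}$. In this case I would use Corollary~\ref{cor:pointed-split-real} directly. Since $\mathcal C$ itself has a split real form, we have $2\eta=0$. A split real form of the zesting then requires $2(\eta+t)=0$, and hence $2t=0$.

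\emph{General real forms.} Here $F$ may permute the simples by a based-ring automorphism. The based-ring automorphisms of the $\mathfrak{sl}_N$ fusion ring are generated by simple-current automorphisms and charge conjugation. On $A$ they induce multiplication by a unit $u$; for instance, the simple current $\lambda\mapsto\lambda+\omega_j$ acts on degrees by $a\mapsto(1+j\cdot)\,a$. The plan is to reduce to Proposition~\ref{prop:cohomological-zesting-real-obstruction}. I would realize each based-ring automorphism by a tensor autoequivalence $T$ of $\mathcal C$ or of a twist of it; the Ciamprone--Giannone--Pinzari classification supplies these, with $\tau$ changed by an explicit factor. Composing $F$ with $T^{-1}$ then yields a label-preserving equivalence between cohomological twists, and the rigidity lemma compares the two classes.

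On $H^3(\mathbb Z/N,\C^\times)$, pullback by multiplication by $u$ acts as multiplication by $u^2$. So the resulting relation is $u^2t=-t$, that is, $(1+u^2)t=0$. Condition~\eqref{eq:cyclic-zesting-no-real-condition} excludes this relation.

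\emph{Level one in the general case.} For $k=1$ the argument is pointed again. A real form of $\Vect^{\eta+\omega_t}$ requires an automorphism $u$ of $\mathbb Z/N$ with $u^*[\eta+t]=-[\eta+t]$. We have $u^*\eta=\eta=-\eta$, because $\eta$ is the real class of $\mathcal C$, which is fixed by the based-ring automorphisms. This again gives $(1+u^2)t=0$.

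\emph{The case $N=3$.} Here $u=\pm1$, so $1+u^2=2$, and $2t\ne0$ for nonzero $t$. This gives the final assertion.

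The main obstacle I expect is the step handling general real forms for $k\geq2$. There one must show that the obstruction really reduces to the form $u^*[\omega]=-[\omega]$ required by Proposition~\ref{prop:cohomological-zesting-real-obstruction}, as Remark~\ref{rem:zesting-obstruction-hypothesis} warns. I would try to track the invariant $\tau$ of the reconstruction theorem under each based-ring automorphism. The aim is to show that the permutation multiplies $\tau$ by the expected root of unity, which would give $(1+u^2)t=0$ without assuming that $F$ has the special form.
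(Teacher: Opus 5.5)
\paragraph{Where the proposal agrees with the paper.} Your split case for $k\geq2$, both level-one arguments, and the reduction at $N=3$ match the paper. At level one you invoke Corollary~\ref{cor:pointed-split-real} directly in the split case, which is a harmless shortcut. For $u^*\eta=\eta$, however, the correct reason is not that $\eta$ is ``fixed by the based-ring automorphisms''. It is that every automorphism of the cyclic group $H^3(\mathbb Z/N\mathbb Z,\C^\times)$ fixes its unique element of order two.

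\paragraph{The gap.} The general real-form case for $k\geq2$ is not proved. You flag this step yourself and leave it open. You never establish that the permutation of simples induced by $F$ is realized by a tensor autoequivalence of $\mathcal C$ \emph{itself}; you hedge with ``or of a twist of it''. Your fallback, tracking the Ciamprone--Giannone--Pinzari invariant $\tau$ under relabelings, is not carried out. Without such a realization, neither Proposition~\ref{prop:cohomological-zesting-real-obstruction} nor Lemma~\ref{lem:type-A-fixed-label-cocycle-rigidity} applies, so the relation $(1+u^2)t=0$ is not established. Separately, your formula for the simple-current automorphisms is garbled: they have the form $\lambda\mapsto J^{j\,t(\lambda)}\lambda$, not $\lambda\mapsto\lambda+\omega_j$. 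Only their action on $A$ by a unit matters, though.

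\paragraph{How the paper closes it.} The missing input is external. The classification of type-$A$ fusion-ring automorphisms (Gannon), together with their monoidal lifts (Edie-Michell), gives a tensor autoequivalence $T$ of $\mathcal C$ inducing the same permutation as $F$. You then do not need $F$ to have the special form required in Proposition~\ref{prop:cohomological-zesting-real-obstruction}. Let $T$ induce $u$ on $A$. Keeping the same underlying functor and tensorator, $T$ is a tensor equivalence $\mathcal C^{u^*\omega}\to\mathcal C^{\omega}$ for every $\omega\in Z^3(A,\C^\times)$. Indeed, on objects of degrees $a,b,c$, both zested associators acquire the same scalar $\omega(ua,ub,uc)$. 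Hence $T^{-1}\circ F:\mathcal C^{\omega_{-t}}\to\mathcal C^{u^*\omega_t}$ induces the identity on the based ring. Lemma~\ref{lem:type-A-fixed-label-cocycle-rigidity} then gives $-t=u^*t=u^2t$ directly. This is how the paper sidesteps the caveat of Remark~\ref{rem:zesting-obstruction-hypothesis}: the rigidity lemma handles an arbitrary label-preserving equivalence, so all that is needed is the existence of some lift $T$ of the permutation.
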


\begin{proof}
Put $\mathcal C=\mathcal C(\mathfrak{sl}_N,k)$. The split real form of
Theorem~\ref{thm:Cgk-split-real} identifies the conjugate of
$\mathcal C^{\omega_t}$ with $\mathcal C^{\omega_{-t}}$. Suppose first that
$k\geq2$ and that
\[
 F:\mathcal C^{\omega_{-t}}\longrightarrow
 \mathcal C^{\omega_t}
\]
is a tensor equivalence. It induces an automorphism of the type-$A$ fusion ring.
The classification of these automorphisms and their monoidal lifts
\cite{GannonFusionAutomorphisms,EdieMichellAutoequivalences} provides a tensor
autoequivalence $T$ of $\mathcal C$ inducing the same permutation of simple
objects. Let $u\in(\mathbb Z/N\mathbb Z)^\times$ be its action on the universal
grading. Applying $T^{-1}$ to the target and then composing with $F$ gives an
equivalence
\[
 \mathcal C^{\omega_{-t}}\longrightarrow
 \mathcal C^{u^*\omega_t}
\]
which induces the identity on the based fusion ring. The type-$A$
fixed-label rigidity of
Lemma~\ref{lem:type-A-fixed-label-cocycle-rigidity} gives
\begin{equation}\label{eq:type-A-zesting-real-relation}
 -t=u^*t=u^2t\qquad\text{in }\mathbb Z/N\mathbb Z.
\end{equation}
Here the last equality follows by identifying
$H^3(\mathbb Z/N\mathbb Z,\C^\times)$ with
$H^4(\mathbb Z/N\mathbb Z,\mathbb Z)$. The latter is generated by the square of a
degree-two class, and $a\mapsto ua$ acts by $u^2$.

If the real form is split, then $F$ fixes every simple isomorphism class and
$u=1$. Equation~\eqref{eq:type-A-zesting-real-relation} gives $2t=0$, proving the
first assertion for $k\geq2$. For an arbitrary real form it gives
$(1+u^2)t=0$, proving the second assertion.

When $k=1$, the category $\mathcal C(\mathfrak{sl}_N,1)$ is pointed. Write
\[
 \beta\in H^3(\mathbb Z/N\mathbb Z,\C^\times)
\]
for its associator class. It has order at most two because the category has a
split real form. If a real equivalence for the twist induces $a\mapsto ua$, the
classification of pointed categories gives
\[
 u^2(\beta+t)=-(\beta+t).
\]
Every automorphism of the cyclic cohomology group fixes its subgroup of elements
of order at most two, and hence $u^2\beta=\beta=-\beta$. The displayed relation
therefore reduces to $u^2t=-t$, which is again
\eqref{eq:type-A-zesting-real-relation}. This proves both assertions at level
one.

Finally, for $N=3$ every unit has square one, and multiplication by two is
invertible on $\mathbb Z/3\mathbb Z$. Thus
condition~\eqref{eq:cyclic-zesting-no-real-condition} holds for both nonzero
classes.
\end{proof}

The situation changes when the zesting is required to be braided. The additional
hexagon equations force the potentially nonreal coherence scalar to cancel the
corresponding phase of the half-braiding.

\begin{lemma}\label{lem:bar-reverses-standard-braiding}
For finite-dimensional type~$1$ integrable
$U_\zeta(\mathfrak g)$-modules, the tensor-product bar reverses the standard
quantum-group braiding:
\begin{equation}\label{eq:bar-reverses-braiding}
 \Phi_2(N,M)^{-1}\circ\Phi(c_{M,N})\circ\Phi_2(M,N)
 =c_{\Phi(N),\Phi(M)}^{-1}.
\end{equation}
The same identity holds after restriction to tilting modules and passage to the
semisimplification.
\end{lemma}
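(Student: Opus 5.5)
The plan is to write the standard braiding as $c_{M,N}=\tau\circ\mathcal R$ on $M\otimes N$, with $\tau$ the flip and $\mathcal R=\Theta_\zeta\,\Pi$ (up to ordering conventions). Here $\Pi$ is the Cartan factor acting on $M_\lambda\otimes N_\mu$ by a scalar $\zeta^{\pm(\lambda,\mu)}$ (suitably normalized, possibly requiring a root of $\zeta$). We then compute both sides of \eqref{eq:bar-reverses-braiding} on a pure tensor $\overline m\otimes\overline n\in\Phi(M)\otimes\Phi(N)$ of weights $\lambda,\mu$. Everything is an identity of locally finite operators, so we may argue on weight vectors and specialize Lusztig's generic identities as in the proof of Lemma~\ref{lem:specialized-quasi-R}.

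\emph{Step 1: left-hand side.} The tensorator gives $\Phi_2(M,N)(\overline m\otimes\overline n)=\overline{\Theta_\zeta(m\otimes n)}$. Applying $\Phi(c_{M,N})$ yields $\overline{\tau\Pi\Theta_\zeta\Theta_\zeta(m\otimes n)}$, where $\Pi$ and $\Theta_\zeta$ are placed according to the chosen convention. Finally $\Phi_2(N,M)^{-1}$ applies $\Theta_\zeta^{-1}$ inside the bar. Moving the bar outward converts each operator $X$ acting inside into $(\sigma_\zeta\otimes\sigma_\zeta)(X)$ acting on $\overline m\otimes\overline n$. By \eqref{eq:bar-quasi-R-inverse}, $\Theta_\zeta$ becomes $\Theta_\zeta^{-1}$. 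The Cartan factor $\zeta^{(\lambda,\mu)}$ becomes $\zeta^{-(\lambda,\mu)}$, since $\Phi(M)_\lambda=\overline{M_\lambda}$ by the weight computation in Proposition~\ref{prop:quantum-bar-tensor-involution}.

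\emph{Step 2: right-hand side.} We write $c^{-1}_{\Phi(N),\Phi(M)}=\mathcal R^{-1}_{\Phi(N),\Phi(M)}\circ\tau$ and express it as an explicit operator built from $\Pi^{-1}$ and $\Theta_\zeta^{21}$-type factors.

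\emph{Step 3: comparison.} After the flip, the two sides become the same word in $\Theta_\zeta^{\pm1}$, their flips, and $\Pi^{\pm1}$. Matching them requires the generic identity $\overline{\Theta}=\Theta^{-1}$ together with the compatibility $\Pi\,\Theta^{21}\Pi^{-1}$ versus the flipped quasi-$R$-matrix, which Lusztig's construction of the $R$-matrix uses (\cite[Section~32.1]{LusztigQuantumGroups}). This is the step I expect to be the main obstacle: keeping track of conventions. In particular, one must check that the bar of the Cartan factor combined with $\Theta\mapsto\Theta^{-1}$ indeed produces the inverse braiding with the arguments swapped, rather than the braiding itself, and that the correction terms cancel. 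As a sanity check before writing the general computation, I would verify the identity on $U_\zeta(\mathfrak{sl}_2)$ with $M=N=L_\zeta(\omega_1)$.

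\emph{Step 4: semisimplification.} Both sides of \eqref{eq:bar-reverses-braiding} are natural in $M,N$. By Lemma~\ref{lem:bar-preserves-tilting-negligible}, $\Phi$, $\Phi_2$ and the braiding preserve $\mathcal T_\zeta$, and the quotient functor to $\mathcal T_\zeta/\mathcal I_{\mathrm{neg}}$ is a braided tensor functor. Applying the quotient to the identity on tilting modules therefore gives the same identity in $\mathcal C(\mathfrak g,k)$.
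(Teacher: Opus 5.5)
Your strategy is the paper's. You factor the standard braiding into quasi-$R$-matrix, diagonal (Cartan) operator and flip, and conjugate. You then use $(\sigma_\zeta\otimes\sigma_\zeta)(\Theta_\zeta)=\Theta_\zeta^{-1}$, that complex conjugation inverts the unimodular diagonal scalars, and that $\Phi$ does not change weights. Your Step~4 also matches the paper. As written, however, the proposal does not prove the lemma: Step~3, which is the whole content of the identity, is left open, and the placement of factors you tentatively adopt is wrong. For the coproduct~\eqref{eq:quantum-coproduct}, the standard braiding \cite[Theorem~32.1.5]{LusztigQuantumGroups} is
\[
 c_{M,N}=\Theta_{\zeta,N,M}\circ\mathcal G_{N,M}\circ P_{M,N},
\]
with the quasi-$R$-matrix acting on the \emph{target} $N\otimes M$. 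Written on the source, the factor is the flipped $\Theta_\zeta^{21}$, not $\Theta_\zeta$. By contrast, $P_{M,N}\circ\mathcal G_{M,N}^{-1}\circ\Theta_\zeta^{-1}$, with $\Theta_\zeta$ on the source $M\otimes N$, is the \emph{inverse} braiding $c_{N,M}^{-1}$. So the expression $\overline{\tau\Pi\Theta_\zeta\Theta_\zeta(m\otimes n)}$, with two adjacent copies of $\Theta_\zeta$ on $M\otimes N$, does not occur, and with it $\Phi_2(N,M)^{-1}$ would cancel nothing. This misplacement is what produces your $c$ versus $c^{-1}$ worry.

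With the correct placement the verification is immediate. For $m\in M$, $n\in N$,
\[
 \Phi(c_{M,N})\Phi_2(M,N)(\overline m\otimes\overline n)
 =\overline{\Theta_\zeta\,\mathcal G_{N,M}P_{M,N}\,\Theta_\zeta(m\otimes n)},
\]
and $\Phi_2(N,M)^{-1}$ cancels the outer $\Theta_\zeta$ on $N\otimes M$ outright. What remains is the conjugate of $\mathcal G_{N,M}P_{M,N}\Theta_\zeta$. By~\eqref{eq:bar-quasi-R-inverse}, conjugating $\Theta_\zeta$ gives $\Theta_\zeta^{-1}$ acting on $\Phi(M)\otimes\Phi(N)$. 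Since weights are unchanged and the diagonal scalars are roots of unity, conjugating $\mathcal G$ gives $\mathcal G^{-1}$ on the bar-twisted modules. Hence the left-hand side is $P_{M,N}\mathcal G_{M,N}^{-1}\Theta_\zeta^{-1}$ on $\Phi(M)\otimes\Phi(N)$, which is exactly $c_{\Phi(N),\Phi(M)}^{-1}$. No relation between the Cartan factor and the flipped quasi-$R$-matrix is needed; such a relation only enters in proving that $c$ is a module map. This is precisely the paper's matrix computation $\overline{T_{N,M}}^{-1}\,\overline{C_{M,N}}\,\overline{T_{M,N}}=P_{M,N}G_{M,N}^{-1}\overline{T_{M,N}}$.
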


\begin{proof}
Write $P_{M,N}:M\otimes N\to N\otimes M$ for the flip and
$\mathcal G_{M,N}$ for Lusztig's toral diagonal operator on $M\otimes N$.
With the coproduct convention~\eqref{eq:quantum-coproduct} and the
quasi-$R$-matrix convention of Lemma~\ref{lem:specialized-quasi-R}, the
standard commutativity isomorphism is
\begin{equation}\label{eq:standard-R-factorization}
 c_{M,N}
 =\Theta_{\zeta,N,M}\circ\mathcal G_{N,M}\circ P_{M,N};
\end{equation}
compare~\cite[Theorem~32.1.5]{LusztigQuantumGroups} and
\cite[Section~3.3]{BakalovKirillov}. The diagonal operators are compatible with
the flip, and complex conjugation inverts them:
\[
 \mathcal G_{N,M}^{-1}P_{M,N}
 =P_{M,N}\mathcal G_{M,N}^{-1},
 \qquad
 \overline{\mathcal G_{M,N}}=\mathcal G_{M,N}^{-1}.
\]
Choose weight bases of $M$ and $N$. Let $T_{M,N}$, $G_{M,N}$, and $C_{M,N}$
denote the matrices in these bases of $\Theta_{\zeta,M,N}$,
$\mathcal G_{M,N}$, and $c_{M,N}$, respectively. Thus
\[
 C_{M,N}=T_{N,M}G_{N,M}P_{M,N}.
\]
Relative to the conjugate bases, the matrix of the tensorator
\eqref{eq:bar-tensorator} is $\overline{T_{M,N}}$, while that of
$\Phi(c_{M,N})$ is $\overline{C_{M,N}}$. Hence the matrix of the left-hand side
of~\eqref{eq:bar-reverses-braiding} is
\begin{align*}
 \overline{T_{N,M}}^{-1}\,
 \overline{C_{M,N}}\,
 \overline{T_{M,N}}
 &=\overline{T_{N,M}}^{-1}\,
   \overline{T_{N,M}}G_{N,M}^{-1}
   P_{M,N}\overline{T_{M,N}}\\
 &=P_{M,N}G_{M,N}^{-1}\overline{T_{M,N}}.
\end{align*}
Let $T_{M,N}^{\Phi}$ be the matrix of $\Theta_\zeta$ on
$\Phi(M)\otimes\Phi(N)$. By the definition of the bar-twisted module action and
\eqref{eq:bar-quasi-R-inverse},
\[
 T_{M,N}^{\Phi}
 =\overline{T_{M,N}^{-1}}.
\]
The weights of the bar-twisted modules are unchanged, so the toral diagonal
operator on $\Phi(M)\otimes\Phi(N)$ has matrix $G_{M,N}$. Therefore the inverse
of the braiding with the objects interchanged has matrix
\[
 P_{M,N}G_{M,N}^{-1}(T_{M,N}^{\Phi})^{-1}
 =P_{M,N}G_{M,N}^{-1}\overline{T_{M,N}},
\]
which is exactly the matrix obtained above. This proves the identity on the
integrable module category. All the operators involved
preserve tilting modules, and the braiding and tensorator preserve the negligible
ideal, so the identity descends to the semisimplification.
\end{proof}

\begin{lemma}\label{lem:zested-associator-real-factorization}
Let $\mathcal C$ be a braided fusion category with a split real form, and choose
fusion bases obtained by scalar extension from that real form. Let
$(\lambda,\omega)$ be an associative zesting whose relative half-braiding is the
reverse half-braiding, as in
\cite[Definition~4.1]{BraidedZesting}. Choose nonzero real vectors in the
one-dimensional fusion spaces between tensor products of the invertible objects
appearing in $\lambda$.

For simple homogeneous objects $X_r,Y_s,Z_t$, transport the original fusion
bases to the zested fusion spaces through tensor products with
$\lambda(r,s)$. In these bases, the matrix of the zested associator is a real
matrix multiplied by
\begin{equation}\label{eq:zested-associator-scalar-factor}
 z(r,s,t)=\omega(r,s,t)\,
 b\bigl(Z_t,\lambda(r,s)\bigr)^{-1},
\end{equation}
where $b(Z_t,\lambda(r,s))$ is the coefficient of
$c_{Z_t,\lambda(r,s)}$ in the chosen one-dimensional fusion bases.
Consequently, the zested $F$-matrices are real whenever all the scalars
$z(r,s,t)$ are real.
\end{lemma}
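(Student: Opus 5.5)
I would prove Lemma~\ref{lem:zested-associator-real-factorization} by writing the zested associator explicitly as a composite of elementary isomorphisms and checking each factor against the real fusion bases. Following \cite[Sections~2--3]{BraidedZesting}, on homogeneous simples $X_r,Y_s,Z_t$ the associator of $\otimes_\lambda$ is, up to the original associators used to rebracket, the composite of:
\begin{enumerate}[label=\textup{(\roman*)}]
\item the original associator $\alpha$ of $\mathcal C$ on $X_r\otimes Y_s\otimes\lambda(r,s)\otimes Z_t\otimes\lambda(r+s,t)$;
\item the half-braiding that moves $\lambda(r,s)$ past $Z_t$;
\item the coherence isomorphism $\lambda(r,s)\otimes\lambda(r+s,t)\to\lambda(s,t)\otimes\lambda(r,s+t)$, which carries the scalar $\omega(r,s,t)$;
\item further applications of $\alpha$ to rebracket onto $X_r\otimes(Y_s\otimes Z_t\otimes\lambda(s,t))\otimes\lambda(r,s+t)$.
\end{enumerate}

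Since the relative half-braiding is the reverse one, step (ii) is $c_{Z_t,\lambda(r,s)}^{-1}$.

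The key steps are as follows. First, the fusion bases of $\mathcal C$ are scalar extensions of real bases. All original associators in (i) and (iv), written in the bases obtained by tensoring with the invertible objects, are therefore real matrices: they are $F$-matrices of the real form composed with tensor identities. Second, $\lambda(r,s)$ is invertible, so every fusion space involving it as one factor with a simple $Z_t$ is one-dimensional. Hence $c_{Z_t,\lambda(r,s)}^{-1}$ acts, after naturality reduces it to simple channels, by the scalar $b(Z_t,\lambda(r,s))^{-1}$ times a real identification. I would justify this by using naturality of $c$ to slide the braiding through the chosen real basis morphisms $Y_s\otimes Z_t\to W$. This leaves only a braiding of the invertible $\lambda(r,s)$ with the simple $Z_t$. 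Third, the coherence isomorphism, expressed in the chosen real vectors of the one-dimensional spaces among invertibles, differs from $\omega(r,s,t)$ times the real identification by a real factor; this is the normalization in \cite[Definition~4.1]{BraidedZesting}. Multiplying the three contributions gives a real matrix times $z(r,s,t)$, as in \eqref{eq:zested-associator-scalar-factor}. The last assertion then follows immediately, since a real matrix times a real scalar is real.

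The main obstacle is the second step. In general the braiding of $\lambda(r,s)$ with $Z_t$ is not a scalar, and it must be shown to contribute no non-real mixing between different fusion channels. I would handle this with naturality and the fact that $Z_t\otimes\lambda(r,s)$ is simple. Every intermediate object has the form $W\otimes\lambda(r,s)$ with $W$ simple, so the braiding reduces to the one-dimensional coefficients $b(Z_t,\lambda(r,s))$ together with hexagon-derived factors $b(W,\lambda)/b(\cdot)$. I would then check that these factors are absorbed into the same overall scalar. Given how the statement is phrased, this should work because the dependence is only on $Z_t$. If residual channel-dependent phases appear, I would use the hexagon and the real basis choice to show they are real up to sign.
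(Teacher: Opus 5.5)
Your proposal follows the paper's proof: you decompose the zested associator into original associators and rebracketings with invertibles, which are real in the transported bases, the reverse half-braiding $c_{Z_t,\lambda(r,s)}^{-1}$, with coefficient $b(Z_t,\lambda(r,s))^{-1}$, and the coherence isomorphism, with coefficient $\omega(r,s,t)$ relative to real generators. The paper gets this last point by choosing the zesting datum via those real generators, since other real choices change $\omega$ only by a real coboundary, rather than by appealing to a normalization in the reference. The obstacle you flag is not actually one: the braiding only passes $\lambda(r,s)$ past the single simple $Z_t$, so $c_{Z_t,\lambda(r,s)}$ lies in a one-dimensional Hom space between simples and is exactly $b$ times the chosen real basis map, while the moves placing $\lambda(r,s)$ next to $Z_t$ are original, hence real, $F$-moves. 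Consequently no hexagon-derived or channel-dependent factors arise, and the detour through $Y_s\otimes Z_t\to W$ is unnecessary.
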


\begin{proof}
The defining diagram for the zested associator
\cite[equation~(3.4) and Figure~3]{BraidedZesting} consists of the original
associators, the relative half-braiding that moves $\lambda(r,s)$ past $Z_t$,
and the coherence isomorphism
\[
 \lambda(r,s)\otimes\lambda(r+s,t)
 \longrightarrow
 \lambda(s,t)\otimes\lambda(r,s+t).
\]
The original associators have real matrices in the chosen fusion bases. Tensoring
with invertible objects, rebracketing, and the identifications between their
one-dimensional fusion spaces also have real matrices when the chosen real
vectors are used. Relative to a real generator of the last displayed Hom space,
the zesting coherence isomorphism has coefficient $\omega(r,s,t)$. Changing
these real generators only multiplies $\omega$ by a nonzero real coboundary and
does not affect either
$\omega/\overline\omega$ or the reality conclusion.
The correspondence between pre-metric-group zestings and categorical zestings is
noncanonical precisely because it involves these choices
\cite[Remark~3.4]{GalindoMoraRowellVerlinde}; we choose it using the real
generators above. Any other choice gives an equivalent zested category, and the
existence of a split real form is invariant under tensor equivalence.

By the convention for the relative half-braiding, its remaining coefficient is
that of
\[
 c'_{\lambda(r,s),Z_t}
 =c_{Z_t,\lambda(r,s)}^{-1},
\]
namely $b(Z_t,\lambda(r,s))^{-1}$. Thus the complete defining diagram, not only
a selected part of it, has the scalar factor
\eqref{eq:zested-associator-scalar-factor}; every other matrix in the composition
is real. This proves the assertion.
\end{proof}

\begin{proposition}\label{prop:braided-zested-Cgk-real}
For every complex simple Lie algebra $\mathfrak g$ and every $k\geq1$, each
braided zesting of $\mathcal C(\mathfrak g,k)$ classified in
\cite{GalindoMoraRowellVerlinde} has a split real form as an underlying fusion
category.
\end{proposition}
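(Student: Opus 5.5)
We prove the statement by combining Lemma~\ref{lem:zested-associator-real-factorization} with Proposition~\ref{prop:dictionary}. We start from the split form of Theorem~\ref{thm:Cgk-split-real}, transported to $\C$. Choosing fusion bases by scalar extension from that form, all $F$-matrices of $\mathcal C(\mathfrak g,k)$ are real. By Proposition~\ref{prop:dictionary}, it then suffices to show that every braided zesting $(\lambda,\omega,\dots)$ in the classification of~\cite{GalindoMoraRowellVerlinde} can be presented so that the scalars $z(r,s,t)=\omega(r,s,t)\,b(Z_t,\lambda(r,s))^{-1}$ of~\eqref{eq:zested-associator-scalar-factor} are real. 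The zested category has the same simple objects, so the resulting $K$-form with $K=\R$ is automatically split.

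The first step is to check that the braided zestings of~\cite{GalindoMoraRowellVerlinde} use the reverse relative half-braiding of~\cite[Definition~4.1]{BraidedZesting}, so that Lemma~\ref{lem:zested-associator-real-factorization} applies. Here $\lambda(r,s)$ ranges over invertible objects of the neutral component. For such an invertible object $g$ and a homogeneous simple $Z_t$, the double braiding is the scalar $\chi_g(t)$, where $\chi_g$ is the character of $U(\mathcal C)$ defined by $g$ through the braiding. The coefficient $b(Z_t,g)$ itself depends on the choice of real generators only up to real factors.

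The second step extracts the constraint from the braided-zesting hexagons. In~\cite[Section~3]{BraidedZesting} and~\cite[Section~5]{GalindoMoraRowellVerlinde}, the braided hexagon conditions express $\omega$, up to coboundary, in terms of the braiding $t$-data and the half-braiding coefficients. The key relation to aim for is that $\omega(r,s,t)\,b(Z_t,\lambda(r,s))^{-1}$ equals a product of values of the braiding data $j$ on degrees. In particular, its phase is controlled by quantities that the hexagon forces to be compatible with complex conjugation. Concretely, I would show
\[
 \overline{z(r,s,t)}=z(r,s,t)\cdot\delta\kappa(r,s,t)
\]
for a real-valued coboundary $\delta\kappa$, and then absorb $\kappa$ by rescaling the real generators. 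Lemma~\ref{lem:bar-reverses-standard-braiding} enters here: under the bar descent, the conjugate of $c_{Z,g}$ is $c^{-1}_{g,Z}$ up to the real frames. Hence $\overline{b(Z_t,g)}$ is $b(g,Z_t)^{-1}$, computed in real bases. Substituting this into the hexagon expression should show that conjugating $z$ amounts to replacing $\omega$ and the half-braiding by data that the hexagon identifies with the original ones, modulo real coboundaries.

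The main obstacle is this second step: making the cancellation between the nonreal phase of $\omega$ and the half-braiding coefficient explicit, uniformly across the classified cases. I would work case by case with the explicit solutions in~\cite{GalindoMoraRowellVerlinde}. There the grading group is cyclic, or $\F_2^2$ in even type $D$, and $\lambda(a,b)=g^{\lfloor (a+b)/N\rfloor}$-type carries with $g$ a generator of the invertibles. The solutions of the braided conditions require $\omega(a,b,c)$ to be a power of a root of unity determined by $\theta_g$ or $b(Z,g)$. For the cyclic formula analogous to~\eqref{eq:cyclic-zesting-cocycle}, I expect $\omega(r,s,t)=b(Z_t,\lambda(r,s))$ up to a sign or real coboundary, making $z$ real. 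In the $\F_2^2$ case all the scalars involved are fourth roots of unity, and a direct check suffices. If a nonreal $z$ survives in some case, the fallback is to apply the odd-support criterion, Proposition~\ref{prop:odd-support}, to the zested category. We can do this once we exhibit the conjugate-linear equivalence given by the bar functor together with the twist by $\omega/\overline\omega$, which the hexagon shows is a coboundary.
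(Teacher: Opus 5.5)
You are following the paper's route, and you already have every ingredient it uses: real fusion bases from the bar form, Lemma~\ref{lem:zested-associator-real-factorization}, the identity $\overline{b(X_t,h)}=b(h,X_t)^{-1}$ from Lemma~\ref{lem:bar-reverses-standard-braiding}, the monodromy $b(X_t,h)\,b(h,X_t)=\chi_t(h)$, and a case check against \cite{GalindoMoraRowellVerlinde}. But the proposal stops exactly where the argument has content, and the substitutes you offer for that step would not work. The missing step is to combine your two identities. For $h=\lambda(r,s)$ they give
\[
 \frac{z(r,s,t)}{\overline{z(r,s,t)}}
 =\frac{\omega(r,s,t)/\overline{\omega(r,s,t)}}{\chi_t\bigl(\lambda(r,s)\bigr)}.
\]
This depends only on the degrees, not on the simple object $X_t$ or the real vectors chosen. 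So reality of $z$ is equivalent to the exact identity $\omega/\overline\omega=\chi_t(\lambda(r,s))$ between roots of unity. You do not need a hexagon formula expressing $z$ through the braiding data, and that target is misleading, since those data are typically nonreal (e.g.\ $t(1,1)=e^{\pi i/4}$ in Remark~\ref{rem:zested-braiding-not-real}). Your expectation that $\omega=b(Z_t,\lambda(r,s))$ up to sign is merely a restatement of the claim. Nor can you settle for $\overline z=z\,\delta\kappa$ with $\delta\kappa$ real. Rescaling real generators multiplies $z$ by real factors and so never changes $z/\overline z$; a residual sign discrepancy cannot be absorbed that way.

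The exact identity is what must be read off from the classification, case by case.
\begin{itemize}
\item Where the associative part is an abelian three-cocycle (the factorized cases, and even type $D$ with Tannakian pointed part), $\lambda$ is trivial and the ordinary component is $\{\pm1\}$-valued.
\item In the nonfactorized cyclic case, $\omega/\overline\omega=q^{(\varepsilon_a+2b)\kappa(r,s)t}$ and $\chi_t(\lambda(r,s))=q^{aN\kappa(r,s)t/m}$. These agree precisely because of the braided-zesting criterion $aN/m\equiv\varepsilon_a+2b\pmod N$.
\item In type $D_{2n}$ with $n$ odd at even level, take degrees $x,y,w\in(\mathbb Z/2\mathbb Z)^2$ and parameters $r,s\in\mathbb Z/4\mathbb Z$. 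Then $\omega=i^{rx_1y_1w_1+sx_2y_2w_2}$ up to real factors, so $\omega/\overline\omega=(-1)^{rx_1y_1w_1+sx_2y_2w_2}=\chi_w(\lambda(x,y))$.
\end{itemize}
Noting that all scalars are fourth roots of unity settles nothing by itself: the factors $i$ in $\omega$ are genuinely nonreal and are cancelled only by the half-braiding phase.

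Your fallback is also not viable. The ratio $\omega/\overline\omega$ is in general not a coboundary. For the fermionic zesting of $\mathcal C(\mathfrak{sl}_2,k)$ with $k\equiv2\pmod4$ it is $(r,s,t)\mapsto(-1)^{rst}$, the nontrivial class in $H^3(\mathbb Z/2\mathbb Z,\C^\times)$. It is compensated by the half-braiding, not by a coboundary. Moreover, Proposition~\ref{prop:odd-support} would require a coherent simple-fixing datum on the zested category $\mathcal D$ and the equality $\mathcal S_{\mathrm{odd}}(\mathcal D)=\operatorname{Hom}(U(\mathcal D),\{\pm1\})$, and you have established neither.
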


\begin{proof}
By Corollary~\ref{cor:Cgk-real-orthogonal-F}, choose fusion bases for
$\mathcal C=\mathcal C(\mathfrak g,k)$ in which the $F$-matrices are real. Let
$(\Phi,\Phi_2)$ be the tensor involution constructed in
Theorem~\ref{thm:Cgk-split-real}, transported through the compatible frames on
the simple objects. By
Lemma~\ref{lem:bar-reverses-standard-braiding}, $\Phi$ reverses the braiding.

We record the consequence needed below. Let $h$ be an invertible object in the
neutral component and let $X_t$ be a simple object of universal degree $t$.
Choose real unit vectors in the one-dimensional fusion spaces involving $h$ and
$X_t$, and denote by $b(h,X_t)$ and $b(X_t,h)$ the corresponding coefficients of
$c_{h,X_t}$ and $c_{X_t,h}$. Equation~\eqref{eq:bar-reverses-braiding} gives
\begin{equation}\label{eq:bar-half-braiding-coefficient}
 \overline{b(h,X_t)}=b(X_t,h)^{-1}.
\end{equation}
The monodromy is the evaluation of the universal degree, hence
\begin{equation}\label{eq:half-braiding-monodromy}
 b(X_t,h)b(h,X_t)=\chi_t(h),
\end{equation}
where $\chi_t$ is the character of the group of invertible objects corresponding
to $t$.

Lemma~\ref{lem:zested-associator-real-factorization} identifies the complete
nonreal scalar in the zested associator as
\begin{equation}\label{eq:zested-associator-potentially-nonreal}
 z(r,s,t)=\omega(r,s,t)\,
 b\bigl(X_t,\lambda(r,s)\bigr)^{-1}.
\end{equation}
Using~\eqref{eq:bar-half-braiding-coefficient} with the arguments interchanged,
together with~\eqref{eq:half-braiding-monodromy}, gives
\begin{equation}\label{eq:zested-associator-reality-ratio}
 \frac{z(r,s,t)}{\overline{z(r,s,t)}}
 =\frac{\omega(r,s,t)/\overline{\omega(r,s,t)}}
        {\chi_t(\lambda(r,s))}.
\end{equation}
We verify from the explicit data of
\cite[Sections~4--5]{GalindoMoraRowellVerlinde} that the right-hand side is one.

If the universal grading group is trivial, there is nothing to prove. We first
treat the factorized cases, in which the group of invertible objects in the
neutral component is trivial. In these cases the braided zestings are represented
by abelian three-cocycles
\cite[Theorem~5.4 and the discussion preceding
Section~5.5]{GalindoMoraRowellVerlinde}.

Suppose first that the universal grading group is
$A=\mathbb Z/N\mathbb Z$. For representatives $0\leq u,v,w<N$, the cyclic
abelian three-cocycles of
\cite[equation~(3.3)]{GalindoMoraRowellVerlinde} may be chosen with ordinary
associator component
\[
 \omega_\eta(u,v,w)
 =\eta^{\,uN\left\lfloor(v+w)/N\right\rfloor},
 \qquad
 \eta^{N^2}=\eta^{2N}=1.
\]
Since $\eta^N\in\{\pm1\}$, the cocycle $\omega_\eta$ takes values in
$\{\pm1\}$.

If $A=\mathbb Z/2\mathbb Z\times\mathbb Z/2\mathbb Z$, the representatives of
\cite[Proposition~3.1]{GalindoMoraRowellVerlinde} are indexed by
\[
 (a,b,c)\in
 \mathbb Z/4\mathbb Z\times\mathbb Z/4\mathbb Z
 \times\mathbb Z/2\mathbb Z.
\]
Their ordinary associator component is
\[
 \omega_{a,b,c}(x,y,z)
 =(-1)^{a x_1y_1z_1+b x_2y_2z_2}.
\]
Although this component depends only on the parities of $a$ and $b$, the full
abelian three-cocycle also records $c$ and the classes of $a,b$ modulo~$4$ in
its braiding component.
Thus the associator corrections in all factorized cases are real. It remains to
treat the nonfactorized cases, for which the neutral component contains
nontrivial invertible objects.

Suppose next that the universal grading group is cyclic of order $N$ and that
the pointed subcategory is degenerate. Let the group of invertible objects in the
neutral component be cyclic of order $m$, generated by $h$, and let $(a,b)$ be
the parameters of the cyclic associative zesting. Write
$\theta_{h^a}=(-1)^{\varepsilon_a}$, where
$\theta$ is the ribbon twist, $\varepsilon_a\in\{0,1\}$, and put
$\xi=e^{\pi i/N}$ and $q=\xi^2$. For representatives $0\leq r,s,t<N$, set
$\kappa(r,s)=\lfloor(r+s)/N\rfloor$. The cyclic formulas give
\[
 \lambda(r,s)=h^{a\kappa(r,s)},
 \qquad
 \frac{\omega(r,s,t)}{\overline{\omega(r,s,t)}}
 =q^{(\varepsilon_a+2b)\kappa(r,s)t}.
\]
Under the standard identification of the universal grading group with
$\mathbb Z/N\mathbb Z$, one has $\chi_t(h)=q^{Nt/m}$ and hence
\[
 \chi_t(\lambda(r,s))=q^{aN\kappa(r,s)t/m}.
\]
The criterion for this associative zesting to admit a braiding is
\begin{equation}\label{eq:cyclic-braided-zesting-condition}
 \frac{aN}{m}\equiv\varepsilon_a+2b\pmod N
\end{equation}
\cite[equations~(5.7)--(5.8), Theorem~5.5, and
Table~10]{GalindoMoraRowellVerlinde}.
Thus the numerator and denominator in
\eqref{eq:zested-associator-reality-ratio} agree, and $z(r,s,t)$ is real.

The only noncyclic universal grading that occurs in the classification is
$\mathbb Z/2\mathbb Z\times\mathbb Z/2\mathbb Z$ in type $D_{2n}$.
The factorized cases, namely the odd levels, were treated above. At even level,
Corollary~5.12 of~\cite{GalindoMoraRowellVerlinde} applies. If $n$ is even, the
pointed subcategory is Tannakian, and the associative parts of the braided
zestings are the abelian three-cocycles of Proposition~3.1, whose ordinary
components take values in $\{\pm1\}$. If $n$ is odd, the pointed quadratic form is
$q(x_1,x_2)=(-1)^{x_1^2+x_2^2}$. The data of
\cite[equations~(4.9)--(4.11) and Theorem~4.6]{GalindoMoraRowellVerlinde}
are parametrized by $r,s\in\mathbb Z/4\mathbb Z$. Write $\bar r,\bar s$ for
their images in $\mathbb Z/2\mathbb Z$. For
$x,y,z\in(\mathbb Z/2\mathbb Z)^2$, their object-valued and scalar parts are
\[
 \lambda(x,y)=e_1^{\bar r x_1y_1}e_2^{\bar s x_2y_2},
 \qquad
 \omega(x,y,z)=i^{r x_1y_1z_1+s x_2y_2z_2},
\]
up to real factors, where $e_1,e_2$ are the two fermionic generators. Therefore
\[
 \frac{\omega(x,y,z)}{\overline{\omega(x,y,z)}}
 =(-1)^{r x_1y_1z_1+s x_2y_2z_2}.
\]
The degree $z$ evaluates on the two generators by
$\chi_z(e_j)=(-1)^{z_j}$. Consequently,
\[
 \chi_z(\lambda(x,y))
 =(-1)^{\bar r x_1y_1z_1+\bar s x_2y_2z_2}.
\]
These expressions agree because $r\equiv\bar r$ and
$s\equiv\bar s\pmod2$. Equation~\eqref{eq:zested-associator-reality-ratio}
again shows that the total associator correction is real.

In every case, the scalar~\eqref{eq:zested-associator-potentially-nonreal} is
therefore real. Lemma~\ref{lem:zested-associator-real-factorization} shows that
all zested $F$-matrices are real in the transported fusion bases.
Proposition~\ref{prop:dictionary} gives a split real form.
\end{proof}

\begin{remark}\label{rem:zested-braiding-not-real}
Proposition~\ref{prop:braided-zested-Cgk-real} concerns only the underlying
fusion category and makes no assertion about descent of the zested braiding or
ribbon structure. For example, in the fermionic zesting of
$\mathcal C(\mathfrak{sl}_2,k)$ with
$k\equiv2\pmod4$, one has $\lambda(1,1)=f$, where $f$ is the nontrivial
invertible object, together with $\omega(1,1,1)=i$ and
$t(1,1)=e^{\pi i/4}$. The
half-braiding of $f$ supplies the complementary imaginary factor; hence the total
associator correction is real. On the other hand, let $X_1$ be the simple object
with label~$1$ in the odd component. The explicit zested modular data give
\cite[Example~5.7 and equation~(5.15)]{GalindoMoraRowellVerlinde}
\[
 \theta_{X_1}^{\lambda}
 =e^{-\pi i/4}\theta_{X_1}
 =\exp\left[
   \pi i\left(\frac{3}{2(k+2)}-\frac14\right)
 \right].
\]
Here we used the convention
$T_{X,X}=\theta_X^{-1}$. Equation~(5.15) multiplies the $T$-matrix entry by
$e^{\pi i/4}$ and therefore multiplies the ribbon twist by its inverse.
If $k=4m+2$, with $m\geq0$, the exponent is
\[
 \frac{1-2m}{8(m+1)},
\]
which is not an integer because its numerator is odd and has absolute value
strictly smaller than its denominator. Thus
$\theta_{X_1}^{\lambda}\notin\{\pm1\}$. The zested braided category is unitary
after choosing the one-dimensional coherence maps involving $f$ to be unitary.
The original associator and braiding are unitary, and the scalar zesting data have
absolute value one, so the inherited dagger makes the zested tensor constraints
and braiding unitary. By
Proposition~\ref{prop:unitary-braided-real-twist}, the zested braided structure,
and therefore its ribbon structure, has no split real form.
\end{remark}

\section{Real descent for representation categories of finite groups}
\label{sec:groups}

Let $G$ be a finite group. We make the criterion of
Section~\ref{sec:descent} explicit for $\Rep_{\C}(G)$ when the semilinear functor
is induced by an automorphism of $G$.
Proposition~\ref{prop:automorphism-induced-real-descent} identifies the resulting
class $\Omega_{s_\alpha}$ with a vector of generalized twisted
Frobenius--Schur indicators.
In the involutive case with trivial coherence,
\cite[Appendix~A]{BHPReality} independently identifies the
Kawanaka--Matsuyama indicators with the sign data governing flat charge
conjugation. Proposition~\ref{prop:automorphism-induced-real-descent} interprets
these signs as relative Brauer classes and also allows coherent implementers for
which $\alpha^2$ is inner, including the possible central corrections.

A \emph{split symmetric $K$-form} below means a split $K$-form whose
scalar-extension equivalence identifies its symmetry with the canonical symmetry on
$\Rep_{\C}(G)$, as in Subsection~\ref{subsec:structured-forms}.
Write $C_2=\Gal(\C/\R)=\{1,\gamma\}$.

Let $\alpha\in\Aut(G)$.  On
$\Rep_{\C}(G)$ consider the conjugate-linear strict tensor functor
\begin{equation}\label{eq:alpha-real-functor}
 \Phi_\alpha(X,\rho)
 =\bigl(\overline X,\overline\rho\circ\alpha^{-1}\bigr),
 \qquad \Phi_\alpha(f)=\overline f.
\end{equation}
Complex conjugation and precomposition with $\alpha^{-1}$ leave the symmetry maps
unchanged. Hence
\[
 \Phi_\alpha(c_{X,Y})
 =c_{\Phi_\alpha(X),\Phi_\alpha(Y)}.
\]
It fixes every simple isomorphism class precisely when $\alpha$ is
\emph{class-inverting}, meaning that $\alpha(g)$ is conjugate to $g^{-1}$ for every
$g\in G$.  Indeed, the character of~\eqref{eq:alpha-real-functor} is
$g\mapsto\overline{\chi(\alpha^{-1}(g))}$, and irreducible characters separate
conjugacy classes.

Assume henceforth that $\alpha$ is class-inverting. Suppose that
\begin{equation}\label{eq:alpha-square-inner}
 \alpha^2=\operatorname{Ad}_a
 \qquad(a\in G).
\end{equation}
Here $\operatorname{Ad}_a(g)=aga^{-1}$.
Then
\begin{equation}\label{eq:alpha-coherence-map}
 \mu_X=\rho(a):\Phi_\alpha^2(X)\longrightarrow X
\end{equation}
is a monoidal natural isomorphism. Consequently,
\[
 [(\Phi_\alpha,\gamma)]
 \in\Pi_{\mathrm{sf}}(\Rep_{\C}(G)/\R)
\]
is an involution and determines the section
\[
 s_\alpha:C_2\longrightarrow
 \Pi_{\mathrm{sf}}(\Rep_{\C}(G)/\R),
 \qquad
 1\longmapsto1,
 \quad
 \gamma\longmapsto[(\Phi_\alpha,\gamma)].
\]
The $C_2$-coherence identity for $\mu$ is
equivalent to
\begin{equation}\label{eq:alpha-fixes-a}
 \alpha(a)=a.
\end{equation}
These conditions have a useful intrinsic form.  If~\eqref{eq:alpha-square-inner}
holds, then $\alpha(a)a^{-1}\in Z(G)$.
Since a class-inverting automorphism acts by inversion on $Z(G)$, replacing $a$ by
$az$, with $z\in Z(G)$, replaces this element by
$\alpha(a)a^{-1}z^{-2}$. By Tannakian
reconstruction, every monoidal natural isomorphism
$\Phi_\alpha^2\Rightarrow\id$ has component $\mu_X=\rho(b)$ at $(X,\rho)$, where
$\alpha^2=\operatorname{Ad}_b$. Consequently,
$\Phi_\alpha$ admits a coherent $C_2$-structure if and only if
$\alpha(a)a^{-1}=z^2$ for some $z\in Z(G)$.
Once one coherent implementer $a$ has been chosen, all the others are $az$ with
$z\in Z(G)[2]$, where $Z(G)[2]=\{z\in Z(G)\mid z^2=1\}$.

A coherent pair $(\alpha,a)$ determines the group
\begin{equation}\label{eq:index-two-extension}
 \widetilde G_{\alpha,a}
 =\left\langle G,t\ \middle|\
 tgt^{-1}=\alpha(g),\ t^2=a\right\rangle.
\end{equation}
The relations define a group on the set $G\sqcup Gt$ exactly because
$\alpha^2=\operatorname{Ad}_a$ and $\alpha(a)=a$. For
$\chi\in\operatorname{Irr}(G)$ put
\begin{equation}\label{eq:generalized-twisted-indicator}
 \nu_{\alpha,a}(\chi)
 =\frac{1}{|G|}\sum_{g\in G}\chi\bigl(g\alpha(g)a\bigr)
 =\frac{1}{|G|}\sum_{x\in\widetilde G_{\alpha,a}\setminus G}\chi(x^2).
\end{equation}
This is the index-two-extension form of the twisted Frobenius--Schur indicator of
\cite[Section~2, especially Theorem~2.5]{KawanakaMatsuyama}. The next proposition
identifies it with the relative Brauer signs and hence with the obstruction of
Section~\ref{sec:descent}.

For $z\in Z(G)[2]$ and $\chi\in\operatorname{Irr}(G)$, the scalar by which $z$
acts on the irreducible representation with character $\chi$ is
$\chi(z)/\chi(1)$. Define
\[
 \operatorname{ev}:Z(G)[2]
 \longrightarrow\{\pm1\}^{\operatorname{Irr}(G)},
 \qquad z\longmapsto\left(\frac{\chi(z)}{\chi(1)}\right)_\chi.
\]
Tannakian reconstruction identifies
$\Aut_\otimes(\id_{\Rep_{\C}(G)})$ with $Z(G)$; hence
$\operatorname{ev}(Z(G)[2])$ is precisely the subgroup of monoidal sign vectors.
Put
\[
 \boldsymbol\nu_{\alpha,a}
 =\bigl(\nu_{\alpha,a}(\chi)\bigr)_{\chi\in\operatorname{Irr}(G)}.
\]

\begin{proposition}\label{prop:automorphism-induced-real-descent}
Let $\alpha$ be class-inverting and let $(\alpha,a)$ satisfy
\eqref{eq:alpha-square-inner} and~\eqref{eq:alpha-fixes-a}. Then
for every $\chi\in\operatorname{Irr}(G)$, the number
$\nu_{\alpha,a}(\chi)\in\{1,-1\}$ is the relative Brauer sign of the corresponding
simple representation, and
\[
 [\boldsymbol\nu_{\alpha,a}]
 \in
 \frac{\{\pm1\}^{\operatorname{Irr}(G)}}
 {\operatorname{ev}(Z(G)[2])}
\]
maps to $\Omega_{s_\alpha}(\Rep_{\C}(G))$ under the injection in
Corollary~\ref{cor:real-signs}. The fixed coherent datum is split precisely when
$\boldsymbol\nu_{\alpha,a}=1$. Some coherent implementer gives a split symmetric
real form precisely when there is $z\in Z(G)[2]$ such that
\begin{equation}\label{eq:central-indicator-correction}
 \frac{\chi(z)}{\chi(1)}\nu_{\alpha,a}(\chi)=1
 \qquad\bigl(\chi\in\operatorname{Irr}(G)\bigr).
\end{equation}
\end{proposition}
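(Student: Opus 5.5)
The plan is to compute the relative Brauer sign of each simple object directly from the coherent datum $(\Phi_\alpha,\mu)$ with $\mu_X=\rho(a)$, and then to identify that sign with $\nu_{\alpha,a}(\chi)$ through the index-two extension $\widetilde G_{\alpha,a}$.

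First, let $(X,\rho)$ be irreducible with character $\chi$. Class-inversion gives a linear isomorphism $f:\Phi_\alpha(X)\to X$. Equivalently, $f$ is a conjugate-linear bijection $J:X\to X$ with
\[
 J\rho(g)=\rho(\alpha(g))J .
\]
By Schur's lemma, $f\circ\Phi_\alpha(f)$ and $\mu_X=\rho(a)$ differ by a scalar $c$, so $J^2=c\,\rho(a)$. Rescaling $J$ by $w\in\C^\times$ multiplies $c$ by $|w|^2>0$. Moreover $c$ is real, since $J$ commutes with $J^2=c\rho(a)$ and $J\rho(a)J^{-1}=\rho(\alpha(a))=\rho(a)$. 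Hence the sign of $c$ is well defined, and by the construction recalled before Proposition~\ref{prop:fusion-space-index} it is the relative Brauer sign $\varepsilon_X$. This sign is $+1$ exactly when $J$ can be normalized to satisfy $J^2=\rho(a)$, that is, exactly when a frame compatible with $\mu$ exists.

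Second, I will identify the sign with the indicator. The relations of $\widetilde G_{\alpha,a}$ and the relation $J^2=\varepsilon\rho(a)$ show that $t\mapsto J$ together with $\rho$ defines a representation of $\widetilde G_{\alpha,a}$ on the underlying real vector space. More cleanly, extend $\rho$ to a semilinear (corepresentation-type) action in the sense of Kawanaka--Matsuyama. Their theorem \cite[Theorem~2.5]{KawanakaMatsuyama} then states that the sum $\frac1{|G|}\sum_{x\in\widetilde G\setminus G}\chi(x^2)$ equals $\pm1$ according to the sign of $J^2$ relative to $\rho(t^2)=\rho(a)$. It vanishes only when the simple is not fixed, which is excluded by class-inversion. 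I expect this step to be the main obstacle, because the normalizations must match: the action $\overline\rho\circ\alpha^{-1}$ against $tgt^{-1}=\alpha(g)$, and the convention $f\Phi(f)=\mu$. As a sanity check, for $\alpha=\id$ and $a=1$ the statement must reduce to the classical Frobenius--Schur theorem. If a direct citation does not match, I will reprove it by averaging: define a $G$-equivariant bilinear form using $J$ and compute $\operatorname{tr}$ of the operator $v\mapsto\frac1{|G|}\sum_g \rho(g)J\rho(\alpha(g))\cdots$ by Schur orthogonality, exactly as in the classical proof.

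Third, since $\mu$ is coherent, Corollary~\ref{cor:real-signs} applies with $\operatorname{Hom}(U,\{\pm1\})$ replaced by the monoidal sign vectors. Tannakian reconstruction identifies these with $\operatorname{ev}(Z(G)[2])$. This gives the class of $\boldsymbol\nu_{\alpha,a}$ mapping to $\Omega_{s_\alpha}$. By the first step, the fixed datum is split iff every sign is $+1$.

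Finally, the other coherent implementers are $az$ with $z\in Z(G)[2]$. Replacing $a$ by $az$ multiplies $\mu_X$ by $\chi(z)/\chi(1)$ and hence multiplies each sign by this scalar, which yields~\eqref{eq:central-indicator-correction}. The resulting forms are symmetric because $\Phi_\alpha$ preserves the symmetry, as noted above; then apply Subsection~\ref{subsec:structured-forms}.
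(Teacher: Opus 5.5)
Your proposal follows the paper's proof step for step: an antilinear intertwiner $J$ with $J^2=c\,\rho(a)$, reality of $c$ from $\alpha(a)=a$, the sign of $c$ as the relative Brauer sign, Schur orthogonality (or the Kawanaka--Matsuyama citation) for the indicator, the Tannakian identification of the monoidal sign vectors with $\operatorname{ev}(Z(G)[2])$, and the correction $a\mapsto az$. The only step you leave as a plan is the one the paper writes out: rescale $J$ to be antiunitary for a $G$-invariant inner product, write $J=C\circ\mathrm{conj}$ with $C$ unitary, and combine $\sum_{g}\rho(g)C\overline{\rho(g)}=\frac{|G|}{d}C^{\mathsf T}$ with $C^{\mathsf T}C^{-1}=\varepsilon_X\rho(a)^{-1}$; you should also drop the aside that $t\mapsto J$ gives a representation of $\widetilde G_{\alpha,a}$, since this fails exactly in the quaternionic case $\varepsilon_X=-1$.
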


\begin{proof}
Let $(X,\rho)$ be irreducible.  Since $\Phi_\alpha(X)\cong X$, there is an
antilinear map $T_X:X\to X$ satisfying
\begin{equation}\label{eq:twisted-real-operator}
 T_X\rho(g)T_X^{-1}=\rho(\alpha(g)).
\end{equation}
The operator $\rho(a)^{-1}T_X^2$ commutes with $\rho(G)$ and is therefore scalar.
Moreover, $T_X$ commutes with $\rho(a)$ because $\alpha(a)=a$; conjugating that scalar
by $T_X$ shows that it is real. Choose a $G$-invariant Hermitian form and rescale
$T_X$ to be antiunitary. Antiunitarity forces the scalar to have absolute value one.
Hence there is a uniquely determined sign
$\varepsilon_X\in\{1,-1\}$ such that
\begin{equation}\label{eq:indicator-sign-equation}
 T_X^2=\varepsilon_X\rho(a).
\end{equation}
In the stable-object construction,~\eqref{eq:indicator-sign-equation} is precisely
the relative Brauer sign.

It remains to verify that~\eqref{eq:generalized-twisted-indicator} computes this
sign. In an orthonormal basis write $T_X=C\circ\mathrm{conj}$. Then $C$ is unitary,
\[
 \rho(\alpha(g))=C\overline{\rho(g)}C^{-1},
 \qquad C\overline C=\varepsilon_X\rho(a).
\]
Schur orthogonality, with $d=\dim X$, gives
\[
 \sum_{g\in G}\rho(g)C\overline{\rho(g)}
 =\frac{|G|}{d}C^{\mathsf T}.
\]
Since $C$ is unitary, the second displayed identity is equivalent to
\[
 C^{\mathsf T}C^{-1}=\varepsilon_X\rho(a)^{-1}.
\]
Taking traces now gives
\[
 \frac1{|G|}\sum_{g\in G}
 \chi\bigl(g\alpha(g)a\bigr)
 =\frac1d\operatorname{Tr}
   \bigl(C^{\mathsf T}C^{-1}\rho(a)\bigr)
 =\varepsilon_X.
\]
This proves the first assertions and the criterion for the fixed coherent datum.

The discussion preceding the proposition proves the assertions concerning the
existence and variation of coherent implementers.  Finally, if $z\in Z(G)[2]$, then
\[
 \nu_{\alpha,az}(\chi)=\frac{\chi(z)}{\chi(1)}\nu_{\alpha,a}(\chi).
\]
Corollary~\ref{cor:real-signs},
together with the structured criterion of
Subsection~\ref{subsec:structured-forms}, therefore identifies the class of
$\boldsymbol\nu_{\alpha,a}$ modulo $\operatorname{ev}(Z(G)[2])$ with
$\Omega_{s_\alpha}(\Rep_{\C}(G))$ and gives
\eqref{eq:central-indicator-correction}.
\end{proof}

Every braided tensor autoequivalence of $\Rep_{\C}(G)$, equivalently every
symmetric tensor autoequivalence,
is, up to monoidal isomorphism, induced by an automorphism of $G$
\cite[Section~2.2, (3)]{DavydovDualizing}. Comparing a semilinear symmetric
equivalence with coefficientwise conjugation reduces it to such an
autoequivalence. Thus no additional cases occur in the symmetric problem.
For arbitrary tensor forms one may also have an \emph{invariant twist}, namely a
Drinfeld twist $J\in\C[G]^{\otimes2}$ commuting with every $\Delta(g)$ and changing
only the tensor structure of the identity functor
\cite[Introduction and Section~4]{DavydovTwistedGroup}. These twists are controlled
for irreducible Frobenius groups in Section~\ref{sec:frobenius-groups}.

If $G$ has odd order, then by
\cite[Proposition~3.1]{DavydovTwistedGroup}, a twisted automorphism decomposes
into an automorphism of $G$ and an invariant twist; the latter does not permute
the simple objects. Hence a split real form forces a class-inverting automorphism.
Such a group is abelian by
\cite[Theorem~2.14]{DavydovDualizing}. Conversely, for abelian $G$ the pointed
category $\Vect_{\operatorname{Hom}(G,\C^\times),\Q}$, with trivial associator and
its canonical symmetry, is a split symmetric
$\Q$-form of $\Rep_{\C}(G)$. Thus, for groups of odd order, split real forms exist
precisely in the abelian case and may then be chosen symmetric and rational.

\section{Orbit-rigid irreducible Frobenius groups}\label{sec:frobenius-groups}

We apply the preceding criterion to irreducible Frobenius groups. Under orbit
rigidity and in odd characteristic, Theorem~\ref{thm:orbit-rigid-frobenius-real}
reduces arbitrary split real descent to the symmetric case and detects it through
indicators of the complement. Cyclic complements and their minimal split fields are
treated separately in Section~\ref{sec:cyclic-frobenius}.

\subsection{Frobenius modules and tensor autoequivalences}

Let
\[
 G=V\rtimes H,
\]
where $V$ is a nonzero finite-dimensional $\F_p$-vector space and
$1\ne H\leq\operatorname{GL}_{\F_p}(V)$ acts irreducibly, with no nonidentity
element fixing a nonzero vector.
We call $G$ an \emph{irreducible Frobenius group}.  Necessarily $p\nmid|H|$;
otherwise an element of order $p$ in $H$ would act unipotently and would have a
nonzero fixed vector.

Put $\widehat V=\operatorname{Hom}(V,\C^\times)$, with
$(h\lambda)(v)=\lambda(h^{-1}v)$.  The action of $H$ on
$\widehat V\setminus\{1\}$ is free, and Clifford correspondence
\cite[Theorem~6.11]{Isaacs} gives
\begin{equation}\label{eq:general-frobenius-irreducibles}
 \operatorname{Irr}(G)
 =\operatorname{Irr}(H)\sqcup
 \left\{
  W_\lambda=\operatorname{Ind}_V^G(\lambda)
  \ \middle|\
  H\lambda\in H\backslash(\widehat V\setminus\{1\})
 \right\}.
\end{equation}
The first family is inflated from $H$, every $W_\lambda$ has degree $|H|$, and
\begin{equation}\label{eq:general-frobenius-rank}
 \operatorname{rank}\bigl(\Rep_{\C}(G)\bigr)
 =|\operatorname{Irr}(H)|+\frac{|V|-1}{|H|}.
\end{equation}

Every abelian normal subgroup of $G$ is contained in $V$.  Indeed, if an abelian
normal subgroup contains $(v,h)$ with $h\ne1$, then its commutators with $V$ fill
$V$, because $h-1$ is invertible; this contradicts commutativity.  Irreducibility
then shows that the only possibilities are $0$ and $V$. The center is also trivial:
\begin{equation}\label{eq:frobenius-center-trivial}
 Z(G)=1.
\end{equation}
Indeed, a central element has trivial component in $H$ by faithfulness and its
component in $V$ belongs to $V^H=0$. Hence
$\Aut_\otimes(\id_{\Rep_{\C}(G)})\cong Z(G)$ is trivial. In automorphism-induced
descent, the coherent structure therefore has no central modification, and its sign
vector admits no central correction.

The reduction theorem for twisted automorphisms of group algebras
\cite[Theorem~2.4]{DavydovTwistedGroup}, together with coprime cohomology, therefore
has the following consequence.

\begin{proposition}\label{prop:frobenius-autoequivalence-permutations}
Every tensor autoequivalence of $\Rep_{\C}(G)$ induces on $\operatorname{Irr}(G)$
the same permutation as pullback along an automorphism of $G$.  Consequently, if
$\Rep_{\C}(G)$ has a split real form, then $G$ has a class-inverting automorphism.
\end{proposition}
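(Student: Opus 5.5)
We use Davydov's description of tensor autoequivalences of $\Rep_{\C}(G)$. By \cite[Theorem~2.4]{DavydovTwistedGroup}, every tensor autoequivalence of $\Rep_{\C}(G)$ is, up to monoidal isomorphism, pullback along a pair $(\phi,J)$. Here $\phi$ is an isomorphism $G\to G'$ onto a group isocategorical to $G$, or more precisely a twisted automorphism. The twist $J$ is built from a normal abelian subgroup $A\trianglelefteq G$ and a $G$-invariant nondegenerate class in $H^2(\widehat A,\C^\times)$, or an alternating bicharacter on $\widehat A$. Since the underlying functor is what determines the permutation of $\operatorname{Irr}(G)$, the goal is to show that the twist contributes no permutation beyond what an automorphism of $G$ already accounts for.

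The key steps are as follows.

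\emph{Step 1: locate the possible twist data.} Every abelian normal subgroup of $G$ lies in $V$, and by irreducibility it is $0$ or $V$. So any nontrivial invariant twist datum lives on $\widehat V$. It must be given by an $H$-invariant class in $H^2(\widehat V,\C^\times)$, which we identify with alternating $\F_p$-bilinear forms on $\widehat V$.

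\emph{Step 2: apply coprime cohomology.} Since $p\nmid|H|$, the relevant obstruction and comparison groups $H^i(H,-)$ with coefficients in $p$-groups vanish. We use this to show that the group $G'$ obtained by twisting along $(V,\beta)$ is isomorphic to $G$ via an isomorphism inducing the identity on $V$ and $H$. This follows because the extension data entering Davydov's construction is a class in $H^2(H,V)$ or $H^1(H,\cdot)$, and these vanish. Consequently the twisted autoequivalence becomes an automorphism of $G$ composed with an invariant twist, that is, a Drinfeld twist commuting with $\Delta(G)$.

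\emph{Step 3: invariant twists fix simples.} An invariant twist changes only the tensor structure of the identity functor, so its underlying functor is the identity and it fixes every simple class. Hence every tensor autoequivalence induces the same permutation as $\phi^*$ for some $\phi\in\Aut(G)$.

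For the consequence, suppose $\Rep_{\C}(G)$ has a split real form. Its descent functor is simple-fixing. Composing it with coefficientwise conjugation $\Phi_{\id}$, as in \eqref{eq:alpha-real-functor} with $\alpha=\id$, gives a complex-linear tensor autoequivalence. This autoequivalence sends each $\chi$ to $\overline\chi$. By the first part, this permutation is realized by pullback along some $\alpha\in\Aut(G)$, so $\chi\circ\alpha^{-1}=\overline\chi$ for all $\chi$. Since irreducible characters separate conjugacy classes, $\alpha$ is class-inverting, after replacing $\alpha$ by $\alpha^{-1}$ if needed.

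The main obstacle is Step 2: matching Davydov's precise parametrization of twisted automorphisms to the Frobenius setting, and checking that all group-theoretic modification data are coprime-cohomological and vanish. First I would try reading \cite[Theorem~2.4]{DavydovTwistedGroup} as saying that the modification is governed by $H$-equivariant data on $V$. I would then kill this data by averaging over $H$, which is possible because $|H|$ is invertible mod $p$.
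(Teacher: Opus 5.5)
Your architecture is the paper's: Davydov's reduction, the twist forced onto $V$, coprime cohomology, invariant twists fixing simples, and composition with coefficientwise conjugation for the consequence. That last part is fine as written.

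The gap is the ``Consequently'' in Step 2. Vanishing of $H^2(H,V)$ gives what you state just before it. With suitable conventions, the grouplikes of $\C[G]^J$ are the products $v\,z_hh$, where ${}^hc/c=dz_h$. The vanishing transgression lets you correct the $z_h$ by characters so that $\iota\colon v\mapsto v$, $h\mapsto z_hh$ is an isomorphism $G\to G'$. Then $\phi=\iota\circ\alpha$ with $\alpha\in\Aut(G)$.

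However, the leftover factor has underlying functor given by pullback along the algebra automorphism $\iota$ of $\C[G]$, and this is not the identity functor. It is (isomorphic to) an invariant twist only if $\iota$ is inner. That amounts to $z$ being a coboundary in $H^1(H,C^1_0(\widehat V,\C^\times))$, i.e.\ to $[c]$ having an $H$-invariant cocycle representative. These coefficients are $\C^\times$-valued functions, not a $p$-group. $\C^\times$ has torsion of every order; for instance $H^1(H,\C^\times)=\operatorname{Hom}(H,\C^\times)$ for the trivial action. So ``averaging because $|H|$ is invertible mod $p$'' does not apply. The paper uses the Frobenius hypothesis here instead: $H$ acts freely on $\widehat V\setminus\{1\}$, so the module is a product of coinduced modules, and Shapiro's lemma gives $H^1=0$.

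There are two ways to repair this within your approach.

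(i) Do the averaging where it is legitimate. Represent $[c]$ by a bicharacter $\widehat V\times\widehat V\to\mu_p$, i.e.\ an $\F_p$-bilinear form $b$ with $b-b^{\mathsf T}=\beta$. Since $\beta$ is $H$-invariant, the forms with this alternation form an $H$-stable affine space over the symmetric forms. Hence $|H|^{-1}\sum_{h\in H}{}^hb$ is an $H$-invariant representative, and $J$ is gauge equivalent to an invariant twist. This route even bypasses the transgression.

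(ii) Keep your $\iota$ and check directly that it fixes every simple class. Since $z_h(1)=1$, the inflated simples are unchanged. Each $W_\lambda$ is determined by its restriction to $V$, which $\iota$ does not alter. By Skolem--Noether, applied blockwise, this also shows that $\iota$ is inner, which recovers the paper's conclusion.
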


\begin{proof}
The correspondence with bi-Galois algebras
in~\cite[Corollary~6.2]{DavydovGalois}, followed by the reduction theorem
\cite[Theorem~2.4]{DavydovTwistedGroup}, shows that after a gauge transformation
the twist in a tensor autoequivalence is supported on an abelian normal subgroup. By the
preceding observation, the only nontrivial support is $V$. The alternation of the
corresponding cocycle is $H$-invariant by
\cite[Theorem~2.4]{DavydovTwistedGroup}. Since $\C^\times$ is
divisible, alternation gives the $H$-equivariant identification
\begin{equation}\label{eq:frobenius-alternation-cohomology}
 H^2(\widehat V,\C^\times)
 \cong\operatorname{Hom}(\textstyle\bigwedge^2\widehat V,\C^\times)
\end{equation}
\cite[Section~2]{DavydovTwistedGroup}. Thus the twist determines a class
\[
 [c]\in H^2(\widehat V,\C^\times)^H.
\]
The transgression construction in
\cite[equations~(1)--(2) and Lemma~3.7]{EGIso} places the
transgression of $[c]$ in
$H^2(H,\widehat{\widehat V})\cong H^2(H,V)$. This group vanishes because
$p\nmid|H|$~\cite[Corollary~III.10.2]{Brown}. We claim that $[c]$ has an
$H$-invariant cocycle representative.

Choose a normalized cocycle $c$ and normalized cochains $z_h$ such that
${}^h c/c=dz_h$. The cocycle defining the transgression is represented by
$z_{hk}/(z_h\,{}^h z_k)$. Since the transgression
vanishes, the $z_h$ may be multiplied by characters to ensure that
$z_{hk}=z_h\,{}^h z_k$.  Thus $z$ is a $1$-cocycle with values in
\[
 C^1_0(\widehat V,\C^\times)
 =\{u:\widehat V\to\C^\times\mid u(1)=1\}.
\]
The action of $H$ on $\widehat V\setminus\{1\}$ is free.  Consequently, this
module is a product of coinduced modules.  Shapiro's lemma
\cite[Chapter~III, Section~6]{Brown} gives
\[
 H^1(H,C^1_0(\widehat V,\C^\times))=0.
\]
Hence
$z_h={}^h u/u$ for a normalized cochain $u$, and replacing $c$ by $c/du$ makes it
$H$-invariant.

The corresponding twist $J\in\C[V]\otimes\C[V]$ commutes with $\Delta(g)$ for
every $g\in G$. Thus twisting leaves the coproduct unchanged, and the algebra
automorphism in the twisted automorphism is a Hopf automorphism, hence comes from
an automorphism of $G$. The remaining invariant twist fixes every object. This
proves the assertion about permutations.

If a split real form exists, its conjugate-linear descent functor fixes the simple
isomorphism classes. Composing it with coefficientwise conjugation gives a
$\C$-linear tensor autoequivalence inducing the duality permutation. The first
assertion supplies $\alpha\in\Aut(G)$ such that
$\chi(\alpha(g))=\chi(g^{-1})$ for every irreducible character.  Therefore
$\alpha$ is class-inverting.
\end{proof}

To express class inversion in terms of the Frobenius module, put
\[
 \mathcal N=N_{\operatorname{GL}_{\F_p}(V)}(H),
 \qquad \alpha_U^H=\operatorname{Ad}_U|_H
 \quad(U\in\mathcal N).
\]

\begin{proposition}\label{prop:frobenius-class-inverting-criterion}
The group $G=V\rtimes H$ has a class-inverting automorphism if and only if there is
$U\in\mathcal N$ such that
\begin{equation}\label{eq:frobenius-module-orbit-criterion}
 \alpha_U^H(h)\text{ is conjugate in }H\text{ to }h^{-1}\quad(h\in H),
 \qquad U(v)\in H\cdot(-v)\quad(v\in V).
\end{equation}
Such a map defines the class-inverting automorphism
\begin{equation}\label{eq:alpha-from-U}
 \alpha_U(vh)=U(v)\alpha_U^H(h).
\end{equation}
Consequently, the existence of a split real form implies the existence of such a
$U$.
\end{proposition}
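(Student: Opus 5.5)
The plan is to prove the two directions separately and then read off the last sentence from Proposition~\ref{prop:frobenius-autoequivalence-permutations}. Two structural facts will be used throughout. First, $V$ is characteristic in $G$: it is the unique maximal abelian normal subgroup, since every abelian normal subgroup lies in $V$. Second, $H$ is a complement of $V$, and all complements are conjugate, because $H^1(H,V)=0$ when $p\nmid|H|$. Consequently, after composing with an inner automorphism, every automorphism $\beta$ of $G$ preserves both $V$ and $H$. Then $U=\beta|_V\in\operatorname{GL}_{\F_p}(V)$, and $\beta(vh)=U(v)\beta(h)$. The relation $hvh^{-1}=h(v)$ gives $U h U^{-1}=\beta(h)$ as operators on $V$, so $U\in\mathcal N$ and $\beta|_H=\alpha_U^H$. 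Conversely, for $U\in\mathcal N$ the formula \eqref{eq:alpha-from-U} is a homomorphism: the check is $U(hv)=\alpha_U^H(h)U(v)$, which is precisely $U\in\mathcal N$.

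Next, list the conjugacy classes of $G$. These are the nontrivial classes of $H$ fused with their $V$-translates (every element $vh$ with $h\ne1$ is conjugate to $h$, since $h-1$ is invertible on $V$), the $H$-orbits on $V\setminus\{0\}$, and the identity. Class inversion for $\alpha=\alpha_U$ then reads as follows:
\begin{itemize}
\item for $h\ne1$, $\alpha(h)=\alpha_U^H(h)$ must be $G$-conjugate to $h^{-1}$, and by the description above $G$-conjugacy of elements of $H$ coincides with $H$-conjugacy (project $G\to H$);
\item for $v\in V$, $U(v)$ must be $G$-conjugate to $-v$, and $G$-conjugacy on $V$ is exactly the $H$-orbit relation, since $V$ acts trivially on itself.
\end{itemize}
This gives \eqref{eq:frobenius-module-orbit-criterion} in both directions.

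For the forward direction, start from a class-inverting automorphism $\beta$. Class inversion is preserved under composition with inner automorphisms, so we may adjust $\beta$ as above to preserve $V$ and $H$. Setting $U=\beta|_V$ then satisfies \eqref{eq:frobenius-module-orbit-criterion}. For the converse, take $U$ satisfying \eqref{eq:frobenius-module-orbit-criterion} and form $\alpha_U$. Every element of $G$ is conjugate either to some $h\in H$ or to some $v\in V$, and the two conditions make $\alpha_U$ class-inverting on each type.

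The final assertion follows by combining this criterion with Proposition~\ref{prop:frobenius-autoequivalence-permutations}, which already produces a class-inverting automorphism from a split real form.

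I expect the main obstacle to be the normalization step, namely conjugating $\beta$ so that it preserves $H$. This requires the complement-conjugacy (Schur--Zassenhaus) argument and the observation that $V$ is characteristic. The rest is bookkeeping with conjugacy classes in a Frobenius group.
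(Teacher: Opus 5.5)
Your proposal is correct and follows essentially the same route as the paper's proof. Both arguments normalize an automorphism using that $V$ is characteristic and that complements are conjugate ($H^1(H,V)=0$), and read off $U\in\mathcal N$ with $\alpha_U^H$ as its restriction to $H$. Both then split class inversion into the two conditions via the conjugacy classes of the Frobenius group: $H$-orbits on $V$, and $Vh$ fused with $h$ for $h\ne1$, with $G$-conjugacy on $H$ equal to $H$-conjugacy. The final assertion is deduced from Proposition~\ref{prop:frobenius-autoequivalence-permutations} in both.
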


\begin{proof}
The subgroup $V$ is characteristic. Every complement to $V$ is conjugate to $H$
because $H^1(H,V)=0$~\cite[Corollary~III.10.2]{Brown}. Thus, after composing an
automorphism with an inner
automorphism induced by $V$, we may assume that it preserves $H$.  Its restriction
to $V$ is then some $U\in\mathcal N$, and its restriction to $H$ is
$\alpha_U^H$.

The conjugacy class of $v\in V$ is $H\cdot v$.  If $h\ne1$, then $1-h$ is
invertible on $V$.  Hence every element of $Vh$ is conjugate by an element of $V$ to
$h$.  Moreover, two elements of $H$ are conjugate in $G$ exactly when they are
conjugate in $H$.  These observations identify class inversion on $V$ and on
$G\setminus V$ with the two conditions in
\eqref{eq:frobenius-module-orbit-criterion}, respectively.  They also prove the
converse and~\eqref{eq:alpha-from-U}.  The final assertion follows from
Proposition~\ref{prop:frobenius-autoequivalence-permutations}.
\end{proof}

\subsection{Orbit rigidity and indicators}

Coherence imposes an additional condition. The square of the class-inverting
automorphism must be inner, and the following group measures this defect.

Define the \emph{orbit-preserving normalizer} of the Frobenius complement by
\begin{equation}\label{eq:orbit-preserving-normalizer}
 N_{\mathrm{orb}}(H,V)
 =\{T\in\mathcal N\mid T(v)\in H\cdot v\text{ for every }v\in V\}.
\end{equation}
This is a subgroup of $\mathcal N$ containing $H$ as a normal subgroup. If
$S,T\in N_{\mathrm{orb}}(H,V)$,
normalization of $H$ shows that $ST$ again preserves every $H$-orbit, and the same
argument applied to $T^{-1}$ proves closure under inverses.

If $U$ satisfies~\eqref{eq:frobenius-module-orbit-criterion}, then
\begin{equation}\label{eq:U-square-in-orbit-preserving-normalizer}
 U^2\in N_{\mathrm{orb}}(H,V).
\end{equation}
Indeed, writing $U(v)=-h_v\cdot v$ with $h_v\in H$ gives
$U^2(v)=(\alpha_U^H(h_v)h_v)\cdot v$. Thus the coset $U^2H$ in
$N_{\mathrm{orb}}(H,V)/H$ measures whether the class-inverting automorphism
\eqref{eq:alpha-from-U} has inner square.  In particular, if
\begin{equation}\label{eq:orbit-rigidity}
 N_{\mathrm{orb}}(H,V)=H,
\end{equation}
then $a=U^2\in H$ and
\begin{equation}\label{eq:alpha-U-coherent-pair}
 \alpha_U^2=\operatorname{Ad}_a,
 \qquad \alpha_U(a)=a.
\end{equation}

There is a convenient sufficient condition for~\eqref{eq:orbit-rigidity}.  If
$T\in N_{\mathrm{orb}}(H,V)$, then
\[
 V=\bigcup_{h\in H}\ker(T-h).
\]
If $T\notin H$, all these subspaces are proper.  Consequently,
\begin{equation}\label{eq:large-characteristic-orbit-rigidity}
 p>|H|\quad\Longrightarrow\quad N_{\mathrm{orb}}(H,V)=H,
\end{equation}
because otherwise
$|V|\leq |H|p^{\dim V-1}<p^{\dim V}$.

For $U$ satisfying~\eqref{eq:frobenius-module-orbit-criterion} under
\eqref{eq:orbit-rigidity} and $\tau\in\operatorname{Irr}(H)$, let $\chi_\tau$ be
the character of $\tau$ and define
\begin{equation}\label{eq:complement-twisted-indicator}
 \nu_{\alpha_U^H,U^2}(\tau)
 =\frac1{|H|}\sum_{h\in H}\chi_\tau\bigl(h\alpha_U^H(h)U^2\bigr).
\end{equation}
These are the indicators of the simple representations of $G$ inflated from $H$.

The induced simple objects contribute no further signs.

\begin{proposition}\label{prop:frobenius-induced-positive}
Let $U$ satisfy~\eqref{eq:frobenius-module-orbit-criterion} and suppose
$a=U^2\in H$.  For the coherent pair $(\alpha_U,a)$, every induced simple has
positive indicator:
\[
 \nu_{\alpha_U,a}(W_\lambda)=1
 \qquad\bigl(\lambda\in\widehat V\setminus\{1\}\bigr).
\]
\end{proposition}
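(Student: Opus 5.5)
The plan is to compute $\nu_{\alpha_U,a}(W_\lambda)$ directly from~\eqref{eq:generalized-twisted-indicator} and show that it is a sum of nonnegative terms. Proposition~\ref{prop:automorphism-induced-real-descent} already guarantees that $\nu_{\alpha_U,a}(W_\lambda)\in\{1,-1\}$; its hypotheses hold because $\alpha_U$ is class-inverting by Proposition~\ref{prop:frobenius-class-inverting-criterion} and $(\alpha_U,a)$ is coherent. Nonnegativity therefore forces the value $1$. This avoids having to decide explicitly which $h\in H$ contribute.

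The first step records the character of $W_\lambda=\operatorname{Ind}_V^G(\lambda)$. Since $V$ is normal, the induced character vanishes off $V$. For $v\in V$ it equals
\[
\chi_{W_\lambda}(v)=\sum_{k\in H}\lambda(k^{-1}\cdot v).
\]
The second step writes $g=vh$ with $v\in V$ and $h\in H$, where $V$ is written additively inside $G$. Using~\eqref{eq:alpha-from-U}, one gets $\alpha_U(g)=U(v)\,\alpha_U^H(h)$, and hence
\[
g\,\alpha_U(g)\,a=\bigl(v+h\cdot U(v)\bigr)\,h\,\alpha_U^H(h)\,a .
\]
This element lies in $V$ exactly when $h\,\alpha_U^H(h)\,a=1$ in $H$. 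Only such $h$ contribute, and for them the summand is $\chi_{W_\lambda}\bigl((1+hU)v\bigr)$.

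The third step interchanges the sums:
\[
\nu_{\alpha_U,a}(W_\lambda)=\frac1{|G|}\sum_{\substack{h\in H\\ h\alpha_U^H(h)a=1}}\ \sum_{k\in H}\ \sum_{v\in V}\lambda\bigl(k^{-1}(1+hU)v\bigr).
\]
For fixed $h$ and $k$, the map $v\mapsto\lambda(k^{-1}(1+hU)v)$ is a character of the group $V$, because $k^{-1}(1+hU)$ is additive. By orthogonality, its sum over $V$ is either $|V|$ or $0$, according as this character is trivial or not. Every term is therefore a nonnegative real number, so $\nu_{\alpha_U,a}(W_\lambda)\ge0$. Combined with $\nu\in\{\pm1\}$, this gives $\nu_{\alpha_U,a}(W_\lambda)=1$.

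The main point needing care is the bookkeeping in the second step. One has to handle the semidirect-product multiplication and the convention $h\cdot w=hwh^{-1}$ on $V$ correctly, so that the $V$-component really is the linear image $(1+hU)v$ and the $H$-component is independent of $v$. Once that is checked, positivity is immediate. Note that no orbit-rigidity hypothesis is used beyond $a=U^2\in H$, which is needed for coherence.
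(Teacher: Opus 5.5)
Your argument is correct, and it takes a different route from the paper's.

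\textbf{The paper's route.} The paper works in the explicit model $\rho_\lambda$ on the basis $\{e_h\}$. Comparing one-dimensional $V$-weight spaces, and using freeness of $H$ on $\widehat V\setminus\{1\}$, it produces a unique $c_\lambda\in H$ with $\lambda(c_\lambda^{-1}Uv)=\lambda(-v)$. It then defines the antilinear permutation operator $T_\lambda e_h=e_{\alpha_U^H(h)c_\lambda}$ and derives $\alpha_U^H(c_\lambda)c_\lambda=a$. This gives $T_\lambda^2=\rho_\lambda(a)$, so the relative Brauer sign, and hence the indicator, is $+1$.

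\textbf{Your route.} You evaluate the indicator sum directly. The key point is that $\alpha_U$ preserves $H$ and $a\in H$. Hence the $V$-component of $g\,\alpha_U(g)\,a$ is the linear image $(1+hU)v$, with no translation term; a translation term would reintroduce roots of unity. Summing over $V$ therefore makes every term $|V|$ or $0$. Nonnegativity together with the dichotomy $\nu\in\{\pm1\}$ from Proposition~\ref{prop:automorphism-induced-real-descent} then forces the value $1$.

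\textbf{What each buys.} Your route is shorter. It needs no model for $W_\lambda$ and never has to locate $c_\lambda$. Its cost is that the value is pinned down only through the $\pm1$ dichotomy, which rests on the Schur-orthogonality computation in Proposition~\ref{prop:automorphism-induced-real-descent}. The paper computes the sign directly. As a by-product, it exhibits explicit permutation-type real structures on the $W_\lambda$, the same kind of frames used later in the cyclic case.

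\textbf{Exact count.} You could also finish by counting instead of invoking the dichotomy. For each $k\in H$, exactly one $h$ contributes, namely $h=c_{k\lambda}^{-1}$. The constraint $h\,\alpha_U^H(h)\,a=1$ is then exactly the paper's identity $\alpha_U^H(c)c=a$, so the count is $|H|$ and $\nu=1$.
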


\begin{proof}
Realize $W_\lambda$ on a basis $\{e_h\mid h\in H\}$ by
\begin{equation}\label{eq:induced-frobenius-model}
 \rho_\lambda(v)e_h=\lambda(h^{-1}v)e_h,
 \qquad \rho_\lambda(k)e_h=e_{kh}.
\end{equation}
Since $\alpha_U$ is class-inverting, the isomorphism class of $W_\lambda$ is fixed
by the functor~\eqref{eq:alpha-real-functor}. Comparing the one-dimensional
$V$-weight spaces in the model~\eqref{eq:induced-frobenius-model} gives
$c_\lambda\in H$ such that
\begin{equation}\label{eq:c-lambda-condition}
 \lambda(c_\lambda^{-1}Uv)=\lambda(-v)
 \qquad(v\in V).
\end{equation}
It is unique because $H$ acts freely on
$\widehat V\setminus\{1\}$.  Define an antilinear map by
\[
 T_\lambda e_h=e_{\alpha_U^H(h)c_\lambda}.
\]
Equations~\eqref{eq:induced-frobenius-model} and
\eqref{eq:c-lambda-condition} give
\[
 T_\lambda\rho_\lambda(g)T_\lambda^{-1}
 =\rho_\lambda(\alpha_U(g))
 \qquad(g\in G).
\]

Apply~\eqref{eq:c-lambda-condition} twice.  One obtains
\[
 \lambda\bigl((\alpha_U^H(c_\lambda)c_\lambda)^{-1}av\bigr)=\lambda(v)
 \qquad(v\in V).
\]
Freeness of the $H$-action on the nontrivial characters yields
$\alpha_U^H(c_\lambda)c_\lambda=a$. Therefore
\[
 T_\lambda^2e_h
 =e_{(\alpha_U^H)^2(h)\alpha_U^H(c_\lambda)c_\lambda}
 =e_{aha^{-1}a}
 =\rho_\lambda(a)e_h.
\]
Proposition~\ref{prop:automorphism-induced-real-descent} now gives the result.
\end{proof}

For simples inflated from $H$, summation over $V$ reduces
\eqref{eq:generalized-twisted-indicator} to
\eqref{eq:complement-twisted-indicator}. Hence the complement indicators determine
$\Omega_{s_{\alpha_U}}(\Rep_{\C}(G))$.

\emph{Residual twists.}
Orbit rigidity also controls tensor autoequivalences that fix every simple
isomorphism class. If $N_{\mathrm{orb}}(H,V)=H$, every class-preserving automorphism of
$G$ is inner. Indeed, after conjugating by an element of $V$, it preserves $H$; its
restriction to $V$ belongs to $N_{\mathrm{orb}}(H,V)=H$, and the automorphism is
conjugation by that element of $H$. Let $\operatorname{Tw}(V,H)$ denote the kernel of the
action of tensor autoequivalence classes on simple isomorphism classes. After the
class-preserving automorphism has been removed, the proof of
Proposition~\ref{prop:frobenius-autoequivalence-permutations} represents every such
class by a $G$-invariant twist supported on $V$. The classes of these twists form a
subgroup of $H^2(\widehat V,\C^\times)^H$ and map onto $\operatorname{Tw}(V,H)$; the
quotient accounts for gauge identifications and class-preserving automorphisms
\cite[Section~2 and Propositions~4.1--4.2]{DavydovTwistedGroup}. The product is induced by
multiplying twists in the commutative algebra $\C[V]^{\otimes2}$. Since $\widehat V$
is an elementary abelian $p$-group,~\eqref{eq:frobenius-alternation-cohomology}
shows that $\operatorname{Tw}(V,H)$ is a finite abelian $p$-group. If $p$ is odd,
coprime cohomology gives, for every $C_2$-action on this group,
\begin{equation}\label{eq:residual-twist-H1}
 H^1(C_2,\operatorname{Tw}(V,H))=1.
\end{equation}
Indeed, multiplication by $2$ is an automorphism of this finite $p$-group; see
\cite[Corollary~III.10.2]{Brown}.

\begin{theorem}\label{thm:orbit-rigid-frobenius-real}
Let $G=V\rtimes H$ be an irreducible Frobenius group over $\F_p$, where $p$ is odd,
and suppose that $N_{\mathrm{orb}}(H,V)=H$. Then $\Rep_{\C}(G)$ has a split real form if
and only if it has a split symmetric real form. These conditions hold if and only
if there is $U\in\mathcal N$ satisfying
\eqref{eq:frobenius-module-orbit-criterion} such that
\begin{equation}\label{eq:orbit-rigid-indicator-criterion}
 \nu_{\alpha_U^H,U^2}(\tau)=1
 \qquad\bigl(\tau\in\operatorname{Irr}(H)\bigr).
\end{equation}
\end{theorem}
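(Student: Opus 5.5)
The argument runs through three implications: (indicators) $\Rightarrow$ (split symmetric) $\Rightarrow$ (split) $\Rightarrow$ (indicators). The middle one is trivial.

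\emph{Indicators imply a split symmetric real form.} Suppose $U\in\mathcal N$ satisfies \eqref{eq:frobenius-module-orbit-criterion} and \eqref{eq:orbit-rigid-indicator-criterion}. By \eqref{eq:U-square-in-orbit-preserving-normalizer} and orbit rigidity, $a=U^2\in H$, and \eqref{eq:alpha-U-coherent-pair} makes $(\alpha_U,a)$ a coherent pair with $\alpha_U$ class-inverting. Proposition~\ref{prop:automorphism-induced-real-descent} then reduces splitness to showing that the full sign vector $\boldsymbol\nu_{\alpha_U,a}$ is trivial, since \eqref{eq:frobenius-center-trivial} leaves no central correction. Split $\operatorname{Irr}(G)$ as in \eqref{eq:general-frobenius-irreducibles}.
\begin{itemize}
\item The induced simples $W_\lambda$ have indicator $1$ by Proposition~\ref{prop:frobenius-induced-positive}.
\item For simples inflated from $H$, summing $\chi(g\alpha_U(g)a)$ over the $V$-component reduces \eqref{eq:generalized-twisted-indicator} to \eqref{eq:complement-twisted-indicator}, as remarked before the theorem. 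Hypothesis \eqref{eq:orbit-rigid-indicator-criterion} gives $1$.
\end{itemize}
So the datum is split and preserves the symmetry. This gives a split symmetric real form.

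\emph{A split real form implies the indicator criterion.} This is the main step. Given an arbitrary split real form, Proposition~\ref{prop:framed-split-descent} supplies an involution $t\in\Pi_{\mathrm{fr}}$ over $\gamma$. Its image in $\Pi_{\mathrm{sf}}$ is a semilinear tensor equivalence $\Psi$ fixing all simple classes. By Propositions~\ref{prop:frobenius-autoequivalence-permutations} and~\ref{prop:frobenius-class-inverting-criterion}, composing $\Psi$ with coefficientwise conjugation yields a linear autoequivalence with the duality permutation. That autoequivalence equals pullback along some automorphism, followed by an invariant twist on $V$. After inner modification the automorphism is $\alpha_U$ for some $U$ satisfying \eqref{eq:frobenius-module-orbit-criterion}. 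Hence
\[
[\Psi]=[J]\,s_{\alpha_U}(\gamma), \qquad [J]\in\operatorname{Tw}(V,H).
\]
Orbit rigidity makes $s_{\alpha_U}(\gamma)$ an involution in $\Pi_{\mathrm{sf}}$, and $[\Psi]^2=1$. So $[J]$ is a $1$-cocycle for the $C_2$-action on $\operatorname{Tw}(V,H)$ given by conjugation by $s_{\alpha_U}(\gamma)$. This action preserves $\operatorname{Tw}(V,H)$ because it is normal, being the kernel of the action on simples. By \eqref{eq:residual-twist-H1}, with $p$ odd, $[J]=[K]\,{}^{\gamma}[K]^{-1}$ for some $[K]\in\operatorname{Tw}(V,H)$. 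Therefore $[\Psi]$ is conjugate in $\Pi_{\mathrm{sf}}$ to $s_{\alpha_U}(\gamma)$ by $[K]$.

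It remains to transport the framed lift. Conjugating $t$ by any framed lift of $[K]$ gives an involution in $\Pi_{\mathrm{fr}}$ lying over $s_{\alpha_U}$. So $s_{\alpha_U}$ lifts, and $\Omega_{s_{\alpha_U}}=0$. By Proposition~\ref{prop:automorphism-induced-real-descent}, together with $Z(G)=1$, the vector $\boldsymbol\nu_{\alpha_U,U^2}$ is trivial. Restricting to the simples inflated from $H$ gives \eqref{eq:orbit-rigid-indicator-criterion}. Since $s_{\alpha_U}$ preserves the symmetry, the same argument also reproduces a split symmetric form.

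The obstacle I expect is the bookkeeping in this conjugation step. I will check two things, in this order.
\begin{itemize}
\item $\operatorname{Tw}(V,H)$ is abelian and stable under conjugation by the semilinear $s_{\alpha_U}(\gamma)$, so that \eqref{eq:residual-twist-H1} applies. I will verify this directly from invariant twists on $\C[V]^{\otimes2}$.
\item The reduction to $\alpha_U$ is unambiguous up to $\operatorname{Tw}$. Class-preserving automorphisms are inner under orbit rigidity, and inner automorphisms are trivial in $\Pi_{\mathrm{sf}}$.
\end{itemize}
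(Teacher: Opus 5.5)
Your proposal is correct and follows essentially the same route as the paper. Sufficiency goes through Propositions~\ref{prop:frobenius-induced-positive} and~\ref{prop:automorphism-induced-real-descent} with $Z(G)=1$. Necessity writes the unframed class of the framed involution as an element of $\operatorname{Tw}(V,H)$ times $[\Phi_{\alpha_U}]$, then uses $H^1(C_2,\operatorname{Tw}(V,H))=1$ to conjugate the involution to one lying over $[\Phi_{\alpha_U}]$. The differences are cosmetic: you read off the indicators from $\Omega_{s_{\alpha_U}}=0$ via Corollary~\ref{cor:real-signs}, while the paper first produces the split symmetric form and then identifies the positive Brauer signs with the indicators; and $[\Psi][\Phi_{\alpha_U}]^{-1}\in\operatorname{Tw}(V,H)$ holds by definition, so you need not extract the automorphism part of $\Psi$ explicitly.
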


\begin{proof}
Suppose first that~\eqref{eq:frobenius-module-orbit-criterion} and
\eqref{eq:orbit-rigid-indicator-criterion} hold.  Orbit rigidity gives the coherent
pair~\eqref{eq:alpha-U-coherent-pair}.  The inflated simples have positive sign by
\eqref{eq:orbit-rigid-indicator-criterion}, and the induced simples have positive
sign by Proposition~\ref{prop:frobenius-induced-positive}.  Hence
Proposition~\ref{prop:automorphism-induced-real-descent} supplies a split symmetric
real form.

Conversely, let a split real form be given. We first show that its descent element
can be conjugated to a purely automorphism-induced one. Let $\gamma$ denote complex
conjugation. Proposition~\ref{prop:framed-split-descent} supplies an element
\[
 t\in\Pi_{\mathrm{fr}}(\Rep_{\C}(G)/\R),
 \qquad p_{\mathrm{fr}}(t)=\gamma,
 \qquad t^2=1.
\]
Let $t_{\mathrm{sf}}$ be its image after forgetting the frames. By
Propositions~\ref{prop:frobenius-autoequivalence-permutations}
and~\ref{prop:frobenius-class-inverting-criterion}, there is
$U\in\mathcal N$ satisfying~\eqref{eq:frobenius-module-orbit-criterion}.
Orbit rigidity gives
$a=U^2\in H$ and~\eqref{eq:alpha-U-coherent-pair}.

Both $t_{\mathrm{sf}}$ and $[\Phi_{\alpha_U}]$ lie over $\gamma$ and fix every
simple isomorphism class. Hence
\[
 s:=t_{\mathrm{sf}}[\Phi_{\alpha_U}]^{-1}
 \in\operatorname{Tw}(V,H),
 \qquad
 t_{\mathrm{sf}}=s[\Phi_{\alpha_U}].
\]
Since the coherent pair
makes $[\Phi_{\alpha_U}]^2=1$, conjugation by $[\Phi_{\alpha_U}]$ defines an
involution $\theta$ of $\operatorname{Tw}(V,H)$. Since $t^2=1$, its unframed image has
order two, and therefore
\[
 s\theta(s)=1.
\]
By~\eqref{eq:residual-twist-H1}, $s=r^{-1}\theta(r)$ for some
$r\in\operatorname{Tw}(V,H)$. Choose a representative of $r$ together with frames on the
simple objects, thereby obtaining
$\widetilde r\in\ker(p_{\mathrm{fr}})$, and put
\[
 t'=\widetilde r\,t\,\widetilde r^{-1}.
\]
Then $(t')^2=1$, while its underlying unframed class is
\[
 r\bigl(s[\Phi_{\alpha_U}]\bigr)r^{-1}
 =rs\theta(r^{-1})[\Phi_{\alpha_U}]
 =[\Phi_{\alpha_U}].
\]
Transport the frames and the square-one structure of $t'$ along a monoidal
isomorphism from its underlying functor to $\Phi_{\alpha_U}$. Since
$\Phi_{\alpha_U}$ preserves the canonical symmetry, the resulting element belongs
to $\Pi_{\mathrm{fr}}^{\mathrm{sym}}(\Rep_{\C}(G)/\R)$, still lies over
$\gamma$, and still has square one. Together with the identity it defines a
section of $p_{\mathrm{fr}}^{\mathrm{sym}}$, and hence a split symmetric real form
by Subsection~\ref{subsec:structured-forms}.

The transported frames make every simple object stable, and hence all their Brauer
signs are positive. Since $Z(G)=1$ by~\eqref{eq:frobenius-center-trivial}, the
coherence of the pure automorphism-induced datum is unique. On the simples inflated
from $H$, Proposition~\ref{prop:automorphism-induced-real-descent} identifies these
signs with the numbers~\eqref{eq:complement-twisted-indicator}. This proves
\eqref{eq:orbit-rigid-indicator-criterion}.
\end{proof}

By~\eqref{eq:large-characteristic-orbit-rigidity}, the orbit-rigidity hypothesis is
automatic when $p>|H|$.

\subsection{A quaternionic counterexample}

The complement indicators cannot be omitted. Recall that a finite group is
\emph{ambivalent} if every element is conjugate to its inverse.

\begin{proposition}\label{prop:Q8-frobenius-family}
For every prime $p>8$, the quaternion group $Q_8$ has a faithful irreducible
fixed-point-free
action on $\F_p^2$ such that
\[
 G_p=(\F_p^2)^+\rtimes Q_8
\]
is ambivalent, but $\Rep_{\C}(G_p)$ has no split real form.
\end{proposition}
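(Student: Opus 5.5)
The plan is to build the family from the embedding $Q_8\hookrightarrow\operatorname{SL}_2(\F_p)$, to get ambivalence from the central element $-1\in Q_8$, and to rule out a split real form with Theorem~\ref{thm:orbit-rigid-frobenius-real}, showing that every admissible $U$ reduces to an inner twist, where the complement indicator is the ordinary Frobenius--Schur indicator.

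\emph{The action.} For odd $p$, $\operatorname{SL}_2(\F_p)$ contains a copy of $Q_8$. One construction: choose $x,y\in\F_p$ with $x^2+y^2=-1$, set $i\mapsto\begin{pmatrix}0&-1\\1&0\end{pmatrix}$ and $j\mapsto\begin{pmatrix}x&y\\y&-x\end{pmatrix}$, and check the relations.
\begin{itemize}[label=\textbullet]
\item \emph{Fixed-point-free.} The unique involution of $Q_8$ goes to $-1$. Each element of order $4$ has square $-1$, so its eigenvalues are $\pm\sqrt{-1}\neq1$.
\item \emph{Irreducible.} If the representation were reducible, Maschke's theorem ($p\nmid 8$) would split it into two characters, and the image would be abelian. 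Since the image is $Q_8$, which is nonabelian, the action is irreducible.
\end{itemize}
So $G_p$ is an irreducible Frobenius group with $p$ odd. Since $p>8=|H|$, \eqref{eq:large-characteristic-orbit-rigidity} gives $N_{\mathrm{orb}}(Q_8,\F_p^2)=Q_8$.

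\emph{Ambivalence.} Take $U=\id$ in Proposition~\ref{prop:frobenius-class-inverting-criterion}.
\begin{itemize}[label=\textbullet]
\item \emph{Second condition.} $-v=(-1)\cdot v\in H\cdot v$, so $U(v)=v\in H\cdot(-v)$.
\item \emph{First condition.} $Q_8$ is ambivalent: $\pm1$ are self-inverse, and $jij^{-1}=-i=i^{-1}$, and similarly for $j$ and $k$.
\end{itemize}
Hence the identity automorphism is class-inverting, and $G_p$ is ambivalent.

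\emph{No split real form.} By Theorem~\ref{thm:orbit-rigid-frobenius-real}, it suffices to show that no $U\in\mathcal N$ satisfying \eqref{eq:frobenius-module-orbit-criterion} makes all the indicators \eqref{eq:complement-twisted-indicator} equal to $1$.
\begin{itemize}[label=\textbullet]
\item \emph{Every admissible $U$ lies in $H$.} Since $-1\in H$, we have $H\cdot(-v)=H\cdot v$, so the second condition of \eqref{eq:frobenius-module-orbit-criterion} says exactly $U\in N_{\mathrm{orb}}(H,V)=H$.
\item \emph{The indicator is the ordinary one.} Write $U=u\in H$, so $\alpha_U^H=\operatorname{Ad}_u$ and $U^2=u^2$. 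Then
\[
 h\,\alpha_U^H(h)\,U^2=h\,uhu^{-1}\,u^2=(hu)^2 ,
\]
and reindexing $g=hu$ gives
\[
 \nu_{\alpha_U^H,U^2}(\tau)=\frac1{8}\sum_{g\in Q_8}\chi_\tau(g^2),
\]
the ordinary Frobenius--Schur indicator of $\tau$.
\item \emph{Evaluation.} For the two-dimensional irreducible representation of $Q_8$, the six elements of order $4$ have $g^2=-1$ and the two central elements have $g^2=1$. So the sum is $(2\cdot2+6\cdot(-2))/8=-1$. Condition \eqref{eq:orbit-rigid-indicator-criterion} therefore fails for every admissible $U$, and $\Rep_{\C}(G_p)$ has no split real form.

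\end{itemize}

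The step most likely to need care is the reduction to $U\in H$: it uses the orbit condition together with $-1\in H$ and orbit rigidity. The rest is bookkeeping, namely checking the explicit $\operatorname{SL}_2$ model and the irreducibility argument.
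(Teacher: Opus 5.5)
Your proposal is correct and follows the paper's proof essentially step by step: the same $Q_8\subset\operatorname{SL}_2(\F_p)$ model, ambivalence via $U=\id$ and $-1\in H$, orbit rigidity from $p>8$ forcing every admissible $U$ into $H$, and reduction to the ordinary Frobenius--Schur indicator $-1$ of the two-dimensional representation. The differences are cosmetic. You reach the ordinary indicator through the identity $h\,uhu^{-1}\,u^2=(hu)^2$ rather than by replacing $t$ with $U^{-1}t$ in the index-two extension, and you compute the indicator instead of citing it. The existence of $x,y$ with $x^2+y^2=-1$ still needs the one-line counting argument that the paper supplies.
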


\begin{proof}
The sets of squares in $\F_p$ and of elements of the form $-1-y^2$ both have cardinality
$(p+1)/2$; hence there are $s_0,t_0\in\F_p$ with
$s_0^2+t_0^2=-1$. Put
\begin{equation}\label{eq:Q8-frobenius-matrices}
 A=\begin{pmatrix}0&1\\-1&0\end{pmatrix},
 \qquad
 B=\begin{pmatrix}s_0&t_0\\t_0&-s_0\end{pmatrix}.
\end{equation}
Then $A^2=B^2=-I$ and $AB=-BA$.  Hence
$H=\langle A,B\rangle\cong Q_8$.  Its action on $V=\F_p^2$ is irreducible because
a common invariant line would contradict $AB=-BA$ in odd characteristic.  Every
nonidentity element acts without nonzero fixed vectors, since every element of
order four has square $-I$. Thus $G_p$ is an irreducible Frobenius group.

The group $Q_8$ is ambivalent and contains $-I$.  Taking $U=I$ in
\eqref{eq:frobenius-module-orbit-criterion} therefore shows that the identity
automorphism of $G_p$ is class-inverting; equivalently, every complex irreducible
representation of $G_p$ is self-dual.  Since $p>|Q_8|$, however,
$N_{\mathrm{orb}}(Q_8,V)=Q_8$.  Any $U$ satisfying the module-orbit condition belongs to
$Q_8$, and its associated coherent pair is
$(\operatorname{Ad}_U,U^2)$.  Replacing the generator $t$ in
\eqref{eq:index-two-extension} by $U^{-1}t$ identifies this pair with
$(\id,1)$.  Its indicators are therefore the ordinary Frobenius--Schur indicators
of $Q_8$.  The two-dimensional irreducible representation of $Q_8$ has indicator
$-1$~\cite[Proposition~4.3]{MasonNgFS}, and
Theorem~\ref{thm:orbit-rigid-frobenius-real} rules out a split real form. Since
$Z(G_p)=1$, the negative quaternionic sign cannot be corrected; equivalently,
$\Omega_{s_{\id}}(\Rep_{\C}(G_p))\ne0$.
\end{proof}

The representation category of $\F_3^2\rtimes Q_8$ is studied independently
in~\cite[Section~7]{BHPReality}, where explicit $F$-symbol calculations show
that it has inherently complex $F$-symbols. The proposition above gives a
complementary infinite family in characteristics $p>8$ by descent and indicator
methods.

\section{Cyclic Frobenius groups and minimal split fields}
\label{sec:cyclic-frobenius}

We now turn to cyclic complements. The field problem was raised in
\cite[Section~3.8]{EGDescent}; Theorem~\ref{thm:frobenius} determines the smallest
split field for this family and shows that the form may always be chosen symmetric.

Let $q=p^n$, let $m>1$ divide $q-1$, and assume
\begin{equation}\label{eq:frobenius-hyp}
 \operatorname{ord}_m(p)=n.
\end{equation}
Choose $r\in\F_q^\times$ of order $m$ and put
\[
 V=\F_q^+,
 \qquad H=\langle r\rangle\cong C_m,
 \qquad
 G_{q,m}=V\rtimes H,
 \qquad d=\frac{q-1}{m}.
\]
The complement acts on $V$ by multiplication. Write $\langle p\rangle$ for the
subgroup generated by $p$ in $(\mathbb Z/m\mathbb Z)^\times$.
Every irreducible Frobenius group with elementary abelian kernel and cyclic
complement has this form, since irreducibility gives
$\F_p[r]\cong\F_{p^n}$ and
$[\F_p(r):\F_p]=\operatorname{ord}_m(p)$; the converse follows as well.

The normalizer of $H$ in the linear group is
\begin{equation}\label{eq:cyclic-normalizer}
 N_{\operatorname{GL}_{\F_p}(\F_q)}(H)
 =\{x\mapsto\lambda x^{p^j}\mid
 \lambda\in\F_q^\times,\ 0\leq j<n\}.
\end{equation}
A normalizing map sends multiplication by $r$ to multiplication by $r^h$
for some $h$. The two operators are similar only when $h=p^j$ modulo $m$; after
composing with the inverse Frobenius, the map centralizes multiplication by $r$ and
is therefore multiplication by an element of $\F_q^\times$.

The subgroup controlling the nonlinear labels is the kernel of the action of $\langle p\rangle$
on $\F_q^\times/H$, namely
\begin{equation}\label{eq:Sqm}
 S_{q,m}
 =\{p^j\bmod m\mid 0\leq j<n,\ p^j\equiv1\pmod d\}.
\end{equation}
The group $\F_q^\times/H$ is cyclic of order $d$. Put
\[
 L=\Q(\zeta_p,\zeta_m),
 \qquad K_{q,m}=\Q(\zeta_m)^{S_{q,m}}.
\]
The superscript denotes the fixed field for the cyclotomic action of
$(\mathbb Z/m\mathbb Z)^\times$.

\begin{theorem}\label{thm:frobenius}
For every subfield $K\subseteq\C$, the category $\Rep_{\C}(G_{q,m})$ has a split
$K$-form if and only if it has a split symmetric $K$-form. These conditions hold
if and only if
\[
 K_{q,m}\subseteq K.
\]
\end{theorem}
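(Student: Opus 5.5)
The plan is to reduce everything to the cyclotomic field $L=\Q(\zeta_p,\zeta_m)$ and then argue in two directions. The necessity argument shows that Galois elements fixing $K$ must be realized on $\operatorname{Irr}(G_{q,m})$ by automorphisms of $G_{q,m}$; this is where $S_{q,m}$ appears. The sufficiency argument builds an explicit automorphism-induced framed section over $K_{q,m}$. Since automorphism-induced equivalences preserve the canonical symmetry, that section gives a split \emph{symmetric} form. Together with the trivial implication (symmetric $\Rightarrow$ split), this yields both equivalences.

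\textbf{Preliminary reduction.} The representations $W_\lambda=\operatorname{Ind}_V^G(\lambda)$ are realized over $\Q(\zeta_p)$, and the linear characters of $H$ over $\Q(\zeta_m)$. Hence $\Rep_{\C}(G_{q,m})$ has a split $L$-form, namely $\Rep_L(G_{q,m})$.
\begin{enumerate}[label=\textup{(\roman*)}]
\item For the ``if'' direction it suffices to treat $K=K_{q,m}$ and then extend scalars.
\item For the ``only if'' direction, a split $K$-form yields, for each $\sigma\in\Aut(\C/K)$, a twisted equivalence fixing all simple classes. Composing with coefficientwise conjugation by $\sigma$ gives a $\C$-linear tensor autoequivalence that permutes $\operatorname{Irr}(G)$ exactly as $\sigma$ does on characters.
\end{enumerate}

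\textbf{Necessity.} By Proposition~\ref{prop:frobenius-autoequivalence-permutations}, this permutation is induced by some $\alpha\in\Aut(G)$. As in the proof of Proposition~\ref{prop:frobenius-class-inverting-criterion}, after an inner modification we may take $\alpha=\alpha_U$ with $U(x)=\lambda x^{p^j}$, using \eqref{eq:cyclic-normalizer}.

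Write $\sigma|_L$ as $(a,b)\in(\mathbb Z/m)^\times\times\F_p^\times$, acting on $\zeta_m$ and $\zeta_p$ respectively.
\begin{itemize}
\item On the inflated linear characters of $H$, $\sigma$ acts as $\tau\mapsto\tau^a$, while $\alpha_U$ acts through $r\mapsto r^{p^j}$. Thus $a\equiv p^{\pm j}\pmod m$, and by \eqref{eq:frobenius-hyp} this determines $j$ modulo $n$.
\item On the labels $H\lambda$ of the $W_\lambda$, identify $\widehat V$ with $\F_q$ by the trace pairing. Then $\sigma$ acts as multiplication by $b$ on $\F_q^\times/H$, while $\alpha_U$ acts by the dual of $x\mapsto\lambda x^{p^j}$.
\end{itemize}
Agreement for all $x\in\F_q^\times$ forces $x^{p^j-1}$ to be constant modulo $H$. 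Since $\F_q^\times/H$ is cyclic of order $d$, this forces $p^j\equiv1\pmod d$, that is, $a\in S_{q,m}$. Hence the image of $\Aut(\C/K)$ in $\Gal(\Q(\zeta_m)/\Q)$ lies in $S_{q,m}$, and Galois theory gives $K_{q,m}\subseteq K$.

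\textbf{Sufficiency.} Let $\Gamma=\Gal(L/K_{q,m})$, which consists of the pairs $(a,b)$ with $a\in S_{q,m}$. For $\sigma=(a,b)$, I would define
\[
\alpha_\sigma(x)=c_b\,x^{p^{j(a)}}\quad\text{on }V,\qquad r\mapsto r^{p^{j(a)}}\quad\text{on }H,
\]
where $c_b\in\F_q^\times$ is chosen so that $c_b x^{p^{j}}\in bHx$ for all $x$. Such a $c_b$ exists precisely because $p^j\equiv1\pmod d$.

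I would then form the semilinear functors $\Phi_\sigma(X,\rho)=({}^\sigma X,\sigma\rho\circ\alpha_\sigma^{-1})$. The first key step is to show that the $c_b$ can be chosen so that $\sigma\mapsto\alpha_\sigma$ is a homomorphism $\Gamma\to\Aut(G)$, or at least a homomorphism modulo $\operatorname{Ad}(H)$ with coherent implementers. The defect is a $2$-cocycle of $\Gamma$ with values in $H$. Here I would use that the relevant cohomology is coprime, or alternatively adjust the $c_b$ directly inside the cyclic group $\F_q^\times$. Since $Z(G)=1$, there is no further monoidal ambiguity, and the classes $[(\Phi_\sigma,\sigma)]$ form a section of $p_{\mathrm{sf}}$ through the symmetric subgroup.

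The second key step is to produce frames satisfying the multiplicativity identity of Proposition~\ref{prop:framed-split-descent}.
\begin{itemize}
\item For one-dimensional inflated simples over $L$, the frame is the identity on the underlying $L$-line, which is compatible because the character values transform correctly.
\item For $W_\lambda$, I would mimic Proposition~\ref{prop:frobenius-induced-positive}. In the monomial model \eqref{eq:induced-frobenius-model}, with basis $\{e_h\}$ and $\lambda$-values in $\Q(\zeta_p)$, set $f_\sigma(e_h)=e_{\alpha_\sigma(h)c_{\lambda,\sigma}}$, where $c_{\lambda,\sigma}\in H$ is uniquely determined by freeness of the $H$-action on $\widehat V\setminus\{1\}$.
\end{itemize}
Uniqueness of $c_{\lambda,\sigma}$ should force the factor set $b_{W_\lambda}$ to be trivial, exactly as in the computation of $T_\lambda^2$ there.

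\textbf{Main obstacle.} I expect the hardest step to be this simultaneous coherence over a possibly non-cyclic $\Gamma$. The section must be genuinely multiplicative in $\Pi_{\mathrm{fr}}$, not merely in $\Pi_{\mathrm{sf}}$, and this requires checking that the choices of $c_b$ and $c_{\lambda,\sigma}$ compose without leaving a residual $H$-valued cocycle. My fallback is to use Proposition~\ref{prop:stability-obstruction}: first make the section coherent, then show that every relative Brauer class in $\Br(L/K_{q,m})$ vanishes. For the $W_\lambda$, the argument would be that their permutation models give factor sets that are trivial by freeness; for the linear simples, it would be the trivial frames above.
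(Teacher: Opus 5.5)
Your necessity argument is essentially the paper's. Your existence strategy also has the paper's shape: an automorphism-induced, hence symmetric, section over $\Gal(L/K_{q,m})$, with permutation frames on the induced simples. The gap is the step you flag and leave open: you never show that $\sigma\mapsto\alpha_\sigma$ is multiplicative, and the justification you offer fails.

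\begin{itemize}
\item The coprimality claim is false in general. The orders of $\Gamma\cong\F_p^\times\times S_{q,m}$ and $H\cong C_m$ usually share factors. For $(q,m)=(5,4)$ one has $j=0$, and $\Gamma\cong\F_5^\times\cong C_4$ acts trivially on $H\cong C_4$, so $H^2(C_4,C_4)\cong C_4\neq0$.
\item Your fallback via Proposition~\ref{prop:stability-obstruction} does not get around this. Suppose only $\alpha_\sigma\alpha_\tau=\operatorname{Ad}_{h_{\sigma,\tau}}\alpha_{\sigma\tau}$ with $h_{\sigma,\tau}\in H$. Coherence is then indeed automatic, since $Z(G)=1$. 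But the comparison isomorphisms act on a linear simple $\chi$ by the scalars $\chi(h_{\sigma,\tau})^{\pm1}$. So the identity frames need no longer be compatible, and $\chi$ carries the class of $(\sigma,\tau)\mapsto\chi(h_{\sigma,\tau})$ in $\Br(L/K_{q,m})$. Showing that this class vanishes is exactly the task you left open.
\end{itemize}

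The missing observation is that your condition on $c_b$ only says $c_b\in bH$, so you may take $c_b=b$. Since $b\in\F_p^\times$ is fixed by every power of Frobenius,
\[
 \alpha_{(a,b)}\circ\alpha_{(a',b')}(x)
 =b\,\bigl(b'x^{p^{j'}}\bigr)^{p^{j}}
 =bb'x^{p^{j+j'}}
 =\alpha_{(aa',bb')}(x),
\]
and the same holds on $H$. This is the paper's identity $a_{u,j}\circ a_{v,k}=a_{uv,j+k}$, where $a_{u,j}$ (with $u=b$) is your $\alpha_\sigma^{-1}$. Any other admissible choice $c_b=bh_\sigma$ changes the defect by the coboundary of $\sigma\mapsto h_\sigma$, so the class is always trivial — but this has to be said.

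Once composition is exact, the rest of your plan goes through with no cohomology at all:
\begin{itemize}
\item the functors $\Phi_\sigma$ compose on the nose;
\item the identity frames on the linear simples are compatible;
\item on $W_\lambda$, uniqueness of $c_{\lambda,\sigma}$ gives $c_{\lambda,\sigma\tau}=\alpha_\sigma(c_{\lambda,\tau})\,c_{\lambda,\sigma}$, so your permutation frames compose strictly. This is the analogue of the paper's $P_jP_k=P_{j+k}$ in the model with basis $\{e_z\}_{z\in tH}$.
\end{itemize}
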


\begin{proof}
\emph{Necessity.}
Write $\operatorname{Tr}=\operatorname{Tr}_{\F_q/\F_p}$.
Specializing~\eqref{eq:general-frobenius-irreducibles}, besides the $m$ linear
characters inflated from $H$, the remaining irreducible representations are
\begin{equation}\label{eq:nonlinear-irreps}
 W_{tH}=\operatorname{Ind}_V^{G_{q,m}}(\psi_t),
 \qquad
 \psi_t(x)=\zeta_p^{\operatorname{Tr}(tx)},
 \qquad tH\in\F_q^\times/H.
\end{equation}
The stabilizer of every nontrivial $\psi_t$ in $H$ is trivial; hence these
representations are irreducible of degree $m$. The displayed models show that $L$
is a splitting field. Since $p\nmid m$,
\[
 \Gal(L/\Q)
 \cong\F_p^\times\times(\mathbb Z/m\mathbb Z)^\times,
\]
where $(u,s)$ sends $\zeta_p$ to $\zeta_p^u$ and $\zeta_m$ to $\zeta_m^s$.

Let $K\subseteq\C$ be a field over which a split form exists, and put $F=K\cap L$.
The extension $L/F$ is Galois because $L/\Q$ is abelian, and $L$ and $K$ are
linearly disjoint over $F$. Thus every $\sigma\in\Gal(L/F)$ extends to $LK$ by the
identity on $K$. Extend $\sigma$ by the identity on a transcendence basis of
$\C/LK$, and then extend it to the algebraic closure. This gives
$\widetilde\sigma\in\Aut(\C/K)$. Since the $K$-form is split, its simple objects
remain simple after scalar extension and exhaust the complex simples. The canonical
semilinear action obtained by scalar extension therefore fixes every simple
isomorphism class. After transport to $\Rep_{\C}(G_{q,m})$, comparison with
coefficientwise $\widetilde\sigma$-twisting gives a $\C$-linear tensor
autoequivalence which
compensates the Galois permutation of the simple objects. By
Proposition~\ref{prop:frobenius-autoequivalence-permutations}, the same permutation
is induced by an automorphism $\alpha$ of $G_{q,m}$.

The subgroup $V$ is characteristic. After composing $\alpha$ with an inner
automorphism induced by $V$, we may assume that it preserves $H$. If
$\sigma=(u,s)$, its action on the linear characters forces
$\alpha(r)=r^{s^{-1}}$. The normalizer description~\eqref{eq:cyclic-normalizer}
therefore gives $s=p^j$ for some $0\leq j<n$ and
\[
 U(x)=\lambda x^{p^{-j}}
 \qquad(\lambda\in\F_q^\times).
\]
Here $U=\alpha|_V$ and $x^{p^{-j}}=x^{p^{n-j}}$ denotes the inverse $j$-fold
Frobenius on $\F_q$.
The compensated action on the nonlinear labels is
\[
 tH\longmapsto u\lambda^{p^j}t^{p^j}H.
\]
Indeed,
\[
 (({}^\sigma\!\psi_t)\circ U)(x)
 =\zeta_p^{\operatorname{Tr}((ut\lambda)^{p^j}x)}
\]
because
$\operatorname{Tr}(z x^{p^{-j}})=\operatorname{Tr}(z^{p^j}x)$.
Fixing the label $H$ forces $u\lambda^{p^j}\in H$. The displayed action then
fixes every label exactly when the
$p^j$-power map is trivial on $\F_q^\times/H$, or equivalently when
$p^j\equiv1\pmod d$. Thus
\[
 \Gal(L/F)\subseteq\F_p^\times\times S_{q,m}.
\]
The fixed field of the group on the right is $K_{q,m}$, and Galois correspondence
gives $K_{q,m}\subseteq F\subseteq K$.

\emph{Existence.}
Under the preceding identification, every
$\sigma\in\Gal(L/K_{q,m})=\F_p^\times\times S_{q,m}$ has the form
$\sigma=(u,p^j)$. Define
\[
 a_{u,j}(x)=u^{-1}x^{p^{-j}},
 \qquad a_{u,j}(r)=r^{p^{-j}}.
\]
These automorphisms satisfy
\[
 a_{u,j}\circ a_{v,k}=a_{uv,j+k},
\]
with indices modulo $n$. On $\Rep_L(G_{q,m})$, applying $\sigma$ to representation
matrices and then precomposing with $a_{u,j}$ defines twisted tensor functors
\[
 \Phi_\sigma(\rho)={}^{\sigma}\!\rho\circ a_{u,j}.
\]
They preserve the canonical symmetry and satisfy the descent identities exactly.
They also fix every linear character; on these simples take the identity frames.
Realize $W_{tH}$ on the basis $\{e_z\mid z\in tH\}$ by
\[
 x e_z=\zeta_p^{\operatorname{Tr}(zx)}e_z,
 \qquad r e_z=e_{r^{-1}z}.
\]
Since $p^j\equiv1\pmod d$, the set $tH$ is invariant under
$z\mapsto z^{p^j}$, and
\[
 P_j(e_z)=e_{z^{p^j}}
\]
intertwines $\Phi_\sigma(W_{tH})$ with $W_{tH}$. If
$\tau=(v,p^k)$, these permutation matrices have rational entries, and their frame
compatibility condition is
\[
 P_j\circ\Phi_\sigma\bigl({}^{\sigma}P_k\bigr)
 =P_jP_k=P_{j+k}.
\]
Together with the identity frames on the linear simples, the maps $P_j$ frame the
twisted equivalences $(\Phi_\sigma,\sigma)$. The strict descent identities and the
last display show that their framed classes define a section of
$p_{\mathrm{fr}}^{\mathrm{sym}}$. The structured framed criterion
of Subsection~\ref{subsec:structured-forms} therefore gives a split symmetric
$K_{q,m}$-form of $\Rep_L(G_{q,m})$, and hence, after extending $L$ to $\C$, of
$\Rep_{\C}(G_{q,m})$. Extension of scalars from $K_{q,m}$ gives a split symmetric
form over every $K\supseteq K_{q,m}$.
\end{proof}

Over $\R$, the theorem has the following equivalent formulations.

\begin{corollary}\label{cor:frobenius-real-equivalences}
The following conditions are equivalent.
\begin{enumerate}[label=\textup{(\alph*)}]
\item $\Rep_{\C}(G_{q,m})$ has a split real form.
\item There is an integer $j$, with $0\leq j<n$, such that
\begin{equation}\label{eq:two-congruence-real-criterion}
 p^j\equiv-1\pmod m,
 \qquad p^j\equiv1\pmod d.
\end{equation}
\item The group $G_{q,m}$ has a class-inverting automorphism.
\end{enumerate}
When these conditions hold, the form may be chosen symmetric. If they fail, no
real subfield of $\C$ admits a split form.
\end{corollary}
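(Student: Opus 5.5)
The plan is to derive everything from Theorem~\ref{thm:frobenius}, the class-inversion criterion of Proposition~\ref{prop:frobenius-class-inverting-criterion}, and the normalizer description~\eqref{eq:cyclic-normalizer}. The basic observation is that a subfield $K\subseteq\C$ contained in $\R$, or more generally any field with $K\subseteq\R$, contains $K_{q,m}$ only if $K_{q,m}$ is real. We therefore first translate (a) into a condition on $K_{q,m}=\Q(\zeta_m)^{S_{q,m}}$. By Theorem~\ref{thm:frobenius} applied with $K=\R$, condition (a) holds if and only if $K_{q,m}\subseteq\R$. This happens if and only if complex conjugation, which acts on $\Q(\zeta_m)$ as $-1\in(\mathbb Z/m\mathbb Z)^\times$, fixes $K_{q,m}$, that is, if and only if $-1\in S_{q,m}$. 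Unwinding~\eqref{eq:Sqm}, this is exactly the existence of $j$ with $0\leq j<n$, $p^j\equiv-1\pmod m$ and $p^j\equiv1\pmod d$, which is (b). The same theorem shows that the real form may then be chosen symmetric. The last sentence of the corollary follows as well: a real subfield $K$ containing $K_{q,m}$ forces $K_{q,m}\subseteq\R$, so if (b) fails no real subfield carries a split form.

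For (a)$\Rightarrow$(c) we simply cite Proposition~\ref{prop:frobenius-class-inverting-criterion}. Its final assertion, which rests on Proposition~\ref{prop:frobenius-autoequivalence-permutations}, gives a class-inverting automorphism whenever a split real form exists.

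For (c)$\Rightarrow$(b), suppose $G_{q,m}$ has a class-inverting automorphism. Proposition~\ref{prop:frobenius-class-inverting-criterion} supplies $U\in\mathcal N$ satisfying~\eqref{eq:frobenius-module-orbit-criterion}, and by~\eqref{eq:cyclic-normalizer} we may write $U(x)=\lambda x^{p^{j'}}$. The first condition in~\eqref{eq:frobenius-module-orbit-criterion} concerns the abelian group $H$, so it reads $\alpha_U^H(r)=r^{-1}$. Conjugating multiplication by $r$ through $U$ gives multiplication by $r^{p^{j'}}$. Hence $p^{j'}\equiv-1\pmod m$, so $j=j'$ satisfies the first congruence.

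The orbit condition $U(v)\in H\cdot(-v)$ for all $v\in\F_q^\times$ is the step requiring care. It says that $-\lambda^{-1}$ lies in the coset $v^{p^{j}-1}H$ for every $v$, where $j=j'$. Taking $v=1$ gives $-\lambda^{-1}\in H$. Then $v^{p^j-1}\in H$ for all $v$, so the $(p^j-1)$-power map kills $\F_q^\times/H$. Since $\F_q^\times/H$ is cyclic of order $d$, this gives $d\mid p^j-1$, the second congruence. I expect the main subtlety to be the reduction to $j\in[0,n)$ and the sign conventions when writing $U$ as $x\mapsto\lambda x^{p^{j}}$ versus the inverse Frobenius used in the theorem. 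Both are harmless because $p^n\equiv1$ modulo both $m$ and $d$, as both divide $q-1$.
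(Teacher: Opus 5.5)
Your proposal is correct and matches the paper's proof. In both, (a)$\Leftrightarrow$(b) comes from Theorem~\ref{thm:frobenius} with $K=\R$ and the observation that $K_{q,m}\subseteq\R$ exactly when $-1\in S_{q,m}$, and (c)$\Rightarrow$(b) is the same normalizer and orbit computation with $U(x)=\lambda x^{p^j}$. The only difference is how you close the cycle: you use (a)$\Rightarrow$(c) from the final assertion of Proposition~\ref{prop:frobenius-class-inverting-criterion}, which relies on the twisted-automorphism machinery of Proposition~\ref{prop:frobenius-autoequivalence-permutations}. The paper instead proves (b)$\Rightarrow$(c) directly, by checking that $U(x)=-x^{p^j}$ satisfies both conditions of Proposition~\ref{prop:frobenius-class-inverting-criterion}, which is more elementary.
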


\begin{proof}
By Theorem~\ref{thm:frobenius}, the first condition holds exactly when
$K_{q,m}\subseteq\R$, and the resulting form may be chosen symmetric. Complex
conjugation acts on $\Q(\zeta_m)$ by $-1$; hence this is equivalent to
$-1\in S_{q,m}$. The definition of $S_{q,m}$ gives the two congruences in
\eqref{eq:two-congruence-real-criterion}.

It remains to compare these conditions with class inversion. By
\eqref{eq:cyclic-normalizer} and
Proposition~\ref{prop:frobenius-class-inverting-criterion}, a class-inverting
automorphism must be represented on $V$ by
$U(x)=\lambda x^{p^j}$. Its action on $H$ is inversion precisely when
$p^j\equiv-1\pmod m$. The orbit condition is
\[
 \lambda x^{p^j}\in H\cdot(-x)\qquad(x\ne0).
\]
Taking $x=1$ gives $\lambda\in-H$, after which the condition becomes
$x^{p^j-1}\in H$ for every $x\ne0$. Since $\F_q^\times/H$ has order $d$, this is
equivalent to $p^j\equiv1\pmod d$. Conversely, the two congruences make
$U(x)=-x^{p^j}$ satisfy both conditions of
Proposition~\ref{prop:frobenius-class-inverting-criterion}. This proves the
equivalences, and the last assertion follows again from
Theorem~\ref{thm:frobenius}.
\end{proof}

Both the degree of the minimal field and whether it is totally real can be read
directly from~\eqref{eq:Sqm}.
With the convention $\operatorname{ord}_1(p)=1$, one has
\[
 S_{q,m}=\langle p^{\operatorname{ord}_d(p)}\rangle,
 \qquad
 [K_{q,m}:\Q]=\frac{\operatorname{ord}_d(p)\varphi(m)}{n}.
\]
Here $\varphi$ is Euler's totient function.
The field $K_{q,m}$ is totally real precisely when $-1\in S_{q,m}$; otherwise it
is a CM field, that is, a totally imaginary quadratic extension of a totally real
field, with maximal real subfield
\[
 \Q(\zeta_m)^{\langle S_{q,m},-1\rangle}.
\]
These assertions follow from cyclotomic Galois theory
\cite[Chapter~2]{Washington}.

The congruence modulo $d$ is essential because it measures the action on the
nonlinear simple objects. For $(q,m)=(25,8)$ one has $S_{25,8}=\{1\}$ and
$K_{25,8}=\Q(\zeta_8)$; omitting the congruence modulo $d$ would incorrectly give
the smaller field $\Q(i)$.

We record two specializations. First,
specializing~\eqref{eq:general-frobenius-rank} gives
\[
 \operatorname{rank}\bigl(\Rep_{\C}(G_{q,m})\bigr)=m+d,
\]
and the rank-five members of the irreducible cyclic family are
\[
 G_{7,2}\cong D_{14}\ \text{(the dihedral group of order $14$)},
 \qquad G_{7,3},
 \qquad G_{5,4}.
\]
The smallest fields over which these categories have split forms, equivalently
split symmetric forms, are respectively
$\Q$, $\Q(\sqrt{-3})$, and $\Q(i)$. The last two are precisely the examples of
\cite{BHP}; the gauge-invariant contractions computed in
\cite[Section~4]{BHP} independently detect the absence of a split real form.

Second, the case $m=q-1$ gives the one-dimensional affine general linear group
$G_{q,q-1}=\operatorname{AGL}_1(\F_q)$, for which the fixed field takes a
particularly simple form. For every prime power $q=p^n\geq3$, the smallest field
over which
\[
 \Rep_{\C}\bigl(\operatorname{AGL}_1(\F_q)\bigr)
\]
has a split form, equivalently a split symmetric form, is
\[
 \Q(\zeta_{q-1})^{\langle p\rangle},
 \qquad
 \left[\Q(\zeta_{q-1})^{\langle p\rangle}:\Q\right]
 =\frac{\varphi(q-1)}{n}.
\]
This is the decomposition field of $p$ in $\Q(\zeta_{q-1})$
\cite[Chapter~2]{Washington}. For $q=3,4$ this field is $\Q$, while for
$q\geq5$ there is no split real form.
Indeed, here $d=1$ and $S_{q,q-1}=\langle p\rangle$, while
$\operatorname{ord}_{q-1}(p)=n$ because for $0<j<n$ one has
$0<p^j-1<q-1$. If $q\geq5$, then $1\not\equiv-1\pmod{q-1}$, and for
$0<j<n$ one also has $0<p^j+1<q-1$. Hence
$-1\notin\langle p\rangle$.

\bibliographystyle{amsplain}
\bibliography{split_forms_references}
\enlargethispage{2\baselineskip}

\end{document}